\documentclass[11pt]{amsart}

\usepackage{lmodern}
\usepackage[margin=1in]{geometry}
\usepackage{amsmath,amssymb,amsthm,mathtools}
\usepackage{booktabs,longtable,array}
\usepackage{enumitem}
\usepackage{microtype}
\usepackage{framed}

\usepackage{xcolor}
\usepackage[normalem]{ulem}
\usepackage{etoolbox}
\usepackage{contour}
\usepackage{graphicx}
\usepackage{float}
\usepackage{tikz}
\usetikzlibrary{arrows.meta}
\usepackage{xurl}
\usepackage{aliascnt}
\usepackage[colorlinks=true,linkcolor=blue,citecolor=blue,urlcolor=blue]{hyperref}
\usepackage[capitalise,noabbrev,nameinlink]{cleveref}

\makeatletter
\def\l@section{\@tocline{1}{2pt plus 1pt}{0pt}{2.8pc}{\bfseries}}
\def\l@subsection{\@tocline{2}{0pt}{2.5pc}{3.2pc}{}}
\def\l@subsubsection{\@tocline{3}{0pt}{4.5pc}{3.8pc}{\small}}
\makeatother

\theoremstyle{plain}
\newtheorem{maintheorem}{Theorem}

\newtheorem{theorem}{Theorem}[section]
\newaliascnt{proposition}{theorem}
\newtheorem{proposition}[proposition]{Proposition}
\aliascntresetthe{proposition}
\newaliascnt{lemma}{theorem}
\newtheorem{lemma}[lemma]{Lemma}
\aliascntresetthe{lemma}
\newaliascnt{corollary}{theorem}
\newtheorem{corollary}[corollary]{Corollary}
\aliascntresetthe{corollary}
\theoremstyle{definition}
\newaliascnt{definition}{theorem}
\newtheorem{definition}[definition]{Definition}
\aliascntresetthe{definition}
\newaliascnt{example}{theorem}
\newtheorem{example}[example]{Example}
\aliascntresetthe{example}
\theoremstyle{remark}
\newaliascnt{remark}{theorem}
\newtheorem{remark}[remark]{Remark}
\aliascntresetthe{remark}
\newaliascnt{question}{theorem}
\newtheorem{question}[question]{Question}
\aliascntresetthe{question}
\numberwithin{equation}{section}

\crefname{theorem}{Theorem}{Theorems}
\crefname{maintheorem}{Theorem}{Theorems}
\crefname{proposition}{Proposition}{Propositions}
\crefname{lemma}{Lemma}{Lemmas}
\crefname{corollary}{Corollary}{Corollaries}
\crefname{definition}{Definition}{Definitions}
\crefname{example}{Example}{Examples}
\crefname{remark}{Remark}{Remarks}
\crefname{question}{Question}{Questions}
\crefname{section}{Section}{Sections}
\crefname{appendix}{Appendix}{Appendices}

\newcommand{\R}{\mathbb R}
\newcommand{\Sph}{\mathbb S}
\newcommand{\conv}{\operatorname{conv}}
\newcommand{\aff}{\operatorname{aff}}
\newcommand{\diam}{\operatorname{diam}}
\newcommand{\convRad}{\operatorname{convRad}}
\newcommand{\diff}{\mathop{}\mspace{-4mu}\mathrm{d}}
\newcommand{\supp}{\operatorname{supp}}
\newcommand{\spann}{\operatorname{span}}
\newcommand{\Stack}{\operatorname{Stack}}
\newcommand{\VR}[2]{\operatorname{VR}(#1;#2)}
\newcommand{\DiamConf}{\operatorname{DiamConf}}
\newcommand{\specC}{\scalebox{0.75}{$\scriptstyle\mathrm{C}$}}
\newcommand{\specWS}{\scalebox{0.75}{$\scriptstyle\mathrm{WS}$}}
\AtBeginEnvironment{thebibliography}{%
  \let\unmarkedbibitem\bibitem
  \renewcommand{\bibitem}[2][]{%
    \par\color{black}\hypersetup{urlcolor=blue}%
    \unmarkedbibitem[#1]{#2}}}

\title[Critical Diameters and Vietoris-Rips Filtrations]{Critical Diameters and Vietoris-Rips Filtrations}

\author{Facundo M\'emoli}
\address{Department of Mathematics, Rutgers University,
Piscataway, NJ 08854, USA}
\email{facundo.memoli@rutgers.edu}

\author{Qingsong Wang}
\address{Hal{\i}c{\i}o\u{g}lu Data Science Institute,
University of California San Diego, La Jolla, CA 92093, USA}
\email{qiw072@ucsd.edu}

\subjclass[2020]{Primary 55N31; Secondary 53C23, 49J52}
\keywords{weak slope, compact metric space, Vietoris--Rips filtration,
Kat\v{e}tov functions, fat realization, Clarke criticality, stationary diameter}
\date{}
\begin{document}
\raggedbottom

\begin{abstract}
We study the relation between critical diameter values and the topology of \mbox{Vietoris–Rips} complexes. Using the notion of weak slope from nonsmooth critical-point theory—a metric analogue of the norm of the gradient—we call a finite labelled configuration weak-slope stationary when the diameter function has zero weak-slope there. For a compact metric space, we prove that the canonical inclusion from scale \(r\) to scale \(s\) is a homotopy equivalence whenever \([r,s)\) contains no diameter of a stationary configuration, regardless of its number of labels. 

For a closed connected Riemannian manifold, we show that  weak-slope stationary configurations are also Clarke stationary. Furthermore, when the metric tensor is real-analytic, the Clarke diameter spectrum is countable and, in general, it has Hausdorff dimension zero, even for values realized at the cut locus. Additionally, a smooth
nonanalytic metric can produce a Cantor subset. 

On Riemannian manifolds, whenever the
distances realizing the diameter are smooth, weak-slope stationarity and
Clarke criticality are equivalent to another notion called first-order diameter stationarity which is often easier to interpret. In the case of spheres, this latter notion is equivalent to the existence of an equilibrium
stress: nonnegative weights, not all zero, on the diameter pairs whose
weighted distance gradients sum to zero in the tangent space at each
configuration point.

On the unit round sphere
\(\Sph^m\), \(m\geq1\), the least positive weak-slope critical diameter is
\(\arccos(-1/(m+1))\), and the first accumulation point is
\(\arccos(-1/m)\).  A bound on the number of points needed to realize a
critical diameter gives local finiteness below the latter value.
We adapt a construction of Lov\'asz that places rescaled copies
of a spherical configuration on selected parallels in one higher
dimension and adjoins a pole.  We prove that every nonzero nonnegative
equilibrium stress lifts to the resulting configuration.  Through the
 equivalence between the different notions of stationarity, this produces stationary configurations on
\(\Sph^{m+1}\) from stationary configurations on \(\Sph^m\) of diameter
in \((0,\pi)\).  Increasing the number of layers gives critical diameters
approaching the original diameter from below, making every such value
an accumulation point in the next dimension.  We determine
all Cantor--Bendixson derived sets of the iterated stack diameters and obtain
corresponding inclusions for the derived sets of every finite order
of the full spherical spectrum.

Finally, we establish criteria for transferring diameter stationarity from subspaces to ambient spaces and for excluding positive stationary values by quantitative contractions. One-Lipschitz retractions also transfer failures of canonical Vietoris--Rips inclusions to be homotopy equivalences. Examples illustrate both exact agreement between stationary values and transition scales and the failure of stationarity to imply a topological transition.
\end{abstract}

\maketitle
\newpage

\tableofcontents
\newpage

\section{Introduction}
\label{sec:introduction}
The Vietoris--Rips complexes of a metric space are formed by simplices that correspond to finite subsets with a given diameter bound.
As the diameter bound increases, new simplices appear, but their addition need
not change the homotopy type.  We seek a geometric condition ensuring
that the canonical inclusion between two scales is a homotopy
equivalence.  Our proof starts with deformations that decrease the
diameter of ordered finite configurations and then uses a comparison
with Vietoris--Rips complexes that respects the scale parameter.

\subsection*{From diameter descent to the canonical Vietoris--Rips map}
For a metric space \((X,d_X)\) and \(t>0\), we use the open
Vietoris--Rips complex \cite[Definition~2.1]{LimMemoliOkutan2024}:
\[
 \VR{X}{t}
 :=\{\sigma\subset X:0<|\sigma|<\infty,\ \diam(\sigma)<t\}.
\]

For \(r<s\), the canonical inclusion \(\VR{X}{r}\to\VR{X}{s}\) is the
identity on vertices.

For \(N\geq1\), consider the labelled diameter
\[
 \diam_N(x_1,\ldots,x_N):=\max_{i<j}d_X(x_i,x_j)
 \qquad\text{on }X^N,
\]
where \(X^N\) has the maximum product metric, repetitions are allowed,
and \(\diam_1:=0\).  The notion of \emph{weak slope} of Degiovanni--Marzocchi and
Katriel \cite{DegiovanniMarzocchi1994,Katriel1994}, recalled in
\cref{def:weak-slope}, measures the
rate of local continuous descent relative to metric displacement.  It
requires no geodesic or differentiable structure.  We call a tuple $x\in X^N$
\emph{weak-slope stationary} when this rate, denoted as $|\diff\diam_N|(x)$, is zero and use the notation
\[
 \Sigma_N^{\specWS}(X)
 :=\{\diam_N(x):x\in X^N,\ |\diff\diam_N|(x)=0\},
 \qquad
 \Sigma^{\specWS}(X):=\bigcup_{N\geq2}\Sigma_N^{\specWS}(X).
\]

Throughout this paper, we understand homotopy statements about a Vietoris--Rips
complex as statements about its geometric realization.
\begin{maintheorem}[First deformation theorem for compact metric spaces]
\label{thm:metric-stationary-chamber-intro}
Let \((X,d_X)\) be a nonempty compact metric space and let \(0<r<s\).
Suppose that
\[
 [r,s)\cap\Sigma^{\specWS}(X)=\varnothing.
\]
Then the canonical inclusion
\[
 \VR{X}{r}\longrightarrow\VR{X}{s}
\]
is a homotopy equivalence.
\end{maintheorem}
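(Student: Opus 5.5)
The plan is to prove the theorem in three stages. First, a levelwise deformation statement for the labelled sublevel sets
\[
 U_N(t):=\{x\in X^N:\diam_N(x)<t\}.
\]
Second, a passage from the levelwise statement to a statement about a realization of the simplicial space \(N\mapsto U_N(t)\). Third, a comparison of that realization with \(\VR{X}{t}\) that is natural in \(t\). The hypothesis on \(\Sigma^{\specWS}(X)\) is used only in the first stage, and it is used one \(N\) at a time.

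\textbf{Stage 1.} Fix \(N\geq2\) and \(r\le t'<s\).
\begin{itemize}
 \item The space \(X^N\) is compact, \(\diam_N\) is \(1\)-Lipschitz, and the weak slope \(|\diff\diam_N|\) is lower semicontinuous.
 \item By hypothesis it has no zeros on the compact set \(\diam_N^{-1}([r,t'])\), so it is bounded below there by some \(\delta_N(t')>0\).
 \item The quantitative deformation lemma of Degiovanni--Marzocchi/Katriel (the \(\varepsilon\)-deformation version, for continuous functionals on complete metric spaces) then gives the following. The sublevel set \(\{\diam_N\le t'\}\) deforms into \(\{\diam_N<r\}\) through sublevel sets, with \(\{\diam_N<r'\}\) fixed for suitable \(r'<r\).
 \item Taking the increasing union over \(t'\uparrow s\) shows that the inclusion \(U_N(r)\hookrightarrow U_N(s)\) is a weak homotopy equivalence. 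Since these are open subsets of a compact metric space, it is in fact a homotopy equivalence.
\end{itemize}
No uniformity in \(N\) is needed. The constants \(\delta_N\) may degenerate as \(N\to\infty\), and this is harmless because Stage~2 only uses levelwise statements.

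\textbf{Stage 2.} The spaces \(U_\bullet(t)\) form a semi-simplicial space whose face maps delete coordinates. Repetitions are allowed, so degeneracies exist too, but only the semi-simplicial structure will be used. The inclusions \(U_\bullet(r)\hookrightarrow U_\bullet(s)\) commute with all face maps, since deleting a coordinate does not increase the diameter. By Stage~1 they are levelwise homotopy equivalences. By Segal's theorem, a levelwise (weak) homotopy equivalence of semi-simplicial spaces induces a (weak) homotopy equivalence of fat realizations \(\|U_\bullet(r)\|\to\|U_\bullet(s)\|\). The point of using the fat realization is that no compatibility of the deformations across different \(N\) is needed. This is fortunate, because a deformation that decreases \(\diam_N\) need not restrict well to faces.

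\textbf{Stage 3 (main obstacle).} The remaining step is a homotopy equivalence \(\|U_\bullet(t)\|\simeq|\VR{X}{t}|\) that is natural with respect to the inclusions \(t\le t''\). The difficulty is that \(\VR{X}{t}\) carries the discrete vertex topology, while \(U_\bullet(t)\) is topologized by \(X^N\). A naive comparison map from the discrete nerve to the topologized one is not an equivalence in general.

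My first attempt would be the Katětov-function viewpoint suggested by the keywords. Embed \(X\) into its space of Katětov functions, or into an injective hull, in which small diameter sets have canonical contractible "centres". Using this embedding, I would construct, for each configuration, a continuous choice of a simplex in the open Vietoris--Rips complex, in the spirit of Hausmann's and Lim--Mémoli--Okutan's arguments. The openness of the condition \(\diam<t\) should supply the slack needed for this choice to be continuous. I would then show that the two compositions of the resulting maps are homotopic to the identity by straight-line homotopies in simplices, and that all of these homotopies commute with the maps induced by \(r<s\).

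If this direct route fails, the fallback is to compare both sides with a third model: the realization of the simplicial set of finite subsets of \(X\) in which each subset carries a small open neighbourhood. A Dugger--Isaksen style hypercover argument should then show that both realizations are equivalent to it. Finally, a weak homotopy equivalence between CW complexes is a homotopy equivalence by Whitehead's theorem, and the geometric realization \(|\VR{X}{t}|\) is a CW complex, so this yields the statement for the canonical inclusion.
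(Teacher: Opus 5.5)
Your Stages~1 and~2 follow the paper's route. At each fixed $N$ you apply Corvellec's deformation theorem; the margin below $r$ that you call $r'$ exists because $\Sigma_N^{\specWS}(X)$ is compact, since the zero set of the lower semicontinuous weak slope is closed. Compact exhaustion then reaches the open endpoint $s$, and the Segal/Ebert--Randal-Williams theorem handles fat realizations. There are two small slips. First, $\diam_N$ is $2$-Lipschitz, not $1$-Lipschitz, for the maximum metric. Second, open subsets of a compact metric space need not have the homotopy type of a CW complex, so your justification for upgrading to a homotopy equivalence is wrong. Stage~1 only establishes a weak equivalence $U_N(r)\to U_N(s)$, but that is all Stage~2 uses.

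The genuine gap is Stage~3. You rightly call it the main obstacle but only sketch it, and it carries the substance of the theorem. Your direct route fails as described. The complex $|\VR{X}{t}|$ has a discrete vertex set, so a configuration cannot be sent continuously to the simplex spanned by its labels; already $x\mapsto x$ from $U_1(t)=X$ is discontinuous for connected $X$. Kat\v{e}tov centres do not help to define maps into the complex.

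The missing idea is a middle term rather than a direct map: the open neighbourhood $B_{t/2}(X)=\bigcup_x V_x(t)$, where $V_x(t)=\{f\in\Delta_1(X):f(x)<t/2\}$.
\begin{itemize}
\item \emph{Comparison with the complex.} For finite $F$, the intersection $V_F(t)$ is nonempty exactly when $\diam F<t$, witnessed by $\tfrac12\diam F+d_X(\cdot,F)$, and these intersections are convex. The Lim--M\'emoli--Okutan functorial nerve lemma therefore gives $B_{t/2}(X)\simeq|\VR{X}{t}|$, compatibly with $r<s$ up to homotopy.
\item \emph{Comparison with $\|U_\bullet(t)\|$.} One uses the incidence spaces $\mathcal N_k(X;t)=\{(f,x_0,\dots,x_k):f(x_i)<t/2\}$, keeping the metric topology on labels. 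The augmentation $\|\mathcal N_\bullet(X;t)\|\to B_{t/2}(X)$ is a homotopy equivalence via a partition-of-unity section, as in Segal's argument.
\item \emph{Levelwise projections.} Each projection $\mathcal N_k(X;t)\to U_{k+1}(t)$ is a homotopy equivalence. The function $q_x(z)=\tfrac12\diam_{k+1}(x)+\min_i d_X(z,x_i)$ lies in the convex fibre over $x$. It depends continuously on $x$, even when labels collide, with $\|q_x-q_{x'}\|_\infty\le 2\,d_{X,\infty}(x,x')$. Straight-line interpolation to $q_x$ therefore contracts the fibres. This is the real role of the canonical centres.
\end{itemize}
Both of these comparisons commute strictly with scale. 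Your Stage~2 equivalence therefore transfers to $B_{r/2}(X)\to B_{s/2}(X)$. Through the homotopy-commutative nerve square it then transfers to the canonical inclusion, where Whitehead's theorem applies. Your fallback is close in spirit, since these incidence spaces are a \v{C}ech nerve, but as stated it specifies neither the model nor the equivalences.
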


Theorem~\ref{thm:metric-stationary-chamber-intro} is a Vietoris--Rips analogue
of the classical noncritical-interval theorem of Morse theory
\cite[Theorem~3.1]{Milnor1963}: the absence of stationary diameter values
guarantees that the canonical inclusion between scales is a homotopy
equivalence. In this sense, Theorem~\ref{thm:metric-stationary-chamber-intro}
serves as a first Morse lemma for the Vietoris--Rips filtration.

The half-open interval comes from the strict convention of working with open Vietoris-Rips complexes: simplices of
diameter \(s\) are absent at scale \(s\), so the upper endpoint is allowed
to be stationary.  
Notably, the conclusion concerns the canonical inclusion,
not merely the existence of a homotopy equivalence between the two
complexes. The proof is given in
Section~\ref*{subsec:configuration-assembly-endpoint}. 
The converse of \cref{thm:metric-stationary-chamber-intro} need not hold: a stationary value need not
correspond to a change in homotopy type; see~\cref{ex:snowflake-stationary-without-transition} for an example.

The proof has two steps: deform the diameter sublevels at
each fixed number of labels, then compare their inclusions with the
canonical Vietoris--Rips inclusion.
Under the hypothesis of Theorem~\ref{thm:metric-stationary-chamber-intro},
Corvellec's deformation theorem \cite[Theorem~2.4]{Corvellec1999},
together with the strict-endpoint argument in
\cref{cor:half-open-fixed-cardinality}, gives weak homotopy equivalences
between the ordered diameter sublevels at each fixed number of labels,
\[
 \DiamConf_N(X;t):=\{x\in X^N:\diam_N(x)<t\}.
\]
These weak homotopy equivalence between ordered diameter sublevels do not immediately give a map of
Vietoris--Rips complexes, since the chosen homotopies need not agree when
labels are deleted or repeated.
We then turn to the classical theory of fat realization
for simplicial spaces \cite[Appendix~A]{Segal1974} for transfering these homotopy equivalences between ordered diameter sublevels at each fixed number of labels to the canonical
Vietoris--Rips inclusion.
Specifically, we use the homotopy-invariance
and graded connectivity results as presented in the exposition
\cite[Theorem~2.2 and Lemma~2.4]{EbertRandalWilliams2019}.
Together with the incidence resolution in
\cref{prop:natural-configuration-resolution} and the neighborhood model
in \cref{cor:katetov-neighborhood-model}, these results give the
finite-label connectivity criterion in
\cref{thm:configurationwise-chamber}.

More generally, \Cref{thm:configurationwise-chamber} controls the homotopy groups of the canonical Vietoris–Rips inclusion using only finitely many labelled diameter sublevels.   For a nonempty metric space \(X\), scales
\(0<r<s\), and an integer \(N\geq2\), suppose that
\[
 \DiamConf_q(X;r)\longrightarrow\DiamConf_q(X;s)
\]
is \((N-q)\)-connected for every \(1\leq q\leq N\).  Then the
canonical Vietoris--Rips inclusion is \((N-1)\)-connected; see
\Cref{thm:configurationwise-chamber}\textup{(i)}.
This means a bijection on path components, isomorphisms on \(\pi_j\)
for \(1\leq j\leq N-2\), and a surjection on \(\pi_{N-1}\), at
every basepoint.  Thus controlling a fixed range of homotopy groups
requires only finitely many label numbers, with progressively weaker
connectivity hypotheses as the label number increases.  This criterion
requires neither compactness nor a weak-slope condition.

For \(N=2\), the one-label inclusion is the identity, so it suffices
that the pair inclusion be surjective on path components.  This holds
for every length space and recovers Virk's surjectivity theorem for
the canonical maps on fundamental groups; see
\Cref{cor:path-metric-rips-pi1-surjectivity}.

\subsection*{Critical diameter values on Riemannian manifolds}

On a finite-dimensional Riemannian manifold and given  a locally Lipschitz function $f:M\to \R$, Clarke's subdifferential \cite[Section~2.1]{Clarke1990}, denoted $\partial^{\mathrm C}f$ and  recalled in
\cref{def:clarke-subdifferential}, gives a geometric test for criticality.
Every weak-slope stationary tuple is Clarke critical by
\cref{prop:clarke-first-order}.  Moreover, \cref{prop:clarke-first-order} also proves that at positive
diameter and away from the cut locus of the diameter pairs, either condition
is equivalent to the absence of a common first-order descent direction.
Clarke criticality remains a necessary condition for
weak-slope stationarity at the cut locus, where the smooth first-order
test need not apply.  For a closed connected Riemannian manifold \(M\), put
\[
 \Sigma_N^{\specC}(M)
 :=\{\diam_N(x):x\in M^N,\ 0\in\partial^{\mathrm C}\diam_N(x)\},
 \qquad
 \Sigma^{\specC}(M):=\bigcup_{N\geq2}\Sigma_N^{\specC}(M).
\]
Constant tuples are global minima, so these spectra include zero,
as do the weak-slope spectra.

\begin{maintheorem}[Clarke--Sard theorem for diameter]
\label{thm:fixed-label-clarke-sard-intro}
Let \((M,g)\) be a closed connected \(C^\infty\) Riemannian manifold and
let \(N\geq2\).  Then \(\Sigma_N^{\specC}(M)\) is compact and has Hausdorff
dimension zero.  Moreover, the spectra are nested under repetition:
\[
 \Sigma_q^{\specC}(M)\subseteq\Sigma_N^{\specC}(M)
 \qquad(2\leq q\leq N).
\]
This includes configurations whose diameter pairs lie at the cut locus.
If \(M\) and \(g\) are real analytic, \(\Sigma_N^{\specC}(M)\) is finite.
\end{maintheorem}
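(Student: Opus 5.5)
The plan is to prove the four assertions of the statement separately: compactness, nesting, Hausdorff dimension zero, and finiteness in the analytic case. Throughout, write \(f_N:=\diam_N\) on the compact manifold \(M^N\). Since \(d_g\) is Lipschitz, \(f_N\) is locally Lipschitz, so the Clarke machinery of \cref{def:clarke-subdifferential} applies.

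\emph{Compactness.} The set-valued map \(x\mapsto\partial^{\mathrm C}f_N(x)\) has closed graph (Clarke's upper semicontinuity, \cite[Section~2.1]{Clarke1990}). Hence the critical set \(K_N:=\{x: 0\in\partial^{\mathrm C}f_N(x)\}\) is closed in the compact space \(M^N\). Therefore \(\Sigma_N^{\specC}(M)=f_N(K_N)\) is compact, as the continuous image of a compact set.

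\emph{Nesting.} Let \(\Delta(x_1,\dots,x_q)=(x_1,\dots,x_q,x_q,\dots,x_q)\). I would use the characterization \(0\in\partial^{\mathrm C}f(x)\) iff \(f^\circ(x;v)\ge0\) for all \(v\), where \(f^\circ\) is Clarke's generalized directional derivative computed in charts. Fix \(w=(w_1,\dots,w_N)\) at \(\Delta x\) and let \(w'=(w_1,\dots,w_q)\). In the limsup defining \(f_N^\circ(\Delta x;w)\), restrict the base points to \(y=\Delta z\) with \(z\to x\). Then \(f_N(\Delta z)=f_q(z)\), and forgetting the extra coordinates gives \(f_N(\Delta z+tw)\ge f_q(z+tw')\). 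Hence \(f_N^\circ(\Delta x;w)\ge f_q^\circ(x;w')\ge0\), so \(\Delta\) maps \(K_q\) into \(K_N\). Since \(f_N\circ\Delta=f_q\), this gives \(\Sigma_q^{\specC}\subseteq\Sigma_N^{\specC}\).

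\emph{Analytic case.} If \((M,g)\) is real analytic, I would invoke the fact that the Riemannian distance of a compact analytic manifold is globally subanalytic. Then \(f_N\), a finite maximum of such functions, is a subanalytic Lipschitz function on a compact analytic manifold. The Clarke–Sard theorem of Bolte–Daniilidis–Lewis for subanalytic functions says \(f_N\) is constant on each connected component of \(K_N\). By subanalyticity \(K_N\) has finitely many components, so \(\Sigma_N^{\specC}\) is finite.

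\emph{Hausdorff dimension zero in the \(C^\infty\) case.} This is the main obstacle, because at the cut locus \(f_N\) is not a maximum of smooth functions. I would first describe \(\partial^{\mathrm C}f_N\) geometrically. By Rademacher and Clarke's gradient formula, \(\partial^{\mathrm C}d(\cdot,\cdot)\) at \((p,q)\), \(p\ne q\), is the convex hull of the pairs \((-\dot\gamma(0),\dot\gamma(1))/L\) over minimizing geodesics \(\gamma\) from \(p\) to \(q\), where \(L=d(p,q)\). Clarke's max-rule then gives an inclusion of \(\partial^{\mathrm C}f_N\) into the convex hull over active pairs. Consequently a critical configuration with positive diameter carries nonnegative weights, not all zero, on finitely many minimizing geodesics realizing the diameter; the number of geodesics is bounded by Carathéodory in terms of \(\dim M^N\). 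These weights make the weighted initial and terminal unit velocities balance at every point.

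Next I would encode this as a critical point of a \(C^\infty\) map. Take the finite-dimensional manifold \(P\) of tuples \((x,(v_{k}))\), where each \(v_k\) is a tangent vector at the appropriate \(x_i\). On \(P\), consider the smooth map whose components are the endpoint mismatches \(\exp(v_k)\) versus \(x_j\) together with the lengths \(|v_k|\). The balancing condition should exhibit each critical configuration as a point where a smooth function, namely a common length on the constraint set, has a degenerate (Lagrange-type) differential. Since \(P\) may fail to be a manifold at conjugate points, I would handle this by working with the unconstrained smooth map \(P\to\R^{\cdot}\) and its rank-deficient points. I would then apply the Yomdin–Norton quantitative Sard theorem: for \(C^k\) maps the critical values have Hausdorff dimension at most \(\dim P/k\), hence dimension zero for \(C^\infty\). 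Finally, the diameter value is recovered by projecting these critical values to the length coordinate. Checking that the Clarke balancing condition really yields rank deficiency in this encoding, uniformly including conjugate cut points, is the step I expect to need the most care.
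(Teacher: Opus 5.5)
Your compactness argument is the paper's. Your nesting argument is correct and more direct than the paper's. Restricting the $\limsup$ defining $f_N^\circ(\Delta x;w)$ to base points $\Delta z$ gives $f_N^\circ(\Delta x;w)\geq f_q^\circ(x;w')\geq0$ at once, provided you use product charts with one chart on all repeated factors. The paper instead applies the chain rule to $\iota_\pi$ and then averages a subgradient over permutations of the repeated blocks. The analytic part follows the paper's route. The result you need there is the Bolte--Daniilidis--Lewis--Shiota theorem for \emph{Clarke} subgradients of definable functions; the limiting-subdifferential results for subanalytic functions do not cover Clarke critical points. Citing its finiteness of Clarke critical values directly, as the paper does, also spares you a separate proof that $K_N$ is subanalytic.

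The genuine gap is the Hausdorff-dimension step. The bound $\dim_{\mathrm H}\leq\dim P/k$ in the quantitative Sard theorem applies only to the image of the set where the \emph{whole} differential vanishes. For a vector-valued $C^k$ map $F:P\to\R^n$, the image of the points of rank at most $r$ is only $\mathcal H^{r+(\dim P-r)/k}$-null. So rank-deficient critical values may have dimension close to $n-1$, and projecting them to one coordinate can give an interval. For example, $F(a,b)=(b^2,a)$ is rank-deficient along its entire constraint set $\{b=0\}$, on which the ``length'' $a$ takes every value. Your map (mismatches, lengths) has the same defect. At a constrained point, rank deficiency only says that some nontrivial combination $\mu\cdot\diff c+\lambda_0\,\diff\ell$ vanishes. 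When some $v_k$ is a conjugate vector this can happen with $\lambda_0=0$, from the constraints alone. Listing one minimizing geodesic with conjugate endpoint twice already produces such a point, with no balance whatsoever. So balance does imply rank deficiency, but rank deficiency is too weak to feed into Sard, and the final projection to the length coordinate does not yield dimension zero.

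What the argument needs is a \emph{scalar} $C^\infty$ function on a second-countable manifold with two properties: its full differential vanishes at a point over the critical configuration, and its value there is the diameter. Your encoding can be repaired this way. Make the open-simplex weights $\lambda$ and the multipliers $\mu_k$ independent variables of
$\mathcal L=\sum_k\lambda_k|v_k|+\sum_k\langle\mu_k,\phi(\exp_{x_{i_k}}v_k)-\phi(x_{j_k})\rangle$, with $\phi$ a local chart. With $\gamma_k(t)=\exp_{x_{i_k}}(tv_k)$, the Gauss lemma $\langle\diff(\exp_p)_v(w),\diff(\exp_p)_v(v)\rangle=\langle w,v\rangle$ supplies the normal multiplier $\mu_k\circ\diff\phi=-\lambda_k\dot\gamma_k(1)^\flat/|v_k|$, even at conjugate vectors. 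First variation along variations through geodesics then turns the base-point derivatives into the Clarke balance.

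The paper avoids constraints altogether. \Cref{lem:smooth-distance-marginal} writes $d_M^2=\min_{z\in K}\Phi(x,x',z)$, using a cutoff-smoothed broken-path energy on a compact manifold $K$. In \cref{lem:marginal-maximum-sard}, the function $H_{\mathbf e}(z,y,\lambda)=\sum_j\lambda_j\psi_{e_j}(z,y_j)$ has:
$y$-derivatives vanishing at interior minimizers;
$z$-derivative vanishing by the Clarke balance;
simplex derivatives vanishing because the active values coincide.
Sard's theorem for real-valued smooth functions then makes the critical values $\mathcal H^p$-null for every $p>0$. The chain rule, together with Lipschitz square roots on $[j^{-2},\diam(M)^2]$, transfers this from $\diam_N^2$ to $\diam_N$.
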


The \hyperref[proof:fixed-label-clarke-sard]{proof in
Section~\ref*{subsec:fixed-label-clarke-spectrum}} represents squared
distance as a minimum of smooth energies over a
compact family of broken paths.  \Cref{lem:marginal-maximum-sard}
then represents the critical diameter values as critical values of smooth
functions, whose critical images have Hausdorff dimension zero by
\cite[Corollary on p.~169]{Sard1965}.  The analytic refinement applies the
finiteness theorem of Bolte--Daniilidis--Lewis--Shiota
\cite[Corollary~9(ii)]{BolteEtAl2007} after verifying subanalyticity of
the induced distance.  Smoothness alone does not give finiteness: a smooth
nonanalytic rotational metric on \(\Sph^2\) has a Cantor subset in every
\(\Sigma_N^{\specC}\), realized by tuples that are not local minima
(\cref{prop:cantor-clarke-spectrum}).

At each fixed number of labels, compactness and Hausdorff dimension
zero give open intervals containing no Clarke critical diameter values.
A single interval avoiding these values at every label number gives
homotopy equivalences of the canonical Vietoris--Rips inclusions by
\cref{thm:metric-stationary-chamber-intro}; a gap at one fixed label
number does not alone provide this conclusion.
The uniform convexity gap in \cref{prop:uniform-convexity-gap}
guarantees a nonempty initial interval avoiding critical values at every
label number.\par

In particular, we obtain an initial interval on which
the Vietoris--Rips complexes retain the homotopy type of the manifold.
Define \(r_{\mathrm C}(M):=\inf\bigl(\Sigma^{\specC}(M)\setminus\{0\}\bigr)\),
with \(\inf\varnothing:=\infty\).
\Cref{prop:uniform-convexity-gap} gives
\(r_{\mathrm C}(M)\geq\convRad(M)>0\), where \(\convRad(M)\) is the
global convexity radius of \cref{def:global-convexity-radius}.
\Cref{cor:convexity-radius-rips-stability} extends Hausmann's guaranteed
initial range {to \(r_{\mathrm C}(M)\)}: the canonical inclusions are
homotopy equivalences whenever \(0<r<s\leq r_{\mathrm C}(M)\), and
\[
 \VR{M}{t}\simeq M\qquad(0<t\leq r_{\mathrm C}(M)).
\]
Hausmann's theorem \cite[Theorem~3.5]{Hausmann1995} supplies the
small-scale identification; our deformation theorem extends it
through \(r_{\mathrm C}(M)\).
A gap at only \(N\) labels gives
an \((N-1)\)-connected canonical map.  The case of the circle will show that this
connectivity bound is sharp; see
\cref{cor:all-label-clarke-chambers,cor:circle-connectivity-sharpness}.

Notably, stationarity is a \emph{necessary} condition for homological change.  In degree \(p\geq1\),
our deformation theorem localizes every positive homological critical
scale to a weak-slope stationary configuration with at most \(p+2\)
labels; see \cref{prop:homological-change-stationarity}.
Combined with the fixed-label Clarke--Sard theorem, this confines those
scales to a compact set of Hausdorff dimension zero.  For real-analytic
Riemannian metrics, it yields a finite barcode in every fixed degree;
see \cref{cor:degreewise-persistence-finiteness}.
Thus an infinite all-label stationary spectrum does not entail
infinitely many homological transitions in any fixed degree.
Stationarity alone need not imply a Vietoris--Rips homotopy transition;
see \cref{ex:snowflake-stationary-without-transition}.
For the circle, however, the positive nonantipodal stationary diameters
coincide exactly with the homotopy transition scales;
see \cref{ex:circle-stationary-rips-intro}.

\subsection*{Round spheres: first values and accumulation}

Every  round sphere \(\Sph^m \subset \R^{m+1}\) in this paper has unit radius and 
geodesic metric \(d_m(x,x') = \arccos(x\cdot x')\).  Write
\[ 
 \zeta_j:=\arccos\!\left(-\frac{1}{j+1}\right)
 \qquad(j\geq0).
\]
Thus \(\zeta_j\) is the diameter of the vertex set of a regular
\((j+1)\)-simplex inscribed in \(\Sph^j\).
An \emph{equilibrium stress} (see \Cref{subsec:spherical-clarke-spectra}) assigns symmetric nonnegative weights, not all zero,
to diameter pairs so that the weighted tangent directions toward diameter neighbors
sum to zero at each vertex; \cref{cor:spherical-gordan-stress}
derives this balance, via Gordan's alternative, from the
absence of a direction simultaneously decreasing all diameter-realizing
distances to first order (\cref{prop:clarke-first-order}).  Applying Euclidean
Jung's theorem \cite{Jung1901} then gives the least positive
value at \(N\) labels in \cref{thm:first-spherical-clarke-value}:
\[
 \min\bigl(\Sigma_N^{\specC}(\Sph^m)\setminus\{0\}\bigr)
 =\min\bigl(\Sigma_N^{\specWS}(\Sph^m)\setminus\{0\}\bigr)
 =\zeta_{\min\{m,N-2\}}
 \qquad(m\geq1,\ N\geq2).
\]
At equality, the distinct support is a regular
\(\min\{m+1,N-1\}\)-simplex centered at the origin; when \(N>m+2\), the
additional labels are repetitions.  In particular, the least positive
value over all label numbers is \(\zeta_m\).

For every unit round sphere,
\(r_{\mathrm C}(\Sph^m)=\zeta_m>\pi/2=\convRad(\Sph^m)\).
\Cref{cor:convexity-radius-rips-stability} therefore enlarges the
convexity-radius range \(0<t\leq\pi/2\) to \(0<t\leq\zeta_m\),
throughout which \(\VR{\Sph^m}{t}\simeq\Sph^m\) and the canonical
scale inclusions are homotopy equivalences; see
\Cref{cor:optimal-spherical-initial-chamber}.
\par

The circle gives a complete model, including the label number needed at
each scale.  Put \(\delta_k:=2\pi k/(2k+1)\) for \(k\geq1\).

\begin{example}[The circle and the Adamaszek--Adams classification]
\label{ex:circle-stationary-rips-intro}
For every \(N\geq2\),
\[
 \Sigma_N^{\specC}(\Sph^1)
 =\Sigma_N^{\specWS}(\Sph^1)
 =\{0,\pi\}\cup
 \left\{\delta_k:
  1\leq k\leq\left\lfloor\frac{N-1}{2}\right\rfloor\right\}.
\]
The value \(\delta_k\) is realized by a regular \((2k+1)\)-gon, and
\(2k+1\) is its least realizing label number.  Thus both positive
nonantipodal spectra are exactly \(\{\delta_k:k\geq1\}\).
For \(0<r\leq s<\pi\), the canonical inclusion
\[
 \VR{\Sph^1}{r}\longrightarrow\VR{\Sph^1}{s}
\]
is a homotopy equivalence if and only if
\([r,s)\cap\{\delta_k:k\geq1\}=\varnothing\).
The assertion about the strict Vietoris--Rips filtration is the classification of
Adamaszek and Adams \cite[Theorem~7.4]{AdamaszekAdams2017};
\cref{sec:circle-rips} identifies its transition scales by a direct
stationary calculation.
\end{example}

For each fixed \(N\), the spectrum \(\Sigma_N^{\specC}(\Sph^m)\) is finite,
but the circle already shows that the union over \(N\) can accumulate.  The next theorem
identifies where accumulation first occurs in every dimension.

For \(k\geq1\), the construction used below places \(k\)
rescaled copies of a finite spherical configuration on \(k\) selected
parallels of a sphere of one higher dimension and adjoins a pole.  We call
the resulting configuration a \emph{\(k\)-layer Lov\'asz stack}, after the
construction introduced by Lov\'asz
\cite[proof of Theorem~1]{Lovasz1983}.  Each application raises the sphere
dimension by one, regardless of \(k\).  A one-layer stack uses just one
copy of the base configuration together with the pole; successive
applications of the one-layer construction give the configurations
appearing in the following theorem.

\begin{maintheorem}[First accumulation of spherical Clarke diameter values]
\label{thm:first-accumulation-spheres-intro}
For every \(m\geq1\) and every \(b<\zeta_{m-1}\), the set
\[
 \Sigma^{\specC}(\Sph^m)\cap(0,b]
\]
is finite.  Moreover, regular odd polygons, followed in higher dimensions by
iterated one-layer Lov\'asz stacks, give a sequence of distinct Clarke
diameter values increasing to \(\zeta_{m-1}\).  Consequently,
the least accumulation point of \(\Sigma^{\specC}(\Sph^m)\) is
\[
 \zeta_{m-1} = \arccos(-\tfrac{1}{m}).
\]
 For example, it is
\(\pi\) for \(m=1\), \(\tfrac{2\pi}{3}\) for \(m=2\), and
\(\arccos(-\tfrac{1}{3})\) for \(m=3\).
\end{maintheorem}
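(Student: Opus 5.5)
The proof splits into a finiteness part and an accumulation part. Both rely on the spherical equilibrium-stress description of Clarke stationarity (\cref{cor:spherical-gordan-stress}). The key fact about Clarke stationary configurations on \(\Sph^m\) with diameter \(D\in(0,\pi)\) is this: by the stress balance, the distinct support cannot lie in an open hemisphere. Otherwise a rotation pushing all points toward the hemisphere's center would shrink every diameter pair to first order, contradicting Gordan's alternative. So the origin lies in the convex hull of the support in \(\R^{m+1}\).

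\emph{Step 1: a bound on the number of points below \(\zeta_{m-1}\).} Let \(x\) be a Clarke stationary configuration with \(\diam_N(x)=D\leq b<\zeta_{m-1}\). I would show that after removing repetitions and passing to a minimal stress support, at most \(K(m,b)\) distinct points remain, with \(K\) independent of \(N\).

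The idea is to combine the origin-in-hull property with the diameter bound. Every pair in the support satisfies \(x_i\cdot x_j\geq\cos D>\cos\zeta_{m-1}=-1/m\). I would pass to the support of a nonzero nonnegative stress, or of a minimal positive convex combination representing the origin. By Carathéodory, \(0\) is a positive combination of at most \(m+2\) support points. The points of a rigid stressed subconfiguration realizing the diameter must also be pairwise \(D\)-separated along stressed edges.

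A cleaner route is to use the lemma the abstract advertises (``a bound on the number of points needed to realize a critical diameter''). Any stationary value below \(\zeta_{m-1}\) is realized by a stationary configuration with at most \(K(m,b)\) labels. To prove it, I would take a minimal stressed subconfiguration. Stress balance at each vertex forces its diameter neighbors not to lie in an open hemisphere of the tangent space. With all pairwise inner products above \(-1/m+\varepsilon\), a packing/compactness argument on directions then bounds the vertex count: near-antipodal tangent directions must occur among neighbors at mutual distance \(\leq D\), which forces definite angular separation. Repetitions and extra labels do not change the value, by the nesting in \cref{thm:fixed-label-clarke-sard-intro}.

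Given the bound, \(\Sigma^{\specC}(\Sph^m)\cap(0,b]\subseteq\Sigma^{\specC}_{K}(\Sph^m)\). This set is finite by the real-analytic case of \cref{thm:fixed-label-clarke-sard-intro}.

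\emph{Step 2: the accumulating sequence.} For \(m=1\), the values \(\delta_k\uparrow\pi=\zeta_0\) come from \cref{ex:circle-stationary-rips-intro}. For \(m\geq2\), I would induct. Assume distinct stationary diameters \(D_k\in(0,\pi)\) on \(\Sph^{m-1}\) increasing to \(\zeta_{m-2}\). Apply the one-layer Lov\'asz stack: a rescaled copy of the configuration on the parallel at height \(-h\), together with the north pole. Choose \(h\) so that the pole-to-layer distance equals the chordal-rescaled layer diameter \(D'\).

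By the stress-lifting result (every nonnegative stress lifts to the stack), combined with the equivalence of stationarity notions at smooth diameter pairs, the stack is Clarke stationary on \(\Sph^m\). It remains to compute \(D'\) as an explicit strictly increasing continuous function \(\phi_m(D)\). The defining equation is \(\cos D'=-h\) together with \(\cos D'=h^2+(1-h^2)\cos D\). Check that \(\phi_m(\zeta_{m-2})=\zeta_{m-1}\); this is consistent because stacking a regular simplex gives a regular simplex one dimension up. Monotonicity then gives distinct values increasing to \(\zeta_{m-1}\).

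\emph{Step 3: conclusion.} By Step 1, no accumulation point lies in \((0,\zeta_{m-1})\). Zero is not an accumulation point either, since \(r_{\mathrm C}=\zeta_m>0\). By Step 2, \(\zeta_{m-1}\) is an accumulation point, so it is the least one.

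The main obstacle is Step 1's uniform label bound. My first attempt there is a compactness argument by contradiction. Take minimal stressed configurations with \(n\to\infty\) support points and diameters \(\leq b\). Extract a limiting measure, which is not supported in any open hemisphere. Derive that its support has diameter at least \(\zeta_{m-1}\), via a measure-theoretic spherical Jung-type inequality in one lower dimension: a vertex's diameter neighbors lie on a sphere of radius \(D\) around it, which is an \((m-1)\)-sphere where Jung-type bounds apply.
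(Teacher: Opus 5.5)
Your Steps~2 and~3 match the paper. Your height $-h$ is the paper's $a=c/(1-c)$. The map $c\mapsto c/(1-c)$ is increasing, continuous up to $c=-1$, and sends $-1/(m-1)$ to $-1/m$, so stacking from $\delta_k\nearrow\pi$ gives stress-carrying values increasing to $\zeta_{m-1}$, and zero is excluded by the gap $\zeta_m$.

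The genuine gap is Step~1. It carries all of the finiteness claim, and none of your three sketches proves it.
\begin{itemize}
\item A Carath\'eodory subset of at most $m+2$ points with $0$ in its hull is in general not stressed. It is therefore not a stationary realization of $D$, so it bounds nothing.
\item The limiting-measure argument cannot work as stated. Weak limits of the (stressed-degree weighted) support measures keep only the barycenter identity $\sum_i h_ix_i=0$. Spherical Jung applied to that yields only $D\geq\zeta_m$, which is below $\zeta_{m-1}$.
\item Applying Jung to all neighbor directions at one vertex, on the full unit sphere $\Sph^{m-1}$ of $T_y\Sph^m$, again yields only $\zeta_m$.
\end{itemize}
The vertexwise balance is what must produce $\zeta_{m-1}$, and your argument discards it. It never says where the threshold $\zeta_{m-1}$ actually enters.

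The missing idea is \cref{lem:subcritical-held-separation}: the vertices of positive stressed degree are uniformly separated. Work with $S=\{y:h_y>0\}$, which keeps both the stress and the diameter; a ``minimal'' subconfiguration is not what you want.
\begin{itemize}
\item At $y\in S$, write each positive-weight neighbor as $z=\cos D\,y+\sin D\,u_z$. The bound $d(z,z')\le D$ gives $u_z\cdot u_{z'}\geq\cos D/(1+\cos D)\geq-\gamma_b$, where $\gamma_b=-\cos b/(1+\cos b)$.
\item Hence every convex combination of at most $m$ of the $u_z$ has norm at least $\rho$, where $\rho^2=(1-(m-1)\gamma_b)/m$. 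The hypothesis $b<\zeta_{m-1}$ is exactly the condition $\rho>0$; this is a Jung-type bound one dimension lower, in a hyperplane of $T_y\Sph^m$.
\item Balance puts $0$ in $\conv\{u_z\}$. Carath\'eodory on the faces then shows this hull contains the closed $\rho$-ball. So every unit $v\in T_y\Sph^m$ has some neighbor with $v\cdot u_z\le-\rho$.
\item If $y'\in S$ lies at distance $r$ from $y$ in direction $v$, then $d(y',z)\le D$ forces $\tan(r/2)\geq-\rho\tan b>0$. This is a separation depending only on $(m,b)$, and cap packing then bounds $|S|$.
\end{itemize}

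Your contradiction idea can be salvaged at this local level. Suppose stressed vertices collide. Keep at most $m+1$ balancing neighbor directions at one of them by Carath\'eodory, and pass to limits. The directions with positive limiting weight then balance inside the hyperplane orthogonal to the collision direction. Jung in $\Sph^{m-2}$ forces the limiting diameter to be at least $\zeta_{m-1}$, a contradiction.

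With this lemma, the rest of your Step~1 goes through as in \cref{prop:bounded-realizing-supports}: repetition places the values in $\Sigma^{\specC}_{N_0}(\Sph^m)$, and analytic finiteness finishes.
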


The proof of Theorem~\ref{thm:first-accumulation-spheres-intro},
given in \hyperref[proof:first-accumulation-spheres]{Section~\ref*{subsec:first-accumulation-point}},
establishes local finiteness by reducing to the fixed-label finiteness of
Theorem~\ref{thm:fixed-label-clarke-sard-intro}.
For each fixed \(b<\zeta_{m-1}\), we show that every critical diameter
below that bound can be realized by a uniformly bounded number of
points.  Retaining the endpoints of positive-weight stress
edges gives a realizing support whose points are uniformly separated by
\cref{lem:subcritical-held-separation}.  Spherical packing then bounds
their number, as in \cref{prop:bounded-realizing-supports}.  All these values lie in
one finite fixed-label spectrum.  Regular odd polygons and successive
one-layer stacks show that the resulting threshold is sharp.

We prove that every nonzero nonnegative equilibrium stress on a finite
spherical base of diameter \(\alpha\in(\pi/2,\pi)\) lifts to its
\(k\)-layer Lov\'asz stack.  As \(k\) increases, the stack diameters
approach \(\alpha\) from below.  Consequently, every positive
nonantipodal stationary diameter on \(\Sph^m\) becomes an accumulation
point of stationary diameters on \(\Sph^{m+1}\); see
\Cref{thm:stationary-stack-lifting,thm:lovasz-stack-spectrum-transport}.

For \(A\subseteq[0,\pi]\), let \(\operatorname{Acc}(A)\) be its set of
accumulation points in \([0,\pi]\).  Define the finite-order
Cantor--Bendixson derived sets by
\(A^{(0)}:=A\) and \(A^{(\nu+1)}:=\operatorname{Acc}(A^{(\nu)})\),
\(\nu\in\mathbb Z_{\geq0}\);
compare \cite[Section~6.C]{Kechris1995}. 
These successive derivatives record accumulation points,
accumulation points of accumulation points, and so on.

\begin{maintheorem}[Accumulation from Lov\'asz stacks]
\label{thm:lovasz-stack-hierarchy-intro}
For every \(n\geq2\), every
\(\alpha\in\Sigma^{\specC}(\Sph^{n-1})\cap(0,\pi)\), and every integer
\(k\geq1\), the \(k\)-layer Lov\'asz stack over a Clarke-critical
configuration of diameter \(\alpha\) in \(\Sph^{n-1}\) is Clarke critical
in \(\Sph^n\).  Its diameter, denoted by \(\Lambda_k(\alpha)\), depends
only on \(\alpha\) and \(k\).  These stack diameters satisfy
\[
 \Lambda_1(\alpha)<\Lambda_2(\alpha)<\cdots<\alpha,
 \qquad
 \Lambda_k(\alpha)\in\Sigma^{\specC}(\Sph^n)\cap(0,\pi),
 \qquad
 \Lambda_k(\alpha)\longrightarrow\alpha.
\]
Equatorial inclusion and the stack sequences together give
\[
 \Sigma^{\specC}(\Sph^{n-1})\cap(0,\pi)
 \subseteq
 \bigl[\Sigma^{\specC}(\Sph^n)\cap(0,\pi)\bigr]
 \cap
 \operatorname{Acc}\bigl(\Sigma^{\specC}(\Sph^n)\cap(0,\pi)\bigr)
 \qquad(n\geq2).
\]
\end{maintheorem}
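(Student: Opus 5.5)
The plan is to work with the spherical characterization of Clarke criticality through equilibrium stresses. For configurations on \(\Sph^n\) with diameter in \((0,\pi)\), the diameter-realizing distances are smooth, so \cref{prop:clarke-first-order} and \cref{cor:spherical-gordan-stress} identify Clarke criticality with the existence of a nonzero nonnegative equilibrium stress on the diameter pairs. The theorem then reduces to a statement about stresses on the \(k\)-layer stack.

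\textbf{Construction of the stack.} Fix a Clarke-critical base \(y_1,\dots,y_p\in\Sph^{n-1}\) of diameter \(\alpha\in(0,\pi)\). By the first-value theorem, \(\alpha\ge\zeta_{n-1}>\pi/2\), so \(\alpha\in(\pi/2,\pi)\). Place copies \((\sin\theta_\ell\, y_i,\cos\theta_\ell)\) on \(k\) parallels at polar angles \(0<\theta_1<\cdots<\theta_k\), and adjoin the pole \(e=(0,1)\). The angles are determined by two requirements: the pole is at distance exactly \(\Lambda\) from the last layer, and consecutive layers realize distance \(\Lambda\) on base diameter pairs. Explicitly, the distance between \((\sin\theta\,y,\cos\theta)\) and \((\sin\theta'\,y',\cos\theta')\) is \(\arccos(\sin\theta\sin\theta'\cos\alpha+\cos\theta\cos\theta')\) when \(d(y,y')=\alpha\). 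Equating the within-layer, between-layer and pole distances gives a recursion \(\theta_\ell\mapsto\theta_{\ell+1}\) together with a closing condition. I would show this system has a unique solution. Its common value \(\Lambda_k(\alpha)\) depends only on \(\alpha\) and \(k\), because the recursion uses only \(\cos\alpha\). I would then check that \(\Lambda_k\) increases strictly in \(k\) and tends to \(\alpha\) as the parallels crowd toward the equator. This is a monotonicity and continuity argument on the one-parameter recursion, and it should be routine.

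\textbf{Lifting the stress.} This is the main step. Given a base stress \(w_{ij}\ge0\), assign layer-dependent multiples \(c_\ell w_{ij}\) to within-layer and consecutive-layer diameter pairs, and assign a weight \(c_0\) to the pole edges. Each vertex then has two equations to satisfy. The tangential components along \(\Sph^{n-1}\) cancel by the base equilibrium, scaled by layer. The meridional components give a linear recursion in the \(c_\ell\), since the base stress sums against radial directions produce a fixed positive multiple of \(\sum_j w_{ij}\). The pole balances by symmetry only if the stressed neighbors of \(e\) have vanishing barycentric tangent. That barycenter is proportional to \(\sum_i(\sum_j w_{ij})y_i\), and it vanishes because summing the base equilibrium with the cosine identity gives exactly this balance.

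The main obstacle is proving that the meridional recursion produces nonnegative \(c_\ell\), not all zero. I would first try induction from the pole downward. Using \(\alpha>\pi/2\) (so \(\cos\alpha<0\)), I would check the sign of each recursion coefficient. The hardest point is the last layer near the equator, where the constraint must close without forcing a negative weight. If a fixed-sign choice fails, I would fall back on Gordan's alternative and show instead that no direction decreases all stack diameter pairs to first order, using the base's corresponding property.

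\textbf{Inclusions.} Once the stack is shown to be stationary, \(\Lambda_k(\alpha)\in\Sigma^{\specC}(\Sph^n)\cap(0,\pi)\). Since \(\Lambda_k(\alpha)\to\alpha\) with all terms distinct, \(\alpha\) is an accumulation point. Equatorial inclusion \(\Sph^{n-1}\subset\Sph^n\) preserves distances and lifts the stress with zero normal component, so \(\alpha\in\Sigma^{\specC}(\Sph^n)\). Together these give the displayed inclusion.
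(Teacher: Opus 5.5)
Your outer frame matches the paper: the reduction to equilibrium stresses, the bound $\alpha\geq\zeta_{n-1}>\pi/2$, equatorial inclusion, and the final accumulation inclusion. The gap is in the construction step, which misidentifies the diameter pairs of the Lov\'asz stack. For $k\geq3$, the system you propose to solve does not describe a configuration of diameter $\Lambda$.

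\textbf{Why consecutive layers cannot realize the diameter.} The pole contact forces the last layer to lie at colatitude $\theta_k=\Lambda$. The largest distance between $L_k$ and a layer at colatitude $\theta$ is attained along base diameter pairs, with cosine $G(\theta)=\cos\Lambda\cos\theta+\cos\alpha\,\sin\Lambda\sin\theta$. Since $G(0)=\cos\Lambda$ and $G'(0)=\cos\alpha\,\sin\Lambda<0$, one has $G<\cos\Lambda$ on the whole interval from $0$ to the next solution $\theta^*$ of $G=\cos\Lambda$. Hence every layer must have colatitude at least $\theta^*$, and the only layer that can realize $\Lambda$ against $L_k$ is the \emph{top} layer $L_1$. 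If you require $L_{k-1}$ to realize $\Lambda$ against $L_k$, then $\theta_1<\theta_{k-1}=\theta^*$, and the stack has diameter larger than $\Lambda$.

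Likewise, the within-layer maximal distance at colatitude $\theta$ has cosine $\cos^2\theta+\cos\alpha\,\sin^2\theta$. Setting this equal to $\cos\Lambda$ determines $\sin^2\theta$, so within-layer diameter pairs cannot occur on every layer.

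\textbf{The actual structure.} The diameter pairs are the spokes $\{Z,v_k^\xi\}$ and the pairs $\{v_i^\xi,v_j^\eta\}$ with $p_\xi\cdot p_\eta=\cos\alpha$ and $i+j\in\{k,k+1\}$. On layers this is the zigzag path $L_0-L_k-L_1-L_{k-1}-L_2-\cdots$, ending in a within-layer loop at $L_{\lceil k/2\rceil}$. The paper obtains the colatitudes from an auxiliary walk $0=\varphi_0<\cdots<\varphi_{2k+1}=\pi$ of equal steps alternating between two meridians separated by $\pi-\alpha$. It imposes the reflection symmetry $\varphi_r+\varphi_{2k+1-r}=\pi$ and places layers at the even colatitudes. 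A shooting argument in the first step gives existence and uniqueness. A root count for each layer's distance function $g_i$ then shows that no other pair exceeds $\Lambda_k$. Your plan has no counterpart of this verification, and it is what determines which pairs must carry the stress.

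\textbf{Consequences for the stress lifting.} Your stress recursion is therefore set up on the wrong graph, and the positivity question you flag as the main obstacle is left open. The fallback to Gordan's alternative only restates the goal in dual form. With the correct graph there is no closing obstruction:
\begin{itemize}
\item The spoke weights must be $\kappa h_\xi$, proportional to the stressed degrees. A constant pole weight cannot balance the meridional components at vertices of $L_k$ with different $h_\xi$; your barycenter computation suggests you intend this.
\item Along the path from $L_k$, each successive layer has one incoming factor already fixed and one new factor determined by its meridional balance.
\item That new factor is forced positive because the meridional coefficients toward the two path neighbours have opposite signs: $\gamma_{i,k-i}>0>\gamma_{i,k+1-i}$ and $\gamma_{k1}<0$. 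This sign pattern comes from the two-root structure of $g_i$ together with $\pi/2<\Lambda_k<\alpha$.
\item The free within-layer factor at $L_{\lceil k/2\rceil}$ absorbs the last equation.
\end{itemize}

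Without the correct diameter graph and this sign pattern, your proposal does not show that the stack is Clarke critical. Your claims about uniqueness, monotonicity in $k$, and convergence of $\Lambda_k$ also concern a recursion that is not the Lov\'asz stack.
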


The \hyperref[proof:lovasz-stack-hierarchy]{proof in
Section~\ref*{subsec:stress-lifting-spectral-transport}} constructs the
stresses and the approaching sequences.  Iterating the
stack construction gives
\[
 \zeta_{n-\nu}\in[\Sigma^{\specC}(\Sph^n)]^{(\nu)}
 \qquad(0\leq\nu\leq n);
\]
the full derivative inclusions are stated in
\cref{cor:stack-derived-consequences}.  In particular, each sphere of
dimension at least two has countably infinitely many explicitly obtained
accumulation points.  For the family generated from odd polygons by
successive stacks, we compute every derived set exactly
(\cref{thm:explicit-lovasz-stack-tree}).  These equalities concern the
constructed family, not the full spectrum: the least points of the full
spectrum's higher derivatives remain open for orders \(2\leq\nu\leq n\);
see \Cref{ques:derived-zeta-hierarchy}.

\subsection*{Geometric applications and limits of the criterion}
\noindent\emph{Geometric applications.}
A one-Lipschitz retraction from \(X\) onto a metric subspace \(Y\)
transfers weak-slope stationary diameter values from \(Y\) to \(X\).
If a canonical inclusion \(\VR{Y}{a}\to\VR{Y}{b}\), \(0<a<b\), fails
to be a homotopy equivalence, the corresponding inclusion for \(X\)
also fails; see
\Cref{prop:weak-slope-nonexpansive-retract}.
Riemannian products with a circle supply examples; see
\Cref{ex:circle-product-canonical-obstructions}.
For complete connected Riemannian manifolds, smooth distance-preserving
embeddings transfer equilibrium stresses when the ambient diameter pairs
avoid the cut locus; see \Cref{prop:stress-transfer-metric-isometric-embedding}.
The equatorial embedding \(\Sph^1\hookrightarrow\Sph^2\) transfers the
circle's positive nonantipodal stationary values, although \(\Sph^2\)
does not retract onto the equator;
see \Cref{ex:equatorial-stress-without-retraction}.

Quantitative contractions provide another application: control of diameter
decrease relative to displacement excludes positive weak-slope stationary
diameter values.  Radial geodesic contractions satisfy the local criterion
on \(\R\)-trees and \(\operatorname{CAT}(0)\) spaces, without a boundedness
assumption; see \Cref{prop:uniform-contraction-no-stationarity,%
prop:busemann-contraction-no-stationarity,rem:metric-tree-boundedness}.
For comparison, Hausmann's crushing theorem concerns a deformation
retraction onto a subspace during which pairwise distances never increase.
It makes the inclusion of the subspace's Vietoris--Rips complex into the
ambient complex a homotopy equivalence at every positive scale
\cite[Proposition~2.2]{Hausmann1995}, but imposes no quantitative relation
between diameter decrease and displacement; see
\Cref{rem:hausmann-crushing-comparison}.

\smallskip
\noindent\emph{Limitations.}
A stationary diameter value
need not mark a Vietoris--Rips transition.  For the snowflaked interval
\(X=([0,1],|x-y|^\alpha)\), \(0<\alpha<1\), one has
\(\Sigma_N^{\specWS}(X)=[0,1]\) for every \(N\geq2\), although all
positive-scale Vietoris--Rips complexes are contractible and all canonical
inclusions between them are homotopy equivalences; see
\Cref{ex:snowflake-stationary-without-transition}.
This space admits a crushing onto a point, so crushing alone does not
exclude weak-slope stationarity.  A different limitation occurs on spaces
with no nonconstant continuous paths: the weak slope of every continuous
function vanishes; see \Cref{prop:path-rigidity-weak-slope}.

\subsection*{Relation to earlier work}
Weak slope and its deformation theory originate in nonsmooth
critical-point theory
\mbox{\cite{DegiovanniMarzocchi1994,Katriel1994,Corvellec1999}}. Our deformation argument builds on Corvellec's metric deformation
theorem \cite{Corvellec1999} and the comparison of
Lim--M\'emoli--Okutan between Vietoris--Rips complexes and metric
neighborhoods, which respects scale inclusions
\cite[Corollary~4.3]{LimMemoliOkutan2024}.
\Cref{prop:natural-configuration-resolution} supplies the comparison
with ordered diameter sublevels.
This comparison allows the
realization theorems of Ebert--Randal-Williams
\cite[Theorem~2.2 and Lemma~2.4]{EbertRandalWilliams2019} to yield the
criterion for the connectivity of Vietoris--Rips inclusions in
\cref{thm:configurationwise-chamber}.
The contribution here is to
connect deformation and connectivity information on labelled
configuration spaces to the canonical Vietoris--Rips scale maps.

Katz used diameter-decreasing deformations to compute particular
Kuratowski-neighborhood homotopy types for \(\Sph^1\), \(\Sph^2\),
and complex projective spaces
\cite[Theorems~1.1, 7.1, and~8.1]{Katz1991}.
\Cref{thm:metric-stationary-chamber-intro} instead gives a
criterion for homotopy equivalence for
arbitrary compact metric spaces, though it does not identify the underlying homotopy type. On a Riemannian manifold, at positive diameter and whenever the distances
realizing the diameter are smooth, \Cref{prop:clarke-first-order} identifies
weak-slope stationarity and Clarke criticality with the absence of a common
first-order descent direction; compare the gradient criteria in
\cite[Section~4.2]{Katz1991} and, for \(\mathbb S^2\),
\cite[Definition~3.1 and Remark~3.2]{KatzMemoliWang2023}.
A different approach is Zaremsky's generalized
Bestvina--Brady Morse theory, whose descending links encode
diameter-lowering deletions and diameter-preserving insertions of
vertices in the closed Vietoris--Rips filtration
\cite[Theorem~3.5]{Zaremsky2022}; its geometric applications also
use suitable dense subspaces \cite[Corollary~4.8]{Zaremsky2022}.
Our criterion instead uses continuous deformations of labelled configurations that decrease their diameter, assembled through
\cref{thm:configurationwise-chamber}.  The mechanisms are
complementary: on discrete spaces weak slope vanishes identically
by \cref{prop:path-rigidity-weak-slope}, whereas descending-link
criteria can still yield homotopy equivalences.

Barbet--Dambrine--Daniilidis--Rifford establish a nonsmooth Sard theorem
for functions obtained by minimizing a smooth family over a compact
parameter manifold \cite[Theorem~1]{BarbetEtAl2016}.
\Cref{lem:marginal-maximum-sard} adapts their critical-value reduction
to finite maxima of such minima.  Applied to diameter, this yields
Hausdorff dimension zero for the fixed-label Clarke spectrum on a
closed smooth Riemannian manifold, without an off-cut-locus or
definability assumption; see \Cref{thm:fixed-label-clarke-spectrum}.
The analytic finiteness result instead applies the definable
Clarke--Sard theorem of Bolte--Daniilidis--Lewis--Shiota
\cite[Corollary~9(ii)]{BolteEtAl2007}; see
\Cref{cor:analytic-definable-fixed-label-finiteness}.
The Cantor example in \cref{prop:cantor-clarke-spectrum} shows that
smoothness alone does not imply countability.

The spherical specialization of \cref{cor:convexity-radius-rips-stability},
stated in \cref{cor:optimal-spherical-initial-chamber}, gives an alternative
proof of the optimal Hausmann theorem for spheres, established by
Lim--M\'emoli--Okutan
\cite[Theorem~7.1 and Section~A.4.1]{LimMemoliOkutan2024}.
On the circle, Adamaszek--Adams determine the homotopy types and
canonical scale maps \cite[Theorem~7.4]{AdamaszekAdams2017}; see
\cref{cor:circle-canonical-rips-maps} for the comparison with stationary diameters.
\Cref{cor:fixed-label-circle-spectra} identifies the corresponding
nonantipodal critical scales through both stationarity notions and
determines their least realizing label numbers;
\cref{cor:circle-connectivity-sharpness} then shows that the
finite-label connectivity bound is sharp.
For persistence, Chazal--de Silva--Oudot establish finite-rank maps
for totally bounded metric spaces
\cite[Proposition~5.1]{ChazalDeSilvaOudot2014}.
\Cref{prop:homological-change-stationarity} additionally locates
homological critical scales in each degree \(p\geq1\) in the
weak-slope spectrum with \(p+2\) labels.  On closed connected
real-analytic Riemannian manifolds, combining this with fixed-label
finiteness gives finitely many bars in each fixed positive degree;
see \Cref{cor:degreewise-persistence-finiteness}.

\label{sec:related-walk}
The spherical layer construction is classical: Lov\'asz formulates
it for strongly self-dual bases
\cite[proof of Theorem~1]{Lovasz1983}, while Katz treats triangular
layers on \(\Sph^2\) and records the regular-simplex extension to
higher dimensions in a remark
\cite[Section~1.1 and Remark~1.2, pp.~120--121]{Katz1989}.
The two descriptions use different geometric coordinates.  We prove
an explicit correspondence between Lov\'asz's description and our
extension of Katz's spherical-coordinate description to arbitrary
finite spherical bases; see
\Cref{prop:ellipse-stack-walk-equivalence}.
The relevant extension here is preservation of a nonzero nonnegative
equilibrium stress on any finite spherical base of diameter in
\((\pi/2,\pi)\), proved in
\cref{thm:stationary-stack-lifting}.
\Cref{prop:canonical-lovasz-transform} controls the diameter
transform, and \cref{thm:lovasz-stack-spectrum-transport} shows that every positive
nonantipodal stationary value becomes an accumulation point of stationary
values in the next dimension.
\Cref{cor:stack-derived-consequences} gives inclusions for the
finite-order derived sets of the full spectrum, whereas
\cref{thm:explicit-lovasz-stack-tree} computes the derived sets of
the explicit iterated-stack family.

For the two-sphere, Katz proves local minimality of finite subsets
critical in his Hausdorff-perturbation sense below diameter
\(2\pi/3\), and finiteness up to congruence below each fixed bound
\(d<2\pi/3\) \cite[Theorem~2 and Corollary~1]{Katz1989}.
The value \(\zeta_1 = 2\pi/3\) also appears as the first accumulation point of
critical diameter values on \(\Sph^2\) in
\cite[Theorem~3.10]{KatzMemoliWang2023}.
Still for the two-sphere, the same preprint describes configurations consisting of a rescaled
regular odd polygon on a parallel together with a pole, whose diameters
increase to \(2\pi/3\)
\cite[Section~2.1 and Example~2.7]{KatzMemoliWang2023}.
In our terminology, these are one-layer Lov\'asz stacks over regular
odd polygons---the two-dimensional case of
\cref{thm:stationary-iterated-one-layer-stacks}.
Beyond the two-sphere case, \Cref{thm:first-accumulation-spheres} establishes the first-accumulation
formula \(\min\operatorname{Acc}(\Sigma^{\specC}(\Sph^m))=\zeta_{m-1}\)
in every dimension $m$, with sharpness supplied by the iterated one-layer
constructions of \cref{thm:stationary-iterated-one-layer-stacks}.
Its proof uses
\cref{lem:subcritical-held-separation,prop:bounded-realizing-supports}
to bound the size of realizing supports, then invokes fixed-label
finiteness from \cref{cor:analytic-definable-fixed-label-finiteness};
the one-layer families of
\cref{thm:stationary-iterated-one-layer-stacks} prove sharpness.

\subsection*{Organization}
\Cref{sec:stationary-rips-chambers} proves
\cref{thm:metric-stationary-chamber-intro} by passing from ordered diameter
sublevels to the canonical Vietoris--Rips map.
\Cref{sec:clarke-spectrum} proves
\cref{thm:fixed-label-clarke-sard-intro} and uses intervals
without Clarke critical diameter values to obtain deformations.
The spherical minimum and circle spectrum
in \cref{sec:spherical-diameter-spectra} supply the starting values for
the accumulation and stack theorems in \cref{sec:spherical-stacks}
(\cref{thm:first-accumulation-spheres-intro,thm:lovasz-stack-hierarchy-intro}).
\Cref{sec:spectral-mechanisms} studies how retractions, embeddings, and
contractions affect stationarity and canonical maps.
\Cref{app:weak-clarke-comparison} contains the technical weak-slope proofs,
the smooth counterexample construction, and the antipodal argument;
\cref{app:ellipse-stack-walk-coordinate-bridge,app:stack-walk-shooting-convergence}
establish the coordinate and convergence results used by the stack construction.

\subsection*{Notation and conventions}
\label{subsec:notation-conventions}
\Cref{tab:notation} lists notation shared by several sections.
Our sphere normalization and the index in \(\zeta_m\) agree with
\cite[Section~1.2, p.~3741]{LimMemoliSmith2023}.
A subscript \(N\) on a diameter spectrum fixes the number of ordered
labels, including repetitions.  Omitting \(N\) means the union over
\(N\geq2\), with no bound on the number of labels.

\begingroup
\small
\renewcommand{\arraystretch}{1.12}
\begin{longtable}{@{}>{\raggedright\arraybackslash}p{0.29\textwidth}
                   >{\raggedright\arraybackslash}p{0.67\textwidth}@{}}
\caption{Notation used across sections.}\label{tab:notation}\\
\toprule
Notation & Meaning\\
\midrule
\endfirsthead
\multicolumn{2}{@{}l}{\textit{Table~\thetable\ continued}}\\
\toprule
Notation & Meaning\\
\midrule
\endhead
\bottomrule
\endlastfoot
\(\diam_N(x)\) & Diameter of an ordered \(N\)-tuple; coordinates may repeat and \(\diam_1=0\).\\
\(P\) & Finite point configuration; when \(P=\{x_1,\ldots,x_N\}\) is the distinct support of a labelled tuple, \(|P|\leq N\).\\
\(d_{X,\infty}\) & Maximum product metric on \(X^N\).\\
\(d_{M,2}\) & Riemannian product distance on \(M^N\).\\
\(\VR{X}{r}\) & Open Vietoris--Rips complex at scale \(r\).\\
\(\DiamConf_N(X;r)\) & Sublevel \(\{x\in X^N:\diam_N(x)<r\}\).\\
\(\Delta_1(X)\), \(d_\infty\) & Kat\v{e}tov functions on \(X\), with the uniform metric.\\
\(|\diff f|\), \(\partial^{\mathrm C}f\) & Weak slope and Clarke subdifferential.\\
\(\Sigma_N^{\specWS}(X)\), \(\Sigma^{\specWS}(X)\) & Diameters of weak-slope stationary \(N\)-tuples, and their union over \(N\geq2\).\\
\(\Sigma_N^{\specC}(M)\), \(\Sigma^{\specC}(M)\) & Diameters of Clarke-critical \(N\)-tuples, and their union over \(N\geq2\).\\
\(\Sph^m\), \(d_m\) & Unit round sphere with intrinsic distance \(d_m(x,x')=\arccos(x\cdot x')\).\\
\(\zeta_j\), \(\delta_k\) & Simplex and circle values: \(\zeta_j=\arccos(-1/(j+1))\), \(\delta_k=2\pi k/(2k+1)\).\\
\(\Stack_k(P)\), \(\Lambda_k(\alpha)\) & The \(k\)-layer Lov\'asz stack and its diameter when \(\diam(P)=\alpha\).\\
\(\operatorname{Acc}(A)\), \(A^{(\nu)}\) & Accumulation points and iterated derived sets, taken in \([0,\pi]\) for spherical spectra.\\
\(\mathcal T_n\) & Diameters obtained by iterating the stack construction to dimension \(n\).\\
\end{longtable}
\endgroup

\subsection*{Acknowledgements}
F.~M\'emoli gratefully acknowledges support from the National Science Foundation through grants CCF-2523653 and DMS-2524362.
F.~M\'emoli thanks the Sydney Mathematical Research Institute
for its hospitality during a visit in which part of this work was developed.
During the development of this work, the authors used ChatGPT Pro and Codex to assist with literature searches, the exploration and testing of ideas, and the auditing of proofs.
The authors checked the mathematical claims, proofs, and references and take
full responsibility for the contents of the paper.

\section{{Metric deformation and Vietoris--Rips inclusions}}
\label{sec:stationary-rips-chambers}
For the proof of \cref{thm:metric-stationary-chamber-intro}, fix a
nonempty compact metric space \((X,d_X)\) and scales \(0<r<s\)
satisfying its hypotheses.  We must pass from deformations of finite
configurations to a comparison of Vietoris--Rips complexes. We work with tuples in \(X^N\), allowing repetitions, so \(N\)
counts labels rather than distinct points.  For each intermediate scale
\(r<u<s\), \cref{thm:fixed-cardinality-metric-descent} supplies the
fixed-label deformations between scales \(r\) and \(u\). 

The canonical sublevel inclusions commute with deletion and repetition
of labels; we assemble them using fat realization.
To compare this realization with the Vietoris--Rips complex, we use the
neighborhood model of Lim--M\'emoli--Okutan
\cite[Section~2.2 and Corollary~4.3]{LimMemoliOkutan2024} in the
space \(\Delta_1(X)\) of Kat\v{e}tov functions on \(X\).  The ball cover of an open neighborhood has
the Vietoris--Rips complex as its nerve.  An incidence space records a
neighborhood point together with labels whose balls contain it, and
relates the same neighborhood to the ordered diameter sublevels.
\Cref{prop:natural-configuration-resolution} constructs the
incidence projections and proves that they commute with increasing scale.
The homotopy equivalences from neighborhoods to Vietoris--Rips complexes
in \cref{cor:katetov-neighborhood-model} commute with scale up to
homotopy.  These are the maps in \cref{fig:configuration-assembly-overview}.
They give the connectivity bounds in \cref{thm:configurationwise-chamber}.
The compact-exhaustion argument in \cref{cor:half-open-fixed-cardinality}
permits the upper endpoint \(s\).

\begin{figure}[H]
\centering
\begingroup
\definecolor{assemblyaccent}{HTML}{000000}
\begin{tikzpicture}[
 x=1cm,y=1cm,>={Latex[length=1.5mm]},
 space/.style={font=\small,inner xsep=2pt,inner ysep=3pt},
 comparison/.style={->,draw=black!60,line width=.45pt},
 focus/.style={->,draw=assemblyaccent,line width=.8pt},
 heading/.style={font=\scriptsize,align=center,text=black!75},
 note/.style={heading,anchor=north,text width=2.65cm,inner sep=0pt},
 arrowlabel/.style={font=\scriptsize,fill=white,inner sep=1pt}
]
\node[heading] at (0,.85) {Vietoris--Rips\\complex};
\node[heading] at (3.1,.85) {Open\\neighborhood};
\node[heading] at (6.4,.85) {Incidence\\resolution};
\node[heading] at (10.2,.85) {Ordered diameter\\sublevels};
\foreach \assemblyscale/\assemblyy in {r/0,u/-1.55}{
 \node[space] (R-\assemblyscale) at (0,\assemblyy)
  {\(\bigl|\VR{X}{\assemblyscale}\bigr|\)};
 \node[space] (E-\assemblyscale) at (3.1,\assemblyy)
  {\(B_{\assemblyscale/2}(X)\)};
 \node[space] (N-\assemblyscale) at (6.4,\assemblyy)
  {\(\|\mathcal N_\bullet(X;\assemblyscale)\|\)};
 \node[space] (C-\assemblyscale) at (10.2,\assemblyy)
  {\(\|\DiamConf_\bullet(X;\assemblyscale)\|\)};
 \draw[comparison] (E-\assemblyscale.west)--
  node[arrowlabel,above] {\(h_{\assemblyscale,X}\)}
  node[arrowlabel,below] {\(\simeq\)} (R-\assemblyscale.east);
 \draw[comparison] (N-\assemblyscale.west)--
  node[arrowlabel,above] {\(\simeq\)} (E-\assemblyscale.east);
 \draw[focus] (N-\assemblyscale.east)--
  node[arrowlabel,above] {\(\sim_{\mathrm w}\)} (C-\assemblyscale.west);
}
\foreach \assemblycolumn in {E,N}{
 \draw[comparison] (\assemblycolumn-r.south)--(\assemblycolumn-u.north);
}
\draw[focus] (R-r.south)--
 node[arrowlabel,left] {\(\simeq\)} (R-u.north);
\draw[focus] (C-r.south)--
 node[arrowlabel,left] {\(\sim_{\mathrm w}\)} (C-u.north);
\node[note] (E-note) at (3.1,-2.12)
 {ball cover has\\Vietoris--Rips nerve};
\node[note] (N-note) at (6.4,-2.12)
 {labels carry the\\metric topology};
\node[note] (C-note) at (10.2,-2.12)
 {degree \(k\):\\\(\DiamConf_{k+1}(X;t)\)};
\coordinate (assembly-footer) at ([yshift=-12pt]C-note.south);
\draw[focus,densely dashed] (C-u.center |- assembly-footer)--
 node[arrowlabel,below=5pt,text=assemblyaccent,fill=none]
  {equivalence transfers through the comparison maps}
 (R-u.center |- assembly-footer);
\end{tikzpicture}
\endgroup
\caption{The comparisons at \(r<u<s\), under the hypotheses of
\cref{thm:metric-stationary-chamber-intro}.  Vertical maps are the
canonical scale inclusions.  The rightmost one is a weak homotopy
equivalence by deformation for each number of labels and fat realization.
The leftmost square commutes up to homotopy by
\cite[Corollary~4.3]{LimMemoliOkutan2024}; the other squares commute
strictly.  Together they transfer the equivalence to the
canonical Vietoris--Rips inclusion.
Here \(B_{t/2}(X)\) is the open neighborhood in \(\Delta_1(X)\),
and double bars denote fat realization.
The symbols \(\simeq\) and \(\sim_{\mathrm w}\) distinguish homotopy
equivalences from weak homotopy equivalences.  The passage to the strict
endpoint \(s\) uses \cref{cor:half-open-fixed-cardinality}.}
\label{fig:configuration-assembly-overview}
\end{figure}

\subsection{Deforming ordered diameter sublevels}
\label{sec:diameter-configuration-sublevels}
Let \((X,d_X)\) be a nonempty metric space.  For \(N\geq1\), equip \(X^N\)
with the maximum product metric
\[
 d_{X,\infty}(x,x'):=\max_{1\leq i\leq N}d_X(x_i,x'_i),
\]
and recall
\begin{equation}
 \diam_N(x):=\max_{1\leq i<j\leq N}d_X(x_i,x_j),
 \qquad
 \DiamConf_N(X;t):=\{x\in X^N:\diam_N(x)<t\}.
 \label{eq:metric-labelled-diameter}
\end{equation}
We set \(\diam_1=0\), so for \(t>0\)
\(\DiamConf_1(X;t)=X\).  These are sublevels of diameter on ordered
tuples.  Coordinates may coincide, unlike in the usual ordered
configuration space \cite[Section~2]{BaryshnikovBubenikKahle2014}; thus
\(\{x_1,\ldots,x_N\}\) may contain fewer than \(N\) points.
Deleting coordinates cannot increase diameter; repeating or permuting
coordinates preserves it.
They are open subspaces of \(X^N\), since
\[
 |\diam_N(x)-\diam_N(x')|\leq2d_{X,\infty}(x,x').
\]

A deformation must decrease diameter continuously for all nearby tuples,
not just for one configuration.  Weak slope expresses this requirement
while controlling the displacement of the points.  We use the
continuous-function definition in \cite[Definition~2.1]{Corvellec1999}.

\begin{definition}[Weak slope~\cite{DegiovanniMarzocchi1994,Katriel1994}]
\label{def:weak-slope}
Let \(f:Z\to\R\) be continuous on a metric space \((Z,d_Z)\).
The \emph{weak slope} \(|\diff f|(z)\) is the supremum of the numbers
\(\sigma\geq0\) for which there are \(\delta>0\) and a continuous map
\[
 H:B_\delta(z)\times[0,\delta]\longrightarrow Z
\]
such that, for every \(y\in B_\delta(z)\) and \(u\in[0,\delta]\),
\[
 d_Z(H(y,u),y)\leq u,
 \qquad
 f(H(y,u))\leq f(y)-\sigma u.
\]
We call \(z\) \emph{weak-slope stationary} if \(|\diff f|(z)=0\).
\end{definition}

The displacement bound gives \(H(y,0)=y\).  A positive admissible
\(\sigma\) therefore supplies a homotopy near \(z\) that decreases \(f\)
at rate at least \(\sigma\), while moving each point by at most the time parameter.

\begin{definition}[Weak-slope diameter spectra]
\label{def:weak-slope-diameter-spectra}
For \(N\geq1\), set
\begin{equation}
 \Sigma_N^{\specWS}(X)
 :=\{\diam_N(x):x\in X^N,\ |\diff\diam_N|(x)=0\},
 \qquad
 \Sigma^{\specWS}(X):=\bigcup_{N\geq2}\Sigma_N^{\specWS}(X).
 \label{eq:weak-slope-diameter-spectra}
\end{equation}
The weak slope is computed with \(d_{X,\infty}\).  Every constant tuple
is stationary, so these spectra include zero. We refer to an element in $\Sigma^{\specWS}(X)$ as a \emph{weak-slope critical value of diameter}.
\end{definition}

Corvellec's theorem converts a uniform positive weak-slope bound into a
homotopy between sublevels.  We use the following form, in which the
homotopy never increases the function and moves the larger sublevel into
the smaller one.

\begin{theorem}[Corvellec's deformation theorem: the form used here
{\cite[Theorem~2.4]{Corvellec1999}}]
\label{thm:corvellec-noncritical-interval}
Let \((Z,d_Z)\) be complete, let \(f:Z\to\R\) be continuous, and let
\(\alpha<\beta\) and \(\delta,\sigma>0\).  Suppose
\[
 \alpha-\delta\leq f(z)\leq\beta+\delta
 \quad\Longrightarrow\quad |\diff f|(z)>\sigma.
\]
There is a continuous homotopy \(\eta:Z\times[0,1]\to Z\) with
\(\eta(-,0)=\operatorname{id}_Z\) such that
\[
 f(\eta(z,u))\leq f(z)\qquad(z\in Z,\ 0\leq u\leq1),
\]
and its time-one map sends \(f^{-1}(({-}\infty,\beta])\) into
\(f^{-1}(({-}\infty,\alpha])\).
\end{theorem}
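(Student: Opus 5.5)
Since this is the cited form of \cite[Theorem~2.4]{Corvellec1999}, the plan is to reproduce the standard gluing argument from nonsmooth critical-point theory rather than derive it from anything earlier in the paper. Put \(S:=f^{-1}([\alpha-\delta,\beta+\delta])\). By hypothesis, each \(z\in S\) has some admissible rate \(\sigma_z>\sigma\). So \cref{def:weak-slope} gives a radius \(\delta_z>0\) and a local deformation
\[
 H_z:B_{\delta_z}(z)\times[0,\delta_z]\to Z
\]
that moves points by at most the time parameter and lowers \(f\) at rate at least \(\sigma\).

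The first step is to glue these local deformations into one continuous map \(\eta_1:Z\times[0,1]\to Z\) with three properties: it is the identity off a neighbourhood of \(S\); it never increases \(f\); and on a slightly smaller strip \(S'\) (defined by shrinking \(\delta\) to \(\delta/2\)) it lowers \(f\) by a definite amount. Since \(Z\) is metric, hence paracompact, I will take a locally finite refinement \(\{U_\lambda\}\) of the cover \(\{B_{\delta_z/2}(z)\}\) together with a subordinate Lipschitz partition of unity \(\{\varphi_\lambda\}\). I will then well-order the index set, which I expect to be countable locally. There is no linear structure in which to average, so instead I apply the local deformations successively: for each \(\lambda\), in order, run \(H_{z_\lambda}\) for time \(\tau\,\varphi_\lambda(\cdot)\cdot\chi(\cdot)\). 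Here \(\chi\) is a cutoff equal to \(1\) on \(S'\) and \(0\) off \(S\), and \(\tau\) is small.

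The displacement bound \(d(H(y,u),y)\le u\) keeps every intermediate point inside the balls where the next \(H\) is defined, provided each step stays within half the radius. Local finiteness makes the composition a finite composition near each point, and hence continuous. The monotonicity \(f(H(y,u))\le f(y)-\sigma u\) makes the composite non-increasing in \(f\). I expect \emph{this gluing step to be the main obstacle}. The difficulty is making the successive compositions well defined and jointly continuous in \((z,u)\) when the local radii \(\delta_z\) are not bounded below. My first attempt will be Corvellec's device of letting the step size depend continuously on the point, through \(\varphi_\lambda\) and a Lipschitz lower bound for the admissible radius, and to verify by induction along the well-order.

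The second step turns small-step descent into the full drop from level \(\beta\) to level \(\alpha\). Once the point-dependent step sizes are in place, each application of \(\eta_1\) lowers \(f\) on \(S'\) by at least \(\sigma\) times a positive, point-dependent time. I will concatenate such steps, reparametrizing the total time into \([0,1]\), and stop a point once its value falls to \(\alpha\) (that is, remove it from \(S'\)). At this point completeness of \(Z\) enters: if the step lengths shrink, the trajectory is Cauchy, because its total displacement is bounded by the total decrease \((\beta-\alpha+\delta)/\sigma\). The limit therefore exists, and continuity together with the strip hypothesis excludes accumulation at a level above \(\alpha\). So the time-one map sends \(f^{-1}((-\infty,\beta])\) into \(f^{-1}((-\infty,\alpha])\).

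Finally, the cutoff \(\chi\) guarantees \(\eta(-,0)=\mathrm{id}_Z\) and \(f(\eta(z,u))\le f(z)\) for all \(z\in Z\), including points outside the strip, which completes the stated form.
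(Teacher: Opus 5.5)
The paper gives no argument of its own here; it imports Corvellec's theorem. Your outline is the standard architecture behind that result: glue the local weak-slope deformations into one descent step, then iterate using completeness. As written, though, it has two genuine gaps.

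The first is the one you flag, and the remedy you sketch (a step size tied to a lower bound for the admissible radius) does not by itself control it. In a well-ordered pass through the indices, local finiteness at the starting point \(y\) helps only if the moving point never leaves a ball \(B_{r(y)}(y)\) that meets finitely many supports, say \(n(y)\) of them. Otherwise each \(H_{z_\lambda}\) can push the point into the support of a later index, and the transfinite composition need not stabilize or be continuous. In addition, the sum of the \(\varphi_\lambda\) evaluated at the successive intermediate points is what measures the decrease, and it need not stay near \(1\). What controls both is the local count \(n(y)\) and a local Lipschitz constant \(L(y)\) of the relevant \(\varphi_\lambda\). Take a continuous budget \(\tau(y)>0\) that, near each \(y\), is at most
\[
 \min\Bigl\{\frac{r(y)}{2n(y)},\ \frac{1}{2n(y)^2L(y)},\ \delta_{z_\lambda}\text{ for the }n(y)\text{ relevant }\lambda\Bigr\}.
\]
Such a function exists because these bounds are locally positive. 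Near \(y\), the glued step is then one fixed finite composition, hence continuous. It moves a point by at most \(n(y)\tau(y)<r(y)\), and by at most its decrease divided by \(\sigma\). It lowers \(f\) by at least \(\sigma\tau(y)/2\) while the point stays in \(S'\).

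The second gap is in the iteration. Your Cauchy argument is pointwise: it shows that each orbit reaches \(f^{-1}((-\infty,\alpha])\) after finitely many glued steps. That number is unbounded on \(f^{-1}((-\infty,\beta])\). So ``concatenate and reparametrize into \([0,1]\)'' does not yet produce a continuous \(\eta\) with the required time-one map. A hard rule stopping points at level \(\alpha\) would also break continuity, so only the continuous cutoff \(\chi\) can be used.

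One fix is to parametrize the concatenation by accumulated budget, allowing a partial last step. The decrease of at least \(\sigma/2\) per unit budget on \(S'\) then gives the uniform horizon \(T=2(\beta-\alpha)/\sigma\). Your completeness argument, using displacement at most \(\tfrac32\) of the budget and continuity of \(\tau\) at a would-be limit point, shows that no orbit stalls before budget \(T\). Joint continuity holds because a compact orbit segment uses finitely many steps with budgets bounded below, and nearby orbits use the same number of steps with nearby budgets. Then set \(\eta(z,u):=\psi(z,uT)\), where \(\psi\) is the concatenated flow.

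Finally, the paper applies the theorem only on the compact space \(X^N\) (\cref{thm:fixed-cardinality-metric-descent}). There a finite subcover of the strip gives one uniform step, a finite composition, and a fixed number of iterations, so none of these difficulties arises.
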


For fixed \(N\), compactness of \(X^N\) and the following lower
semicontinuity property give the required uniform bound.

\begin{lemma}[Lower semicontinuity of the weak slope
{\cite[p.~265]{Corvellec1999}}]
\label{lem:weak-slope-lsc}
For a continuous function \(f:Z\to\R\), the function
\(z\mapsto|\diff f|(z)\) is lower semicontinuous.
\end{lemma}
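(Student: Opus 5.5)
The plan is a direct argument from \cref{def:weak-slope}: fix \(z\in Z\) and a number \(0<\tau<|\diff f|(z)\); if no such \(\tau\) exists, then \(|\diff f|(z)=0\) and there is nothing to show. We must produce a neighborhood \(U\) of \(z\) such that \(|\diff f|(y)\geq\tau\) for every \(y\in U\). Since \(\tau<|\diff f|(z)\) and the weak slope is a supremum, there is an admissible \(\sigma\geq\tau\) at \(z\). This means there are \(\delta>0\) and a continuous map \(H:B_\delta(z)\times[0,\delta]\to Z\) with \(d_Z(H(w,u),w)\leq u\) and \(f(H(w,u))\leq f(w)-\sigma u\) for all \(w\in B_\delta(z)\) and \(u\in[0,\delta]\).

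The key step is to reuse \(H\) at nearby points. Take \(U:=B_{\delta/2}(z)\) and put \(\delta':=\delta/2\). For \(y\in U\), the triangle inequality gives \(B_{\delta'}(y)\subseteq B_\delta(z)\). The restriction
\[
 H':=H|_{B_{\delta'}(y)\times[0,\delta']}:B_{\delta'}(y)\times[0,\delta']\longrightarrow Z
\]
is continuous. It still satisfies \(d_Z(H'(w,u),w)\leq u\) and \(f(H'(w,u))\leq f(w)-\sigma u\) for every \(w\in B_{\delta'}(y)\) and \(u\in[0,\delta']\), because these inequalities are pointwise and the new domain is a subset of the old one. So \(\sigma\) is admissible at \(y\) with constant \(\delta'\), and \(|\diff f|(y)\geq\sigma\geq\tau\).

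Since \(\tau<|\diff f|(z)\) was arbitrary, this gives \(\liminf_{y\to z}|\diff f|(y)\geq|\diff f|(z)\), which is lower semicontinuity. The only point needing care is that \cref{def:weak-slope} quantifies the deformation uniformly over a ball rather than at the single point \(z\). That uniformity is what makes admissibility an open condition: the ball around \(y\) is a sub-ball of the one around \(z\), and the time interval only shrinks. Continuity of \(f\) is not used beyond its appearance in the definition, and neither is completeness. I do not expect a genuine obstacle.
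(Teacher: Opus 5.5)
Your proposal is correct and follows essentially the same approach as the paper: restricting the deformation witnessed on \(B_\delta(z)\times[0,\delta]\) to \(B_{\delta/2}(y)\times[0,\delta/2]\) for every \(y\in B_{\delta/2}(z)\), so that each strict superlevel set of the weak slope is open. Your version just spells out the triangle-inequality inclusion and the passage from admissible rates to the \(\liminf\) formulation.
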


\begin{proof}
Any admissible rate witnessed on \(B_\delta(z)\times[0,\delta]\)
is also witnessed at every \(z'\in B_{\delta/2}(z)\), by restricting to
\(B_{\delta/2}(z')\times[0,\delta/2]\).  Thus each strict superlevel
set of the weak slope is open.
\end{proof}

\begin{proposition}[Fixed-label application of Corvellec's theorem]
\label{thm:fixed-cardinality-metric-descent}
Let \(X\) be a nonempty compact metric space, let \(N\geq1\), and let
\(0<r<s\).  If
\[
 [r,s]\cap\Sigma_N^{\specWS}(X)=\varnothing,
\]
then the inclusion
\[
 \DiamConf_N(X;r)\longrightarrow\DiamConf_N(X;s)
\]
is a homotopy equivalence.  More precisely, the larger sublevel admits a
homotopy within itself whose final map lands in the smaller sublevel and
whose restriction to the smaller sublevel stays there throughout.
\end{proposition}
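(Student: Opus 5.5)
We apply \cref{thm:corvellec-noncritical-interval} to \(Z=X^N\) with the metric \(d_{X,\infty}\) and \(f=\diam_N\). Here \(Z\) is compact, hence complete, and \(f\) is continuous (indeed \(2\)-Lipschitz). The first step is to extract a uniform slope bound. Since \([r,s]\) is compact and disjoint from \(\Sigma_N^{\specWS}(X)\), we want \(\delta>0\) such that \([r-2\delta,s+2\delta]\) also misses the spectrum. To get this, note that the spectrum \(\Sigma_N^{\specWS}(X)=f(\{|\diff f|=0\})\) is compact. By \cref{lem:weak-slope-lsc} the set \(\{|\diff f|>0\}\) is open, so the stationary set \(\{|\diff f|=0\}\) is closed in the compact space \(X^N\), hence compact, and its image under the continuous \(f\) is compact. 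Hence the distance from \([r,s]\) to the spectrum is positive.

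Next, the set \(K:=f^{-1}([r-2\delta,s+2\delta])\) is compact, and on it \(|\diff f|>0\). A positive lower semicontinuous function on a compact set attains a positive minimum, or at least is bounded below by a positive number. Choose \(\sigma\) strictly below that bound. If \(K\) is empty, any \(\sigma\) works. This verifies the hypothesis of \cref{thm:corvellec-noncritical-interval} with some \(\alpha<\beta\) satisfying \(r-\delta\le\alpha\) and \(\beta+\delta\le s+2\delta\).

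The main subtlety is the mismatch between Corvellec's closed sublevels \(f^{-1}((-\infty,\beta])\to f^{-1}((-\infty,\alpha])\) and our open sublevels \(\DiamConf_N(X;s)\) and \(\DiamConf_N(X;r)\). We take \(\beta=s\) and \(\alpha=r-\delta/2\), so the strip \([\alpha-\delta,\beta+\delta]\) lies inside \([r-2\delta,s+2\delta]\). Corvellec's homotopy \(\eta\) does not increase \(f\), so it restricts to a homotopy of \(\DiamConf_N(X;s)\) within itself, and to a homotopy of \(\DiamConf_N(X;r)\) within itself. Its time-one map sends \(\DiamConf_N(X;s)\subseteq f^{-1}((-\infty,s])\) into \(f^{-1}((-\infty,r-\delta/2])\subseteq\DiamConf_N(X;r)\). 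This gives the ``more precisely'' clause.

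Finally, let \(g:=\eta(-,1)\colon\DiamConf_N(X;s)\to\DiamConf_N(X;r)\) and let \(\iota\) be the inclusion. Then \(\iota\circ g\simeq\mathrm{id}\) via \(\eta\) on the larger sublevel, and \(g\circ\iota\simeq\mathrm{id}\) via the restriction of \(\eta\) to the smaller sublevel, which stays there throughout. So \(\iota\) is a homotopy equivalence. The case \(N=1\) is trivial, since both sublevels equal \(X\). We expect no real obstacle beyond the compactness argument for the uniform gap and the endpoint bookkeeping above.
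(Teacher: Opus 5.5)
Your proposal is correct and follows the paper's proof essentially step for step. Both arguments use the closedness of the stationary set (from lower semicontinuity) and compactness to get a strip around \([r,s]\) with a uniform positive slope bound. Both then apply Corvellec's theorem with upper level \(s\) and a lower level strictly below \(r\); the paper writes \(c<s\) with margin \(\varepsilon\), whereas you take \(\alpha=r-\delta/2\), \(\beta=s\). Finally, both restrict the diameter-nonincreasing homotopy to the two open sublevels to obtain the homotopy inverse.
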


\begin{proof}
The case \(N=1\) is the identity.  Fix \(N\geq2\).  The stationary
set is closed by \cref{lem:weak-slope-lsc}, so its diameter
image \(\Sigma_N^{\specWS}(X)\) is compact.  We may therefore choose \(0<\varepsilon<c<r\)
so that \([c-\varepsilon,s+\varepsilon]\) avoids this spectrum.
Its inverse image \(K\) under \(\diam_N\) is compact.  If \(K\)
is empty, the two sublevels are equal.  Otherwise lower semicontinuity
gives \(0<\sigma<\min_K|\diff\diam_N|\).

Apply \cref{thm:corvellec-noncritical-interval} on the complete space
\(X^N\), with levels \(c<s\) and margin \(\varepsilon\).
The homotopy never increases diameter and at time one sends every tuple
of diameter in \([c,s]\) to diameter at most \(c<r\); tuples below
\(c\) remain there.  Its time-one restriction therefore defines
\[
 g:\DiamConf_N(X;s)\longrightarrow\DiamConf_N(X;r).
\]
The homotopy stays in each of the two sublevels.  Its restrictions join
the identities to \(i g\) and \(g i\), respectively, where \(i\)
is the inclusion.  Hence \(g\) is its homotopy inverse.
\end{proof}

\subsection{Open neighborhoods in the space of Kat\v{e}tov functions}
\label{subsec:katetov-neighborhoods}

\Cref{thm:fixed-cardinality-metric-descent} gives the fixed-label
homotopy equivalences.  To relate them to the Vietoris--Rips complex,
we use an intermediate metric space: an open neighborhood of \(X\) in
an injective space.  Recall that a metric space \(E\) is
\emph{injective} if every one-Lipschitz map \(A\to E\) extends to a
one-Lipschitz map \(B\to E\) whenever \(A\subseteq B\) is an isometric
inclusion \cite[Section~2]{Lang2013}.
The neighborhood can be compared both with the complex and with the
realized ordered diameter sublevels.

\Cref{cor:katetov-neighborhood-model} identifies the homotopy
type of the neighborhood with that of the Vietoris--Rips complex.
It uses Lim--M\'emoli--Okutan's neighborhood model
\cite[Section~2.2]{LimMemoliOkutan2024}: balls centered at points of
\(X\) form a cover whose nerve is the Vietoris--Rips complex.
To relate the neighborhood to realized ordered diameter sublevels in
\cref{prop:natural-configuration-resolution}, we retain a point in
an intersection and the tuple of centers of its balls.
The space of Kat\v{e}tov functions
is useful here because those intersections are convex, giving explicit
contractions of the fibers.

For a nonempty metric space \(X\), a \emph{Kat\v{e}tov function}
\cite{Melleray2008} is a function \(f:X\to\R\) satisfying
\[
 |f(z)-f(w)|\leq d_X(z,w)\leq f(z)+f(w)
 \qquad(z,w\in X).
\]
Such functions describe the possible distances from points of \(X\) to
a point in a metric extension of \(X\).
Following \cite[Section~3]{Lang2013}, we write
\begin{equation}
 \Delta_1(X):=\{f:X\to\R:f\text{ is a Kat\v{e}tov function}\}
 \label{eq:katetov-space}
\end{equation}
for the space of all Kat\v{e}tov functions.  Its pointwise-minimal
elements form the tight span \cite[Theorem~3.3]{Lang2013}.
Its defining inequalities imply
\(f\geq0\) and are preserved by pointwise convex combinations.
We equip it with the uniform metric
\[
 d_\infty(f,g):=\sup_{z\in X}|f(z)-g(z)|.
\]
\Cref{thm:configurationwise-chamber} will apply even to unbounded spaces.  To check that
this metric is finite in that setting, fix \(x_*\in X\).  Then
the two Kat\v{e}tov inequalities give
\[
 d_X(z,x_*)-f(x_*)\leq f(z)\leq d_X(z,x_*)+f(x_*),
\]
and hence \(|f(z)-g(z)|\leq f(x_*)+g(x_*)\).
Thus convex combinations and their straight-line homotopies are
continuous for \(d_\infty\).
Lang proves that \(\Delta_1(X)\) with this metric is injective
\cite[Proposition~3.2]{Lang2013}, so the neighborhood comparison of
\cite[Propositions~2.25 and~2.27]{LimMemoliOkutan2024} applies.

Write \(d_x(z):=d_X(x,z)\).  The map \(x\mapsto d_x\) is an isometric
embedding of \(X\) into \(\Delta_1(X)\), the Kuratowski embedding.
The standard evaluation identity \cite[Equation~(3.2)]{Lang2013} is,
for every \(f\in\Delta_1(X)\),
\begin{equation}
 \|f-d_x\|_\infty=f(x).
 \label{eq:katetov-distance-to-kuratowski}
\end{equation}
Indeed, the Kat\v{e}tov inequalities give
\(|f(z)-d_X(x,z)|\leq f(x)\), with equality at \(z=x\).

Write \(B_\rho(X)\) for the open \(\rho\)-neighborhood of the
embedded copy of \(X\) in \(\Delta_1(X)\), as in
\cite[Section~2.2]{LimMemoliOkutan2024}.
At Vietoris--Rips scale \(t>0\), the neighborhood we use is
\begin{equation}
 B_{t/2}(X)=\bigcup_{x\in X}V_x(t),
 \qquad
 V_x(t):=\{f\in\Delta_1(X):f(x)<t/2\}.
 \label{eq:katetov-neighborhood-cover}
\end{equation}
By \eqref{eq:katetov-distance-to-kuratowski}, \(V_x(t)\) is exactly
the open ball of radius \(t/2\) about \(d_x\).  The factor \(1/2\)
matches our diameter convention for \(\VR{X}{t}\).

For a nonempty finite \(F\subset X\), put
\(V_F(t):=\bigcap_{x\in F}V_x(t)\).
The cover detects precisely the Vietoris--Rips simplices:
\begin{equation}
 V_F(t)\ne\varnothing\quad\Longleftrightarrow\quad\diam(F)<t.
 \label{eq:katetov-intersection-rips-condition}
\end{equation}
One implication follows from
\(d_X(x,x')\leq f(x)+f(x')<t\) for \(f\in V_F(t)\) and \(x,x'\in F\).
For the converse, put \(D=\diam(F)<t\).  The function
\(z\mapsto D/2+d_X(z,F)\) is one-Lipschitz, and
\[
 d_X(z,w)\leq d_X(z,F)+D+d_X(w,F)
\]
shows that it is Kat\v{e}tov.  Its value on \(F\) is \(D/2<t/2\), so it
belongs to \(V_F(t)\).  Moreover, every nonempty \(V_F(t)\) is convex,
because its additional evaluation inequalities are preserved by convex combinations.  It is therefore contractible.

Thus the cover \(\{V_x(t)\}_{x\in X}\) has nerve \(\VR{X}{t}\)
and contractible nonempty finite intersections.
Lim--M\'emoli--Okutan's functorial nerve lemma
\cite[Corollary~4.3]{LimMemoliOkutan2024} gives the following comparison.
The proof records the hypotheses, including the compatibility with scale.

\begin{proposition}[Lim--M\'emoli--Okutan's neighborhood comparison in
\(\Delta_1(X)\)]
\label{cor:katetov-neighborhood-model}
For every nonempty metric space \(X\) and \(0<r\leq s\), there are
homotopy equivalences
\begin{equation}
 h_{t,X}:B_{t/2}(X)\longrightarrow\bigl|\VR{X}{t}\bigr|,
 \qquad
 t\in\{r,s\},
 \label{eq:natural-rips-neighborhood-zigzag}
\end{equation}
such that
\[
 h_{s,X}\circ v_{r,s}\simeq i_{r,s}\circ h_{r,X},
\]
where \(v_{r,s}:B_{r/2}(X)\hookrightarrow B_{s/2}(X)\) and
\(i_{r,s}:|\VR{X}{r}|\hookrightarrow|\VR{X}{s}|\) are the canonical
scale inclusions.  The comparison is also natural up to homotopy
under isometries of \(X\).
\end{proposition}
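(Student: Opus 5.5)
The plan is to verify the hypotheses of Lim--M\'emoli--Okutan's functorial nerve lemma \cite[Corollary~4.3]{LimMemoliOkutan2024} for the two covers
\[
 \mathcal U_t:=\{V_x(t)\}_{x\in X}\quad\text{of}\quad B_{t/2}(X),\qquad t\in\{r,s\},
\]
and then read off both the equivalences and the homotopy commutativity of the scale square. First, each \(V_x(t)\) is open in \(B_{t/2}(X)\). Indeed, evaluation \(f\mapsto f(x)\) is one-Lipschitz for \(d_\infty\), so \(V_x(t)\) is the open ball of radius \(t/2\) about \(d_x\) by \eqref{eq:katetov-distance-to-kuratowski}. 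By \eqref{eq:katetov-neighborhood-cover} these sets cover \(B_{t/2}(X)\). By \eqref{eq:katetov-intersection-rips-condition} the nerve of \(\mathcal U_t\), indexed by finite subsets of \(X\), is exactly \(\VR{X}{t}\). Every nonempty finite intersection \(V_F(t)\) is convex, hence contractible via straight-line homotopies. These homotopies are continuous for \(d_\infty\), because \(d_\infty\) is finite on \(\Delta_1(X)\) as checked above, even for unbounded \(X\).

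Next comes compatibility with scale. For \(r\leq s\) we have \(V_x(r)\subseteq V_x(s)\) for each \(x\), with the same index \(x\). So the pair of covers is related by the identity on index sets, and this identity induces the simplicial inclusion \(\VR{X}{r}\hookrightarrow\VR{X}{s}\) on nerves. This is exactly the situation of a morphism of covers with subordinate refinement, which is what the functorial nerve lemma requires. The lemma therefore yields homotopy equivalences \(h_{t,X}\) from the covered space to the realized nerve, with \(h_{s,X}\circ v_{r,s}\simeq i_{r,s}\circ h_{r,X}\).

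The step I expect to be the main obstacle is matching the precise hypotheses of \cite[Corollary~4.3]{LimMemoliOkutan2024}. Nerve lemmas for arbitrary (possibly uncountable, non-locally-finite) open covers usually need paracompactness of the covered space, and \(h_{t,X}\) is built from a partition of unity subordinate to the cover. Here \(B_{t/2}(X)\) is a metric space and hence paracompact. Moreover, the injectivity of \(\Delta_1(X)\) \cite[Proposition~3.2]{Lang2013} is what places us in the setting of \cite[Propositions~2.25 and~2.27]{LimMemoliOkutan2024}.

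If the cited corollary only provides a zigzag, I would instead construct the maps directly. Take a partition of unity \(\{\varphi^t_x\}\) subordinate to \(\mathcal U_t\) and set \(h_{t,X}(f)=\sum_x\varphi^t_x(f)\,x\). At scale \(s\) one has two partitions of unity, \(\{\varphi^r_x\}\) restricted to \(B_{r/2}(X)\) and \(\{\varphi^s_x\}\). Both supports at \(f\) index simplices of \(\VR{X}{s}\) lying in the single simplex \(\{x: f\in V_x(s)\}\), which is finite-carried locally. Hence the straight-line homotopy between the two barycentric maps stays in \(|\VR{X}{s}|\), and this gives the homotopy commutativity.

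Finally, naturality under an isometry \(\phi\) of \(X\) is handled the same way. The isometry induces \(f\mapsto f\circ\phi^{-1}\) on \(\Delta_1(X)\), which carries \(V_x(t)\) onto \(V_{\phi(x)}(t)\). It therefore maps covers to covers compatibly with the simplicial map \(\phi\) on nerves. The same straight-line argument, or the functoriality clause of the nerve lemma, gives commutativity up to homotopy.
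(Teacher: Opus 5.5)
Your proposal is correct and matches the paper's proof: you check paracompactness of $B_{t/2}(X)$, convexity (hence contractibility) of the nonempty finite intersections, the nesting $V_x(r)\subseteq V_x(s)$ over the same index set, and that the nerves are the strict Vietoris--Rips complexes, then invoke \cite[Corollary~4.3]{LimMemoliOkutan2024}; you handle isometries through $f\mapsto f\circ g^{-1}$, which carries $V_x(t)$ onto $V_{g(x)}(t)$, and the paper cites \cite[Theorem~4.2]{LimMemoliOkutan2024} for that step. Your fallback partition-of-unity argument is not needed, since the cited corollary already provides the homotopy-commutative square; if you keep it, note that $\{x: f\in V_x(s)\}$ may be infinite, so it is not itself a simplex, and only its finite subsets (such as the union of the two finite supports) are simplices.
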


\begin{proof}
The spaces \(B_{r/2}(X)\) and \(B_{s/2}(X)\) are metrizable and hence
paracompact.  The covers \(\{V_x(r)\}_{x\in X}\) and
\(\{V_x(s)\}_{x\in X}\) have contractible nonempty finite
intersections, and satisfy \(V_x(r)\subseteq V_x(s)\).
Their nerves are the strict Vietoris--Rips complexes by
\eqref{eq:katetov-intersection-rips-condition}.  The conclusion is
therefore exactly \cite[Corollary~4.3]{LimMemoliOkutan2024}, whose index set may have arbitrary cardinality.

An isometry \(g\) acts on functions by
\((g_*f)(z):=f(g^{-1}z)\), carrying \(V_x(t)\) to \(V_{g(x)}(t)\).
Naturality up to homotopy under this action follows from
\cite[Theorem~4.2]{LimMemoliOkutan2024}.
\end{proof}

It remains to compare this neighborhood with ordered tuples carrying their metric topology, rather than treating the labels as a discrete set.

\subsection{Recovering ordered diameter sublevels}
\label{subsec:incidence-configuration-comparison}

To relate continuous motions in \(X^{k+1}\) to the neighborhood
\(B_{t/2}(X)\), recall that a tuple \((x_0,\ldots,x_k)\) has diameter
less than \(t\) exactly when the balls \(V_{x_i}(t)\) have a common
point, by \eqref{eq:katetov-intersection-rips-condition}.
We retain both the tuple and a point \(f\) in this intersection:

\begin{equation}
 \mathcal N_k(X;t)
 :=\{(f,x_0,\ldots,x_k)\in\Delta_1(X)\times X^{k+1}:
                         f(x_i)<t/2\text{ for every }i\}.
 \label{eq:incidence-nerve-levels}
\end{equation}
All coordinates carry their metric topology.  The inequality
\[
 |f(x)-g(y)|\leq\|f-g\|_\infty+d_X(x,y)
\]
shows that evaluation is continuous.  The projection
\(\mathcal N_0(X;t)\to B_{t/2}(X)\), \((f,x)\mapsto f\), has the
continuous local section \(f\mapsto(f,x)\) on \(V_x(t)\).

The spaces \(\mathcal N_k(X;t)\) form the \v Cech nerve of this
projection: a point in degree \(k\) records \(k+1\) pairs \((f,x_i)\)
with the same image \(f\), or equivalently a point of the
\((k+1)\)-fold fiber product over \(B_{t/2}(X)\)
\cite[Corollary~1.5 and Section~4]{DuggerIsaksen2004}.
Face maps delete a label and degeneracy maps repeat one
\cite[Chapter~1, \S1]{May1967}.
Forgetting \(f\) leaves precisely the ordered diameter sublevel
\(\DiamConf_{k+1}(X;t)\subset X^{k+1}\).  Deleting or repeating a
coordinate preserves the diameter bound, so these sublevels also form
a simplicial space, with

\[
 \bigl(\DiamConf_\bullet(X;t)\bigr)_k:=\DiamConf_{k+1}(X;t)
\]
in degree \(k\).  The incidence spaces therefore project both
to the neighborhood and to the diameter sublevels.  We compare these
projections after geometric realization.
For a simplicial space \(Z_\bullet\), its \emph{fat geometric
realization} is
\begin{equation}
 \|Z_\bullet\|
 :=\left(\coprod_{k\geq0}Z_k\times\Delta^k\right)\big/\!\sim,
 \label{eq:fat-realization-definition}
\end{equation}
where \(\Delta^k\) is the standard topological simplex and only the face
identifications are imposed
\cite[Appendix~A, p.~308]{Segal1974}.  Products and quotients are
taken in compactly generated spaces
\cite[Convention~1.5]{EbertRandalWilliams2019}.
A map of simplicial spaces that is a weak homotopy equivalence in every
degree induces a weak homotopy equivalence on their fat realizations
\cite[Theorem~2.2]{EbertRandalWilliams2019}.

First consider the projection that forgets the labels.  It induces
\[
 \epsilon_{t,X}:\|\mathcal N_\bullet(X;t)\|\longrightarrow B_{t/2}(X),
 \qquad
 \epsilon_{t,X}[(f,x_0,\ldots,x_k);a]:=f,
\]
because the \(f\)-coordinate is unchanged by face identifications.
This is the usual augmentation from a realized \v Cech nerve
to its base.  We prove that it is a homotopy equivalence by adapting
the partition-of-unity argument of
\cite[\S4, Proposition~4.1]{Segal1968}; compare
\cite[Proposition~4G.2]{Hatcher2002}.
The proof keeps the metric topology on the labels throughout.

\begin{lemma}[The incidence resolution]
\label{lem:incidence-cech-resolution}
The augmentation
\[
 \epsilon_{t,X}:\|\mathcal N_\bullet(X;t)\|\longrightarrow B_{t/2}(X)
\]
is a homotopy equivalence and commutes strictly with increasing scale.
\end{lemma}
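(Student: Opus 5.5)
We adapt Segal's partition-of-unity argument and construct an explicit section of \(\epsilon_{t,X}\) that is also a homotopy inverse. Strict compatibility with scale is immediate. For \(r\leq s\) the inclusions \(\mathcal N_k(X;r)\subseteq\mathcal N_k(X;s)\) commute with the face maps, so they induce a map of fat realizations, and composing with \(\epsilon_{s,X}\) gives \(\epsilon_{r,X}\) followed by \(v_{r,s}\). The real work is the homotopy equivalence.

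\emph{Step 1: a section.} The open cover \(\{V_x(t)\}_{x\in X}\) of the metrizable, hence paracompact, space \(B_{t/2}(X)\) admits a locally finite partition of unity \(\{\phi_\lambda\}_{\lambda\in\Lambda}\) with \(\supp\phi_\lambda\subseteq V_{x_\lambda}(t)\) for chosen centers \(x_\lambda\). Fix a well-ordering of \(\Lambda\). For \(f\in B_{t/2}(X)\), let \(\lambda_0<\cdots<\lambda_k\) be the indices with \(\phi_\lambda(f)>0\). Then \(f(x_{\lambda_i})<t/2\) for every \(i\), and we define
\[
 s(f):=\bigl[(f,x_{\lambda_0},\ldots,x_{\lambda_k});(\phi_{\lambda_0}(f),\ldots,\phi_{\lambda_k}(f))\bigr].
\]
Continuity follows from local finiteness. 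Near \(f\), only finitely many \(\phi_\lambda\) are nonzero, and the point is given by a fixed ordered label set with continuous barycentric coordinates. When a coordinate vanishes, the face identification of the fat realization deletes that label, which is exactly what is needed. Moreover \(\epsilon_{t,X}\circ s=\operatorname{id}\).

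\emph{Step 2: \(s\circ\epsilon_{t,X}\simeq\operatorname{id}\).} We work fiberwise over \(f\), which is unchanged throughout. Given a point \([(f,x_0,\ldots,x_k);a]\), we form the join-type point on the concatenated labels \((x_0,\ldots,x_k,x_{\lambda_0},\ldots,x_{\lambda_m})\) in degree \(k+m+1\), with barycentric coordinates \(((1-u)a,u\phi(f))\). This is a legitimate element of \(\mathcal N_{k+m+1}(X;t)\), since every label satisfies \(f(\cdot)<t/2\). At \(u=0\) and \(u=1\) the face identifications reduce it to the original point and to \(s(f)\), respectively.

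The main obstacle is continuity of this homotopy on the fat realization. The realization is a quotient taken in compactly generated spaces, so continuity must be checked on each \(\mathcal N_k\times\Delta^k\times[0,1]\), using that products with \([0,1]\) commute with the quotient, which holds for compact \([0,1]\) in CGWH. On such a piece, the map lands in a locally constant degree \(k+m+1\) as \(f\) varies, because \(m\) depends on \(f\) only locally through the finitely many positive \(\phi_\lambda\). The construction also has to be compatible with the face identifications in the source, which holds because deleting a label with zero weight commutes with concatenation. The varying \(m\) is handled as in Step 1, by allowing zero weights and using the face relations.

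Both \(s\circ\epsilon\simeq\operatorname{id}\) and \(\epsilon\circ s=\operatorname{id}\) then hold, so \(\epsilon_{t,X}\) is a homotopy equivalence. Labels keep their metric topology throughout, since only the \(X^{k+1}\) coordinates are carried along.
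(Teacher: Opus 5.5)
Your proposal follows the paper's proof essentially step for step: a partition-of-unity section with barycentric weights over an ordered index set, the fiberwise concatenation homotopy with weights $((1-u)a,u\lambda(f))$, and continuity checked on each $\mathcal N_k\times\Delta^k\times[0,1]$ using fixed local label lists, face identifications and the compactly generated product with $[0,1]$. One point is left implicit in your write-up and is stated explicitly in the paper: the fixed local list must be incident to every nearby $f'$. To ensure this, keep only the indices whose closed support contains $f_0$, and shrink the neighborhood so that it misses all other supports and lies in each $V_{x_\alpha}(t)$; then zero-weight labels still satisfy $f'(x_\alpha)<t/2$, and the representatives stay in $\mathcal N_\bullet(X;t)$.
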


\begin{proof}
We construct a continuous section \(s_{t,X}\) of
\(\epsilon_{t,X}\) and a homotopy from the identity to
\(s_{t,X}\epsilon_{t,X}\).
On each \(V_x(t)\), the map \(f\mapsto(f,x)\) chooses a label incident
to \(f\).  A partition of unity combines these local choices into
a point of the realization, using the partition values as barycentric
coordinates.

Since \(B_{t/2}(X)\) is metrizable and hence paracompact, we
can choose a locally finite
partition of unity \((\lambda_\alpha)_{\alpha\in\mathcal A}\)
and points \(x_\alpha\in X\) such that
\(\supp\lambda_\alpha\subset V_{x_\alpha}(t)\), and order
\(\mathcal A\).
For each \(f\), list the finitely many indices with positive weight as
\(\alpha_1<\cdots<\alpha_m\), and write
\[
 \mathbf x(f):=(x_{\alpha_1},\ldots,x_{\alpha_m}),
 \qquad
 \lambda(f):=(\lambda_{\alpha_1}(f),\ldots,\lambda_{\alpha_m}(f)).
\]
The list is nonempty, all its entries witness incidence to \(f\), and
its weights sum to one.  Hence
\[
 s_{t,X}(f):=[(f,\mathbf x(f));\lambda(f)]
\]
is a section of \(\epsilon_{t,X}\).

To prove that \(s_{t,X}\) is continuous, we represent it
locally by a fixed finite list of labels, allowing zero weights.
Near a fixed \(f_0\), local finiteness leaves only finitely many
supports.  Discard the closed supports not containing \(f_0\), and
shrink the neighborhood into \(V_{x_\alpha}(t)\) for every remaining
index.  On this neighborhood a single fixed list represents \(s_{t,X}\),
allowing zero weights; face identifications remove those zero
coordinates.  This continuous local representative proves continuity.

We now deform each point of the realization to
\(s_{t,X}(f)\) while keeping its image \(f\) under \(\epsilon_{t,X}\)
fixed.  Concatenating the two label lists lets us transfer the
barycentric weight from the original point to the section.
For \(z=[(f,\mathbf y);a]\), concatenate its list with the section's
list and put
\[
 H(z,u):=[(f,\mathbf y,\mathbf x(f));((1-u)a,u\lambda(f))].
\]
All labels remain incident to \(f\), and the weights sum to one.
Deleting an original zero-weight coordinate commutes with this
formula, so \(H\) respects the face identifications.
Using a fixed finite list of labels locally, as above,
proves continuity on each defining simplex;
the compactly generated quotient and product then give continuity
on the realization times \([0,1]\).  At the endpoints the zero
weights are removed, giving
\[
 H(-,0)=\operatorname{id},\qquad
 H(-,1)=s_{t,X}\epsilon_{t,X},\qquad
 \epsilon_{t,X}s_{t,X}=\operatorname{id}.
\]
Finally, the augmentation is the projection to \(f\), so its scale
squares commute as equalities of maps.
\end{proof}

The other projection is
\(\pi_{k,t,X}:\mathcal N_k(X;t)\to\DiamConf_{k+1}(X;t)\),
\((f,x)\mapsto x\), which forgets the neighborhood point.
For \((f,x_0,\ldots,x_k)\in\mathcal N_k(X;t)\),
\[
 d_X(x_i,x_j)\leq f(x_i)+f(x_j)<t,
\]
so the projection takes values in
\(\DiamConf_{k+1}(X;t)\).
To prove that \(\pi_{k,t,X}\) is a homotopy equivalence, we
construct a continuous section \(x\mapsto(q_x,x)\).
For a fixed tuple \(x\), the possible first coordinates form the convex
intersection \(\bigcap_i V_{x_i}(t)\).  Choosing \(q_x\) in that
intersection lets us contract the fiber to \((q_x,x)\) by straight-line
interpolation.  The choice must vary continuously with \(x\), including
when labels collide.  The function used in
\eqref{eq:katetov-intersection-rips-condition} has this property.
For \(x=(x_0,\ldots,x_k)\), define
\begin{equation}
 D_x:=\diam_{k+1}(x),\qquad
 m_x(z):=\min_i d_X(z,x_i),\qquad
 q_x(z):=\tfrac12D_x+m_x(z).
 \label{eq:configuration-katetov-center}
\end{equation}
The function \(q_x\) is one-Lipschitz, and the triangle inequality gives
\[
 d_X(z,w)\leq m_x(z)+D_x+m_x(w)=q_x(z)+q_x(w).
\]
Thus \(q_x\in\Delta_1(X)\).  At every label,
\begin{equation}
 q_x(x_i)=\tfrac12D_x=\tfrac12\diam_{k+1}(x).
 \label{eq:center-incidence-bound}
\end{equation}
In particular, \(D_x<t\) puts \(q_x\) in every ball \(V_{x_i}(t)\).

\begin{lemma}[Levelwise comparison with diameter sublevels]
\label{lem:cech-level-configuration}
For every \(k\geq0\), the projection
\[
 \pi_{k,t,X}:\mathcal N_k(X;t)\longrightarrow\DiamConf_{k+1}(X;t),
 \qquad
 \pi_{k,t,X}(f,x):=x,
\]
is a homotopy equivalence.  Its section is \(x\mapsto(q_x,x)\), and
straight-line interpolation in the first coordinate gives a
fiberwise strong deformation retraction onto this section.
\end{lemma}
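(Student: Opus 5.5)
The plan is to show that the section \(\sigma:x\mapsto(q_x,x)\) is continuous and lands in \(\mathcal N_k(X;t)\), and that \(\pi_{k,t,X}\circ\sigma=\operatorname{id}\). Then we write down the straight-line homotopy
\[
 G\bigl((f,x),u\bigr):=\bigl((1-u)f+u\,q_x,\;x\bigr),\qquad u\in[0,1],
\]
and check that it stays in \(\mathcal N_k(X;t)\), is continuous, fixes the section, and preserves fibers. Together these give a fiberwise strong deformation retraction onto the image of \(\sigma\). Since \(\pi_{k,t,X}\circ G(-,u)=\pi_{k,t,X}\), it follows that \(\sigma\circ\pi_{k,t,X}\simeq\operatorname{id}\). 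Combined with \(\pi\sigma=\operatorname{id}\), this shows \(\pi_{k,t,X}\) is a homotopy equivalence.

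First, membership. For \(x\in\DiamConf_{k+1}(X;t)\), the computation preceding the lemma shows \(q_x\in\Delta_1(X)\), and \eqref{eq:center-incidence-bound} gives \(q_x(x_i)=D_x/2<t/2\) for every \(i\), so \((q_x,x)\in\mathcal N_k(X;t)\).

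For the homotopy, let \((f,x)\in\mathcal N_k(X;t)\). Then \(f\) and \(q_x\) both lie in the convex set \(\bigcap_iV_{x_i}(t)\). This set is convex because \(\Delta_1(X)\) is closed under pointwise convex combinations and each constraint \(f(x_i)<t/2\) is preserved by them. Hence \((1-u)f+uq_x\) lies in the same intersection for all \(u\in[0,1]\). At \(u=0\) the homotopy is the identity, at \(u=1\) it equals \(\sigma\pi\), and on the image of \(\sigma\) it is constant. So it is a strong deformation retraction in each fiber.

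The main obstacle is continuity in the metric topology, including where labels collide. I would prove the estimate
\[
 \|q_x-q_{x'}\|_\infty\leq \tfrac12|D_x-D_{x'}|+\sup_z|m_x(z)-m_{x'}(z)|\leq 2\,d_{X,\infty}(x,x').
\]
The first piece uses \(|\diam_{k+1}(x)-\diam_{k+1}(x')|\leq2d_{X,\infty}(x,x')\), recorded earlier. The second uses the fact that a minimum of finitely many functions, each moved by at most \(d_X(x_i,x_i')\), moves by at most the maximum of these displacements. Thus \(x\mapsto q_x\) is \(2\)-Lipschitz into \((\Delta_1(X),d_\infty)\), so \(\sigma\) is continuous.

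Continuity of \(G\) then follows from
\[
 \bigl\|(1-u)f+uq_x-\bigl((1-u')f'+u'q_{x'}\bigr)\bigr\|_\infty
 \leq\|f-f'\|_\infty+2\,d_{X,\infty}(x,x')+|u-u'|\,\|f-q_x\|_\infty .
\]
The quantity \(\|f-q_x\|_\infty\) is finite, since the earlier argument bounds \(d_\infty\) between any two Kat\v{e}tov functions by \(f(x_*)+g(x_*)\). It is also locally bounded, because it is continuous in \((f,x)\). The second coordinate of \(G\) is constant in \(u\), so it is trivially continuous. No compactness of \(X\) is needed, consistent with the later unbounded applications.
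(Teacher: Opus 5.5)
Your proposal is correct and follows the paper's proof essentially step for step: the section \(x\mapsto(q_x,x)\), convexity of the incidence fiber \(\bigcap_i V_{x_i}(t)\), the straight-line homotopy in the first coordinate, and the bound \(\|q_x-q_{x'}\|_\infty\leq2d_{X,\infty}(x,x')\) obtained from the estimates for \(D_x\) and \(m_x\). Your explicit three-term estimate for the continuity of the homotopy spells out what the paper summarizes as continuity of pointwise convex combinations in the uniform metric. Since the term \(\|f-q_x\|_\infty\) there is evaluated at the fixed base point, the local-boundedness remark is not actually needed.
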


\begin{proof}
By \eqref{eq:katetov-intersection-rips-condition}, the projection takes
values in the stated sublevel, and \eqref{eq:center-incidence-bound}
shows that \(\sigma_{k,t,X}(x):=(q_x,x)\) is a section.
For the maximum product metric, the functions
\(D_x\) and \(m_x\) satisfy
\[
 |D_x-D_{x'}|\leq2d_{X,\infty}(x,x'),\qquad
 \|m_x-m_{x'}\|_\infty\leq d_{X,\infty}(x,x').
\]
Consequently,
\begin{equation}
 \|q_x-q_{x'}\|_\infty\leq2d_{X,\infty}(x,x').
 \label{eq:center-collision-continuity}
\end{equation}
No extremizing label or distinct support is chosen, so this bound proves
continuity of the section even when labels collide.

For a fixed tuple \(x\), the fiber consists of the Kat\v{e}tov
functions satisfying \(f(x_i)<t/2\) for every \(i\).
It is convex, and \(q_x\) belongs to it.  Define
\[
 H_{k,t}((f,x),u):=((1-u)f+u q_x,x).
\]
The interpolated function is Kat\v{e}tov, and
\[
 ((1-u)f+u q_x)(x_i)
 =(1-u)f(x_i)+u q_x(x_i)<t/2.
\]
Thus the homotopy keeps the tuple fixed and remains in its incidence
fiber.  It is continuous by \eqref{eq:center-collision-continuity}
and continuity of pointwise convex combinations in the uniform
metric.  Finally,
\[
 \pi_{k,t,X}\sigma_{k,t,X}=\operatorname{id},\quad
 H_{k,t}(-,0)=\operatorname{id},\quad
 H_{k,t}(-,1)=\sigma_{k,t,X}\pi_{k,t,X},
\]
and \(H_{k,t}(\sigma_{k,t,X}(x),u)=\sigma_{k,t,X}(x)\).
These are the asserted fiberwise deformation-retraction identities.
\end{proof}

\begin{proposition}[Comparing neighborhoods with realized diameter sublevels]
\label{prop:natural-configuration-resolution}
For every \(t>0\), there is a zigzag of weak homotopy equivalences
\begin{equation}
 B_{t/2}(X)
 \xleftarrow[\simeq]{\ \epsilon_{t,X}\ }
 \|\mathcal N_\bullet(X;t)\|
 \xrightarrow[\sim_{\mathrm w}]{\ \|\pi_{\bullet,t,X}\|\ }
 \|\DiamConf_\bullet(X;t)\|.
 \label{eq:natural-configuration-resolution}
\end{equation}
The left-hand map is a homotopy equivalence.  Both comparison maps
commute strictly with increasing scale.
\end{proposition}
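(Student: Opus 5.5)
The plan is to assemble the proposition from the two lemmas just proved, after checking that the relevant maps are well-defined maps of simplicial spaces. For the left-hand map there is nothing new to do. \Cref{lem:incidence-cech-resolution} already shows that \(\epsilon_{t,X}\) is a homotopy equivalence and commutes strictly with the inclusion \(B_{t/2}(X)\hookrightarrow B_{t'/2}(X)\) for \(t<t'\). The reason is that \(\mathcal N_k(X;t)\subseteq\mathcal N_k(X;t')\) is an inclusion of subspaces of \(\Delta_1(X)\times X^{k+1}\), and \(\epsilon\) is projection to the first coordinate.

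For the right-hand map, I first check that the levelwise projections \(\pi_{k,t,X}\) form a map of simplicial spaces \(\mathcal N_\bullet(X;t)\to\DiamConf_\bullet(X;t)\). The face map \(d_i\) deletes the label \(x_i\) on both sides and leaves \(f\) unchanged. The degeneracy \(s_i\) repeats \(x_i\) on both sides. Forgetting \(f\) therefore commutes with all faces and degeneracies. Only the face maps are needed for the fat realization \eqref{eq:fat-realization-definition}, so \(\|\pi_{\bullet,t,X}\|\) is a well-defined continuous map. Continuity holds because it is induced by the continuous maps \(\pi_{k,t,X}\times\operatorname{id}_{\Delta^k}\) on the disjoint union, and these are compatible with the face identifications.

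By \cref{lem:cech-level-configuration}, each \(\pi_{k,t,X}\) is a homotopy equivalence, and in particular a weak homotopy equivalence. \cite[Theorem~2.2]{EbertRandalWilliams2019} states that a levelwise weak equivalence of simplicial spaces induces a weak equivalence of fat realizations, so \(\|\pi_{\bullet,t,X}\|\) is a weak homotopy equivalence. One point needs checking: the levels here are metrizable spaces, whereas products in the realization are taken in compactly generated spaces. Metrizable spaces are first countable and hence compactly generated, so \(k\)-ification does not change \(\mathcal N_k\) or \(\DiamConf_{k+1}\), and the levelwise hypothesis holds as stated. For \(t<t'\), strict commutation with scale follows because the inclusions \(\mathcal N_k(X;t)\subseteq\mathcal N_k(X;t')\) and \(\DiamConf_{k+1}(X;t)\subseteq\DiamConf_{k+1}(X;t')\) are simplicial, and projection commutes with them on the nose. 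Realization is functorial, so both realized squares commute strictly.

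I expect the main obstacle to be the point-set bookkeeping around the cited realization theorem: checking that the levelwise homotopy equivalences qualify as weak equivalences in the compactly generated category and that no properness or cofibrancy condition is required. The fat realization is chosen precisely to avoid such conditions, so I would state this explicitly rather than reprove it. The claim that the left-hand map is a genuine homotopy equivalence, not just a weak one, is already supplied by \cref{lem:incidence-cech-resolution}.
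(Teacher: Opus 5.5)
Your proposal is correct and follows the paper's own proof essentially step for step. The left map comes from \cref{lem:incidence-cech-resolution}; the levelwise projections form a simplicial map that is a homotopy equivalence in each degree by \cref{lem:cech-level-configuration}; \cite[Theorem~2.2]{EbertRandalWilliams2019} then makes its fat realization a weak equivalence; and strict compatibility with scale holds because both maps are coordinate projections. Your remark that the metrizable levels are already compactly generated is correct bookkeeping that the paper leaves implicit.
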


\begin{proof}
The augmentation is covered by \cref{lem:incidence-cech-resolution}.
The projections \((f,x_0,\ldots,x_k)\mapsto(x_0,\ldots,x_k)\)
commute with deletion and repetition, so they form a simplicial map.
By \cref{lem:cech-level-configuration}, this map is a homotopy
equivalence in every degree.
Theorem~2.2 of
\cite{EbertRandalWilliams2019} makes its fat realization a weak
homotopy equivalence.  Both maps are coordinate projections, so
increasing the scale leaves their formulas unchanged and gives
strictly commutative squares.
\end{proof}

For the analogous ordered simplicial-set construction, see
\cite[Definition~4.3]{Otter2022}.  The comparison above instead uses
configuration spaces with their inherited metric topology, as required
for the fixed-label deformations.

\Cref{prop:natural-configuration-resolution} relates the
Kat\v{e}tov neighborhood \(B_{t/2}(X)\) to the realized diameter sublevels
\(\|\DiamConf_\bullet(X;t)\|\).
We use this zigzag of weak homotopy equivalences at \(t=r\) and \(t=s\),
for \(0<r<s\), in \cref{thm:configurationwise-chamber}.
The canonical inclusions of ordered diameter sublevels form a
simplicial map.  Fat realization applies to these inclusions and to
the projections \(\pi_{k,t,X}\), all of which commute with deleting
and repeating labels.  To prove a homotopy equivalence in each degree,
we may choose an inverse and a homotopy separately in that degree.
Thus the centers \(q_x\) in \cref{lem:cech-level-configuration} and the
diameter-decreasing deformations of
\cref{thm:fixed-cardinality-metric-descent} need not be compatible
across label numbers.

\subsection{{Comparing the inclusions and treating the upper endpoint}}
\label{subsec:configuration-assembly-endpoint}

We can now compare the canonical Vietoris--Rips inclusion using the
ordered diameter sublevels.
\Cref{thm:configurationwise-chamber} applies to every nonempty metric
space and does not assume a weak-slope condition.
Its proof follows the maps in \cref{fig:configuration-assembly-overview}.

An \(m\)-connected map induces isomorphisms on homotopy groups in
degrees less than \(m\) and a surjection in degree \(m\), at every
basepoint.  For \(m\geq1\) it also induces a bijection on path
components; a \(0\)-connected map is surjective on path components.
A \emph{weak homotopy equivalence} induces a bijection on path components
and isomorphisms in every positive degree
\cite[Section~4.1]{Hatcher2002}.

\begin{theorem}[{Vietoris--Rips inclusions from ordered diameter sublevels}]
\label{thm:configurationwise-chamber}
Let \(X\) be a nonempty metric space and let \(0<r<s\).
\begin{enumerate}[label=\textup{(\roman*)}]
\item If, for some \(N\geq1\), each inclusion
\[
 \DiamConf_q(X;r)\longrightarrow\DiamConf_q(X;s),
 \qquad 1\leq q\leq N,
\]
is \((N-q)\)-connected, then the canonical inclusion
\(\VR{X}{r}\to\VR{X}{s}\) is \((N-1)\)-connected.
\item If the configuration inclusion is a weak homotopy equivalence
for every \(q\geq1\), then the canonical Vietoris--Rips inclusion is a homotopy
equivalence.
\end{enumerate}
\end{theorem}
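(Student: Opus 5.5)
The plan is to follow the diagram in \cref{fig:configuration-assembly-overview} and reduce everything to a statement about the simplicial map of ordered diameter sublevels. Write \(j_\bullet:\DiamConf_\bullet(X;r)\to\DiamConf_\bullet(X;s)\) for the levelwise canonical inclusion. In degree \(k\) this map is the inclusion \(\DiamConf_{k+1}(X;r)\to\DiamConf_{k+1}(X;s)\). It commutes with deletion and repetition of labels, so it is a simplicial map. We consider the diagram
\[
 \bigl|\VR{X}{r}\bigr|\xleftarrow{h_{r,X}}B_{r/2}(X)\xleftarrow{\epsilon_{r,X}}\|\mathcal N_\bullet(X;r)\|\xrightarrow{\|\pi_{\bullet,r,X}\|}\|\DiamConf_\bullet(X;r)\|
\]
together with the analogous row at scale \(s\). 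The rows are connected by the canonical scale inclusions. By \cref{cor:katetov-neighborhood-model} the left square commutes up to homotopy. By \cref{prop:natural-configuration-resolution} the other two squares commute strictly. All horizontal maps are weak homotopy equivalences; the first two are in fact homotopy equivalences.

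First we transfer connectivity. Suppose \(m\) is the connectivity of \(\|j_\bullet\|\), or \(\infty\) in case (ii). In a commutative (or homotopy commutative) square whose horizontal maps are weak equivalences, one vertical map is \(m\)-connected exactly when the other is. This holds because connectivity is detected on \(\pi_0\) and on \(\pi_j\) at all basepoints, and homotopy commutativity only changes basepoints along paths, which is harmless since all the maps are bijective on \(\pi_0\). Chaining across the three squares, \(\VR{X}{r}\to\VR{X}{s}\) is as connected as \(\|j_\bullet\|\).

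Second, we bound the connectivity of \(\|j_\bullet\|\). For (ii), each \(j_k\) is a weak homotopy equivalence, so \cite[Theorem~2.2]{EbertRandalWilliams2019} makes \(\|j_\bullet\|\) a weak equivalence. For (i), the hypothesis says that \(j_k\) is \((N-1-k)\)-connected for \(0\leq k\leq N-1\). For \(k\geq N\) nothing is assumed, although \(j_k\) is trivially \((-1)\)-connected. The graded connectivity lemma \cite[Lemma~2.4]{EbertRandalWilliams2019} states that if \(j_k\) is \((n-k)\)-connected for all \(k\), then \(\|j_\bullet\|\) is \(n\)-connected. Taking \(n=N-1\), the hypothesis is exactly right in degrees \(k\leq N-1\). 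In degrees \(k\geq N\), \((n-k)\)-connected means \((\leq -1)\)-connected, which is vacuous. I expect the main care to be needed here: matching the degree shift \(q=k+1\) and the convention for negative connectivity with the lemma's statement. The skeletal-filtration proof shows that the cells \(Z_k\times\Delta^k\) with \(k\geq N\) cannot affect \(\pi_j\) for \(j<N\), so the bound holds. Combining with the first step gives (i).

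Finally, for (ii) we upgrade the weak equivalence to a homotopy equivalence. The geometric realizations of simplicial complexes are CW complexes. By Whitehead's theorem, a weak homotopy equivalence between them, in particular the canonical inclusion, is a homotopy equivalence. This completes the plan; the only nonformal inputs are the two Ebert--Randal-Williams results and the homotopy-commuting square of \cref{cor:katetov-neighborhood-model}.
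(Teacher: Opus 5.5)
Your proposal is correct and is essentially the paper's proof. It uses the same comparison through $\|\mathcal N_\bullet(X;t)\|$ and $B_{t/2}(X)$, the same Ebert--Randal-Williams realization results, and Whitehead's theorem. The only differences are that you apply Lemma~2.4 directly, with the conditions in degrees $k\geq N$ vacuous, and use Theorem~2.2 for (ii); the paper instead truncates to degrees $k<N$, uses that skeleton inclusions are $(N-1)$-connected, and deduces (ii) from (i) applied for every $N$.

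One small correction to your heuristic: cells in degree $N$ can kill classes in $\pi_{N-1}$, so they do affect $\pi_j$ for $j=N-1$. The bound survives only because $(N-1)$-connectivity requires mere surjectivity in degree $N-1$.
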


\begin{proof}
For \textup{(i)}, we first prove that the scale inclusion
between realized diameter sublevels is \((N-1)\)-connected.
\Cref{prop:natural-configuration-resolution,cor:katetov-neighborhood-model}
will then transfer this connectivity to the canonical Vietoris--Rips inclusion.
Let \(f_\bullet\) be the simplicial scale inclusion with degree-\(k\) map
\[
 f_k:\DiamConf_{k+1}(X;r)\longrightarrow\DiamConf_{k+1}(X;s).
\]
For \(0\leq k<N\), it is \((N-1-k)\)-connected.
The graded realization theorem says that a semisimplicial map
whose degree-\(k\) map is \((m-k)\)-connected in every degree induces
an \(m\)-connected map on realizations
\cite[Lemma~2.4]{EbertRandalWilliams2019}.
Apply it with \(m=N-1\), after truncating the underlying
semisimplicial spaces by putting the empty space in degrees \(k\geq N\).
It follows that the map between the \((N-1)\)-skeleta is
\((N-1)\)-connected.  The inclusions of these skeleta into the full
realizations are also \((N-1)\)-connected
\cite[Lemma~2.1]{EbertRandalWilliams2019}.
Thus \(\|f_\bullet\|\) induces isomorphisms below degree \(N-1\).
A class in degree \(N-1\) lifts first to the target skeleton and
then through the map between the skeleta, proving surjectivity.
For \(N=1\) the same argument is on path components; otherwise it
applies at every basepoint, with the usual changes along paths.

We transfer the connectivity of \(\|f_\bullet\|\) through the
strictly commutative squares of
\cref{prop:natural-configuration-resolution}.  Since their comparison
maps are weak homotopy equivalences, the neighborhood inclusion
\[
 v_{r,s}:B_{r/2}(X)\longrightarrow B_{s/2}(X)
\]
is also \((N-1)\)-connected.
The homotopy equivalences \(h_{r,X}\) and \(h_{s,X}\) in
\cref{cor:katetov-neighborhood-model} give the diagram
\begin{equation}
\begin{array}{ccc}
 B_{r/2}(X)&\xrightarrow{\ h_{r,X}\ }&|\VR{X}{r}|\\[5pt]
 {\scriptstyle v_{r,s}}\Big\downarrow&&
 \Big\downarrow{\scriptstyle i_{r,s}}\\[5pt]
 B_{s/2}(X)&\xrightarrow{\ h_{s,X}\ }&|\VR{X}{s}|.
\end{array}
\label{eq:canonical-rips-scale-diagram}
\end{equation}
It commutes up to homotopy:
\(h_{s,X}v_{r,s}\simeq i_{r,s}h_{r,X}\).
Both horizontal maps are homotopy equivalences, so the connectivity of
\(v_{r,s}\) transfers to the canonical inclusion
\(i_{r,s}\).  This proves \textup{(i)}.

Under the hypothesis of \textup{(ii)}, part~\textup{(i)} applies for
every \(N\).  Hence the same map \(i_{r,s}\) is a weak homotopy
equivalence.  The realizations of abstract simplicial complexes are
CW complexes, so Whitehead's theorem, applied on each path component,
makes it a homotopy equivalence \cite[Theorem~4.5]{Hatcher2002}.
\end{proof}

The endpoint \(s\) may itself be stationary in
\cref{thm:metric-stationary-chamber-intro}: the hypothesis excludes
stationary values only from \([r,s)\).
For each fixed number of labels,
\cref{thm:fixed-cardinality-metric-descent} gives the required
equivalence at every scale \(u\) with \(r<u<s\).
To reach scale \(s\), observe that diameter is uniformly less than
\(s\) on every compact subset of \(\DiamConf_N(X;s)\).
Such a subset therefore lies in \(\DiamConf_N(X;u)\) for some \(u<s\).
Sphere maps and their homotopies have compact images, so their
comparison at scale \(s\) reduces to the equivalences at these earlier
scales.  \Cref{lem:compact-exhaustion} makes this passage precise.

\begin{lemma}[Exhaustion at a strict endpoint]
\label{lem:compact-exhaustion}
Let \((Y_t)_{t\leq b}\) be nested subspaces of \(Y_b\), with their
subspace topologies, and suppose every compact subset of \(Y_b\)
is contained in some \(Y_u\), \(u<b\).  Fix \(r<b\).
If each inclusion \(Y_r\to Y_u\), \(r<u<b\), is a weak homotopy
equivalence, then so is \(Y_r\to Y_b\).  If \(Y_r\) and \(Y_b\)
are CW complexes, this last map is a homotopy equivalence.
\end{lemma}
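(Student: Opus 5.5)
The plan is to verify directly that the inclusion \(j:Y_r\to Y_b\) induces a bijection on path components and isomorphisms on \(\pi_n\) for every \(n\geq1\) and every basepoint. Each verification is a statement about a compact object: a sphere, a disk, or a cylinder \(S^n\times[0,1]\) mapped into \(Y_b\). The hypothesis lets us push each such object into some \(Y_u\) with \(u<b\). There the assumed weak equivalence \(Y_r\to Y_u\) finishes the argument. Since all spaces carry subspace topologies of \(Y_b\), a map into \(Y_b\) with image in \(Y_u\) is continuous as a map into \(Y_u\). This is the only point-set fact needed.

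For surjectivity, take a map \(\phi:(S^n,*)\to(Y_b,y_0)\) with \(y_0\in Y_r\) (for \(n=0\), just a point of \(Y_b\)). Its image is compact, so it lies in some \(Y_u\) with \(r<u<b\); we may enlarge \(u\) because the family is nested. Thus \([\phi]\) is in the image of \(\pi_n(Y_u,y_0)\to\pi_n(Y_b,y_0)\). Since \(\pi_n(Y_r,y_0)\to\pi_n(Y_u,y_0)\) is surjective, and the composite \(Y_r\to Y_u\to Y_b\) is \(j\), the class \([\phi]\) lies in the image of \(j_*\).

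For injectivity, take \(\psi:(S^n,*)\to(Y_r,y_0)\) that becomes null in \(Y_b\) via a based null-homotopy \(D^{n+1}\to Y_b\). For \(n=0\), take two points of \(Y_r\) joined by a path in \(Y_b\). The image of the null-homotopy (or path) is compact, hence contained in some \(Y_u\), \(u<b\). So \(\psi\) is already null in \(Y_u\), and injectivity of \(\pi_n(Y_r)\to\pi_n(Y_u)\) shows that \(\psi\) is null in \(Y_r\). Basepoints are harmless here: all basepoints are taken in \(Y_r\), and the hypothesis holds at every basepoint. This proves that \(j\) is a weak homotopy equivalence.

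The final assertion follows from Whitehead's theorem \cite[Theorem~4.5]{Hatcher2002}, applied on each path component as in the proof of \cref{thm:configurationwise-chamber}. The only step requiring any care is the topological one. Compactness is used in \(Y_b\), so we must check that a continuous map into \(Y_b\) landing in \(Y_u\) is continuous into \(Y_u\), which holds because \(Y_u\) has the subspace topology. No compactly generated refinement is needed, since spheres and disks are compact Hausdorff.
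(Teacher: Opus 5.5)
Your proposal is correct and matches the paper's proof essentially step for step. In both, the compact images of spheres, null-homotopies, points and paths in \(Y_b\) factor through some \(Y_u\) with \(r<u<b\), the weak equivalence \(Y_r\to Y_u\) gives surjectivity and injectivity, and Whitehead's theorem applied componentwise gives the CW assertion; your remark that the subspace topology makes the factored maps continuous into \(Y_u\) only spells out a detail the paper leaves implicit.
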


\begin{proof}
A based sphere map into \(Y_b\) has compact image, so it factors
through some \(Y_u\), enlarging \(u\) to ensure \(r<u<b\).
The weak equivalence \(Y_r\to Y_u\) gives surjectivity on homotopy
groups.  A null-homotopy also has compact image and factors through
one such subspace, which gives injectivity.  Applying the same
argument to points and paths gives a bijection on path components.
The CW assertion follows componentwise from Whitehead's theorem
\cite[Theorem~4.5]{Hatcher2002}.
\end{proof}

\begin{corollary}[Comparison on \(X^N\) at a strict endpoint]
\label{cor:half-open-fixed-cardinality}
Let \(X\) be a nonempty compact metric space, \(N\geq1\), and \(0<r<s\).
If \([r,s)\cap\Sigma_N^{\specWS}(X)=\varnothing\), then
\[
 \DiamConf_N(X;r)\longrightarrow\DiamConf_N(X;s)
\]
is a weak homotopy equivalence.
\end{corollary}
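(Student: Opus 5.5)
The plan is to apply \cref{lem:compact-exhaustion} with \(Y_t:=\DiamConf_N(X;t)\) for \(t\leq b:=s\), after checking its two hypotheses. The case \(N=1\) is trivial: both sublevels equal \(X\). So fix \(N\geq2\).

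\emph{Intermediate scales.} For each \(u\) with \(r<u<s\), the closed interval \([r,u]\) lies inside \([r,s)\). By hypothesis it therefore misses \(\Sigma_N^{\specWS}(X)\). \Cref{thm:fixed-cardinality-metric-descent}, applied with the scales \(r<u\), then makes \(\DiamConf_N(X;r)\to\DiamConf_N(X;u)\) a homotopy equivalence, and in particular a weak homotopy equivalence. This is exactly why the closed-interval hypothesis of that proposition is compatible with the half-open hypothesis here.

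\emph{Compact exhaustion.} The sets \(Y_t\) are nested open subspaces of \(Y_s\) carrying the subspace topology from \(X^N\). Let \(K\subseteq\DiamConf_N(X;s)\) be compact. The function \(\diam_N\) is continuous (indeed \(2\)-Lipschitz for \(d_{X,\infty}\)), so it attains a maximum \(m\) on \(K\), and \(m<s\) (for \(K=\varnothing\) there is nothing to prove). Choose \(u\) with \(\max(m,r)<u<s\); then \(K\subseteq\DiamConf_N(X;u)\) with \(r<u<s\).

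With both hypotheses in place, \cref{lem:compact-exhaustion} gives that \(Y_r\to Y_s\) is a weak homotopy equivalence, which is the claim. I expect no real obstacle: the only points needing care are continuity of \(\diam_N\), which guarantees the maximum on \(K\) is strictly below \(s\), and the observation that each \([r,u]\) avoids the spectrum. Neither of these requires compactness of \(X\) beyond its use inside \cref{thm:fixed-cardinality-metric-descent}.
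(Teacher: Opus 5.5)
Your proposal is correct and matches the paper's proof: both apply \cref{thm:fixed-cardinality-metric-descent} at each intermediate scale \(r<u<s\) (using \([r,u]\subset[r,s)\)), bound \(\diam_N\) strictly below \(s\) on compact subsets by continuity to obtain the exhaustion hypothesis, and conclude with \cref{lem:compact-exhaustion}, treating \(N=1\) as the identity.
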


\begin{proof}
For every \(r<u<s\), \cref{thm:fixed-cardinality-metric-descent}
gives a homotopy equivalence from scale \(r\) to scale \(u\).
If \(C\subset\DiamConf_N(X;s)\) is nonempty and compact, then
continuity of diameter gives \(\max_C\diam_N<s\).
Choose
\[
 \max\{r,\max_C\diam_N\}<u<s.
\]
Then \(C\subset\DiamConf_N(X;u)\).
Apply \cref{lem:compact-exhaustion}; for \(N=1\) the inclusion
is already the identity.
\end{proof}

\phantomsection
\label{proof:metric-stationary-chamber}
\begin{proof}[\normalfont\bfseries Proof of \cref{thm:metric-stationary-chamber-intro}]
The hypothesis on \(\Sigma^{\specWS}(X)\) implies
\([r,s)\cap\Sigma_N^{\specWS}(X)=\varnothing\) for every \(N\geq2\).
By \cref{cor:half-open-fixed-cardinality}, all configuration
inclusions are weak homotopy equivalences; the one-label inclusion
is the identity on \(X\).
Part~\textup{(ii)} of \cref{thm:configurationwise-chamber}
therefore gives the required homotopy equivalence of the canonical
Vietoris--Rips inclusion.
\end{proof}

A gap at \(N\) labels also controls every smaller label number, because
repetition nests the weak-slope spectra.  We obtain this from a general
retraction property, which will also be used in
\cref{prop:weak-slope-nonexpansive-retract}.

\begin{lemma}[Weak slope under a one-Lipschitz retraction]
\label{lem:weak-slope-retraction}
Let \(i:Y\to Z\) be an isometric embedding and \(p:Z\to Y\) be
one-Lipschitz with \(p i=\operatorname{id}_Y\).
Suppose \(g:Y\to\R\) and \(f:Z\to\R\) are continuous and
\[
 f\circ i=g,\qquad g\circ p\leq f.
\]
Then \(|\diff g|(y)\geq|\diff f|(i(y))\) for every \(y\in Y\).
\end{lemma}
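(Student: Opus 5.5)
The plan is to take an admissible descent for \(f\) at \(i(y)\) and push it down to \(Y\) by composing with \(p\). Fix \(y\in Y\) and any \(\sigma\geq0\) admissible for \(f\) at \(z:=i(y)\) in \cref{def:weak-slope}. Then there are \(\delta>0\) and a continuous map \(H:B_\delta(z)\times[0,\delta]\to Z\) with \(d_Z(H(w,u),w)\leq u\) and \(f(H(w,u))\leq f(w)-\sigma u\). It suffices to show that the same \(\sigma\) is admissible for \(g\) at \(y\), with the same \(\delta\). Taking the supremum over such \(\sigma\) then gives \(|\diff g|(y)\geq|\diff f|(i(y))\).

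Define
\[
 G:B_\delta(y)\times[0,\delta]\to Y,\qquad G(y',u):=p\bigl(H(i(y'),u)\bigr).
\]
This is well defined because \(i\) is isometric, so \(d_Z(i(y'),z)=d_Y(y',y)<\delta\) and \(i\) maps \(B_\delta(y)\) into \(B_\delta(z)\). The map \(G\) is continuous as a composite of continuous maps.

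For the displacement bound, use \(p i=\operatorname{id}_Y\) together with the fact that \(p\) is one-Lipschitz:
\[
 d_Y(G(y',u),y')=d_Y\bigl(p(H(i(y'),u)),p(i(y'))\bigr)\leq d_Z\bigl(H(i(y'),u),i(y')\bigr)\leq u.
\]
For the descent bound, use \(g\circ p\leq f\) and then \(f\circ i=g\):
\[
 g(G(y',u))\leq f\bigl(H(i(y'),u)\bigr)\leq f(i(y'))-\sigma u=g(y')-\sigma u.
\]
Hence \(\sigma\) is admissible for \(g\) at \(y\), which is what we needed.

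The case \(\sigma=0\) is trivial, since \(H(w,u)=w\) is always admissible, so the supremum is over a nonempty set. I expect no real obstacle here; the only points needing care are that the ball \(B_\delta(y)\) maps into \(B_\delta(z)\), which uses that \(i\) is isometric, and that \(G\) lands in \(Y\), which it does because \(p\) takes values in \(Y\).
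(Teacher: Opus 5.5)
Your proof is correct and is essentially the paper's argument: transfer an admissible deformation \(H\) for \(f\) to \(pH(i(\cdot),u)\), and check the displacement bound via \(pi=\operatorname{id}_Y\) with \(p\) one-Lipschitz, and the descent bound via \(g\circ p\leq f\) and \(f\circ i=g\). Your remark that the same \(\delta\) works because \(i\) maps \(B_\delta(y)\) into \(B_\delta(i(y))\) makes precise the paper's ``sufficiently small source ball.''
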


\begin{proof}
Transfer an admissible local deformation \(H\) for \(f\) by
\(\widetilde H(z,u):=pH(i(z),u)\).
The isometry puts a sufficiently small source ball in the domain,
and
\[
 d_Y(\widetilde H(z,u),z)\leq d_Z(H(i(z),u),i(z))\leq u,\qquad
 g(\widetilde H(z,u))\leq f(H(i(z),u))\leq g(z)-\sigma u.
\]
Every admissible rate for \(f\) is therefore admissible for \(g\).
Taking suprema proves the claim.
\end{proof}

\begin{lemma}[Repetition nesting for weak-slope spectra]
\label{lem:weak-slope-repetition-nesting}
Let \(2\leq q\leq N\) and
\(\pi:\{1,\ldots,N\}\twoheadrightarrow\{1,\ldots,q\}\).
For the repetition map \((\iota_\pi(x))_j:=x_{\pi(j)}\),
\[
 |\diff\diam_q|(x)\geq|\diff\diam_N|(\iota_\pi(x)),
 \qquad
 \Sigma_q^{\specWS}(X)\subseteq\Sigma_N^{\specWS}(X).
\]
\end{lemma}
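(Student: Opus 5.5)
The plan is to apply \cref{lem:weak-slope-retraction} with \(Y=X^q\) and \(Z=X^N\), both carrying the maximum product metrics, and with \(i=\iota_\pi\) and \(g=\diam_q\), \(f=\diam_N\). The one-Lipschitz retraction will be a coordinate selection. Since \(\pi\) is surjective, choose a section \(\tau:\{1,\ldots,q\}\to\{1,\ldots,N\}\) with \(\pi\circ\tau=\operatorname{id}\). Then define \(p:X^N\to X^q\) by \(p(z)_a:=z_{\tau(a)}\).

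We first check the hypotheses of the lemma.
\begin{itemize}[label=\(\circ\)]
\item \(\iota_\pi\) is an isometric embedding for \(d_{X,\infty}\). Each coordinate \(x_a\) appears among the coordinates of \(\iota_\pi(x)\), because \(\pi\) is surjective, and every coordinate of \(\iota_\pi(x)\) is some \(x_a\). Hence \(\max_j d_X(x_{\pi(j)},x'_{\pi(j)})=\max_a d_X(x_a,x'_a)\).
\item \(p\) is one-Lipschitz, since it is a selection of coordinates.
\item \(p\circ\iota_\pi=\operatorname{id}\), because \(p(\iota_\pi(x))_a=x_{\pi(\tau(a))}=x_a\).
\item \(\diam_N\circ\iota_\pi=\diam_q\). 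Repeating coordinates does not change the set of pairwise distances, apart from zeros coming from repeated labels.
\item \(\diam_q\circ p\leq\diam_N\). Deleting coordinates cannot increase the diameter, and \(p(z)\) is, up to relabeling, the subtuple \((z_{\tau(a)})_a\) of \(z\).
\end{itemize}
Continuity of both diameters was noted in \eqref{eq:metric-labelled-diameter}.

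\cref{lem:weak-slope-retraction} then gives \(|\diff\diam_q|(x)\geq|\diff\diam_N|(\iota_\pi(x))\) for every \(x\in X^q\). For the spectrum inclusion, let \(x\in X^q\) satisfy \(|\diff\diam_q|(x)=0\). The inequality forces \(|\diff\diam_N|(\iota_\pi(x))=0\), since weak slopes are nonnegative. Also \(\diam_N(\iota_\pi(x))=\diam_q(x)\), so \(\diam_q(x)\in\Sigma_N^{\specWS}(X)\). Such a surjection \(\pi\) exists whenever \(q\leq N\), for instance \(\pi(j)=\min\{j,q\}\).

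No step is a real obstacle. The only points needing care are that surjectivity of \(\pi\) is what makes \(\iota_\pi\) isometric rather than merely one-Lipschitz, and that the inequality direction in the lemma is the one that pushes stationarity from \(q\) labels up to \(N\) labels.
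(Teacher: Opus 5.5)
Your proposal is correct and follows the paper's proof essentially verbatim: choose a section of \(\pi\) to get a one-Lipschitz coordinate-selection retraction onto the isometrically embedded repeated tuples, verify \(\diam_N\iota_\pi=\diam_q\) and \(\diam_q p\leq\diam_N\), and apply \cref{lem:weak-slope-retraction}. The spectral inclusion then follows exactly as you say, from nonnegativity of weak slope and preservation of diameter under repetition.
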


\begin{proof}
Choose a section \(a\) of \(\pi\) and let
\((p_a(y))_i:=y_{a(i)}\).
For the maximum product metrics, \(\iota_\pi\) is isometric,
\(p_a\) is one-Lipschitz, and
\[
 p_a\iota_\pi=\operatorname{id},\qquad
 \diam_N\iota_\pi=\diam_q,\qquad \diam_q p_a\leq\diam_N.
\]
Apply \cref{lem:weak-slope-retraction}.  A zero weak slope on
\(X^q\) forces zero weak slope at its repeated tuple, and repetition
preserves diameter, proving the inclusion of spectra.
\end{proof}

\begin{corollary}[Intervals without stationary values and Vietoris--Rips connectivity]
\label{cor:fixed-label-weak-slope-connectivity}
Let \(X\) be a nonempty compact metric space, \(N\geq2\), and \(0<r<s\).
If \([r,s)\cap\Sigma_N^{\specWS}(X)=\varnothing\), then the
canonical inclusion \(\VR{X}{r}\to\VR{X}{s}\) is \((N-1)\)-connected.
\end{corollary}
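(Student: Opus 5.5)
The plan is to reduce the statement to part~\textup{(i)} of \cref{thm:configurationwise-chamber}. That part only needs the ordered diameter sublevel inclusions
\[
 \DiamConf_q(X;r)\longrightarrow\DiamConf_q(X;s),\qquad 1\leq q\leq N,
\]
to be \((N-q)\)-connected. I will in fact show that each of them is a weak homotopy equivalence, which is \(m\)-connected for every \(m\). The case \(q=1\) is the identity on \(X\), since \(\diam_1=0\) and \(r>0\), so nothing needs to be checked there.

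For \(2\leq q\leq N\), I first transfer the gap hypothesis from \(N\) labels to \(q\) labels. \Cref{lem:weak-slope-repetition-nesting} gives \(\Sigma_q^{\specWS}(X)\subseteq\Sigma_N^{\specWS}(X)\), using any surjection \(\{1,\ldots,N\}\twoheadrightarrow\{1,\ldots,q\}\), which exists because \(q\leq N\). Consequently
\[
 [r,s)\cap\Sigma_q^{\specWS}(X)=\varnothing .
\]
Since \(X\) is compact, \cref{cor:half-open-fixed-cardinality} then shows that \(\DiamConf_q(X;r)\to\DiamConf_q(X;s)\) is a weak homotopy equivalence, so in particular it is \((N-q)\)-connected.

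Part~\textup{(i)} of \cref{thm:configurationwise-chamber} now yields that \(\VR{X}{r}\to\VR{X}{s}\) is \((N-1)\)-connected. The one point to watch is the meaning of connectivity in degree \(0\), i.e.\ for \(q=N\). There a weak homotopy equivalence is bijective, and hence surjective, on path components, so it satisfies the \(0\)-connected requirement. With the nesting lemma and the strict-endpoint corollary already in hand, nothing else needs checking.
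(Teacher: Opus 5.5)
Your proposal is correct and follows the paper's proof essentially verbatim. Repetition nesting transfers the gap to every $2\leq q\leq N$, the strict-endpoint corollary makes each configuration inclusion a weak homotopy equivalence, and the one-label map is the identity, so part (i) of the finite-label criterion applies. Your check that a weak homotopy equivalence meets the $0$-connected requirement at $q=N$ is a detail the paper leaves implicit.
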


\begin{proof}
By \cref{lem:weak-slope-repetition-nesting}, the gap holds for every
\(2\leq q\leq N\).  Then \cref{cor:half-open-fixed-cardinality}
makes each corresponding configuration inclusion a weak homotopy
equivalence.  The one-label map is the identity, so
\cref{thm:configurationwise-chamber}\textup{(i)} applies.
\end{proof}

Fix a field \(\mathbb F\) and an integer \(p\geq1\).
A scale \(c>0\) is \emph{homologically regular in degree \(p\)} if
there is \(0<\varepsilon<c\) such that every canonical inclusion
between scales \(r<s\) in \((c-\varepsilon,c+\varepsilon)\) induces
an isomorphism on \(H_p(-;\mathbb F)\).
Otherwise \(c\) is a
\emph{homological critical scale in degree \(p\)}. We use the
local-constancy convention of Bubenik--Scott
\cite[Definition~4.3]{BubenikScott2014}.

\begin{proposition}[Homological changes force stationary configurations]
\label{prop:homological-change-stationarity}
Let \(X\) be a nonempty compact metric space, let \(p\geq1\), and
let \(\mathbb F\) be a field.  For \(0<r<s\), if the canonical map
\[
 H_p(\VR{X}{r};\mathbb F)
 \longrightarrow H_p(\VR{X}{s};\mathbb F)
\]
is not an isomorphism, then
\[
 [r,s)\cap\Sigma_{p+2}^{\specWS}(X)\neq\varnothing.
\]
If that map is not surjective, then already
\[
 [r,s)\cap\Sigma_{p+1}^{\specWS}(X)\neq\varnothing.
\]
Every positive homological critical scale in degree \(p\) belongs to
\(\Sigma_{p+2}^{\specWS}(X)\).  If every neighborhood of a scale
\(c>0\) contains scales \(r<s\) for which the displayed map is not
surjective, then \(c\in\Sigma_{p+1}^{\specWS}(X)\).
\end{proposition}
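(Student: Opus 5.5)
We argue by contraposition from \cref{cor:fixed-label-weak-slope-connectivity}. Suppose \([r,s)\cap\Sigma_{p+2}^{\specWS}(X)=\varnothing\). Applying that corollary with \(N=p+2\) makes the canonical inclusion \(\VR{X}{r}\to\VR{X}{s}\) a \((p+1)\)-connected map. By the relative Hurewicz theorem, or equivalently by passing to the mapping cylinder and using the long exact sequence of the pair, a \((p+1)\)-connected map induces isomorphisms on \(H_j(-;\mathbb F)\) for \(j\leq p\) and a surjection on \(H_{p+1}\). In particular it is an isomorphism on \(H_p\), which contradicts the hypothesis. For the surjectivity statement, suppose instead that \([r,s)\cap\Sigma_{p+1}^{\specWS}(X)=\varnothing\). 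The corollary with \(N=p+1\) (valid since \(p+1\geq2\)) then gives a \(p\)-connected inclusion. Such a map is surjective on \(H_p\), because the relative group \(H_p\) of the pair vanishes and the exact sequence \(H_p(\text{source})\to H_p(\text{target})\to H_p(\text{pair})=0\) shows surjectivity. This again contradicts the hypothesis.

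Next we localize critical scales. Let \(c>0\) be a homological critical scale in degree \(p\). For every \(\varepsilon\in(0,c)\), the interval \((c-\varepsilon,c+\varepsilon)\) contains scales \(r_\varepsilon<s_\varepsilon\) whose canonical map on \(H_p\) is not an isomorphism. By the first part, there is a value \(\lambda_\varepsilon\in[r_\varepsilon,s_\varepsilon)\cap\Sigma_{p+2}^{\specWS}(X)\), and \(|\lambda_\varepsilon-c|<\varepsilon\). It remains to show that \(\Sigma_{p+2}^{\specWS}(X)\) is closed, which gives \(c\in\Sigma_{p+2}^{\specWS}(X)\) on letting \(\varepsilon\to0\). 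Closedness holds because the stationary set \(\{x\in X^{p+2}:|\diff\diam_{p+2}|(x)=0\}\) is closed by \cref{lem:weak-slope-lsc}, hence compact in the compact space \(X^{p+2}\), and its image under the continuous map \(\diam_{p+2}\) is compact. This is the same observation already used in the proof of \cref{thm:fixed-cardinality-metric-descent}. The argument for the non-surjective case is identical, using the second part and compactness of \(\Sigma_{p+1}^{\specWS}(X)\).

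The only point needing care is the homological translation of connectivity. The maps are between CW complexes but need not be inclusions of subcomplexes in any useful sense, so we replace the map by its mapping cylinder and apply the relative Hurewicz theorem for \((n)\)-connected pairs, with \(\mathbb F\) coefficients via universal coefficients. We must also handle possibly several path components. Since the map is at least \(1\)-connected when \(p\geq1\) in the first case, and \(p\)-connected in the second, components correspond bijectively in the first case, and we work componentwise at every basepoint. We therefore expect no genuine obstacle beyond citing these standard facts.
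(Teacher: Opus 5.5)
Your proof is correct and follows the paper's argument. It takes contrapositives of the fixed-label connectivity corollary with \(N=p+2\) and \(N=p+1\), uses the standard homological consequences of an \(n\)-connected map, and uses compactness of \(\Sigma_N^{\specWS}(X)\), from lower semicontinuity of the weak slope, to pass to an exact critical scale; your sequential limiting argument is equivalent to the paper's neighborhood formulation, and since the canonical map is already the inclusion of a subcomplex, the mapping-cylinder replacement is harmless but unnecessary.
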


\begin{proof}
An \((N-1)\)-connected map of CW complexes induces homology
isomorphisms in degrees below \(N-1\) and a surjection in degree
\(N-1\).  Apply \cref{cor:fixed-label-weak-slope-connectivity}
with \(N=p+2\) and \(N=p+1\), respectively, and take
contrapositives.

For each fixed \(N\), lower semicontinuity of the weak slope
in \cref{lem:weak-slope-lsc} makes its zero locus closed in the
compact space \(X^N\).  Its continuous diameter image
\(\Sigma_N^{\specWS}(X)\) is therefore compact.  If \(c>0\) is
outside this spectrum, it has a neighborhood disjoint from the
spectrum, and every half-open scale interval in that neighborhood
satisfies the corresponding gap hypothesis.  The preceding
connectivity conclusions give the two assertions at an exact scale.
\end{proof}

To apply \cref{thm:configurationwise-chamber}\textup{(i)}
with \(N=2\), it is enough that the pair inclusion be surjective on
path components.  The Vietoris--Rips inclusion is
then \(1\)-connected: it induces a bijection on components and a
surjection on fundamental groups.  New triangles can still kill
loops.

A \emph{length space} is a metric space in
which the distance between two points is the infimum of the lengths of
rectifiable paths joining them \cite[Definition~I.3.1]{BridsonHaefliger1999}.
For a nonempty length space \(X\), Virk proved that the
canonical inclusion induces a surjection
\[
 (i_{r,s})_*:
 \pi_1\bigl(|\VR{X}{r}|,x_*\bigr)
 \longrightarrow
 \pi_1\bigl(|\VR{X}{s}|,x_*\bigr)
\]
for every \(0<r<s\) and every basepoint \(x_*\in X\)
\cite[Proposition~3.2(9) and Section~11.3]{Virk2020}.
\Cref{cor:path-metric-rips-pi1-surjectivity} derives this surjectivity,
together with the bijection on path components, from
\cref{thm:configurationwise-chamber}.

\begin{corollary}[Recovering Virk's fundamental-group surjectivity]
\label{cor:path-metric-rips-pi1-surjectivity}
Let \(X\) be a nonempty length space and \(0<r<s\).
The canonical inclusion
\(\VR{X}{r}\to\VR{X}{s}\) is \(1\)-connected.
\end{corollary}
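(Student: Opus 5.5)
We apply \cref{thm:configurationwise-chamber}\textup{(i)} with \(N=2\). Its hypotheses then ask that the inclusion \(\DiamConf_1(X;r)\to\DiamConf_1(X;s)\) be \(1\)-connected, and that \(\DiamConf_2(X;r)\to\DiamConf_2(X;s)\) be \(0\)-connected, i.e.\ surjective on path components. The conclusion is exactly that \(\VR{X}{r}\to\VR{X}{s}\) is \(1\)-connected: a bijection on path components and a surjection on \(\pi_1\) at every basepoint. The one-label condition is immediate, since \(\diam_1=0\) gives \(\DiamConf_1(X;t)=X\) for every \(t>0\), so that inclusion is the identity of \(X\), which is \(m\)-connected for every \(m\).

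The substance is the two-label condition. Fix \((x,y)\in X^2\) with \(d_X(x,y)<s\). We must join it by a path inside \(\DiamConf_2(X;s)\) to some pair in \(\DiamConf_2(X;r)\). We use the length structure to choose a rectifiable path \(\gamma:[0,1]\to X\) from \(x\) to \(y\) with length \(L<s\), and we parametrize it proportionally to arc length.

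First consider the path of pairs \(u\mapsto(\gamma(u),y)\) for \(u\in[0,1]\). Along it,
\[
 d_X(\gamma(u),y)\le \operatorname{length}(\gamma|_{[u,1]})=(1-u)L<s,
\]
so the path stays in \(\DiamConf_2(X;s)\). It is continuous in the maximum product metric because \(\gamma\) is continuous. Its endpoint is the pair \((y,y)\), which has diameter \(0<r\) and therefore lies in \(\DiamConf_2(X;r)\). This proves surjectivity on path components.

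There is no real obstacle here. The only point needing care is to use the length-space hypothesis to obtain a path of length strictly less than \(s\); since \(d_X(x,y)<s\) and the distance is an infimum of lengths, such a path exists. After that, the estimate along the path is just the bound of distance by the length of the remaining subarc. Note that we do not need the path to stay below \(r\): only its endpoint must land in the smaller sublevel. Combining this with the identity on one label, \cref{thm:configurationwise-chamber}\textup{(i)} gives the \(1\)-connectivity, and in particular Virk's surjectivity on fundamental groups.
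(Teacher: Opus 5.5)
Your proof is correct and is essentially the paper's argument: apply \cref{thm:configurationwise-chamber}\textup{(i)} with \(N=2\), note that the one-label map is the identity, and slide one label along a path of length less than \(s\) to the other. Along the way the distance is bounded by the length of the remaining subpath, so the pair stays below \(s\) and ends at a pair of diameter zero. The differences are cosmetic: you move the first label rather than the second, and the arc-length reparametrization is unnecessary, since \(d_X(\gamma(u),y)\leq L<s\) already suffices.
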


\begin{proof}
Given \((x_0,x_1)\in\DiamConf_2(X;s)\), choose a rectifiable path
from \(x_1\) to \(x_0\) of length less than \(s\).
Keep the first label at \(x_0\) and move the second along this path.
At every time, the remaining subpath to \(x_0\) has length less than
\(s\), so the pair remains in \(\DiamConf_2(X;s)\).
It ends at \((x_0,x_0)\in\DiamConf_2(X;r)\).
Thus the pair inclusion is surjective on components.  The one-label
map is the identity, and
\cref{thm:configurationwise-chamber}\textup{(i)} applies with \(N=2\).
\end{proof}

For the closed filtration, Chazal--de Silva--Oudot proved the
corresponding surjectivity on first homology by approximate midpoint
subdivisions~\cite[Corollary~6.2]{ChazalDeSilvaOudot2014}.

\section{From weak-slope deformation to geometric critical values}
\label{sec:clarke-spectrum}

To apply \cref{thm:metric-stationary-chamber-intro} to a closed connected
Riemannian manifold, it suffices to exclude Clarke-critical diameter
values in the interval under consideration; see
\cref{prop:clarke-first-order}.  We first establish this comparison,
then prove the fixed-label structure theorem announced in
\cref{thm:fixed-label-clarke-sard-intro} and establish the
label-independent gap above zero in \cref{prop:uniform-convexity-gap}.

\subsection{Weak slope, Clarke criticality, and common descent}
\label{subsec:clarke-comparison}

For a finite maximum of smooth functions, only the
functions attaining the maximum at the point in question contribute
to its first-order change.  For example, both functions \(t\) and
\(-t\) attain the maximum in \(f(t)=\max\{t,-t\}\) at zero.
Their derivatives are \(1\) and \(-1\), so no direction decreases
both functions to first order; equivalently, zero lies in the convex
hull of these two derivatives.
Clarke's subdifferential extends this convex-hull description to
locally Lipschitz functions.

\begin{definition}[Clarke subdifferential
{\cite[Section~2.1]{Clarke1990}}]
\label{def:clarke-subdifferential}
Let \(f:M\to\R\) be locally Lipschitz on a finite-dimensional Riemannian
manifold.  Choose a smooth chart \(\phi:U\to\R^m\) about \(x\), and put
\(h=f\circ\phi^{-1}\).  The \emph{Clarke subdifferential} is
\[
 \partial^{\mathrm C}f(x)
 :=(\diff\phi_x)^*\overline{\conv}\left\{
     \lim_{k\to\infty}\diff h_{z_k}:
     z_k\to\phi(x),\ h\text{ is differentiable at }z_k
   \right\}\subset T_x^*M.
\]
It is a nonempty compact convex set, independent of the chart.  The point
\(x\) is \emph{Clarke critical} if \(0\in\partial^{\mathrm C}f(x)\).
\end{definition}

Degiovanni--Marzocchi's Banach-space comparison
\cite[Theorem~2.17]{DegiovanniMarzocchi1994} has the following Riemannian
form.

\begin{theorem}[Degiovanni--Marzocchi comparison in Riemannian form]
\label{thm:weak-implies-clarke}
Let \(f:M\to\R\) be locally Lipschitz on a finite-dimensional Riemannian
manifold.  Then
\begin{equation}
 |\diff f|_{d_M}(x)
 \geq\operatorname{dist}_{g_x^*}
       \bigl(0,\partial^{\mathrm C}f(x)\bigr).
 \label{eq:riemannian-weak-clarke-comparison}
\end{equation}
In particular, weak-slope stationarity implies Clarke criticality.
\end{theorem}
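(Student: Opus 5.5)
The plan is to prove the contrapositive form: if \(\operatorname{dist}_{g_x^*}(0,\partial^{\mathrm C}f(x))>\sigma>0\), then \(\sigma\) (or any smaller positive number) is an admissible rate in \cref{def:weak-slope} at \(x\). Since the inequality is local and both sides depend only on a neighborhood of \(x\), we can work in a chart. The first step is a norm comparison. Choose normal coordinates \(\phi\) centered at \(x\), so that \(g_{\phi(x)}\) is the Euclidean inner product. For any \(\eta>0\) there is a neighborhood on which the Riemannian distance \(d_M\) and the Euclidean distance in the chart are comparable with factors in \([1-\eta,1+\eta]\), and on which the dual norms \(g_z^*\) are likewise comparable with the Euclidean dual norm. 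Because the Clarke subdifferential is chart-independent and transforms by \((\diff\phi_x)^*\), the distance \(\operatorname{dist}_{g_x^*}(0,\partial^{\mathrm C}f(x))\) equals the Euclidean distance from \(0\) to \(\partial^{\mathrm C}h(0)\), where \(h=f\circ\phi^{-1}\).

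The second step is to build a single descent direction. Let \(v\) be minus the Euclidean minimal-norm element of the compact convex set \(\partial^{\mathrm C}h(0)\), normalized to unit length. By the projection characterization, \(\langle \xi,v\rangle\le -\rho\) for every \(\xi\in\partial^{\mathrm C}h(0)\), where \(\rho:=\operatorname{dist}(0,\partial^{\mathrm C}h(0))\). Upper semicontinuity of the Clarke subdifferential then gives \(\delta>0\) such that \(\langle\xi,v\rangle\le-\rho+\eta\) for all \(\xi\in\partial^{\mathrm C}h(z)\) with \(|z|<2\delta\). Lebourg's mean value theorem gives, for \(y\) near \(0\) and small \(u\ge 0\),
\[
 h(y+uv)-h(y)=u\langle\xi,v\rangle\le-(\rho-\eta)u
\]
for some \(\xi\) in the Clarke subdifferential at an intermediate point.

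The third step defines the deformation. Set
\[
 H(p,u):=\phi^{-1}\bigl(\phi(p)+\tfrac{u}{1+\eta}v\bigr)
\]
on a small \(d_M\)-ball around \(x\), for \(u\) in a short interval \([0,\delta']\). This map is continuous. The metric comparison gives \(d_M(H(p,u),p)\le u\), and the previous step gives \(f(H(p,u))\le f(p)-\tfrac{\rho-\eta}{1+\eta}u\). Hence \(|\diff f|(x)\ge(\rho-\eta)/(1+\eta)\), and letting \(\eta\to0\) yields \eqref{eq:riemannian-weak-clarke-comparison}. The final clause follows at once: if \(|\diff f|(x)=0\), then the distance is zero, and since \(\partial^{\mathrm C}f(x)\) is closed, it contains \(0\).

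The main obstacle is the bookkeeping in the uniformity. The descent estimate must hold for all base points \(y\) in a ball, not only at \(x\), and it must survive the switch between the chart and Riemannian norms. This is exactly why I use the upper semicontinuity of \(\partial^{\mathrm C}h\) together with Lebourg's theorem rather than a directional derivative at \(x\) alone. I also need to make sure the chart ball is taken inside a \(d_M\)-ball so that the radius condition of \cref{def:weak-slope} holds.
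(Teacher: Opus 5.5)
Your argument is correct. Its outer layer matches the paper's proof: a normal chart at \(x\) with \(\diff\phi_x\) an isometry, a local bound \(d_M(p,q)\leq(1+\eta)\,|\phi(p)-\phi(q)|\), and the time rescaling \(u\mapsto u/(1+\eta)\) that turns a Euclidean witness into a \(d_M\)-witness.

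The two proofs differ in the Euclidean step. The paper cites Degiovanni--Marzocchi's Banach-space inequality \(|\diff h|(z)\geq\min_{\xi\in\partial^{\mathrm C}h(z)}\|\xi\|\) and transfers whatever deformation witnesses it. You instead prove that inequality in the chart by giving an explicit witness: the coordinate translation in the direction \(v\) opposite to the least-norm element of \(\partial^{\mathrm C}h(0)\). You get uniform descent from upper semicontinuity of \(\partial^{\mathrm C}h\) and Lebourg's mean value theorem. Your route is self-contained and shows that nearly optimal rates are achieved by translation in a single direction. The paper's route is shorter because it hands exactly this step to the citation.

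Two simplifications are available. First, since \(h^\circ(0;v)=\max_{\xi\in\partial^{\mathrm C}h(0)}\xi[v]\leq-\rho\), uniform descent \(h(y+tv)-h(y)\leq-(\rho-\eta)t\) for small \(|y|\) and \(t\) follows directly from the definition of \(h^\circ(0;v)\) as a joint \(\limsup\) over \(y\to0\), \(t\downarrow0\). So neither Lebourg nor upper semicontinuity is needed, and this is essentially how the cited theorem is proved. Second, in normal coordinates centered at \(x\), small \(d_M\)-balls about \(x\) are exactly Euclidean balls in the chart, so the domain bookkeeping you flag at the end is automatic. The comparison of dual norms \(g_z^*\) for \(z\neq x\) is never used.
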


Here \(g_x^*\) denotes the dual metric on \(T_x^*M\).
The numerical weak slope depends on the metric, but its vanishing is
unchanged by a local bi-Lipschitz change.  We use this fact to compare
stationarity of \(\diam_N\) for the Riemannian and maximum product
metrics.

\begin{lemma}[Local bi-Lipschitz invariance of weak-slope stationarity]
\label{lem:weak-slope-bilipschitz}
Let \(f:Z\to\R\) be continuous, and suppose two metrics \(d,d'\) on
\(Z\) are locally bi-Lipschitz equivalent near \(z\).  Then
\[
 |\diff f|_d(z)=0\quad\Longleftrightarrow\quad|\diff f|_{d'}(z)=0.
\]
\end{lemma}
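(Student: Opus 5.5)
The plan is to prove a quantitative statement: near \(z\), positive weak slope for one metric forces positive weak slope for the other, with the rate degraded only by the bi-Lipschitz constant.

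\emph{Setup.} By symmetry it suffices to show that \(|\diff f|_{d'}(z)>0\) implies \(|\diff f|_{d}(z)>0\). Choose \(\rho>0\) and \(L\geq1\) so that
\[
 L^{-1}d(a,b)\leq d'(a,b)\leq L\,d(a,b)
 \qquad\text{for all }a,b\in B^d_\rho(z).
\]
Pick an admissible rate \(\sigma>0\) for \(d'\), witnessed by \(\delta'>0\) and a continuous map
\[
 H:B^{d'}_{\delta'}(z)\times[0,\delta']\to Z,\qquad d'(H(y,u),y)\leq u,\qquad f(H(y,u))\leq f(y)-\sigma u.
\]

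\emph{Reparametrize time.} Define \(\widetilde H(y,u):=H(y,u/L)\) on \(B^d_\delta(z)\times[0,\delta]\), with \(\delta>0\) chosen small. The required properties then follow from the constants.

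First, \(\widetilde H\) is well defined. We need \(\delta<\rho\) and \(L\delta<\delta'\), so that \(d(y,z)<\delta\) gives \(d'(y,z)<L\delta<\delta'\). We also need \(u/L\leq\delta'\).

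Second, the displacement bound holds. The point \(H(y,u/L)\) satisfies \(d'(H(y,u/L),y)\leq u/L\), which gives \(d'(H(y,u/L),z)<L\delta+\delta'\). To apply the comparison inequality we need both points in \(B^d_\rho(z)\). So we additionally require \(L(L\delta+\delta)<\rho\); more carefully, we shrink \(\delta'\) first so that \(B^{d'}_{2\delta'}(z)\subseteq B^d_\rho(z)\). This is possible since the metrics induce the same topology near \(z\), by the local bi-Lipschitz equivalence. Once both points lie in \(B^d_\rho(z)\), the comparison gives
\[
 d(\widetilde H(y,u),y)\leq L\cdot u/L=u.
\]

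Third, the decrease estimate holds with rate \(\sigma/L\):
\[
 f(\widetilde H(y,u))\leq f(y)-(\sigma/L)\,u.
\]
Continuity of \(\widetilde H\) is clear, since \(d\) and \(d'\) define the same topology near \(z\). Hence \(|\diff f|_d(z)\geq\sigma/L>0\). Together with the symmetric argument, this gives \(L^{-1}|\diff f|_{d'}(z)\leq|\diff f|_d(z)\leq L|\diff f|_{d'}(z)\), which yields the equivalence.

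The only delicate point is the bookkeeping of the neighborhoods. We must ensure that the deformed points stay within the region where the bi-Lipschitz comparison is valid. This is handled by shrinking \(\delta'\) before defining \(\delta\), since the definition of weak slope allows restricting any witness to smaller balls and shorter times, as in the proof of \cref{lem:weak-slope-lsc}.
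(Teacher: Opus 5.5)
Your argument is correct and is essentially the paper's proof: shrink a witness for one metric so that the deformed points stay where the comparison holds, rescale time by the bi-Lipschitz constant via \(\widetilde H(y,u)=H(y,u/L)\) to get rate \(\sigma/L\), and then interchange the metrics. (Drop the tentative condition \(L(L\delta+\delta)<\rho\): it presupposes the comparison it is meant to justify. The version with \(B^{d'}_{2\delta'}(z)\subseteq B^d_\rho(z)\) is the right one, and, like the paper, it uses that the comparison neighborhood is a neighborhood of \(z\) for both metrics.)
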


The proofs of \cref{thm:weak-implies-clarke,lem:weak-slope-bilipschitz}
are given in \cref{app:weak-clarke-comparison}.

For a finite maximum of \(C^1\) functions on a finite-dimensional
Riemannian manifold \(M\), the weak-slope bound
\eqref{eq:riemannian-weak-clarke-comparison} is an equality.
The closest point to zero in the convex hull of the differentials of
the branches attaining the maximum determines the best common rate
of descent.  We express this rate using the following directional
derivative.

\begin{definition}[Upper Dini directional derivative;
cf.\ {\cite[Definition~3.1.3 and Remark~3.1.4]{CannarsaSinestrari2004}}]
\label{def:upper-dini-directional}
For a locally Lipschitz function \(f:M\to\R\) and \(v\in T_zM\), put
\[
 D^+f(z;v):=\limsup_{h\downarrow0}
       \frac{f(\exp_z(hv))-f(z)}{h}.
\]
By local Lipschitz continuity, replacing \(v\) by \(v_h\to v\)
changes the quotient by \(o(1)\).
\end{definition}

Suppose, on a neighborhood of \(z\), that
\(f=\max_{\alpha\in\mathcal A}f_\alpha\), where \(\mathcal A\) is
nonempty and finite and each \(f_\alpha\) is \(C^1\).  Define
\[
 I_f(z):=\{\alpha:f_\alpha(z)=f(z)\},
 \qquad
 K_z:=\conv\{\diff(f_\alpha)_z:\alpha\in I_f(z)\}.
\]
The indices in \(I_f(z)\) are called \emph{active}.  Inactive branches
have a positive gap below \(f(z)\), so they cannot attain the maximum
on a sufficiently small neighborhood of \(z\).

\begin{proposition}[Weak slope of a finite \(C^1\) maximum]
\label{prop:weak-slope-finite-smooth-maximum}
With this notation,
\begin{equation}
 D^+f(z;v)=\max_{\alpha\in I_f(z)}\diff(f_\alpha)_z[v],
 \qquad v\in T_zM,
 \label{eq:dini-finite-maximum}
\end{equation}
and \(\partial^{\mathrm C}f(z)=K_z\).  For the Riemannian distance and
its induced cotangent norm,
\begin{equation}
 |\diff f|(z)
 =\operatorname{dist}(0,K_z)
 =\max_{\lVert v\rVert\leq1}\{-D^+f(z;v)\}.
 \label{eq:weak-slope-finite-maximum}
\end{equation}
Consequently, the following are equivalent:
\begin{enumerate}[label=\textup{(\roman*)}]
\item \(|\diff f|(z)=0\);
\item \(0\in K_z\);
\item no vector strictly decreases every active branch to first order;
\item \(D^+f(z;v)\geq0\) for every \(v\in T_zM\).
\end{enumerate}
\end{proposition}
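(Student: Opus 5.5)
I would prove the statement in four steps: the Dini formula first, then the Clarke subdifferential, then the lower bound on the weak slope, and finally the matching upper bound.

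\emph{The Dini formula.} Restrict to a neighborhood of \(z\) on which only active branches can attain the maximum. For each active \(\alpha\), the \(C^1\) chain rule gives
\[
 f_\alpha(\exp_z(hv))=f(z)+h\,\diff(f_\alpha)_z[v]+o(h).
\]
Taking the maximum over the finitely many active indices gives \eqref{eq:dini-finite-maximum}, and in fact as a genuine limit, not only a \(\limsup\).

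\emph{The Clarke subdifferential.} Work in a chart. The identity \(\partial^{\mathrm C}f(z)=K_z\) is Clarke's classical formula for pointwise maxima of \(C^1\) (hence strictly differentiable) functions \cite[Proposition~2.3.12]{Clarke1990}. The inclusion \(\partial^{\mathrm C}f(z)\subseteq K_z\) can also be seen directly. At any differentiability point \(y\) near \(z\), \(\diff f_y\) lies in \(\conv\{\diff(f_\alpha)_y:\alpha\in I_f(y)\}\), and \(I_f(y)\subseteq I_f(z)\) for \(y\) near \(z\). Continuity of the differentials then puts every limit of such \(\diff f_{y}\) in \(K_z\). Since Clarke's formula gives equality, I would cite it rather than reprove the reverse inclusion.

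\emph{Lower bound: \(|\diff f|(z)\geq\operatorname{dist}(0,K_z)\).} This is exactly \cref{thm:weak-implies-clarke} once \(\partial^{\mathrm C}f(z)=K_z\) is known. By minimax (convexity and compactness of \(K_z\)),
\[
 \operatorname{dist}(0,K_z)=\max_{\lVert v\rVert\le1}\min_{\xi\in K_z}\{-\xi[v]\}=\max_{\lVert v\rVert\le1}\{-D^+f(z;v)\},
\]
using \eqref{eq:dini-finite-maximum} and linearity in \(\xi\).

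\emph{Upper bound: \(|\diff f|(z)\leq\max_{\lVert v\rVert\le1}\{-D^+f(z;v)\}\).} This is the step I expect to need the most care. Suppose \(\sigma\) is admissible with deformation \(H\). Fix \(\eta>0\). For small \(u\), the curve \(u\mapsto H(z,u)\) stays within distance \(u\) of \(z\), so we may write \(H(z,u)=\exp_z(u w_u)\) with \(\lVert w_u\rVert\le1\) up to a \(1+o(1)\) factor from normal coordinates. Passing to a subsequence \(u_k\to0\) with \(w_{u_k}\to w\) and \(\lVert w\rVert\le 1\), each active branch satisfies
\[
 f_\alpha(\exp_z(u_kw_{u_k}))\geq f(z)+u_k\,\diff(f_\alpha)_z[w]-o(u_k).
\]
Together with \(f(H(z,u_k))\le f(z)-\sigma u_k\), this gives \(\diff(f_\alpha)_z[w]\le-\sigma\) for every active \(\alpha\). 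Hence \(-D^+f(z;w)\geq\sigma\), and taking the supremum over admissible \(\sigma\) gives the bound. Only the fibre at \(y=z\) of the deformation is used, so no uniformity in \(y\) is needed here. This closes the chain of inequalities in \eqref{eq:weak-slope-finite-maximum}, and the attainment of the maximum follows from compactness of the unit ball.

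For the reverse direction one could instead build the deformation explicitly: take \(v\) to be minus the normalized minimal-norm element of \(K_z\) and set \(H(y,u)=\exp_y(u\,P_{z\to y}v)\), using uniform continuity of the differentials. The comparison theorem already covers that direction, though, so this construction is unnecessary.

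\emph{Equivalences.} From \eqref{eq:weak-slope-finite-maximum}: (i)\(\Leftrightarrow\)(ii) is immediate, and (ii)\(\Leftrightarrow\)(iv) follows from the max-formula. For (ii)\(\Leftrightarrow\)(iii), use Gordan's alternative: either \(0\in K_z\), or there is a \(v\) with \(\diff(f_\alpha)_z[v]<0\) for every active \(\alpha\), and not both.
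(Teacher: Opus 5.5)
Your proposal is correct and follows essentially the same route as the paper: Taylor expansion for the Dini formula, Clarke's finite-maximum rule for \(\partial^{\mathrm C}f(z)=K_z\), the Degiovanni--Marzocchi comparison for the lower bound, and the rescaled single fibre \(u_k^{-1}\exp_z^{-1}H(z,u_k)\) with Taylor expansion of each active branch for the upper bound. The differences are cosmetic. The paper proves the inclusion \(K_z\subseteq\partial^{\mathrm C}f(z)\) directly via \(f-f_\alpha\geq0\), where you cite Clarke. It also obtains the duality identity from the least-norm element of \(K_z\), where you invoke a minimax theorem.
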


The proof of \cref{prop:weak-slope-finite-smooth-maximum} is given in
\cref{sec:finite-smooth-maxima}.

\begin{remark}[Smooth interpretation]
\label{rem:smooth-weak-slope}
Taking a single branch in
\cref{prop:weak-slope-finite-smooth-maximum} gives
\[
 |\diff f|(z)=\lVert\diff f_z\rVert=\lVert\nabla f(z)\rVert
 \qquad(f\in C^1(M)),
\]
with the weak slope and both norms computed using the Riemannian metric.
Thus the usual gradient criterion for deformation is a special case.  For a finite smooth maximum, the convex hull of
the active differentials takes the place of the single gradient.
\end{remark}

\phantomsection
\label{sec:riemannian-diameter-stationarity}

The diameter is a finite maximum of pair distances.  Only the pairs
attaining that maximum matter locally: every other pair has a strict gap
below it.  In particular, pairs of coincident coordinates are inactive
at positive diameter.

The following notation specializes the active-branch terminology for
finite maxima \cite[Proposition~2.3.12]{Clarke1990} to the diameter.

\begin{definition}[Diameter pairs]
\label{def:active-pairs}
Let \((X,d_X)\) be a metric space and let \(x\in X^N\).
For \(1\leq i<j\leq N\), put
\[
 \ell_{ij}^X:X^N\longrightarrow\R,
 \qquad \ell_{ij}^X(x):=d_X(x_i,x_j).
\]
A \emph{diameter pair} at \(x\)
is a pair realizing \(\diam_N(x)\).  Its indices belong to
\[
 \operatorname{Act}(x):=
 \{ij:1\leq i<j\leq N,\ d_X(x_i,x_j)=\diam_N(x)\}.
\]
For a finite set \(P\subset X\), diameter pairs are defined in the
same way, using distinct points of \(P\).
\end{definition}

For distinct points \(y,y'\) on a complete Riemannian manifold,
\(d_M\) is smooth near \((y,y')\) exactly when
\(y'\notin\operatorname{Cut}_M(y)\)
\cite[Corollary~5.7.11]{Petersen2016}.

\begin{definition}[First-order stationarity for diameter]
\label{def:first-order-stationary}
Let \(M\) be a connected finite-dimensional Riemannian manifold and let
\(x\in M^N\) have positive diameter.  Suppose \(d_M\) is smooth near
each diameter pair of \(x\).  A vector \(V\in T_xM^N\) decreases all
these distances to first order if
\[
 \diff(\ell_{ij}^M)_x[V]<0
 \qquad\text{for every }ij\in\operatorname{Act}(x).
\] In this case we say that $V$ is  a \emph{first-order descent direction} for $x$.
The tuple $x$ is \emph{first-order stationary for diameter} if no such vector
exists.  A nonempty finite set \(P\subset M\) of positive diameter,
with \(d_M\) smooth near its diameter pairs, is \emph{first-order
stationary for diameter} if no family
\((w_y)_{y\in P}\), \(w_y\in T_yM\), satisfies
\[
 \diff(d_M)_{(y,y')}[w_y,w_{y'}]<0
 \qquad\text{for every diameter pair }\{y,y'\}\subset P.
\]
By convention, constant tuples and singleton sets are also first-order
stationary.  Their diameter is already minimal; the differential criteria
above apply only at positive diameter.
\end{definition}

The following comparison applies
\cref{thm:weak-implies-clarke,lem:weak-slope-bilipschitz,prop:weak-slope-finite-smooth-maximum}
to the diameter.  The weak slope uses the maximum product metric specified
in \cref{def:weak-slope-diameter-spectra}.

\begin{proposition}[Comparison of diameter criticality notions]
\label{prop:clarke-first-order}
Let \(M\) be a connected finite-dimensional Riemannian manifold and let
\(N\geq2\).  We have the following \emph{Global comparison} result of diameter criticality notions. For every \(x\in M^N\),
\begin{equation}
 |\diff\diam_N|_{d_{M,\infty}}(x)=0
 \quad\Longrightarrow\quad
 0\in\partial^{\mathrm C}\diam_N(x).
 \label{eq:diameter-weak-clarke-comparison}
\end{equation}
\emph{When the distance functions are smooth.} Suppose further that \(x\) has positive
diameter and \(d_M\) is smooth near each diameter pair.  Then
\begin{equation}
 \partial^{\mathrm C}\diam_N(x)
 =\conv\{\diff(\ell_{ij}^M)_x:ij\in\operatorname{Act}(x)\}.
 \label{eq:clarke-active-convex-hull}
\end{equation}
The following are equivalent:
\begin{enumerate}[label=\textup{(\roman*)}]
\item \(|\diff\diam_N|_{d_{M,\infty}}(x)=0\);
\item \(0\in\partial^{\mathrm C}\diam_N(x)\);
\item \(x\) has no common first-order descent direction.
\end{enumerate}
If these equivalent stationarity conditions hold, the distinct support
\(\{x_1,\ldots,x_N\}\) is also first-order stationary for diameter.
\end{proposition}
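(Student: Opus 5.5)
The plan is to deduce everything from the three general tools already stated: \cref{thm:weak-implies-clarke}, \cref{lem:weak-slope-bilipschitz}, and \cref{prop:weak-slope-finite-smooth-maximum}. These are applied on the product manifold \(M^N\), equipped with the Riemannian product metric, whose distance is \(d_{M,2}\).

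\emph{Global comparison.} The first step compares the two metrics on \(M^N\):
\[
 d_{M,\infty}\leq d_{M,2}\leq\sqrt N\,d_{M,\infty}.
\]
The left inequality holds because each coordinate projection of a path in \(M^N\) has length at most that of the path. The right inequality comes from concatenating coordinatewise near-minimal paths; alternatively, use product geodesics in a chart. So the two metrics are globally bi-Lipschitz. \Cref{lem:weak-slope-bilipschitz} then shows that \(|\diff\diam_N|_{d_{M,\infty}}(x)=0\) iff \(|\diff\diam_N|_{d_{M,2}}(x)=0\). Since \(\diam_N\) is Lipschitz, \cref{thm:weak-implies-clarke} applies on the Riemannian manifold \(M^N\) and yields \(0\in\partial^{\mathrm C}\diam_N(x)\). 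This proves \eqref{eq:diameter-weak-clarke-comparison}.

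\emph{Smooth case.} Assume \(\diam_N(x)>0\) and that \(d_M\) is smooth near every diameter pair. Every pair \(ij\) in \(\operatorname{Act}(x)\) has \(x_i\neq x_j\), so \(\ell^M_{ij}\) is \(C^1\) (indeed smooth) near \(x\). The inactive pairs satisfy \(\ell_{ij}^M(x)<\diam_N(x)\), and by continuity they stay strictly below on a neighborhood of \(x\). This includes coincident pairs, where \(d_M\) fails to be smooth. On that neighborhood, therefore,
\[
 \diam_N=\max_{ij\in\operatorname{Act}(x)}\ell^M_{ij}
\]
is a finite maximum of \(C^1\) branches, all of which are active at \(x\). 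The main subtlety here is that nonsmooth inactive branches must be discarded before invoking the finite-max result, and this is legitimate precisely because of the strict gap. \Cref{prop:weak-slope-finite-smooth-maximum} then gives \eqref{eq:clarke-active-convex-hull}. It also shows that \(|\diff\diam_N|_{d_{M,2}}(x)=0\), \(0\in K_x\), and the absence of a common first-order descent direction are equivalent. Combined with the bi-Lipschitz step, this gives the equivalence of (i), (ii), and (iii).

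\emph{Passage to the support.} Suppose (iii) holds, and suppose for contradiction that a family \((w_y)_{y\in P}\) strictly decreases every diameter pair of \(P=\{x_1,\ldots,x_N\}\). Define \(V\in T_xM^N\) by \(V_i:=w_{x_i}\); this is well defined because repeated labels receive the same vector. Every active pair \(ij\) corresponds to a diameter pair \(\{x_i,x_j\}\) of \(P\) with \(x_i\neq x_j\), since \(\diam P=\diam_N(x)\). Hence
\[
 \diff(\ell^M_{ij})_x[V]=\diff(d_M)_{(x_i,x_j)}[w_{x_i},w_{x_j}]<0,
\]
so \(V\) is a common first-order descent direction, which contradicts (iii). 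The smoothness hypothesis needed for \(P\) holds because its diameter pairs are exactly the active pairs.
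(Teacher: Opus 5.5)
Your proposal is correct and follows the paper's proof essentially step for step: the same metric comparison \(d_{M,\infty}\leq d_{M,2}\leq\sqrt N\,d_{M,\infty}\) combined with \cref{lem:weak-slope-bilipschitz} and \cref{thm:weak-implies-clarke}, the same use of the strict gap to discard inactive branches before invoking \cref{prop:weak-slope-finite-smooth-maximum}, and the same copying of a support descent family to the labels. One harmless slip is that concatenating coordinatewise paths only gives the constant \(N\) rather than \(\sqrt N\) (moving all coordinates simultaneously at proportional speeds gives \(\sqrt N\)), but any constant suffices for the bi-Lipschitz step.
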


\begin{proof}
The Riemannian product distance
\(d_{M,2}(x,y):=(\sum_{i=1}^N d_M(x_i,y_i)^2)^{1/2}\)
and the maximum product distance satisfy
\[
 d_{M,\infty}\leq d_{M,2}\leq\sqrt N\,d_{M,\infty}.
\]
By \cref{lem:weak-slope-bilipschitz}, a local bi-Lipschitz change
preserves vanishing of the weak slope, though it need not preserve its
numerical value.  Apply the Riemannian form of
Degiovanni--Marzocchi's comparison in \cref{thm:weak-implies-clarke}
to \(\diam_N\) on the product manifold \(M^N\), and then use the
displayed metric bounds.  This gives
\eqref{eq:diameter-weak-clarke-comparison}.

Every inactive distance function has a positive gap below \(\diam_N(x)\).
Continuity and the smoothness assumption give a neighborhood on which
\[
 \diam_N=\max_{ij\in\operatorname{Act}(x)}\ell_{ij}^M
\]
is a finite maximum of smooth functions.  Apply
\cref{prop:weak-slope-finite-smooth-maximum} for the Riemannian product
metric.  This proves \eqref{eq:clarke-active-convex-hull} and the three
equivalences; \cref{lem:weak-slope-bilipschitz} transfers the stationary
zero set to the maximum product metric.

If the support admits a common strict descent family \((w_y)\), set
\(V_i=w_{x_i}\).  A pair of coincident coordinates is inactive at positive diameter,
and each labelled diameter pair gives
\[
 \diff(\ell_{ij}^M)_x[V]
 =\diff(d_M)_{(x_i,x_j)}[w_{x_i},w_{x_j}]<0.
\]
This contradicts (iii), proving the support assertion.
\end{proof}

In particular, an interval containing no Clarke critical diameter value
contains no weak-slope stationary value.  We denote these sets as follows.

\begin{definition}[Clarke diameter spectra]
\label{def:clarke-diameter-spectra}
For a connected finite-dimensional Riemannian manifold and \(N\geq2\),
set
\begin{equation}
 \begin{aligned}
 \Sigma_N^{\specC}(M)
 &:=\{\diam_N(x):x\in M^N,\ 0\in\partial^{\mathrm C}\diam_N(x)\},\\
 \Sigma^{\specC}(M)&:=\bigcup_{N\geq2}\Sigma_N^{\specC}(M).
 \end{aligned}
 \label{eq:clarke-diameter-spectra}
\end{equation}
We refer to an element of  $\Sigma^{\specC}(M)$ \emph{Clarke critical value of diameter}.
\end{definition}

At diameter zero, weak-slope stationarity follows from global
minimality, Clarke criticality from \cref{prop:clarke-first-order},
and first-order stationarity from the convention in
\cref{def:first-order-stationary}.  Thus both kinds of spectra include zero.
The implication from weak-slope stationarity to Clarke criticality in
\cref{prop:clarke-first-order} also gives the spectral inclusions
\begin{equation}
 \Sigma_N^{\specWS}(M)\subseteq\Sigma_N^{\specC}(M),
 \qquad
 \Sigma^{\specWS}(M)\subseteq\Sigma^{\specC}(M).
 \label{eq:weak-clarke-spectrum-inclusion}
\end{equation}
These inclusions require no off-cut-locus assumption.

\begin{remark}[Why the two spectra are distinguished]
\label{rem:weak-clarke-nonequivalence}
For general locally Lipschitz functions, Clarke criticality need not imply
weak-slope stationarity.  Ribarska, Tsachev, and Krastanov give a
piecewise-linear example on \(\R^2\) with a Clarke-critical origin and
positive weak slope \cite[Example~1.4]{RibarskaTsachevKrastanov1996}.
Convexifying nearby gradients can therefore create criticality even when a
continuous nonsmooth deformation decreases the function.
For diameter, \cref{prop:clarke-first-order} proves the converse when
\(d_M\) is smooth near each diameter pair.
\end{remark}

\subsection{The Clarke spectrum for a fixed number of labels}
\label{subsec:fixed-label-clarke-spectrum}

We prove that, for each fixed number of labels, the Clarke critical
diameter values form a compact set of Hausdorff dimension zero.

Throughout this subsection, let \((M,g)\) be a closed connected
\(C^\infty\) Riemannian manifold, and write \(d_M\) for the distance
induced by \(g\).  If \(\dim M=0\), then \(M\) is a singleton and both
kinds of spectra equal \(\{0\}\).  We may therefore assume positive
dimension in the proofs.

\begin{theorem}[Clarke--Sard theorem for labelled diameter]
\label{thm:fixed-label-clarke-spectrum}
Let \((M,g)\) be a closed connected \(C^\infty\) Riemannian manifold.
For every \(N\geq2\), the set \(\Sigma_N^{\specC}(M)\) is compact and
has Hausdorff dimension zero, including values realized at the cut locus.
These spectra are nested under repetition:
\[
 \Sigma_q^{\specC}(M)\subseteq\Sigma_N^{\specC}(M)
 \qquad(2\leq q\leq N).
\]
\end{theorem}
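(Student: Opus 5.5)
Three things must be shown: that \(\Sigma_N^{\specC}(M)\) is compact, that it has Hausdorff dimension zero, and that the spectra are nested under repetition.

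Compactness and nesting are soft. The Clarke subdifferential of a locally Lipschitz function has closed graph, that is, it is upper semicontinuous (\cite[Proposition~2.1.5]{Clarke1990}). Hence the critical set \(\{x\in M^N:0\in\partial^{\mathrm C}\diam_N(x)\}\) is closed in the compact space \(M^N\), and its continuous image \(\Sigma_N^{\specC}(M)\) is compact.

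For nesting, consider the repetition embedding \(\iota_\pi\) of \cref{lem:weak-slope-repetition-nesting}, which satisfies \(\diam_N\circ\iota_\pi=\diam_q\). For \(x\in M^q\) with \(0\in\partial^{\mathrm C}\diam_q(x)\), I want \(0\in\partial^{\mathrm C}\diam_N(\iota_\pi x)\). This requires care, because the Clarke chain rule only gives \(\partial^{\mathrm C}\diam_q(x)\subseteq(\diff\iota_\pi)^*\partial^{\mathrm C}\diam_N(\iota_\pi x)\). That inclusion is still enough: any preimage of \(0\) under \((\diff\iota_\pi)^*\) need not be \(0\). So instead I will use symmetry. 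Average an element \(\xi\in\partial^{\mathrm C}\diam_N(\iota_\pi x)\) that maps to \(0\) over the group of permutations preserving the fibres of \(\pi\). The function \(\diam_N\) is invariant under this group and the point \(\iota_\pi x\) is fixed by it, so \(\partial^{\mathrm C}\diam_N(\iota_\pi x)\) is invariant, and by convexity the averaged covector still lies in it. An invariant covector whose pullback is zero must itself be zero, since on each fibre the components are equal and sum to the \(q\)-component. This gives \(0\in\partial^{\mathrm C}\diam_N(\iota_\pi x)\), and since repetition preserves diameter, \(\Sigma_q^{\specC}(M)\subseteq\Sigma_N^{\specC}(M)\).

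The main work is Hausdorff dimension zero, including at the cut locus, and here I follow the representation outlined in the introduction. Choose an integer \(L\) so large that every minimizing geodesic splits into \(L\) segments, each shorter than the injectivity radius. Then
\[
 d_M(y,y')^2=\min_{(p_1,\ldots,p_{L-1})\in M^{L-1}}E(y,p,y'),\qquad E=L\sum_k d_M(p_k,p_{k+1})^2 .
\]
I restrict the minimization to a compact region on which \(E\) is smooth, namely where consecutive points are within, say, half the injectivity radius; the minimizers, being evenly spaced along geodesics, lie in its interior. Since \(\diam_N^2=\max_{ij}\ell_{ij}^2\) and squaring is a diffeomorphism on positive values, the Clarke critical values of \(\diam_N\) at positive level correspond to those of \(F=\max_{ij}\min_{p}E_{ij}\). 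The zero value is a single point and costs nothing.

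Then comes the marginal-maximum Sard lemma (\cref{lem:marginal-maximum-sard}), adapted from \cite{BarbetEtAl2016}. Suppose \(0\in\partial^{\mathrm C}F(x)\). Using Clarke's formula for a finite max (\cite[Proposition~2.3.12]{Clarke1990}) together with the marginal-function formula (the Clarke subdifferential of the minimum is contained in the convex hull of \(\partial_x E(x,p)\) over minimizers \(p\)), I obtain finitely many active pairs \(ij\), minimizers \(p^{ij}\), and weights \(\lambda_{ij}\geq0\) summing to \(1\), with \(\sum\lambda_{ij}\partial_x E_{ij}(x,p^{ij})=0\). Carathéodory bounds the number of terms by \(\dim M^N+1\). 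At each minimizer we also have \(\partial_pE_{ij}=0\). Consequently the point \((x,(p^{ij}),\lambda)\) is a critical point of the smooth function
\[
 G(x,p,\lambda)=\sum\lambda_{ij}E_{ij}(x,p^{ij})
\]
restricted to the simplex in \(\lambda\), for each choice of index set and each face of the simplex, taking the relative interior of the face. Its critical value is \(\sum\lambda_{ij}E_{ij}=F(x)\), since all active terms equal the maximum. There are finitely many strata, each a smooth function on a finite-dimensional manifold, so the Sard theorem for \(C^\infty\) functions \cite{Sard1965} gives critical images of Hausdorff dimension zero, and a finite union preserves this.

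I expect the main obstacle to be the marginal-function step at the cut locus. There I must justify the inclusion of \(\partial^{\mathrm C}\min_pE\) in the convexified hull of the \(\partial_xE\) over all minimizers, which uses compactness of the fibres and upper semicontinuity of the minimizer set, and I must check that the minimizers stay in the smooth interior region uniformly.
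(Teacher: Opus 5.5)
Your three-part plan is the paper's own: compactness from the closed graph of $\partial^{\mathrm C}$, nesting by averaging a zero-pullback covector over permutations of repeated labels, and Hausdorff dimension zero via the broken-geodesic energy, the max-of-min Clarke rule, Carath\'eodory, an auxiliary weighted sum, and Sard's Hausdorff-measure theorem, transferred from $\diam_N^2$ to $\diam_N$ away from zero. However, the Sard reduction as you wrote it fails in exactly the cut-locus case the statement emphasizes. You attach one minimizer $p^{ij}$ and one weight $\lambda_{ij}$ to each active pair, with a single path variable per pair in $G=\sum\lambda_{ij}E_{ij}(x,p^{ij})$. The marginal rule only places $\partial^{\mathrm C}\min_pE_{ij}(x,\cdot)$ inside the convex hull of $\partial_xE_{ij}(x,p)$ over \emph{all} minimizers $p$. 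At a cut-locus pair, the element you need is generally not the differential along any single minimizing geodesic.

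Concretely, take the antipodal pair $(y,-y)$ on the round $\Sph^m$, which is Clarke critical with value $\pi$. Every single minimizer gives $\partial_xE_{12}\neq0$. The one-term function $E_{12}(x,p)$ has $0$ as its only critical value: vanishing of $\partial_yE$ forces $p_1=y$, and the $p$-equations then collapse the whole chain. So your union of critical images misses $\pi^2$. The repair is to index the Carath\'eodory terms by pairs \emph{with repetition} and give each term its own copy of the path variable. For an ordered list $e_1,\ldots,e_\ell$ of active pairs, work with $\sum_{j=1}^{\ell}\lambda_jE_{e_j}(x,p_j)$ on $M^N\times\prod_j(\text{path space})\times\operatorname{int}\Delta^{\ell-1}$. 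This is the paper's $H_{\mathbf e}$ with $\mathbf e\in\mathcal I^\ell$. Your critical-point verification then goes through unchanged, since two terms with the same pair but different minimizers are simply different coordinates.

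A smaller point concerns your restricted path domain. The condition that consecutive points lie within half the injectivity radius involves $d_M(y,p_1)$ and $d_M(p_{L-1},y')$. So the constraint moves with $x$ and is not of the form $U\times K$ with a fixed compact parameter space, which Clarke's compact-parameter rule requires. You can localize. Near each $x_0$, choose a fixed compact set of chains that contains in its interior all minimizers for nearby $x$; these are evenly spaced with step at most $\diam(M)/L$, so choose $L$ with $\diam(M)/L$ well below your radius. Then cover the compact $M^N$ by finitely many such patches. The paper avoids this entirely by replacing $d_M^2$ with the globally smooth $\psi=\chi d_M^2+(1-\chi)C\geq d_M^2$. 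Then $\Phi$ is smooth on $M\times M\times M^{L-1}$, the parameter space $M^{L-1}$ is fixed, compact and boundaryless, and interiority of minimizers is automatic.
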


We use that Lipschitz maps do not increase Hausdorff dimension and
that this dimension is stable under countable unions
\cite[Section~2.1]{Falconer1997}.

To prove that the critical diameter values in
\cref{thm:fixed-label-clarke-spectrum} have Hausdorff dimension zero,
we first express squared distance as a minimum of a smooth function
over a fixed compact manifold.  This allows us to apply
\cref{lem:marginal-maximum-sard} even at the cut locus.  We obtain
this representation by the classical broken-geodesic construction for the
energy \cite[\S16]{Milnor1963}, using a smooth cutoff to extend the energy
to all vertex configurations without changing its minimum.
The auxiliary variable in \cref{lem:smooth-distance-marginal} records
the intermediate points of the broken path.  Each minimizing parameter
records one minimizing geodesic, so the derivatives in the two endpoints
come from that same geodesic.

\begin{lemma}[A finite-dimensional smooth energy representation of squared
distance]
\label{lem:smooth-distance-marginal}
There are a compact boundaryless manifold \(K\) and a smooth function
\(\Phi:M\times M\times K\to\mathbb R\) such that
\begin{equation}
 d_M(x,x')^2=\min_{z\in K}\Phi(x,x',z).
 \label{eq:smooth-distance-marginal}
\end{equation}
Every minimizer records an equal subdivision of one minimizing geodesic
from \(x\) to \(x'\).
\end{lemma}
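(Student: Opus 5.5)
The plan is the classical Milnor broken-geodesic construction. Embed \(M\) isometrically? No: instead use the Riemannian structure directly. Let \(\iota:=\operatorname{inj}(M)>0\) (positive since \(M\) is compact) and \(D:=\diam(M)\). Fix an integer \(L\) with \(D/L<\iota/4\). Take \(K:=M^{L-1}\), a compact boundaryless manifold, whose points \(z=(z_1,\ldots,z_{L-1})\) are the intermediate vertices of a broken path. Put \(z_0=x\), \(z_L=x'\). Where consecutive vertices are close, \(d_M(z_{l-1},z_l)^2\) is smooth, since \(d_M^2\) is smooth on \(\{(p,q):d_M(p,q)<\iota\}\). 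So the natural candidate is \(E(x,x',z):=L\sum_{l=1}^L d_M(z_{l-1},z_l)^2\). By Cauchy--Schwarz, \(E\geq(\sum_l d_M(z_{l-1},z_l))^2\geq d_M(x,x')^2\). Equality holds iff the consecutive distances are all equal and the concatenation is length-minimizing, that is, iff the \(z_l\) are the equal-subdivision points of one minimizing geodesic. Such points exist by Hopf--Rinow and have spacing \(d_M(x,x')/L\leq D/L<\iota/4\).

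To get a globally smooth \(\Phi\), replace each term by \(\psi(d_M(z_{l-1},z_l)^2)\). Here \(\psi:[0,\infty)\to\R\) is smooth, satisfies \(\psi(s)=s\) for \(s\leq(\iota/4)^2\), \(\psi(s)\geq s\) wherever defined on the smooth region, and \(\psi\) is constant, equal to some \(C\), for \(s\geq(\iota/2)^2\). Then \(\psi\circ d_M^2\) is smooth on all of \(M\times M\): near pairs with \(d_M<\iota\) it is a smooth composition, and on the open set \(\{d_M>\iota/2\}\) it is the constant \(C\). These two open sets cover \(M\times M\). Choose \(C\) so that \(L\,C>D^2\) (possible: take \(\psi\) increasing with \(\psi(s)\geq s\) and \(C\geq LD^2\)). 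Define \(\Phi:=L\sum_l\psi(d_M(z_{l-1},z_l)^2)\). Since \(\psi(s)\geq s\) on \([0,(\iota/2)^2]\) and \(\Phi\geq LC>D^2\) whenever some gap exceeds \(\iota/2\), we get \(\Phi\geq d_M(x,x')^2\) everywhere. Consider first configurations with all gaps \(\leq\iota/2\). There \(\Phi\geq L\sum d^2\geq d_M(x,x')^2\) by the Cauchy--Schwarz step. The geodesic subdivision attains equality because its gaps are below \(\iota/4\), where \(\psi\) is the identity. This gives \eqref{eq:smooth-distance-marginal}.

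For the minimizer characterization, let \(\Phi(x,x',z)=d_M(x,x')^2\). No gap can exceed \(\iota/2\), since that would give \(\Phi>D^2\). Then equality must hold in \(\psi(s)\geq s\) for each term, in Cauchy--Schwarz, and in the triangle inequality. Equality in \(\psi(s)\geq s\) requires arranging \(\psi(s)>s\) for \(s>(\iota/4)^2\), which is easy to build in. Equality then forces each gap below \(\iota/4\), each consecutive pair joined by its unique minimizing geodesic, equal gaps, and total length \(d_M(x,x')\). The concatenated broken geodesic therefore has length equal to the distance, so it is a minimizing curve and hence smooth: an unbroken geodesic. Thus \(z\) is an equal subdivision of one minimizing geodesic.

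The main obstacle is the cutoff bookkeeping: \(\psi\) must be simultaneously smooth, the identity near \(0\), strictly above the identity beyond \((\iota/4)^2\), and eventually constant at a level exceeding \(D^2\). This is needed so that the global minimum is unchanged and the minimizers are exactly the geodesic subdivisions. I would first construct \(\psi\) as \(s+\chi(s)\) with a smooth bump \(\chi\geq0\) vanishing exactly on \([0,(\iota/4)^2]\) and large enough. I would then modify it to become constant past \((\iota/2)^2\), checking that the inequality \(\psi(s)\geq s\) is only needed for \(s\leq(\iota/2)^2\).
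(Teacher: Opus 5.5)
Your proposal is correct and follows the paper's proof essentially step for step: the same Milnor broken-geodesic energy on \(K=M^{L-1}\), a cutoff that turns squared distance into a globally smooth function dominating \(d_M^2\), the same Cauchy--Schwarz chain, and the same equality analysis. The only differences are minor. You apply the cutoff to the scalar \(d_M^2\), whereas the paper uses a cutoff \(\chi\) on \(M\times M\). Your strictness requirement \(\psi(s)>s\) beyond \((\iota/4)^2\) is not needed: equality already forces equal gaps summing to \(d_M(x,x')\le\diam(M)\), so every gap is at most \(\diam(M)/L<\iota/4\) automatically.
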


\par\smallskip
\begin{proof}
Choose
\[
 0<r_1<r_2<\operatorname{inj}(M),
 \]
a smooth cutoff \(\chi:M^2\to[0,1]\) equal to one on
\(\{d_M\leq r_1\}\) and supported in \(\{d_M<r_2\}\), and a constant
\(C>\diam(M)^2\).  The function
\[
 \psi(x,x'):=\chi(x,x')d_M(x,x')^2+(1-\chi(x,x'))C
\]
is globally smooth: the first term extends by zero because
\(\operatorname{supp}(\chi)\subset\{d_M<r_2\}\), where squared distance is
smooth.  It satisfies \(\psi\geq d_M^2\), and has equality exactly where
\(\chi=1\).  Choose an integer \(L\geq2\) with
\(\tfrac{\diam(M)}{L}<r_1\), set
\(K:=M^{L-1}\), put \(z_0:=x\) and \(z_L:=x'\), and define
\[
 \Phi(x,x',z):=L\sum_{\nu=1}^{L}\psi(z_{\nu-1},z_\nu).
\]
For every broken chain,
\[
 d_M(x,x')^2
 \leq\left(\sum_\nu d_M(z_{\nu-1},z_\nu)\right)^2
 \leq L\sum_\nu d_M(z_{\nu-1},z_\nu)^2
 \leq\Phi(x,x',z).
\]
Equal subdivision of a minimizing geodesic gives equality.  Conversely,
equality forces equal segment lengths and equality in every cutoff term.
Every segment then has length less than \(r_2\), so consecutive points
are joined by unique minimizing short segments.  Their concatenation
has length \(d_M(x,x')\), hence is a minimizing geodesic with its
equal subdivision.  Write
\[
 \diff_x\Phi(x,x',z)
 :=\diff\bigl(\Phi(\,\cdot\,,x',z)\bigr)_x,
 \qquad
 \diff_{x'}\Phi(x,x',z)
 :=\diff\bigl(\Phi(x,\,\cdot\,,z)\bigr)_{x'}
\]
for the partial differentials in the two endpoint variables.
Each segment of a minimizing chain has length strictly less than
\(r_1\), so the cutoff equals one near that segment's endpoint pair.
In particular, when \(x\neq x'\), the endpoint differentials at a
minimizing parameter \(z\) come from the same geodesic.
Put \(r:=d_M(x,x')\).  For its unit-speed parametrization
\(\gamma:[0,r]\to M\), first variation
\cite[Lemma~5.4.2]{Petersen2016} gives
\[
 \diff_x\Phi(x,x',z)=-2r\dot\gamma(0)^\flat,
 \qquad
 \diff_{x'}\Phi(x,x',z)=2r\dot\gamma(r)^\flat.
\]
Here \(v^\flat:=g_y(v,\,\cdot\,)\in T_y^*M\) denotes the covector
metrically dual to a tangent vector \(v\in T_yM\).
\end{proof}

The critical values of a smooth real-valued function on a finite-dimensional
second-countable smooth manifold without boundary have zero
\(p\)-dimensional Hausdorff measure for every \(p>0\), by a classical
result of Sard \cite[Corollary on p.~169]{Sard1965}.
To apply this result to the energy representation in
\cref{lem:smooth-distance-marginal}, we must turn a Clarke-critical
value into a critical value of a smooth function.
At a Clarke-critical point, zero is a convex combination of the
differentials of the relevant minimizing energies.  We treat the
minimizing parameters and the coefficients in this combination as
additional variables in a smooth weighted sum.  Its derivative in the
original variables vanishes by the convex-combination identity;
its derivatives in the minimizing parameters vanish at interior minima;
and its derivative along variations of the weights with sum zero
vanishes because the active energies have equal values.
This adapts the construction in
\cite[proof of Theorem~1]{BarbetEtAl2016} to a finite maximum of minima.
\Cref{lem:marginal-maximum-sard} makes this reduction precise.

\begin{lemma}[Critical values of a finite maximum of minima]
\label{lem:marginal-maximum-sard}
Let \(U\) be a second-countable smooth manifold and \(\mathcal I\) a
nonempty finite set.  For \(e\in\mathcal I\), let \(K_e\) be a
nonempty compact smooth
manifold, possibly with boundary, and let
\(\psi_e\in C^\infty(U\times K_e)\).
Assume that, for every \(x\in U\), every minimizer of
\(\psi_e(x,\cdot)\) lies in \(\operatorname{int}K_e\).
Define
\[
 g_e(x):=\min_{y\in K_e}\psi_e(x,y),
 \qquad G(x):=\max_{e\in\mathcal I}g_e(x).
\]
Then every \(g_e\), and hence \(G\), is locally Lipschitz, and the set
\[
 \bigl\{G(x):x\in U,\ 0\in\partial^{\mathrm C}G(x)\bigr\}
\]
has Hausdorff dimension zero.
\end{lemma}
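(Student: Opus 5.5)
The plan follows the Barbet--Dambrine--Daniilidis--Rifford reduction: show that every Clarke-critical value of \(G\) is a critical value of one of countably many smooth functions on finite-dimensional manifolds, and then invoke Sard.

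\emph{Lipschitz property and Clarke subdifferential of each \(g_e\).} Local Lipschitz continuity of \(g_e\) is standard, since \(\psi_e\) is \(C^1\) and \(K_e\) is compact, so \(\psi_e(\cdot,y)\) is locally Lipschitz uniformly in \(y\); hence \(G\) is locally Lipschitz. Next I will use the Danskin-type formula for a marginal minimum. For \(x\in U\) let \(Y_e(x)\) denote the set of minimizers of \(\psi_e(x,\cdot)\). I will show
\[
 \partial^{\mathrm C}g_e(x)\subseteq\conv\{\diff_x\psi_e(x,y):y\in Y_e(x)\}.
\]
Where \(g_e\) is differentiable at \(z\), any \(y\in Y_e(z)\) satisfies \(\psi_e(z',y)-\psi_e(z,y)\geq g_e(z')-g_e(z)\), which forces \(\diff g_e(z)=\diff_x\psi_e(z,y)\). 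Limits \(z_k\to x\) with minimizers \(y_k\) subconverge to a minimizer at \(x\), by upper semicontinuity of \(Y_e\). For the max, Clarke's rule gives \(\partial^{\mathrm C}G(x)\subseteq\conv\bigcup_{e\text{ active}}\partial^{\mathrm C}g_e(x)\).

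\emph{Carathéodory and the smooth auxiliary function.} At a Clarke-critical \(x\), with \(n=\dim U\), Carathéodory gives zero as a convex combination of at most \(n+1\) vectors \(\diff_x\psi_{e_a}(x,y_a)\), with active indices \(e_a\) and \(y_a\in Y_{e_a}(x)\subset\operatorname{int}K_{e_a}\). For each of the finitely many index tuples \((e_0,\dots,e_n)\), allowing repetitions, I define on
\[
 W:=U\times\prod_a\operatorname{int}K_{e_a}\times\operatorname{int}\Delta^n
\]
the smooth function \(F(x,y,\lambda)=\sum_a\lambda_a\psi_{e_a}(x,y_a)\). Zero weights will be handled by passing to the face simplex, i.e.\ by using fewer indices; this adds only finitely many more functions. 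At the point \((x,y,\lambda)\) the derivatives vanish as follows. The \(x\)-derivative vanishes by the convex combination. Each \(y_a\)-derivative is \(\lambda_a\diff_y\psi_{e_a}\), which vanishes because \(y_a\) is an interior minimizer. The derivative along tangent directions of the simplex vanishes because all values \(\psi_{e_a}(x,y_a)=g_{e_a}(x)=G(x)\) are equal for active indices. The critical value of \(F\) there is \(G(x)\).

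\emph{Conclusion and main obstacle.} Sard's theorem in the form \cite[Corollary on p.~169]{Sard1965} gives zero \(p\)-dimensional Hausdorff measure, for every \(p>0\), of the critical values of each of the finitely many smooth \(F\). The manifolds are second countable and boundaryless because I use interiors. A finite union then has Hausdorff dimension zero. I expect the main obstacle to be the subdifferential inclusion for \(g_e\): justifying that the limiting gradients arise from minimizers, and controlling the convex hull after the closure, which is fine because the set of \(\diff_x\psi_e(x,y)\) over \(y\in Y_e(x)\) is compact.
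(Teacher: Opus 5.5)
Your proposal is correct and follows essentially the same route as the paper. The paper likewise uses the marginal-minimum inclusion for $\partial^{\mathrm C}g_e$, the finite-maximum rule, Carath\'eodory with at most $\dim U+1$ terms, and the weighted sum $\sum_j\lambda_j\psi_{e_j}(z,y_j)$ on the interiors of the parameter manifolds times the open simplex (faces handled by taking fewer indices), followed by Sard's corollary over finitely many index tuples. The only difference is that you prove the marginal inclusion directly from the limiting-gradient definition, using minimizers to identify the derivative at differentiability points and upper semicontinuity of the argmin. The paper instead cites Clarke's compact-parameter maximum rule together with the negation rule.
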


\begin{proof}
Choose a coordinate neighborhood \(V\Subset U\) whose coordinate image
is convex.  Compactness of \(\overline V\times K_e\) gives a uniform bound for
\(\lVert\diff_x\psi_e\rVert\).  Thus the functions
\(\psi_e(\,\cdot\,,y)\), \(y\in K_e\), have a common local Lipschitz
constant.  Taking the minimum preserves this bound for each
\(g_e\).  The largest of the finitely many bounds is a local
Lipschitz constant for \(G\), proving the asserted local Lipschitz
continuity.

Suppose \(0\in\partial^{\mathrm C}G(x)\).  We first express zero as
a convex combination of the differentials of minimizing branches
whose outer values attain the maximum.  Apply Clarke's compact-parameter
maximum rule \cite[Theorem~2.8.2]{Clarke1990} to
\(-g_e(x)=\max_{y\in K_e}(-\psi_e(x,y))\).
Its hypotheses hold because \(K_e\) is compact and both \(\psi_e\)
and \(\diff_x\psi_e\) are continuous, with the common local Lipschitz
bound just established.  The negation rule
\cite[Proposition~2.3.1]{Clarke1990}, followed by the finite-maximum rule
\cite[Proposition~2.3.12]{Clarke1990}, gives
\[
 \partial^{\mathrm C}g_e(x)
 \subseteq
 \conv\left\{\diff_x\psi_e(x,y):
      y\in\operatorname*{argmin}\psi_e(x,\cdot)\right\}
\]
and
\[
 \partial^{\mathrm C}G(x)
 \subseteq
 \conv\bigcup_{e\in I(x)}\partial^{\mathrm C}g_e(x),
 \qquad
 I(x):=\{e\in\mathcal I:g_e(x)=G(x)\}.
\]
Consequently, zero belongs to
\[
 \operatorname{conv}
 \bigcup_{e\in I(x)}
 \operatorname{conv}
 \left\{\diff_x\psi_e(x,y):
      y\in\operatorname*{argmin}\psi_e(x,\cdot)\right\},
\]
and Carath\'eodory's theorem
\cite[Theorem~17.1]{Rockafellar1970} gives
an integer \(1\leq\ell\leq\dim(U)+1\), active indices \(e_j\), interior
minimizers \(y_j\), and positive numbers \(\lambda_j\) summing to one, such
that
\begin{equation}
 \sum_{j=1}^{\ell}\lambda_j\diff_x\psi_{e_j}(x,y_j)=0.
 \label{eq:marginal-sard-balance}
\end{equation}
We now regard the minimizing parameters and the coefficients in
\eqref{eq:marginal-sard-balance} as independent variables, together
with the base point.
For each ordered tuple \(\mathbf e=(e_1,\ldots,e_\ell)\), consider the
smooth function
\[
 H_{\mathbf e}(z,y_1,\ldots,y_\ell,\lambda)
 :=\sum_{j=1}^{\ell}\lambda_j\psi_{e_j}(z,y_j)
\]
on
\(U\times\prod_j\operatorname{int}K_{e_j}\times
\operatorname{int}\Delta^{\ell-1}\), where the last factor is the
relative interior of the probability simplex
\(\{\lambda\in[0,1]^\ell:\sum_j\lambda_j=1\}\).
At the point with base coordinate \(x\), the selected minimizers
\(y_j\), and the selected coefficients \(\lambda_j\), the
\(z\)-derivative of \(H_{\mathbf e}\) vanishes by
\eqref{eq:marginal-sard-balance}.  Its \(y_j\)-derivatives vanish
because the minimizers are interior.  Its simplex derivative vanishes
because all active values equal \(G(x)\) and variations tangent to
the simplex have coordinate sum zero.  Moreover, the value of
\(H_{\mathbf e}\) there is \(G(x)\).

Consequently,
\begin{equation}
 \{G(x):x\in U,\ 0\in\partial^{\mathrm C}G(x)\}
 \subseteq
 \bigcup_{\ell=1}^{\dim(U)+1}
 \ \bigcup_{\mathbf e\in\mathcal I^\ell}
 H_{\mathbf e}\bigl(\{p:\diff H_{\mathbf e}(p)=0\}\bigr).
 \label{eq:marginal-sard-critical-image-inclusion}
\end{equation}

Each domain of \(H_{\mathbf e}\) is a finite-dimensional,
second-countable smooth manifold without boundary.  Applying Sard's
corollary \cite[Corollary on p.~169]{Sard1965} to the set where
\(\diff H_{\mathbf e}=0\) shows that its image has zero
\(p\)-dimensional Hausdorff measure for every \(p>0\).
The union in \eqref{eq:marginal-sard-critical-image-inclusion} is
finite, so the same conclusion holds for the Clarke critical values
of \(G\).
\end{proof}

\phantomsection
\label{proof:fixed-label-clarke-sard}
\begin{proof}[\normalfont\bfseries Proof of \cref{thm:fixed-label-clarke-sard-intro} and \cref{thm:fixed-label-clarke-spectrum}]
We first prove the Hausdorff-dimension assertion for squared diameter
and then transfer it to diameter away from zero.  Put
\(G_N:=\diam_N^2\).  With one copy of the compact parameter \(K\) from
\cref{lem:smooth-distance-marginal} for each pair of labels,
\eqref{eq:smooth-distance-marginal} gives
\[
 G_N(x)=\max_{i<j}\min_{z\in K}\Phi(x_i,x_j,z).
\]
By \cref{lem:marginal-maximum-sard}, the Clarke critical values of \(G_N\)
form a set of Hausdorff dimension zero.  At a tuple with
\(r:=\diam_N(x)>0\), Clarke's scalar-composition chain rule
\cite[Theorem~2.3.9]{Clarke1990} applied to the square gives
\[
 \partial^{\mathrm C}G_N(x)
 \subseteq2r\,\partial^{\mathrm C}\diam_N(x).
\]
On a neighborhood of \(r^2>0\), the square root is \(C^1\).  Applying the
same rule to \(\diam_N=\sqrt{G_N}\) gives the reverse inclusion after
multiplication by \(2r\).  Hence
\[
 \partial^{\mathrm C}G_N(x)=2r\,\partial^{\mathrm C}\diam_N(x).
\]
For each integer \(j\geq1\), the square-root map is Lipschitz on
\([j^{-2},\diam(M)^2]\) whenever this interval is nonempty.  The positive
Clarke critical values of \(\diam_N\) are therefore covered by countably
many Lipschitz images of subsets of the critical-value set of \(G_N\).
Each image has Hausdorff dimension zero.  Countable stability and adjoining
the singleton \(0\) prove Hausdorff dimension zero.

It remains to prove compactness and nesting.  The Clarke
subdifferential has closed graph in local cotangent-bundle
trivializations \cite[Proposition~2.1.5(d)]{Clarke1990}.  Its zero locus
in the compact manifold \(M^N\) is therefore closed, and its image under
\(\diam_N\) is exactly \(\Sigma_N^{\specC}(M)\).  This proves compactness.

For nesting, we show that repeating labels preserves Clarke
criticality.  Let
\(\pi:\{1,\ldots,N\}\twoheadrightarrow\{1,\ldots,q\}\) and let
\(\iota_\pi:M^q\to M^N\) be defined by
\[
 (\iota_\pi(x))_j:=x_{\pi(j)}.
\]
Thus it repeats the \(i\)-th label \(k_i:=|\pi^{-1}(i)|\) times.  Since
\(\diam_q=\diam_N\circ\iota_\pi\), Clarke's smooth inner-map chain rule
\cite[Theorem~2.3.10]{Clarke1990} gives
\[
 \partial^{\mathrm C}\diam_q(x)
 \subseteq
 \diff(\iota_\pi)_x^*\partial^{\mathrm C}\diam_N(\iota_\pi x).
\]
If zero belongs to the left side, choose
\(\xi\in\partial^{\mathrm C}\diam_N(\iota_\pi x)\) with
\(\diff(\iota_\pi)_x^*\xi=0\).  Zero pullback does not make \(\xi\)
zero.  We obtain the zero ambient covector by averaging over permutations
of the repeated copies.  Let
\[
 \mathfrak G_\pi
 :=\prod_{i=1}^q\mathfrak S_{\pi^{-1}(i)}
 \leq\mathfrak S_N
\]
be the subgroup that permutes the repeated copies within each fiber of
\(\pi\).  Every \(g\in\mathfrak G_\pi\) fixes \(\iota_\pi x\).  Since \(\diam_N\) is invariant under label permutations, applying
the smooth inner-map chain rule to a permutation and its inverse
shows that permuting the cotangent blocks of \(\xi\) by \(g\)
preserves membership in the subdifferential.  Thus
\[
 g\cdot\xi\in\partial^{\mathrm C}\diam_N(\iota_\pi x).
\]
Moreover, \(\diff(\iota_\pi)_x^*\) adds the cotangent blocks over each
fiber of \(\pi\), so
\[
 \diff(\iota_\pi)_x^*(g\cdot\xi)
 =\diff(\iota_\pi)_x^*\xi=0.
\]
Consequently the average
\[
 \overline\xi
 :=\frac{1}{\lvert\mathfrak G_\pi\rvert}
   \sum_{g\in\mathfrak G_\pi}g\cdot\xi
\]
still belongs to
\(\partial^{\mathrm C}\diam_N(\iota_\pi x)\) and still has zero pullback.
Its \(k_i\) blocks over the \(i\)-th label are equal, while their sum is
zero.  Each block of \(\overline\xi\) is therefore zero.  Thus
\(\overline\xi=0\), and the repeated tuple is Clarke critical.

The analytic finiteness assertion of
\cref{thm:fixed-label-clarke-sard-intro} follows from
\cref{cor:analytic-definable-fixed-label-finiteness}, proved next.
\end{proof}

\subsection{Analytic finiteness and the smooth limitation}

For real-analytic metrics, we prove that the Clarke and weak-slope
critical diameter spectra are finite for each fixed number of labels.
This applies in particular to the round spheres.

\begin{corollary}[Finiteness for analytic metrics]
\label{cor:analytic-definable-fixed-label-finiteness}
Let \((M,g)\) be a closed connected real-analytic Riemannian manifold.
For every fixed \(N\geq2\), both \(\Sigma_N^{\specC}(M)\) and
\(\Sigma_N^{\specWS}(M)\) are finite.
\end{corollary}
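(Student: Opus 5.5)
Since \(\Sigma_N^{\specWS}(M)\subseteq\Sigma_N^{\specC}(M)\) by \eqref{eq:weak-clarke-spectrum-inclusion}, it suffices to prove that \(\Sigma_N^{\specC}(M)\) is finite.  The plan is to apply the definable Clarke--Sard theorem of Bolte--Daniilidis--Lewis--Shiota \cite[Corollary~9(ii)]{BolteEtAl2007}.  That result says a locally Lipschitz function whose graph is globally subanalytic has only finitely many Clarke critical values on a compact domain.  Everything therefore reduces to showing that \(\diam_N:M^N\to\R\) is subanalytic, hence globally subanalytic since \(M^N\) is compact.  Here we view \(M\) as a compact real-analytic manifold, embedded analytically in some \(\R^p\) by Grauert's theorem if one wants to work in Euclidean o-minimal language.

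\textbf{Key step: subanalyticity of \(d_M\).}  I would not rely on the smooth cutoff of \cref{lem:smooth-distance-marginal}, since a cutoff destroys analyticity.  Instead I would use the exponential map, which is real analytic on \(TM\) when \(g\) is analytic, together with its restriction to the compact set
\[
 B:=\{(y,v)\in TM:\ g_y(v,v)\leq \diam(M)^2\}.
\]
Every pair of points is joined by a minimizing geodesic, so
\[
 d_M(x,x')=\min\{\lVert v\rVert_{g_x}:(x,v)\in B,\ \exp_x(v)=x'\}.
\]
The set \(\{(x,x',v):(x,v)\in B,\ \exp_x v=x'\}\) is compact and semianalytic.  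The graph of \(d_M\) is then the projection of a relatively compact subanalytic set, namely the set of \((x,x',t,v)\) with \(t=\lVert v\rVert\) and \(\exp_xv=x'\), after removing the points where a smaller value exists.  The minimum of a subanalytic function over fibers with compact parameter set is subanalytic, because subanalytic sets with proper projections form an o-minimal structure \(\R_{\mathrm{an}}\) after compactification.  Hence \(d_M\) is globally subanalytic.

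Finite maxima of globally subanalytic functions are globally subanalytic, so \(\diam_N=\max_{i<j}d_M(x_i,x_j)\) is too.  It is Lipschitz on the compact analytic manifold \(M^N\), so the hypotheses of \cite[Corollary~9(ii)]{BolteEtAl2007} hold.  I would check that the Clarke subdifferential there agrees with \cref{def:clarke-subdifferential} via an analytic chart or embedding; the two agree because Clarke's subdifferential is chart-invariant.  Finiteness of the Clarke critical values of \(\diam_N\) on \(M^N\) then follows.  The weak-slope spectrum is finite as a subset.

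\textbf{Expected obstacle.}  The delicate point is the formal subanalyticity argument, in particular the passage from a minimum over a compact fiber to a subanalytic graph.  It also requires some care in framing the definable Clarke--Sard theorem for a manifold domain rather than \(\R^n\).  For the manifold issue, I would first try covering \(M^N\) by finitely many relatively compact analytic charts with semianalytic images.  On each chart I would apply the theorem to the restriction; finitely many finite sets give a finite union.
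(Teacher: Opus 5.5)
Your proposal is correct and follows essentially the paper's route: reduce to the Clarke spectrum via \eqref{eq:weak-clarke-spectrum-inclusion}, get subanalyticity of \(\diam_N\) as a finite maximum of \(d_M\), and apply \cite[Corollary~9(ii)]{BolteEtAl2007} on finitely many analytic chart boxes covering \(M^N\). The paper makes your ``apply the theorem to the restriction'' precise by extending \(\diam_N\circ\phi_a^{-1}|_{K_a}\) by \(+\infty\) off a closed box; this gives a lower-semicontinuous definable function on Euclidean space that agrees with the original near the box interiors.

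The one real difference is how subanalyticity of \(d_M\) is obtained. The paper simply cites Tamm \cite[Theorem~3.5.2]{Tamm1981}. Your exponential-map argument instead proves it directly: you minimize \(\lVert v\rVert\) over the compact semianalytic set \(\{\exp_x v=x',\ \lVert v\rVert\leq\diam(M)\}\), and the fiberwise minimum stays subanalytic by Gabrielov's complement theorem, equivalently by o-minimality of \(\R_{\mathrm{an}}\).
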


\begin{proof}
A semianalytic set is locally described by finite Boolean combinations
of analytic equalities and inequalities; a subanalytic set is locally a
projection of a relatively compact semianalytic set
\cite[Definitions~2.1 and~3.1]{BierstoneMilman1988}.
We use \(\R_{\mathrm{an}}\), the structure generated by restrictions
of analytic functions to compact boxes on whose neighborhoods they
are analytic.  Its definable sets, also called globally subanalytic
sets, are obtained from these graphs and polynomial equalities and
inequalities by finite Boolean operations, Cartesian products, and
coordinate projections.
This structure is o-minimal: its definable subsets of \(\mathbb R\)
are finite unions of points and intervals.  A function is definable
when its graph belongs to this class; for a function taking the value
\(+\infty\), we use the finite graph
\(\{(x,f(x)):f(x)<+\infty\}\)
\cite[Section~4, Definitions~6--7 and Remark~5(ii)]{BolteEtAl2007}.
In our application, the graphs on compact coordinate boxes will be
globally subanalytic, as verified below.

Bolte--Daniilidis--Lewis--Shiota's theorem gives finitely many Clarke
critical values for a lower-semicontinuous definable function on
Euclidean space, allowing the value \(+\infty\)
\cite[Corollary~9(ii)]{BolteEtAl2007}.
If \(M\) is a singleton, both spectra equal
\(\{0\}\).

Since \(M\) is closed connected real-analytic Riemannian manifold.
By \cite[Theorem~3.5.2]{Tamm1981}, the two-point distance
\(d_M:M\times M\to\R\) is subanalytic.
Analytic coordinate projections and finite maxima preserve
subanalyticity, so
\[
 \diam_N(x_1,\ldots,x_N)=\max_{1\leq i<j\leq N}d_M(x_i,x_j)
\]
is subanalytic. To apply \cite[Corollary~9(ii)]{BolteEtAl2007} in the Riemannian case, choose finitely many
analytic charts \(\phi_a:U_a\to\R^{N\dim M}\)
and closed Euclidean boxes \(K_a\Subset\phi_a(U_a)\) such that
\(\phi_a^{-1}(\operatorname{int}K_a)\) cover \(M^N\).
Extend \(\diam_N\circ\phi_a^{-1}|_{K_a}\) by \(+\infty\) outside
\(K_a\).  Continuity of diameter and closedness of \(K_a\) make
this extension lower semicontinuous, including on the boundary of
the box.  Its finite graph is compact and subanalytic.  To see that
the extension is definable in \(\R_{\mathrm{an}}\), cover the compact closure of the
local semianalytic data by finitely many coordinate boxes.  Their
defining analytic functions restrict to restricted analytic functions;
finite Boolean operations and projection preserve definability
\cite[Section~4, Definition~6]{BolteEtAl2007}.  The cited finiteness
theorem therefore gives finitely many Clarke critical values for each
extension.  On box interiors it agrees locally with the
original function, so their Clarke subdifferentials coincide there.
Chart invariance and the finite cover prove
finiteness of \(\Sigma_N^{\specC}(M)\); the weak-slope assertion follows
from \eqref{eq:weak-clarke-spectrum-inclusion}.
\end{proof}

The finiteness conclusion fails for smooth metrics, even on the
two-sphere.  The example below uses geodesic parallels whose lengths
vary over a Cantor set.  With one endpoint fixed, approaching the
opposite point of a parallel along its two minimizing semicircles
gives opposite limiting pairs
of endpoint covectors.  Their midpoint is zero, so the endpoint
pair is Clarke critical.  A flat smooth perturbation makes the
critical values distinct.

\begin{example}[Smoothness does not imply finiteness]
\label{prop:cantor-clarke-spectrum}
There is a smooth nonanalytic rotational metric \(g\) on \(\Sph^2\)
such that, for \(M=(\Sph^2,g)\), every \(\Sigma_N^{\specC}(M)\),
\(N\geq2\), contains a Cantor set.  The realizing critical tuples are
not local minima of diameter.
\end{example}

\par\smallskip
This example shows that the conclusion of
\cref{thm:fixed-label-clarke-spectrum} cannot be strengthened from
Hausdorff dimension zero to countability, already for \(N=2\).
The construction establishes Clarke
criticality; it does not establish weak-slope stationarity or a change in
Vietoris--Rips homotopy type.
The construction and the global minimizing-length estimate are proved in
\cref{app:cantor-clarke-spectrum}.

\subsection{Initial gaps and canonical Vietoris--Rips maps}
\label{subsec:all-label-clarke-consequences}

The critical-value theorem concerns each fixed number of labels.  To
obtain an initial deformation interval for the full Vietoris--Rips
filtration, we need a gap valid simultaneously for every label number
\(N\geq2\).  Riemannian
convexity supplies such a gap by moving all points toward one center.

\begin{definition}[Strong convexity and global convexity radius
{\cite[Section~1]{Dibble2017}}]
\label{def:global-convexity-radius}
An open subset \(U\subseteq M\) is \emph{strongly convex} if every two
points of \(U\) are joined by a unique minimizing geodesic in \(M\), and
that geodesic is contained in \(U\).  The \emph{global convexity radius} is
\[
 \convRad(M):=\inf_{x\in M}\sup\{R>0:
 B_r(x)\text{ is strongly convex for every }0<r<R\}.
\]
\end{definition}

Since \(M\) is closed, its global convexity radius is positive
\cite[Section~1]{Dibble2017}.
Following \cite[Section~3, pp.~178--179]{Hausmann1995}, let
\(r_H(M)\) be the supremum of the numbers \(a>0\) satisfying all
three conditions below:
\begin{enumerate}[label=\textup{(\alph*)}]
\item For every \(x,y\in M\) with \(d_M(x,y)<2a\), there is a
      unique minimizing geodesic from \(x\) to \(y\).
\item If \(x,y,u\in M\) satisfy
      \(d_M(x,y)<a\), \(d_M(u,x)<a\), and \(d_M(u,y)<a\), then
      every point \(z\) on the minimizing geodesic from \(x\) to \(y\)
      satisfies
      \[
       d_M(u,z)\leq\max\{d_M(u,x),d_M(u,y)\}.
      \]
\item If \(\gamma\) and \(\eta\) are unit-speed geodesics with
      \(\gamma(0)=\eta(0)\), then, for every \(0\leq s,s'<a\)
      and \(0\leq t\leq1\), whenever the indicated parameters lie in
      their domains,
      \[
       d_M\bigl(\gamma(ts),\eta(ts')\bigr)
       \leq d_M\bigl(\gamma(s),\eta(s')\bigr).
      \]
\end{enumerate}
We prove that \(r_H(M)=\convRad(M)\) and that no positive Clarke
critical diameter is smaller than this radius. To our knowledge, this identification of the two radii has not previously been stated explicitly.
\begin{proposition}[A uniform initial gap]
\label{prop:uniform-convexity-gap}
Let \((M,g)\) be a closed connected Riemannian manifold.  Then
\[
 r_H(M)=\convRad(M),
 \qquad
 \Sigma^{\specC}(M)\cap(0,\convRad(M))=\varnothing.
\]
\end{proposition}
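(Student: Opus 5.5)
We prove the two assertions separately: first the identification \(r_H(M)=\convRad(M)\) by two inequalities, then the gap by an explicit descent.

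\emph{Step 1: \(\convRad(M)\leq r_H(M)\).} Fix \(0<a<\convRad(M)\); we check (a)--(c). For (a), if \(d_M(x,y)<2a\), the midpoint-type ball \(B_\rho(x)\) with \(d_M(x,y)<\rho<2a\) need not be convex, so instead we argue from the standard fact (Dibble) that \(\convRad(M)=\min\{\tfrac12\operatorname{inj}(M),r_f(M)\}\), where \(r_f\) is the focal radius. Then \(2a<\operatorname{inj}(M)\), which gives uniqueness of minimizing geodesics. For (b), the three points lie in the closed ball \(\overline B_\rho(u)\) with \(\rho=\max\{d(u,x),d(u,y)\}<a\). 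Strong convexity of slightly larger balls, together with convexity of \(d_u\) along geodesics inside \(B_a(u)\) (no focal points before \(r_f\)), keeps the geodesic \(xy\) inside \(\overline B_\rho(u)\). For (c), we use that \(\gamma(ts),\eta(ts')\) lie within the convexity radius of the common base point. The function \(t\mapsto d(\gamma(ts),\eta(ts'))\) is a convex function of the geodesic pair in the product ball and vanishes at \(t=0\), so it is nondecreasing on \([0,1]\).

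\emph{Step 1': \(r_H(M)\leq\convRad(M)\).} Take \(a<r_H(M)\) and a ball \(B_r(x)\) with \(r\leq a\). By (a), points of \(B_r(x)\) are at distance \(<2a\), so they are joined by unique minimizers. By (b) with \(u=x\), the minimizer stays in \(\overline B_{\max}(x)\subset B_r(x)\). Hence \(B_r(x)\) is strongly convex for all \(r<a\), which gives \(\convRad\geq a\).

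\emph{Step 2: the gap.} Let \(x\in M^N\) with \(0<D:=\diam_N(x)<\convRad(M)\). Since \(2\cdot\tfrac D2<\operatorname{inj}\), we can take a center \(c=x_1\) (or better, any label), so that all \(x_i\) lie in \(\overline B_D(x_1)\), a strongly convex ball where all distances are smooth. Distance \(D<\operatorname{inj}\) means no diameter pair is in the cut locus. Hence \cref{prop:clarke-first-order} applies, and Clarke criticality is equivalent to the absence of a first-order descent direction. As the direction we take \(V_i:=-\exp_{x_1}^{-1}(x_i)\), i.e.\ the radial contraction \(x_i(t)=\exp_{x_1}((1-t)\exp_{x_1}^{-1}x_i)\). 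Property (c) with base point \(x_1\) shows that all pairwise distances are nonincreasing. We need strict first-order decrease for every active pair \(ij\). Here \(d(x_i(t),x_j(t))\) is convex in \(t\) (strong convexity / no focal points within radius \(<r_f\)), equals \(D>0\) at \(t=0\), and equals \(0\) at \(t=1\). Convexity then forces the derivative at \(0\) to be at most \(-D<0\). This yields a common descent direction, so \(x\) is not Clarke critical.

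\emph{Main obstacle.} The main obstacle is the convexity of \(t\mapsto d(\gamma(t),\eta(t))\) for geodesics inside balls of radius \(<\convRad\). This holds for the product distance when the segment stays in a region without conjugate or focal obstruction, and it requires care: convexity of the distance between two geodesics is only guaranteed under curvature bounds. If it fails, we fall back on Step 1, using (c) (i.e.\ monotonicity) to get nonstrict decrease, and obtain strictness from the first-variation formula. The derivative at \(t=0\) is \(-\langle\dot\sigma(D),\exp^{-1}_{x_1}\text{-transport}\rangle\) terms, and it is negative once we replace \(V\) by \(V+\epsilon W\) for a small perturbation \(W\) pointing inward along the active geodesics. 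We try the convexity route first, using Dibble's characterization \(\convRad=\min\{\operatorname{inj}/2,r_f\}\).
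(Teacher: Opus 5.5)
\textbf{Gap 1: the convexity claim in Steps 1 and 2 is false.} Your velocity field is the right one: the radial contraction toward \(x_1\) has velocity \(\exp_{x_i}^{-1}(x_1)=-\nabla h(x_i)\), where \(h=\tfrac12 d_M(x_1,\cdot)^2\). This is exactly the paper's choice. (As written, \(-\exp_{x_1}^{-1}(x_i)\) lies in \(T_{x_1}M\), not \(T_{x_i}M\).) The problem is how you justify strictness. The map \(t\mapsto d_M(x_i(t),x_j(t))\) for two radially contracted points is not convex in general; in positive curvature it is concave. On \(\Sph^m\), let \(x_i,x_j\) lie at distance \(\rho\in(0,\pi/2)\) from \(x_1\), at angle \(\theta\in(0,\pi)\). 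The law of cosines gives \(\sin\bigl(\tfrac12 d_m(x_i(t),x_j(t))\bigr)=\sin\tfrac\theta2\,\sin\bigl((1-t)\rho\bigr)\), which is strictly concave in \(t\). With \(d(0)=D\) and \(d(1)=0\), concavity only yields \(d'(0)\geq -D\), which gives no sign. Convexity of the distance between two geodesics is Busemann convexity, a nonpositive-curvature property for Riemannian manifolds, and round spheres are the paper's main application. Your proof of (c) in Step 1 rests on the same false convexity. The fallback does not repair either use. For (c) it is circular. Adding \(\epsilon W\) helps only if \(W\) strictly decreases the active pairs on which \(V\) has zero derivative, which is the original problem. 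What you need instead is strict convexity of \(h\) along a \emph{single} geodesic. For \(D<R<\convRad(M)=\min\{r_f(M),\tfrac12\operatorname{inj}(M)\}\), Dibble's Lemma~2.3 makes \(\operatorname{Hess}h\) positive definite on \(B_R(x_1)\). The diameter segment \(\gamma_{ij}\) stays in this ball by strong convexity, and first variation gives
\[
 \diff(\ell_{ij}^M)_x[v]=(h\circ\gamma_{ij})'(0)-(h\circ\gamma_{ij})'(D)
 =-\int_0^D\operatorname{Hess}h(\dot\gamma_{ij},\dot\gamma_{ij})\,\diff s<0 .
\]
This is the paper's argument. The same computation with \(h_p=\tfrac12 d_M(p,\cdot)^2\) shows that distances between flow lines of \(-\nabla h_p\) are nonincreasing. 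These flow lines are the radial contractions \(\exp_p(e^{-\tau}v)\), and this is how the paper obtains (c).

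\textbf{Gap 2: Step 1' misapplies (b).} Condition (b) requires \(d_M(x,y)<a\), but two points of \(B_r(u)\) can be at distance up to \(2r\). Your argument therefore proves strong convexity only for balls of radius at most \(a/2\), that is, \(\convRad(M)\geq r_H(M)/2\). This is not cosmetic. Since (b) is a non-strict inequality, it does not by itself exclude a minimizing segment that runs along a level set of \(d_u\). Closing the gap needs focal-radius input, which the paper supplies as follows. Condition (a) forces \(2a\leq\operatorname{inj}(M)\), because cut points joined by two minimizers are dense in each cut locus. Differentiating (c) with \(s=s'\) and \(t=u/v\) in the variation parameter gives \(|J(u)|\leq|J(v)|\) for \(0<u<v<a\), for every normal Jacobi field with \(J(0)=0\); hence \(a\leq\operatorname{foc}_e(p)\) for all \(p\). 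Xu's lemma \(\inf_p\operatorname{foc}_e(p)=r_f(M)\) and the formula \(\convRad(M)=\min\{r_f(M),\tfrac12\operatorname{inj}(M)\}\) then give \(a\leq\convRad(M)\). Your verification of (a) and (b) in Step 1 is fine.
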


\begin{proof}
If \(M\) is a singleton, both radii are infinite and the spectrum is
\(\{0\}\), so the assertions hold.  Assume \(\dim M>0\).
We first establish the gap and then compare the two radii.
Fix \(N\geq2\) and \(x\in M^N\) with
\(0<D:=\diam_N(x)<\convRad(M)\), and choose
\(D<R<\convRad(M)\).  We move every label down the squared-distance
function
\[
 h(z):=\tfrac12d_M(x_1,z)^2.
\]

We first justify the smoothness and Hessian positivity needed for this
choice of function.  In Dibble's notation, \(r_f(p)\) is the infimum of positive times at
which a nonzero normal Jacobi field along a unit-speed geodesic from
\(p\), initially zero, has vanishing derivative of its norm;
\(r_f(M)=\inf_p r_f(p)\).
An empty defining set has infimum infinity.
Dibble's global convexity-radius formula, in the form
\cite[Corollary~1.5]{Xu2017}, gives
\begin{equation}
 \convRad(M)=\min\{r_f(M),\tfrac12\operatorname{inj}(M)\}.
 \label{eq:convexity-focal-injectivity}
\end{equation}
Consequently, \(R<r_f(x_1)\) and
\(2R<\operatorname{inj}(M)\leq\operatorname{inj}(x_1)\).
The exponential map is therefore injective on the radius-\(R\) tangent
ball, and \cite[Lemma~2.3]{Dibble2017} gives a positive definite
Hessian for \(h\) on the strongly convex ball \(B_R(x_1)\).
The same global injectivity
bound shows that every diameter pair is off the cut locus.

All labels lie in this ball.
Give label \(i\) the velocity \(v_i=-\nabla h(x_i)\).  The unique
unit-speed minimizing segment \(\gamma_{ij}:[0,D]\to M\) of a diameter pair stays inside the ball.  First variation
\cite[Lemma~5.4.2]{Petersen2016} gives
\begin{align*}
 \diff(\ell_{ij}^M)_x[v]
 &=-\langle v_i,\dot\gamma_{ij}(0)\rangle
   +\langle v_j,\dot\gamma_{ij}(D)\rangle\\
 &=(h\circ\gamma_{ij})'(0)-(h\circ\gamma_{ij})'(D)\\
 &=-\int_0^D\operatorname{Hess}h
       (\dot\gamma_{ij}(s),\dot\gamma_{ij}(s))\,\diff s<0.
\end{align*}
Thus the same velocity strictly decreases every active distance.
\Cref{prop:clarke-first-order} excludes Clarke criticality, independently
of the number of labels.

We now identify Hausmann's radius using the same convexity properties.
For \(a<\convRad(M)\), the bound
\(2a<\operatorname{inj}(M)\) gives his condition (a), and strong
convexity of balls gives condition (b).  The Hessian positivity above
holds for \(h_p=\tfrac12d_M(p,\cdot)^2\) on every \(B_a(p)\).
Its negative gradient flow is the radial contraction
\[
 F_\tau(\exp_p v)=\exp_p(e^{-\tau}v),
 \qquad |v|<a,\quad \tau\geq0.
\]
The first variation calculation above, with \(h=h_p\), shows that
the distance between two flow lines is nonincreasing.
Taking \(t=e^{-\tau}\), and then using continuity at \(t=0\), gives
condition (c).  Hence
\(\convRad(M)\leq r_H(M)\).

Conversely, suppose \(a\) satisfies Hausmann's conditions.
Condition (a) implies \(2a\leq\operatorname{inj}(M)\): cut points
joined by more than one minimizing geodesic are dense in each cut
locus; see \cite[proof of Proposition~2.1]{Xu2017}.
For a variation through radial unit-speed geodesics, apply condition
(c) with \(s=s'=v\) and \(t=u/v\), where \(0<u<v<a\).
Dividing by the absolute value of the variation parameter and taking
its limit gives
\[
 |J(u)|\leq|J(v)|\qquad(0<u<v<a)
\]
for every normal Jacobi field \(J\) with \(J(0)=0\).
In Xu's notation, this says
\(a\leq\operatorname{foc}_e(p)\) for every \(p\), where
the extended focal radius \(\operatorname{foc}_e(p)\) is the
supremum of radii on which all these norms are nondecreasing.
\cite[Lemma~2.5]{Xu2017} gives
\(\inf_p\operatorname{foc}_e(p)=r_f(M)\).
Together with \eqref{eq:convexity-focal-injectivity}, this yields
\[
 a\leq\min\{r_f(M),
                  \tfrac12\operatorname{inj}(M)\}
   =\convRad(M).
\]
Taking the supremum over \(a\) proves the reverse inequality.
\end{proof}

An interval containing no Clarke critical diameter values contains no
weak-slope stationary values by \eqref{eq:weak-clarke-spectrum-inclusion}.
Thus \cref{thm:metric-stationary-chamber-intro} and
\cref{cor:fixed-label-weak-slope-connectivity} give the
following consequences for the canonical scale inclusion.

\begin{corollary}[Clarke gaps and canonical Vietoris--Rips maps]
\label{cor:all-label-clarke-chambers}
Let \(M\) be a closed connected finite-dimensional Riemannian manifold,
and let \(0<r<s\).
\begin{enumerate}[label=\textup{(\roman*)}]
\item If \([r,s)\cap\Sigma^{\specC}(M)=\varnothing\), the canonical
      inclusion \(\VR{M}{r}\to\VR{M}{s}\) is a homotopy equivalence.
\item If \(s\leq\diam(M)\) and
      \([r,s)\cap\Sigma_N^{\specC}(M)=\varnothing\) for some \(N\geq2\),
      that inclusion is \((N-1)\)-connected.
\end{enumerate}
\end{corollary}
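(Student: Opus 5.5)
Both parts reduce to results already established, via the spectral inclusion \eqref{eq:weak-clarke-spectrum-inclusion}. A closed connected Riemannian manifold \(M\) with its geodesic distance \(d_M\) is a nonempty compact metric space. By \cref{prop:clarke-first-order}, every weak-slope stationary tuple for \(\diam_N\), computed with the maximum product metric \(d_{M,\infty}\), is Clarke critical. Hence
\[
 \Sigma_N^{\specWS}(M)\subseteq\Sigma_N^{\specC}(M)\quad\text{for every }N\geq2,
\]
and the same holds for the unions over \(N\). This comparison holds without any off-cut-locus assumption, so no smoothness of the diameter pairs needs to be checked.

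For \textup{(i)}, the hypothesis \([r,s)\cap\Sigma^{\specC}(M)=\varnothing\) and the inclusion above give \([r,s)\cap\Sigma^{\specWS}(M)=\varnothing\). \Cref{thm:metric-stationary-chamber-intro} then says directly that the canonical inclusion \(\VR{M}{r}\to\VR{M}{s}\) is a homotopy equivalence.

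For \textup{(ii)}, the fixed-label inclusion gives \([r,s)\cap\Sigma_N^{\specWS}(M)=\varnothing\). \Cref{cor:fixed-label-weak-slope-connectivity} then makes the canonical inclusion \((N-1)\)-connected. Behind that corollary, \cref{lem:weak-slope-repetition-nesting} propagates the gap to every \(2\leq q\leq N\), \cref{cor:half-open-fixed-cardinality} gives weak equivalences of the configuration inclusions, and \cref{thm:configurationwise-chamber}\textup{(i)} finishes the argument. The assumption \(s\leq\diam(M)\) is not needed for this argument; it only keeps the statement in the nondegenerate range, since above \(\diam(M)\) both complexes are full simplices.

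The only point that needs care, and the closest thing to an obstacle, is that the two spectra must be compared with the same metric on \(M^N\). \Cref{def:weak-slope-diameter-spectra} uses \(d_{M,\infty}\), while Clarke criticality is intrinsic to the smooth structure. \Cref{prop:clarke-first-order} handles this through the local bi-Lipschitz invariance of \cref{lem:weak-slope-bilipschitz}, so all that remains is to quote it correctly.
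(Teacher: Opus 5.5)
Your proposal is correct and follows the paper's proof: the spectral inclusion \eqref{eq:weak-clarke-spectrum-inclusion} turns the Clarke gap into a weak-slope gap. Then \cref{thm:metric-stationary-chamber-intro} gives (i), and \cref{cor:fixed-label-weak-slope-connectivity} gives (ii); your remark that the hypothesis \(s\leq\diam(M)\) plays no role in the argument is also accurate.
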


\begin{proof}
Use \eqref{eq:weak-clarke-spectrum-inclusion} and apply \cref{thm:metric-stationary-chamber-intro} for (i)
and \cref{cor:fixed-label-weak-slope-connectivity} for (ii).
\end{proof}

For a closed connected finite-dimensional \(C^\infty\) Riemannian
manifold \(M\), define
\[
 r_{\mathrm C}(M):=
 \inf\bigl(\Sigma^{\specC}(M)\setminus\{0\}\bigr),
 \qquad \inf\varnothing:=\infty.
\]
This is the lower endpoint of the positive Clarke diameter spectrum;
the definition does not require the infimum to be attained.
If \(\dim M>0\), then
\[
 0<\convRad(M)\leq r_{\mathrm C}(M)\leq\diam(M).
\]
The lower bound follows from \cref{prop:uniform-convexity-gap}.
For the upper bound, a diameter-realizing pair is a global maximum
of the locally Lipschitz function \(\diam_2\), hence is Clarke
critical.  If \(M\) is a singleton, then \(r_{\mathrm C}(M)=\infty\).
All scale parameters below are finite.

\begin{corollary}[Improvement of Hausmann's Theorem: Vietoris--Rips stability up to \(r_{\mathrm C}(M)\)]
\label{cor:convexity-radius-rips-stability}
Let \(M\) be a closed connected finite-dimensional \(C^\infty\)
Riemannian manifold.  For every \(0<r<s\leq r_{\mathrm C}(M)\), the canonical
inclusion
\[
 \VR{M}{r}\longrightarrow\VR{M}{s}
\]
is a homotopy equivalence.  Moreover,
\(\VR{M}{t}\simeq M\) for every \(0<t\leq r_{\mathrm C}(M)\).
\end{corollary}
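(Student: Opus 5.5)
The plan is to combine the Clarke gap with the weak-slope deformation theorem, and then use Hausmann's theorem to identify the homotopy type at small scales.

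\emph{The inclusions.} Fix \(0<r<s\leq r_{\mathrm C}(M)\). By definition of \(r_{\mathrm C}(M)\) as the infimum of the positive Clarke spectrum, every element of \(\Sigma^{\specC}(M)\setminus\{0\}\) is at least \(r_{\mathrm C}(M)\geq s\). Since \(r>0\), this gives \([r,s)\cap\Sigma^{\specC}(M)=\varnothing\). Here the half-open interval matters: the endpoint \(s=r_{\mathrm C}(M)\) may itself be a critical value, but it is excluded. Then \cref{cor:all-label-clarke-chambers}\textup{(i)} makes the canonical inclusion \(\VR{M}{r}\to\VR{M}{s}\) a homotopy equivalence. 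That corollary reduces to \eqref{eq:weak-clarke-spectrum-inclusion} and \cref{thm:metric-stationary-chamber-intro}, which uses compactness of \(M\). For a singleton \(M\) all complexes are points, so we assume \(\dim M>0\); then \(r_{\mathrm C}(M)\) is finite, by the bound \(r_{\mathrm C}(M)\leq\diam(M)\) recorded above.

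\emph{Homotopy type.} Hausmann's theorem \cite[Theorem~3.5]{Hausmann1995} gives \(\VR{M}{t}\simeq M\) for \(0<t\leq r_H(M)\); I would check that the open-complex convention matches his. \Cref{prop:uniform-convexity-gap} gives \(r_H(M)=\convRad(M)>0\). Choose \(t_0\) with \(0<t_0<\min\{t,\convRad(M)\}\), so that \(t_0\leq r_H(M)\) and \(t_0\leq r_{\mathrm C}(M)\). Then \(\VR{M}{t_0}\simeq M\). For \(t_0<t\leq r_{\mathrm C}(M)\), the first part shows that the canonical inclusion \(\VR{M}{t_0}\to\VR{M}{t}\) is a homotopy equivalence, hence \(\VR{M}{t}\simeq M\). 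If \(t\leq\convRad(M)\), Hausmann's theorem applies to \(t\) directly.

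\emph{Main obstacle.} The only delicate point is matching conventions: closed versus open complexes, and whether Hausmann's statement is for \(t<r_H\) or \(t\leq r_H\). Choosing \(t_0\) strictly below \(\convRad(M)\) avoids both issues, since the open and closed complexes at scales strictly below \(r_H\) are each equivalent to \(M\). After that, the deformation theorem carries the equivalence up to and including \(r_{\mathrm C}(M)\).
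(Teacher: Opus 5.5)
Your proposal is correct and follows the paper's proof: from the definition of \(r_{\mathrm C}(M)\) you get the half-open gap \([r,s)\cap\Sigma^{\specC}(M)=\varnothing\), which goes into \cref{cor:all-label-clarke-chambers}\textup{(i)}, and you then carry Hausmann's identification at a small scale \(t_0<t\) up to scale \(t\) along the canonical inclusion. Invoking \(r_H(M)=\convRad(M)\) makes explicit the paper's ``sufficiently small''. Your side remark applying Hausmann directly when \(t\leq\convRad(M)\) is unnecessary, and it depends on his endpoint convention, since the \(t_0\) argument already covers that case.
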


\begin{proof}
The singleton case is immediate.  Otherwise, for
\(0<r<s\leq r_{\mathrm C}(M)\), the definition gives
\[
 [r,s)\cap\Sigma^{\specC}(M)=\varnothing.
\]
Hence \cref{cor:all-label-clarke-chambers}\textup{(i)} gives the
canonical homotopy equivalence, including when \(s=r_{\mathrm C}(M)\).
Only the half-open interval must avoid the spectrum, so no attainment
or closedness assumption on the all-label spectrum is needed.
For any \(0<t\leq r_{\mathrm C}(M)\), choose \(0<r_0<t\) sufficiently small
that Hausmann's theorem \cite[Theorem~3.5]{Hausmann1995} gives
\(\VR{M}{r_0}\simeq M\).  The canonical inclusion from scale \(r_0\)
to scale \(t\) is a homotopy equivalence by the first assertion,
so \(\VR{M}{t}\simeq M\).
\end{proof}

\begin{remark}
Since \(r_H(M)=\convRad(M)\leq r_{\mathrm C}(M)\),
\cref{cor:convexity-radius-rips-stability} includes stability up to
the convexity radius and extends the guaranteed range whenever this
inequality is strict.  Hausmann's small-scale theorem identifies the
initial homotopy type, and the absence of Clarke critical diameter
values extends that identification through \(r_{\mathrm C}(M)\).
This does not assert that \(r_{\mathrm C}(M)\) is the first actual
Vietoris--Rips transition: stationarity alone does not establish
sharpness of the bound.

For the unit round sphere \(\Sph^m\), \(m\geq1\), one has
\(\convRad(\Sph^m)=\pi/2\), whereas
\cref{thm:first-spherical-clarke-value} gives
\[
 r_{\mathrm C}(\Sph^m)=\zeta_m
 =\arccos\!\left(-\frac1{m+1}\right)>\frac\pi2.
\]
Consequently, \cref{cor:convexity-radius-rips-stability} gives
\(\VR{\Sph^m}{t}\simeq\Sph^m\) for every \(0<t\leq\zeta_m\),
with canonical homotopy equivalences between any two scales in this
range.  This provides an alternative proof of the optimal spherical
initial-range theorem of Lim--M\'emoli--Okutan
\cite[Theorem~7.1 and Section~A.4.1]{LimMemoliOkutan2024}; see
\Cref{cor:optimal-spherical-initial-chamber}.
Hausmann's theorem supplies the small-scale identification, while the
sharp stationary-diameter bound and our deformation theorem extend it
through \(\zeta_m\).  Sharpness of the spherical endpoint uses the
additional topological argument recalled after that corollary.
\end{remark}
\par

Within Hausmann's original range \(0<r<s<r_H(M)\), the canonical-map
conclusion also follows from his construction: choosing one ordering of
\(M\) at all scales makes his maps \(T_t:\VR{M}{t}\to M\) satisfy
\(T_s\circ i_{r,s}=T_r\), where \(i_{r,s}\) is the canonical inclusion
\cite[Section~3, equation~(3.1) and Theorem~3.5]{Hausmann1995}.

The union of the Clarke critical diameter spectra over all label
numbers has Hausdorff dimension zero.

\begin{corollary}[The all-label spectrum has Hausdorff dimension zero]
\label{cor:all-label-thin-initial}
For a closed connected \(C^\infty\) Riemannian manifold \(M\), the union
\(\Sigma^{\specC}(M)\) has Hausdorff dimension zero.
\end{corollary}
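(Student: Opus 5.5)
The plan is to write the all-label spectrum as a countable union of fixed-label spectra and apply countable stability of Hausdorff dimension. By definition,
\[
 \Sigma^{\specC}(M)=\bigcup_{N\geq2}\Sigma_N^{\specC}(M).
\]
This is a countable union indexed by the integer label number \(N\).

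First, for each fixed \(N\geq2\), \cref{thm:fixed-label-clarke-spectrum} gives that \(\Sigma_N^{\specC}(M)\) has Hausdorff dimension zero. This holds on a closed connected \(C^\infty\) Riemannian manifold with no off-cut-locus assumption, which is exactly the hypothesis here. The singleton case \(\dim M=0\) is trivial, since then every spectrum equals \(\{0\}\).

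Second, I will invoke countable stability of Hausdorff dimension, as cited from \cite[Section~2.1]{Falconer1997} before that theorem. It gives
\[
 \dim_H\bigcup_{N\geq2}\Sigma_N^{\specC}(M)=\sup_{N\geq2}\dim_H\Sigma_N^{\specC}(M)=0.
\]
Equivalently, for every \(p>0\), each \(\Sigma_N^{\specC}(M)\) has zero \(p\)-dimensional Hausdorff measure. Countable subadditivity of \(\mathcal H^p\) then gives zero \(p\)-measure for the union.

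There is essentially no obstacle: the Sard-type work is entirely contained in the fixed-label theorem. The one point to record is that, unlike each \(\Sigma_N^{\specC}(M)\), the union need not be compact. So the corollary asserts only Hausdorff dimension zero, not closedness. The nesting \(\Sigma_q^{\specC}\subseteq\Sigma_N^{\specC}\) is not needed, though it shows the union is increasing.

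As a remark to include, dimension zero in \(\R\) implies the union has empty interior. So every interval contains subintervals avoiding each fixed \(\Sigma_N^{\specC}(M)\), but not necessarily a single interval avoiding all of them. This is consistent with the earlier discussion that a uniform gap requires \cref{prop:uniform-convexity-gap}.
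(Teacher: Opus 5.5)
Your proof is correct and is the paper's argument: the all-label spectrum is a countable union of the fixed-label spectra, each of Hausdorff dimension zero by \cref{thm:fixed-label-clarke-spectrum}, and countable stability of Hausdorff dimension finishes the proof. In your side remark, the step to subintervals avoiding a fixed \(\Sigma_N^{\specC}(M)\) needs that spectrum to be closed, not only the union to have empty interior; the same theorem supplies this through compactness.
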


\begin{proof}
It is the countable union of the fixed-label spectra in
\cref{thm:fixed-label-clarke-spectrum}.  Apply countable stability of
Hausdorff dimension.
\end{proof}

By \cref{thm:fixed-label-clarke-spectrum}, each
\(\Sigma_N^{\specC}(M)\) is closed and has empty interior.  Hence
\begin{equation}
 (0,\diam(M))\setminus\Sigma^{\specC}(M)
 \label{eq:generic-clarke-regular-values}
\end{equation}
is a countable intersection of open dense sets and is dense by
Baire's theorem.  This need not give an open interval containing no
critical values:
even a countable set can be dense.  To exclude accumulation we will
instead bound the number of points needed to realize each critical value.

For homology in a fixed degree, only one fixed-label spectrum is
needed.  This gives a stronger regularity conclusion than the
all-label statement alone. The following corollary uses persistence modules induced by Vietoris-Rips filtrations and their barcodes;
for background, see \cite[Section~2.1]{LimMemoliOkutan2024},
\cite[Section~2.1.2]{BalitskiyCoskunuzerMemoli2025},
and references therein.

\begin{corollary}[Homological critical scales and analytic finite barcodes]
\label{cor:degreewise-persistence-finiteness}
Let \((M,g)\) be a closed connected \(C^\infty\) Riemannian manifold,
let \(p\geq1\), and let \(\mathbb F\) be a field.  The set of
positive homological critical scales in degree \(p\) is a compact
subset of \(\Sigma_{p+2}^{\specC}(M)\) and has Hausdorff dimension
at most zero.
If \(M\) and \(g\) are real analytic, then
\(H_p(\VR{M}{t};\mathbb F)\) is finite-dimensional for every
\(t>0\), and the persistence module
\[
 \bigl\{H_p(\VR{M}{t};\mathbb F)\bigr\}_{t>0}
\]
with its canonical maps has finitely many bars.  Every positive
endpoint of these bars belongs to \(\Sigma_{p+2}^{\specC}(M)\).
\end{corollary}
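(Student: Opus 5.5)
The plan has three parts: locate the critical scales, prove finite-dimensionality of homology, and then count bars.

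\emph{Locating the critical scales.} By \cref{prop:homological-change-stationarity}, every positive homological critical scale in degree \(p\) lies in \(\Sigma_{p+2}^{\specWS}(M)\). By \eqref{eq:weak-clarke-spectrum-inclusion}, this set is contained in \(\Sigma_{p+2}^{\specC}(M)\). Compactness and Hausdorff dimension zero of the latter come from \cref{thm:fixed-label-clarke-spectrum}. Since subsets inherit Hausdorff dimension at most zero, it remains to show that the set of critical scales is closed in \((0,\infty)\) and bounded.
\begin{itemize}
\item[] \emph{Openness of regular scales.} If \(c\) is regular with witness \(\varepsilon\), then every \(c'\) within \(\varepsilon/2\) of \(c\) is regular with witness \(\varepsilon/2\).
\item[] \emph{Boundedness.} Every scale above \(\diam(M)\) is regular, since \(\VR{M}{t}\) is then a full simplex.
\item[] \emph{Accumulation at zero.} Critical scales cannot accumulate at \(0\), because \cref{prop:uniform-convexity-gap} gives no spectrum in \((0,\convRad(M))\).
\end{itemize}
Hence the set is a closed subset of the compact set \(\Sigma_{p+2}^{\specC}(M)\setminus(0,\convRad(M))\), and is therefore compact.

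\emph{Finite-dimensionality in the analytic case.} By \cref{cor:analytic-definable-fixed-label-finiteness}, \(S:=\Sigma_{p+2}^{\specC}(M)\cap(0,\infty)\) is finite; write it as \(c_1<\cdots<c_n\). Fix \(t>0\). Take any \(0<r<t\) with \(r<\convRad(M)\) and \([r,t)\cap S=\varnothing\) whenever \(t\le c_1\). Then \cref{cor:convexity-radius-rips-stability} gives \(\VR{M}{t}\simeq M\), and \(H_p(M)\) is finite-dimensional because \(M\) is a closed manifold. For general \(t\), I would induct across the finitely many points \(c_i\le t\).
\begin{itemize}
\item[] \emph{On gaps.} On each interval \([a,b)\) avoiding \(S\), \cref{cor:fixed-label-weak-slope-connectivity} with \(N=p+2\) shows the canonical map is an isomorphism on \(H_p\). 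Every \(t\) in the gap therefore has homology isomorphic to that at the left end of the gap.
\item[] \emph{Crossing a critical value.} Here I must show finite-dimensionality survives passing \(c_i\). This is the main obstacle. The plan is to use \(q\)-tameness of Vietoris--Rips persistence of compact spaces, Chazal--de Silva--Oudot \cite[Proposition~5.1]{ChazalDeSilvaOudot2014}: every map \(H_p(\VR{M}{a})\to H_p(\VR{M}{b})\) with \(a<b\) has finite rank.
\item[] \emph{Combining.} Choose \(a<t<b\) in the same gap (or with \(t\) at a left-closed endpoint, using the half-open convention). The map from \(a\) to \(t\) is an isomorphism. By the same corollary applied with the \(p+2\) spectrum, the map from \(t\) to \(b\) is injective as long as \([t,b)\) avoids \(S\). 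A finite-rank map that is injective has finite-dimensional domain, so \(H_p(\VR{M}{t})\) is finite-dimensional.
\item[] \emph{Case \(t\in S\).} The interval \([t,b)\) contains \(t\), so instead I use \(a<t\) with \([a,t)\) avoiding \(S\). This gives an isomorphism \(H_p(a)\cong H_p(t)\), and \(H_p(a)\) is finite-dimensional by the previous case.
\end{itemize}

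\emph{Counting bars.} The module is pointwise finite-dimensional, so it decomposes into interval modules (Crawley-Boevey). Every bar endpoint is a scale at which the structure maps fail to be isomorphisms locally, so it is a homological critical scale. Hence every positive endpoint lies in the finite set \(S\). All bars therefore have endpoints in the finite set \(S\cup\{0,\infty\}\). Between consecutive such points the dimension is finite and constant, so only finitely many bars can start or end at each point. This gives finitely many bars.
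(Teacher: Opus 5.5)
This is essentially the paper's proof: critical scales go into \(\Sigma_{p+2}^{\specC}(M)\) via \cref{prop:homological-change-stationarity}, and openness of regularity plus the convexity gap give compactness. Half-open gap isomorphisms combined with Chazal--de Silva--Oudot finite rank give finite-dimensionality. The interval decomposition with endpoints in the finite spectrum gives finitely many bars. The paper avoids your case split on \(t\in S\) by using the finite-rank isomorphism \(H_p(\VR{M}{a})\to H_p(\VR{M}{t})\) with \([a,t)\) spectrum-free for every \(t\).

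You should drop your base case. \Cref{cor:convexity-radius-rips-stability} gives \(\VR{M}{t}\simeq M\) only for \(t\leq r_{\mathrm C}(M)\), and this can lie strictly below \(c_1\). For example, on \(\Sph^m\) with \(p<m\) one has \(c_1=\zeta_p>\zeta_m=r_{\mathrm C}(\Sph^m)\). So that claim is unjustified, but it is also unnecessary: your combining step already proves finite-dimensionality at every \(t\) without it.
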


\begin{proof}
\Cref{prop:homological-change-stationarity} and
\eqref{eq:weak-clarke-spectrum-inclusion} place the positive
homological critical scales in \(\Sigma_{p+2}^{\specC}(M)\).
Their set is closed in \((0,\infty)\), since homological regularity
is an open condition.  By \cref{prop:uniform-convexity-gap}, it
is bounded away from zero.  Compactness and Hausdorff dimension zero
therefore follow from \cref{thm:fixed-label-clarke-spectrum}.
If \(M\) is a singleton, the set is empty and all the stated
homology groups vanish.

Now assume the metric is real analytic.  By
\cref{cor:analytic-definable-fixed-label-finiteness}, the spectrum
\(\Sigma_{p+2}^{\specC}(M)\) is finite.  For each \(t>0\), choose
\(0<a<t\) such that
\[
 [a,t)\cap\Sigma_{p+2}^{\specC}(M)=\varnothing.
\]
The weak-slope comparison and
\cref{cor:fixed-label-weak-slope-connectivity} imply that the
canonical map from scale \(a\) to scale \(t\) is an isomorphism
on \(H_p(-;\mathbb F)\).  This also holds when \(t\) itself is
critical, because the gap condition omits its upper endpoint.

To see that this isomorphism has finite rank, use the finite-net
argument of \cite[Proposition~5.1]{ChazalDeSilvaOudot2014}, here
with the strict convention.  Choose \(\varepsilon>0\) with
\(a+2\varepsilon<t\), a finite \(\varepsilon\)-net \(F\subset M\),
and a map \(\nu:M\to F\) satisfying
\(d_M(x,\nu(x))<\varepsilon\).  The induced simplicial maps
\[
 \VR{M}{a}\longrightarrow\VR{F}{a+2\varepsilon}
 \longrightarrow\VR{M}{t}
\]
have composite contiguous to the canonical inclusion: for each
simplex \(\sigma\) at scale \(a\), the union
\(\sigma\cup\nu(\sigma)\) has diameter less than
\(a+2\varepsilon<t\).  The homology map thus factors through the
homology of a finite complex.  Being both an isomorphism and of
finite rank, it proves finite-dimensionality at scale \(t\).

There are only finitely many positive values in
\(\Sigma_{p+2}^{\specC}(M)\).  The homology maps are isomorphisms
between successive such values, including at the right endpoint
by the half-open gap condition.  Above \(\diam(M)\), the complex
is a full simplex and its positive-degree homology vanishes.
Consequently the persistence module is determined by a finite
sequence of finite-dimensional vector spaces.  Its decomposition
into interval modules gives finitely many bars, whose positive
endpoints can occur only at the indicated spectral values.
\end{proof}

Thus the total stationary spectrum can be infinite even though,
for an analytic metric, persistence has only finitely many bars
in each fixed homological degree.  No finiteness across all degrees
or of the collection of full homotopy types is asserted.

The round sphere first gives an exact positive minimum and an explicit
circle spectrum in \cref{sec:spherical-diameter-spectra}.  The bound on
realizing supports in \cref{sec:spherical-stacks} will then locate
the first accumulation point when all label numbers are allowed.

\section{Positive stationary diameters of round spheres}
\label{sec:spherical-diameter-spectra}

We now determine the least positive stationary diameter on a round
sphere.  Both the ambient dimension and the number of labels enter the
answer:
\[
 \min\bigl(\Sigma_N^{\specC}(\Sph^m)\setminus\{0\}\bigr)
 =\min\bigl(\Sigma_N^{\specWS}(\Sph^m)\setminus\{0\}\bigr)
 =\zeta_{\min\{m,N-2\}}.
\]
The two restrictions have the same geometric origin: a centered regular
simplex must fit in the ambient sphere and use no more points than the
available labels.  Summing the equilibrium equations puts the origin in
the convex hull of the endpoints of edges with positive weight.
Jung's inequality then gives the lower bound, and its equality case
identifies the realizing configurations
(\cref{thm:first-spherical-clarke-value}).  Repetitions allow larger
tuples with the same simplex support.

The circle provides an exact model beyond this first value.  Its regular
odd polygons give every positive nonantipodal stationary diameter and its
least number of labels.  The known canonical Vietoris--Rips maps then
show that the connectivity estimate is sharp.  Those polygons also
provide the bases for the accumulation construction in the next section.

\subsection{First-order stationarity and equilibrium stresses}
\label{subsec:spherical-clarke-spectra}

We characterize stationarity at nonantipodal positive diameters by a
weighted balance among the pairs realizing the diameter.

Let \(P=\{x_1,\ldots,x_q\}\subset\Sph^m\), with \(q=|P|\), have diameter
\(D\in(0,\pi)\), and write \(x:=(x_1,\ldots,x_q)\).
Its \emph{diameter graph} has vertex set \(P\) and
an edge \(ij\) precisely when \(d_m(x_i,x_j)=D\).
These are the diameter pairs of \cref{def:active-pairs}.  For each diameter pair \(ij\), the function
\(\ell_{ij}^{\Sph^m}\), defined in \cref{def:active-pairs}, is smooth near \(x\), because \(x_i\) and
\(x_j\) are distinct and nonantipodal.  For
\(V=(V_1,\ldots,V_q)\), with \(V_i\in T_{x_i}\Sph^m\), its differential is
\[
 \diff(\ell_{ij}^{\Sph^m})_x[V]
 =-\frac{V_i\cdot x_j+x_i\cdot V_j}{\sin D}.
\]
By \cref{prop:clarke-first-order}, common strict descent
fails exactly when zero belongs to the convex hull of these
differentials.  Equivalently, some nonzero nonnegative combination of
them vanishes.  This is Gordan's theorem of the alternative
\cite[Section~22]{Rockafellar1970}, in the form of the
convex-hull identity \eqref{eq:clarke-active-convex-hull}.
Reading the vanishing combination
one tangent space at a time gives the following equilibrium equations.

\begin{corollary}[Equilibrium stresses on a round sphere]
\label{cor:spherical-gordan-stress}
Let \(P=\{x_1,\ldots,x_q\}\subset\Sph^m\), with \(q=|P|\), have diameter
\(D\in(0,\pi)\).  Then \(P\) is first-order stationary for
diameter if and only if there are symmetric coefficients
\(w_{ij}=w_{ji}\geq0\), not all zero and vanishing on
nonedges of the diameter graph, with \(w_{ii}:=0\), such that
\begin{equation}
 \sum_{j\ne i}w_{ij}(x_j-\cos(D)x_i)=0
 \qquad(1\leq i\leq q).
 \label{eq:spherical-gordan-equilibrium}
\end{equation}
Such a family \(\bigl(w_{ij}\bigr)_{1\leq i,j\leq q}\) is called a
\emph{nonzero nonnegative equilibrium stress}.
For this stress, the \emph{stressed degree} of \(x_i\) is
\(h_i:=\sum_{j\ne i}w_{ij}\), the sum of its incident stress weights.
These degrees satisfy
\begin{equation}
 (1-\cos D)\sum_{i=1}^q h_i x_i=0,
 \label{eq:spherical-stress-weighted-center}
\end{equation}
and hence \(0\in\conv\{x_i:h_i>0\}\).
\end{corollary}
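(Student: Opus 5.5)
The plan is to apply Gordan's alternative to the explicit differentials of the smooth diameter-pair distances, and then read the resulting vanishing combination one tangent space at a time.

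\emph{Step 1 (reduction to Gordan).} By \cref{def:first-order-stationary}, $P$ is first-order stationary exactly when no family $(w_y)$ of tangent vectors strictly decreases every diameter-pair distance. Taking $V_i=w_{x_i}$ for the tuple $x$ of distinct points, this says that no $V\in T_x(\Sph^m)^q$ satisfies $\diff(\ell_{ij}^{\Sph^m})_x[V]<0$ for all edges $ij$. All edges are smooth branches, since their endpoints are distinct and nonantipodal. Gordan's theorem in $T_x(\Sph^m)^q$ then gives an equivalent condition: some nonnegative coefficients $w_{ij}$, not all zero and supported on edges, satisfy $\sum_{ij}w_{ij}\diff(\ell_{ij})_x=0$ as a covector.

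\emph{Step 2 (reading the balance blockwise).} Using the displayed formula for $\diff(\ell_{ij})_x$, the $V_i$-block of this combination is $-(\sin D)^{-1}\sum_{j\neq i}w_{ij}\,x_j\cdot V_i$. Here we symmetrize, setting $w_{ji}=w_{ij}$. This block vanishes for all $V_i\perp x_i$ exactly when the tangential projection $\sum_j w_{ij}\bigl(x_j-(x_j\cdot x_i)x_i\bigr)$ is zero. On edges, $x_j\cdot x_i=\cos D$, so this projection is precisely the vector in \eqref{eq:spherical-gordan-equilibrium}. Moreover, that vector is already tangent at $x_i$, because its dot product with $x_i$ is $\sum_j w_{ij}(\cos D-\cos D)=0$. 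Hence tangential vanishing is equivalent to vanishing in $\R^{m+1}$, and both directions of the equivalence follow.

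\emph{Step 3 (weighted center).} Sum \eqref{eq:spherical-gordan-equilibrium} over $i$. By symmetry, $\sum_i\sum_j w_{ij}x_j=\sum_j h_jx_j$, while $\sum_i\sum_j w_{ij}\cos(D)\,x_i=\cos(D)\sum_i h_ix_i$. The sum therefore equals $(1-\cos D)\sum_i h_ix_i=0$, which is \eqref{eq:spherical-stress-weighted-center}. Since $D\in(0,\pi)$ we have $1-\cos D>0$, so $\sum_i h_ix_i=0$. The weights are not all zero, so $H:=\sum_i h_i>0$, and dividing by $H$ writes $0$ as a convex combination of the points $x_i$ with $h_i>0$.

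The only delicate point is Step 2: one must check that the tangential projection coincides with the stated ambient vector, and this uses $x_i\cdot x_j=\cos D$ on edges. Everything else is bookkeeping.
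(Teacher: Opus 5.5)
Your proposal is correct and follows essentially the same route as the paper: Gordan's alternative applied to the diameter-pair distance differentials on $T_x(\Sph^m)^q$, a blockwise reading of the resulting balance via the gradient $-(x_j-\cos(D)x_i)/\sin D$, and summation with symmetric weights to obtain the weighted-center identity and the convex-hull conclusion. The only cosmetic difference is that you apply Gordan directly to the definition of first-order stationarity for the set, whereas the paper phrases that step through the convex-hull criterion of \cref{prop:clarke-first-order}; your version avoids the Clarke machinery, which is not needed for this equivalence.
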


\begin{proof}
Put \(x:=(x_1,\ldots,x_q)\).  Apply the convex-hull criterion in
\cref{prop:clarke-first-order} to the active distance
differentials on \(T_x(\Sph^m)^q\), and normalize the coefficients.
  The gradient of the \(ij\)-branch
at coordinate \(x_i\) is
\[
 -\frac{x_j-\cos(D)x_i}{\sin D}.
\]
The vanishing of a nonzero nonnegative combination of the full
differentials is therefore equivalent, coordinate by coordinate, to
\eqref{eq:spherical-gordan-equilibrium}.

Sum these equations over \(i\).  Symmetry of the weights gives
\(\sum_i\sum_{j\ne i}w_{ij}x_j=\sum_jh_jx_j\), proving
\eqref{eq:spherical-stress-weighted-center}.  Since
\[
 \sum_i h_i=2\sum_{i<j}w_{ij}>0
 \quad\text{and}\quad 1-\cos D>0,
\]
division by \(\sum_i h_i\) expresses zero as a convex combination of
the vertices of positive stressed degree.
\end{proof}

The vector \(x_j-\cos(D)x_i\) is the tangent projection of \(x_j\)
at \(x_i\).  Thus \eqref{eq:spherical-gordan-equilibrium} says that
the weighted directions toward diameter neighbors balance at each point.
Summing these equations gives
\eqref{eq:spherical-stress-weighted-center}, which places the origin in
the convex hull and allows us to apply Jung's inequality.

The nonnegativity requirement permits vertices of zero
stressed degree, even among
vertices incident to edges of the diameter graph.  They carry no positive-weight edge and
can be discarded without changing the equilibrium equations on the
remaining points.  The remaining set still has diameter \(D\), because
the stress has at least one positive-weight diameter edge.  The
origin-containing convex hull in the corollary refers precisely to this
set of vertices of positive stressed degree.

\begin{proposition}[Coincidence below the antipodal scale]
\label{prop:spherical-spectrum-coincidence}
Let \(m\geq1\), \(N\geq2\), and \(x\in(\Sph^m)^N\), with
\(0<\diam_N(x)<\pi\).  The following are equivalent: \(x\) is
weak-slope stationary; \(x\) is Clarke critical; \(x\) has no common
strict first-order descent direction; and its distinct support
\(\{x_1,\ldots,x_N\}\) carries a nonzero nonnegative equilibrium stress.
\end{proposition}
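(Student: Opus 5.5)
The plan is to reduce everything to \cref{prop:clarke-first-order} and \cref{cor:spherical-gordan-stress}. The key observation is that on \(\Sph^m\) the distance \(d_m\) is smooth near every pair of distinct, nonantipodal points, since the cut locus of a point is its antipode. Suppose \(0<D:=\diam_N(x)<\pi\). Every diameter pair \(ij\in\operatorname{Act}(x)\) then consists of distinct points (because \(D>0\)) that are not antipodal (because \(D<\pi\)). So \(d_m\) is smooth near each diameter pair, and the smooth-case hypotheses of \cref{prop:clarke-first-order} hold. That proposition immediately gives the equivalence of the first three conditions: weak-slope stationarity for \(d_{\Sph^m,\infty}\), Clarke criticality, and the absence of a common first-order descent direction.

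The remaining step is to show that condition (iii) for the labelled tuple is equivalent to the existence of a nonzero nonnegative equilibrium stress on the distinct support \(P=\{x_1,\ldots,x_N\}\). By \cref{cor:spherical-gordan-stress}, the latter is equivalent to \(P\) being first-order stationary in the set sense of \cref{def:first-order-stationary}. We also have \(\diam(P)=D\in(0,\pi)\), since repetitions do not change the diameter. One direction, tuple stationarity implies support stationarity, is the final assertion of \cref{prop:clarke-first-order}.

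For the converse, assume the tuple admits a common descent direction \(V=(V_1,\ldots,V_N)\). We want to produce a strict descent family \((w_y)_{y\in P}\) on the support. The difficulty is that repeated labels \(x_i=x_{i'}\) may carry different velocities \(V_i\neq V_{i'}\). My first attempt is to average: for \(y\in P\), set \(w_y:=\frac{1}{|\{i:x_i=y\}|}\sum_{i:x_i=y}V_i\).

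Now consider a support diameter pair \(\{y,y'\}\). Every labelled pair \((i,j)\) with \(x_i=y\) and \(x_j=y'\) is active, since \(d_m(y,y')=D\). The differential \(\diff(d_m)_{(y,y')}[\,\cdot\,,\,\cdot\,]\) is linear and splits as a sum of a term in the first slot and a term in the second. Averaging the strict inequalities \(\diff(d_m)_{(y,y')}[V_i,V_j]<0\) over all such \(i\) and all such \(j\) therefore gives \(\diff(d_m)_{(y,y')}[w_y,w_{y'}]<0\). This contradicts first-order stationarity of \(P\). The same linearity argument shows directly, via \eqref{eq:spherical-gordan-equilibrium}, that a stress on \(P\) pulls back to a nonnegative combination of labelled active differentials. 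To do this, split each weight \(w_{yy'}\) evenly among the labelled pairs over \(\{y,y'\}\) after rescaling by multiplicities; this gives an alternative route to the same conclusion.

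The main obstacle is this handling of repeated labels, which the averaging trick appears to resolve cleanly. Apart from that, the only point needing care is confirming the cut-locus claim, namely \(\operatorname{Cut}(y)=\{-y\}\) on the round sphere, and that no coincident pair is active at positive diameter.
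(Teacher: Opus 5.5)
Your proof is correct and essentially matches the paper's. Smoothness of \(d_m\) away from antipodal pairs reduces the first three equivalences to \cref{prop:clarke-first-order}, and the passage from tuple to support uses the copying argument together with \cref{cor:spherical-gordan-stress}. The only difference is in the support-to-tuple direction: the paper lifts the stress to one chosen label per support point, with zero weights on all other labelled edges, whereas you average descent velocities (or split weights) over repeated labels; both routes are equally valid.
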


\begin{proof}
Distance is smooth near every diameter pair, so
\cref{prop:clarke-first-order} gives the first three equivalences.

To pass between a labelled tuple and its distinct support,
we copy descent vectors to repeated labels in one direction and lift
stress coefficients in the other.  A pair of coincident coordinates
is inactive at positive diameter.  If the support admitted a common strict descent
family, copying its vector at each point to every corresponding label
would give a common strict descent direction for \(x\).  Thus
\cref{cor:spherical-gordan-stress} gives a stress on the support whenever
\(x\) is stationary.  Conversely, select one label for each support
point.  A stress on the support lifts to those labels, with zero weights
on all other labelled edges.  This is a nonzero nonnegative dependence
among active labelled distance differentials, so
\cref{prop:clarke-first-order} excludes common strict
descent for \(x\).
This proves the pointwise assertion and
\eqref{eq:fixed-spherical-spectrum-coincidence}.  Taking unions over
\(N\), or ordering the points of any stressed finite set, proves
\eqref{eq:all-spherical-spectrum-coincidence}.
\end{proof}

Consequently,
\begin{equation}
 \Sigma_N^{\specWS}(\Sph^m)\cap(0,\pi)
 =\Sigma_N^{\specC}(\Sph^m)\cap(0,\pi),
 \label{eq:fixed-spherical-spectrum-coincidence}
\end{equation}
and the unions over all label numbers have the description
\begin{equation}
\begin{split}
 \Sigma^{\specWS}(\Sph^m)\cap(0,\pi)
 &=\Sigma^{\specC}(\Sph^m)\cap(0,\pi)\\
 &=\left\{\diam(P):
   \begin{array}{l}
    P\subset\Sph^m\text{ is finite and carries a nonzero}\\
    \text{nonnegative equilibrium stress},\quad 0<\diam(P)<\pi
   \end{array}\right\}.
\end{split}
\label{eq:all-spherical-spectrum-coincidence}
\end{equation}

The antipodal scale requires a separate argument because distance is
not smooth at antipodal pairs.

\begin{lemma}[Two-label spectrum of a round sphere]
\label{lem:two-label-spherical-spectrum}
For every \(m\geq1\),
\[
 \Sigma_2^{\specC}(\Sph^m)
 =\Sigma_2^{\specWS}(\Sph^m)=\{0,\pi\}.
\]
\end{lemma}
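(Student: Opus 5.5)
We must show three things about \(\Sigma_2^{\specC}(\Sph^m)=\Sigma_2^{\specWS}(\Sph^m)=\{0,\pi\}\): the value \(0\) lies in both spectra, the value \(\pi\) lies in both, and no value in \((0,\pi)\) lies in either. By \eqref{eq:weak-clarke-spectrum-inclusion} we have \(\Sigma_2^{\specWS}\subseteq\Sigma_2^{\specC}\). It therefore suffices to show three facts: \(0\) and \(\pi\) belong to \(\Sigma_2^{\specWS}(\Sph^m)\), and \(\Sigma_2^{\specC}(\Sph^m)\cap(0,\pi)=\varnothing\).

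\emph{Zero.} Constant pairs are global minima of \(\diam_2\), so they are weak-slope stationary; this was already noted after \cref{def:weak-slope-diameter-spectra}.

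\emph{Exclusion of \((0,\pi)\).} Take a pair \((x_1,x_2)\) with \(0<D=d_m(x_1,x_2)<\pi\). Distance is smooth there, and there is a single diameter pair. By \cref{cor:spherical-gordan-stress}, a stress \(w_{12}>0\) would force \(x_2-\cos(D)x_1=0\), which is impossible since \(\sin D>0\). Concretely, moving both points apart along the great circle through them gives a common strict descent direction. \Cref{prop:spherical-spectrum-coincidence} or \cref{prop:clarke-first-order} then excludes Clarke criticality.

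\emph{The value \(\pi\).} Take an antipodal pair \((x,-x)\); it is a global maximum of \(\diam_2\). The natural plan is to show weak-slope stationarity directly from \cref{def:weak-slope}. Suppose \(\sigma>0\) and a continuous deformation \(H\) on \(B_\delta((x,-x))\times[0,\delta]\) were admissible. Since the pair is a global maximizer, the descent inequality at the center point itself is harmless. The obstruction must come from nearby antipodal pairs \((y,-y)\), which also have diameter \(\pi\). At each such pair \(H\) must strictly lower the diameter, so \(H((y,-y),u)\) becomes nonantipodal for all \(u>0\).

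\emph{Main obstacle.} The difficulty is to turn this into a contradiction; I expect a topological argument here. Fix a small \(u>0\). For \(y\) in a small closed disc around \(x\) (of dimension \(m\)), write \(H((y,-y),u)=(a(y),b(y))\) with \(a(y)\ne-b(y)\). This yields a continuous map \(y\mapsto\) the unit tangent direction at \(a(y)\) toward \(b(y)\), or equivalently a continuous choice of minimizing geodesic from \(a(y)\) to \(b(y)\). Moreover \(a(y)\) and \(b(y)\) lie within \(u\) of \(y\) and \(-y\).

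One route is to take the midpoint of the unique minimizing arc from \(a(y)\) to \(b(y)\). The arc has length less than \(\pi\), and the midpoint lies near the equator orthogonal to \(y\). Now vary \(y\), not only in the disc but along a whole sphere of antipodal pairs. If the deformation is defined near every antipodal pair (use compactness and \cref{lem:weak-slope-lsc}: the weak slope is positive on an open set, and we want to rule it out at a single point), the midpoint map gives a continuous section of the tangent bundle avoiding zero. This holds at least for the relevant great-sphere factor, and the degree argument fails for odd \(m\).

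A cleaner local route is the following. Compare \(H\) with the two-valued nature of the descent. Approaching \((x,-x)\) from pairs \((x,z)\) with \(z\) near \(-x\), the descent must push \(z\) toward the far side, and the direction depends on the side of approach. For \(m=1\) the pairs \((x,-x)\) and \((x,z)\) with \(z\) on either side give opposite required motions, and continuity of \(H\) at \(u\) small contradicts a uniform rate \(\sigma\). In general, restrict to the circle through \(x\) and \(-x\) in the direction \(e\). Use the fact that \(d_m(H_1,H_2)\le\pi-\sigma u\) for the antipodal pair itself, together with continuity, to get a contradiction along a loop of approach directions. This is the argument I expect the paper defers to the appendix, and it is the step to work out carefully.
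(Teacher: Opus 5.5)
Your reduction through $\Sigma_2^{\specWS}\subseteq\Sigma_2^{\specC}$ is sound, and your treatment of $0$ and of $(0,\pi)$ is fine. One slip there: the descent direction moves the two points \emph{toward} each other, not apart.

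The gap is the only nontrivial claim, namely that an antipodal pair is weak-slope stationary. You leave it unproved, and neither of your sketched routes can close it.
\begin{itemize}
\item \emph{The global route.} Even if you assemble a deformation near all antipodal pairs, it yields only a nonvanishing tangent field. Such fields exist when $m$ is odd, as you note. Moreover, the family of exactly antipodal pairs $(y,-y)$ carries no local obstruction at all: near $x$ you can move both points in a common direction $e(y)\perp y$ and strictly lower the distance.
\item \emph{The great-circle route.} This gives only a one-parameter family of nearby pairs. For $m\geq2$ the deformed pairs need only avoid antipodality, i.e.\ trace a path in $\R^m\setminus\{0\}$ in the coordinates below, and such a path can pass around the origin. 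Sweeping a loop of directions adds one parameter, which gives an obstruction only when $m=2$.
\end{itemize}

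The missing idea is to use the full $m$-dimensional slice of \emph{non}-antipodal pairs near $(x,-x)$, as the paper does.
\begin{itemize}
\item \emph{Coordinates.} Fix the first point (normal coordinate $u=0$) and write the second as $-\exp_y(w)$, so that locally $d_m=\pi-\|w\|$. By \cref{lem:weak-slope-bilipschitz} you may compute the weak slope in the Euclidean coordinate metric.
\item \emph{The deformed map.} Suppose a deformation $H$ had rate $\sigma>0$. Take $w$ in a ball $\overline B_r^m$ and fix a time $0<\tau<r$. The $w$-coordinate $W_\tau(w)$ of $H((0,w),\tau)$ satisfies $\|W_\tau(w)\|\geq\|w\|+\sigma\tau>0$ and $\|W_\tau(w)-w\|\leq\tau$.
\item \emph{The contradiction.} So $W_\tau/\|W_\tau\|$ maps the whole ball into $\Sph^{m-1}$. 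On $\partial\overline B_r^m$ it is homotopic to $w\mapsto w/r$, because the straight-line homotopy avoids $0$ when $\tau<r$. This contradicts the no-retraction theorem for $m\geq2$ and the intermediate value theorem for $m=1$. Your $m=1$ remark about opposite required motions is the simplest case of this argument.
\end{itemize}

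Alternatively, your global instinct can be rescued with homology instead of vector fields. Positive slope at one antipodal pair would give positive slope at every positive-diameter pair, by transitivity of $O(m+1)$ on antipodal pairs. With \cref{lem:weak-slope-lsc} and compactness, Corvellec's theorem would then deform $\Sph^m\times\Sph^m$ into a small neighborhood of the diagonal. That contradicts $H_m(\Sph^m\times\Sph^m)\cong\mathbb Z^2$.
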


The smooth case follows by moving one endpoint toward the other.  Near an
antipodal pair, distance has the local form \(\pi-\lVert w\rVert\).
A positive weak-slope deformation would extend a degree-one sphere map
across a ball.  The complete endpoint argument is given in
\cref{subsec:antipodal-weak-slope}.

The two-label endpoint and repetition nesting
(\cref{lem:weak-slope-repetition-nesting} and
\cref{thm:fixed-label-clarke-spectrum}) put \(\pi\) in both
spectra for every \(N\geq2\), and constant tuples put zero in both spectra.
Since no diameter exceeds \(\pi\),
\eqref{eq:fixed-spherical-spectrum-coincidence} yields
\begin{equation}
\begin{split}
 \Sigma_N^{\specWS}(\Sph^m)
 &=\Sigma_N^{\specC}(\Sph^m),\\
 \Sigma^{\specWS}(\Sph^m)
 &=\Sigma^{\specC}(\Sph^m).
\end{split}
\label{eq:full-fixed-spherical-spectrum-coincidence}
\end{equation}
These are equalities of value sets.  They do not assert pointwise
equivalence of weak-slope stationarity and Clarke criticality for all
tuples of diameter \(\pi\).

\subsection{The sharp positive minimum at \texorpdfstring{\(N\)}{N} labels}

We now compute the least positive stationary diameter for a prescribed
number of labels and identify every configuration attaining it.

\begin{lemma}[Euclidean Jung's theorem in spherical form {\cite{Jung1901}}]
\label{lem:spherical-jung}
Let \(P\subset\Sph^m\), \(m\geq1\), be finite.  If
\(0\in\conv(P)\), then \(\diam(P)\geq\zeta_m\).
Equality holds if and only if \(P\) is exactly the vertex set of a
regular \((m+1)\)-simplex centered at the origin.
\end{lemma}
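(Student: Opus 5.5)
The plan is to reduce the statement to Euclidean Jung's theorem in \(\R^{m+1}\) via the chord–angle relation. For unit vectors, \(\lVert x-x'\rVert^2=2-2\cos d_m(x,x')\), so the Euclidean diameter of \(P\) is \(\sqrt{2-2\cos\diam(P)}\), a strictly increasing function of the spherical diameter on \([0,\pi]\). Jung's theorem says that a set of Euclidean diameter \(\Delta\) in \(\R^{m+1}\) lies in a closed ball of radius \(R\leq\Delta\sqrt{(m+1)/(2(m+2))}\). Equality holds only if the set contains the vertex set of a regular \((m+1)\)-simplex of edge \(\Delta\) inscribed in the boundary sphere of the minimal ball.

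\emph{Step 1: the minimal enclosing ball is the unit ball.} Since \(0\in\conv(P)\) and \(P\subset\Sph^m\), any closed ball \(\overline B(c,R)\) containing \(P\) satisfies \(R\geq1\). Write \(0=\sum\lambda_ix_i\) as a convex combination. Then
\[
\sum\lambda_i\lVert x_i-c\rVert^2=\sum\lambda_i(1-2x_i\cdot c+\lVert c\rVert^2)=1+\lVert c\rVert^2.
\]
So some \(\lVert x_i-c\rVert^2\geq1+\lVert c\rVert^2\geq1\), and therefore \(R\geq1\). The value \(R=1\) occurs only for \(c=0\). The unit ball contains \(P\), so it is the unique minimal enclosing ball, with center \(0\) and radius \(1\).

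\emph{Step 2: apply Jung.} From \(1\leq\Delta\sqrt{(m+1)/(2(m+2))}\) we get \(\Delta^2\geq 2(m+2)/(m+1)\). Hence
\[
\cos\diam(P)=1-\Delta^2/2\leq 1-\frac{m+2}{m+1}=-\frac1{m+1},
\]
which gives \(\diam(P)\geq\zeta_m\).

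\emph{Step 3: the equality case.} Equality forces equality in Jung's bound, which is the main obstacle. The standard equality statement from Jung's proof via Helly/Carathéodory and the circumcenter computation gives that \(P\) contains a regular \((m+1)\)-simplex \(S\) of edge \(\Delta\) inscribed in the unit sphere centered at \(0\). We must then show that \(P\) contains no other points. Suppose \(y\in P\setminus S\) with \(\lVert y\rVert=1\). The simplex vertices \(s_0,\ldots,s_{m+1}\) satisfy \(\sum s_k=0\), hence \(\sum_k y\cdot s_k=0\). Every pair in \(P\) has Euclidean distance at most \(\Delta\), so \(y\cdot s_k\geq\cos\zeta_m=-1/(m+1)\). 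The sum of these \(m+2\) terms is \(0\), while \(y\neq s_k\) forces \(y\cdot s_k<1\) for each \(k\).

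To finish, use the barycentric expansion \(y=\sum_k\mu_ks_k\) with \(\sum_k\mu_k=1\). This representation is possible because the \(s_k\) affinely span \(\R^{m+1}\). Using \(s_j\cdot s_k=-1/(m+1)\) for \(j\neq k\), one computes
\[
y\cdot s_k=\mu_k\Bigl(1+\tfrac1{m+1}\Bigr)-\tfrac1{m+1}.
\]
The constraints \(y\cdot s_k\geq-1/(m+1)\) therefore give \(\mu_k\geq0\), so \(y\in\conv(S)\). But \(\conv(S)\) meets the unit sphere only at the vertices \(s_k\), by strict convexity of the Euclidean ball. Thus \(y\in S\), a contradiction, and \(P=S\).

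For the converse, a centered regular \((m+1)\)-simplex has \(0\) as its centroid, so \(0\in\conv(P)\), and its diameter is \(\zeta_m\) by the defining computation of \(\zeta_m\).
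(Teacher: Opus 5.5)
Your proof is correct and follows essentially the same route as the paper. The identity $\sum_i\lambda_i\lVert x_i-c\rVert^2=1+\lVert c\rVert^2$ fixes the circumradius at one, Jung's bound turns this into $\cos\diam(P)\leq-1/(m+1)$, and in the equality case the barycentric computation together with strict convexity of the ball rules out extra points. The one difference is that the paper does not cite Jung's theorem or its equality case as a black box. It derives both inline from an inclusion-minimal (hence affinely independent, $q\leq m+2$) subset with $0$ in its convex hull, via $1=\tfrac12\sum_{i,j}a_ia_j\lVert w_i-w_j\rVert^2\leq\tfrac{\Delta_E^2}{2}\bigl(1-\sum_i a_i^2\bigr)$. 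That derivation is exactly the simplex-containment step you defer to ``the standard equality statement''.
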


\begin{proof}
Put \(D:=\diam(P)\) and let
\[
 \Delta_E:=\max_{y,y'\in P}\|y-y'\|=2\sin(D/2)
\]
be its Euclidean chordal diameter.  Choose an inclusion-minimal set
\(\{w_1,\ldots,w_q\}\subseteq P\) containing zero in its convex
hull, and write \(\sum_i a_iw_i=0\), with \(a_i>0\) and
\(\sum_i a_i=1\).  Minimality implies affine independence: an affine
dependence would allow a variation of these coefficients preserving
their sum and barycenter until one coefficient became zero.  Hence
\(q\leq m+2\).

For any Euclidean center \(c\),
\[
 \sum_i a_i\|w_i-c\|^2=1+\|c\|^2.
\]
The Euclidean circumradius of \(P\) is therefore exactly one.  The
Jung estimate for this configuration follows directly from
\[
 1=\frac12\sum_{i,j}a_i a_j\|w_i-w_j\|^2
 \leq\frac{\Delta_E^2}{2}\left(1-\sum_i a_i^2\right)
 \leq\frac{\Delta_E^2}{2}\left(1-\frac1q\right)
 \leq\frac{m+1}{2(m+2)}\Delta_E^2.
\]
It follows that
\(\cos D=1-\Delta_E^2/2\leq-1/(m+1)\), or
\(D\geq\zeta_m\).

If \(D=\zeta_m\), equality holds throughout the displayed chain.
Thus \(q=m+2\), all coefficients are \(1/(m+2)\), and every distinct
pair \(w_i,w_j\) has chordal distance \(\Delta_E\).  In particular,
\[
 \sum_iw_i=0,
 \qquad w_i\cdot w_j=-\frac1{m+1}\quad(i\ne j).
\]
These points form a centered regular \((m+1)\)-simplex.
For any \(y\in P\), the diameter bound gives
\(y\cdot w_i\geq-1/(m+1)\) for every \(i\).  The barycentric
coordinates of \(y\in\R^{m+1}\) with respect to this simplex are
\[
 b_i=\frac{1+(m+1)y\cdot w_i}{m+2}.
\]
They are nonnegative and sum to one, so \(y\in\conv\{w_i\}\).
Strict convexity of the Euclidean unit ball forces a unit vector in
this simplex to be one of its vertices.  Hence \(P=\{w_i\}\).
The displayed inner products also prove the converse.
\end{proof}

A stress can be supported on fewer points than the original tuple.
Applying \cref{lem:spherical-jung} in the linear span of
an inclusion-minimal subset whose convex hull contains
the origin accounts for both restrictions: the ambient
dimension and the available number of labels.

\begin{theorem}[Sharp spherical diameter gap at \(N\) labels]
\label{thm:first-spherical-clarke-value}
For every \(m\geq1\) and \(N\geq2\),
\begin{equation}
 \min\bigl(\Sigma_N^{\specC}(\Sph^m)\setminus\{0\}\bigr)
 =\min\bigl(\Sigma_N^{\specWS}(\Sph^m)\setminus\{0\}\bigr)
 =\zeta_{\min\{m,N-2\}}.
 \label{eq:fixed-label-spherical-minimum}
\end{equation}
Consequently,
\begin{equation}
 \min\bigl(\Sigma^{\specC}(\Sph^m)\setminus\{0\}\bigr)
 =\min\bigl(\Sigma^{\specWS}(\Sph^m)\setminus\{0\}\bigr)
 =\zeta_m.
 \label{eq:first-spherical-clarke-value}
\end{equation}
A Clarke-critical or weak-slope stationary \(N\)-tuple at the least positive
value in \(\Sigma_N^{\specC}(\Sph^m)\) has as its distinct
support the vertex set of a regular
\(\min\{m+1,N-1\}\)-simplex centered at the origin.
Repetitions occur only when \(N>m+2\).
\end{theorem}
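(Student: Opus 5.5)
Set \(k:=\min\{m,N-2\}\). The plan has three parts: a lower bound from the stress and Jung's inequality, an explicit configuration attaining \(\zeta_k\), and the rigidity statement from Jung's equality case.

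\emph{Lower bound.} Let \(x\in(\Sph^m)^N\) be Clarke critical with \(D:=\diam_N(x)>0\). If \(D=\pi\), then \(D\geq\zeta_k\), since \(\zeta_k<\pi\). Now assume \(D<\pi\). By \cref{prop:spherical-spectrum-coincidence}, the distinct support carries a nonzero nonnegative equilibrium stress. By \cref{cor:spherical-gordan-stress}, the set \(P_+\) of support points with positive stressed degree satisfies \(0\in\conv(P_+)\). Choose an inclusion-minimal subset \(W\subseteq P_+\) with \(0\in\conv(W)\). As in the proof of \cref{lem:spherical-jung}, \(W\) is affinely independent and contains zero in its relative interior. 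Hence \(E:=\spann(W)\) has dimension \(|W|-1\). Moreover \(|W|\leq\min\{m+2,N\}\), because \(W\) lies in \(\R^{m+1}\) and consists of distinct support points. Also \(|W|\geq2\), since a single unit vector cannot be \(0\).

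Put \(j:=|W|-2\leq k\). Then \(W\subset E\cap\Sph^m\cong\Sph^{j}\), and the geodesic distance on this great subsphere is the restriction of \(d_m\). Applying \cref{lem:spherical-jung} in \(\Sph^j\) gives \(D\geq\diam(W)\geq\zeta_j\geq\zeta_k\), because \(\zeta\) is increasing in its index. The case \(j=0\) gives \(\zeta_0=\pi\), which is consistent with this.

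\emph{Attainment.} Take a regular \((k+1)\)-simplex centered at the origin in a \((k+1)\)-dimensional linear subspace. Its \(k+2\leq N\) vertices lie on \(\Sph^m\), and we pad with repeated labels. Its diameter graph is complete. Uniform weights \(w_{ij}=1\) satisfy \eqref{eq:spherical-gordan-equilibrium}: for each \(i\), \(\sum_{j\ne i}x_j=-x_i\) and \(\cos\zeta_k=-1/(k+1)\), so the left side is \(-x_i+x_i=0\). For \(k=0\) this configuration is an antipodal pair, and \cref{lem:two-label-spherical-spectrum} together with repetition nesting handles it. \Cref{prop:spherical-spectrum-coincidence} then gives weak-slope stationarity, so \(\zeta_k\) lies in both spectra. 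By \eqref{eq:weak-clarke-spectrum-inclusion} the two minima coincide.

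Equation \eqref{eq:first-spherical-clarke-value} follows by taking \(N\geq m+2\), using the nesting \(\Sigma_q\subseteq\Sigma_N\).

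\emph{Rigidity.} This is the main obstacle. Suppose \(D=\zeta_k<\pi\), so \(k\geq1\). Then equality holds in the chain above, so \(j=k\) and \(W\) is a centered regular \((k+1)\)-simplex by the equality case of \cref{lem:spherical-jung}. We must show the support contains no further points. Take any support point \(y\). The bound \(d_m(y,w_i)\leq\zeta_k\) gives \(y\cdot w_i\geq-1/(k+1)\).

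\begin{itemize}
\item If \(k=m\), the barycentric argument of \cref{lem:spherical-jung} applies verbatim and forces \(y\in W\).
\item If \(k=N-2<m\), then \(W\) already uses \(N\) distinct labels, so the support equals \(W\) by counting.
\end{itemize}

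The delicate point is the case \(k=m=N-2\). It is covered by either of the two arguments above, so there is no gap. Repetitions can occur only when \(N>|W|=k+2\), which forces \(k=m\) and hence \(N>m+2\).
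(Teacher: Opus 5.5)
Your proposal is correct and follows the paper's proof essentially step for step. The shared steps are: a stress on the distinct support, an inclusion-minimal origin-containing subset, Jung's inequality in its linear span, the equally weighted centered simplex padded by repeated labels, and rigidity via Jung's equality case combined with either label counting or the barycentric argument.

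There is one slip to fix. The map \(j\mapsto\zeta_j\) is strictly \emph{decreasing} (\(\zeta_0=\pi>\zeta_1=2\pi/3>\cdots\)), not increasing. Decreasing monotonicity is what gives \(\zeta_j\geq\zeta_k\) for \(j\leq k\) and forces \(j=k\) at equality. Likewise, your claim \(\zeta_k<\pi\) fails when \(N=2\), where \(k=0\). That case is trivial, since a \(2\)-tuple of diameter \(\pi\) is an antipodal pair, but it is not covered by your \(k\geq1\) rigidity argument.
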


\begin{proof}
\noindent\emph{Lower bound.}
The case \(N=2\), including the equality configuration, follows from
\cref{lem:two-label-spherical-spectrum}.  Assume \(N\geq3\), and let
\(x\in(\Sph^m)^N\) be Clarke critical with positive diameter \(D\).
If \(D=\pi\), it is strictly larger than the claimed minimum.  If
\(D<\pi\), \cref{prop:spherical-spectrum-coincidence} gives a
nonzero nonnegative stress on its distinct support
\(P:=\{x_1,\ldots,x_N\}\).  For this stress, discard the vertices of zero
stressed degree and denote
the remaining set by \(S\).  By
\eqref{eq:spherical-stress-weighted-center}, \(0\in\conv(S)\).

Choose an inclusion-minimal subset
\(Q=\{z_1,\ldots,z_q\}\subseteq S\) with \(0\in\conv(Q)\).
It is affinely independent by the coefficient-variation argument in
\cref{lem:spherical-jung}.  Since its affine span contains zero, its
linear span has dimension \(q-1\).  We have \(q\geq2\), because a
unit vector is nonzero.  The case \(q=2\) would force an antipodal
pair and hence \(D=\pi\).  Therefore
\[
 3\leq q\leq\min\{N,m+2\}.
\]
The unit sphere in \(\spann(Q)\) is an equatorial \(\Sph^{q-2}\).
Applying \cref{lem:spherical-jung} there gives
\[
 D\geq\diam(Q)\geq\zeta_{q-2}
 \geq\zeta_{\min\{m,N-2\}}.
\]
By \eqref{eq:full-fixed-spherical-spectrum-coincidence}, this also
bounds the positive weak-slope spectrum from below.

\par\smallskip\noindent\emph{Attainment.}
To attain the bound, put \(q:=\min\{N,m+2\}\) and take the vertices
\(v_1,\ldots,v_q\) of a centered regular \((q-1)\)-simplex in an
equatorial \(\Sph^{q-2}\subseteq\Sph^m\).  Their diameter is
\(\zeta_{q-2}<\pi\), and
\[
 \sum_i v_i=0,
 \qquad v_i\cdot v_j=-\frac1{q-1}\quad(i\ne j).
\]
Giving every edge the same positive weight satisfies equilibrium:
\[
 \sum_{j\ne i}\left(v_j+\frac1{q-1}v_i\right)=0.
\]
Thus \cref{prop:spherical-spectrum-coincidence} makes the ordered
vertex tuple stationary for both notions.  If \(N>q\), repeat
vertices; the characterization by an equilibrium stress
on the distinct points in that proposition
gives stationarity of the resulting \(N\)-tuple as well.  This proves
\eqref{eq:fixed-label-spherical-minimum}.

\par\smallskip\noindent\emph{Equality configurations.}
At equality, strict decrease of \(j\mapsto\zeta_j\) forces
\(q=\min\{N,m+2\}\) in the lower-bound chain.  The equality case of
\cref{lem:spherical-jung} makes \(Q\) a centered regular
\((q-1)\)-simplex.  If \(N\leq m+2\), its \(q=N\) distinct
vertices already use every label.  If \(N>m+2\), it is a
full-dimensional regular simplex in \(\R^{m+1}\).  Each additional
support point \(y\) satisfies \(y\cdot z_i\geq-1/(m+1)\) for
every vertex \(z_i\).  The same barycentric and strict-convexity
argument as in \cref{lem:spherical-jung} forces \(y\in Q\).
All additional labels are therefore repetitions.  Taking
\(N\geq m+2\) proves \eqref{eq:first-spherical-clarke-value}.
\end{proof}

\subsection{The initial Vietoris--Rips range}

For \(m\geq1\), \cref{thm:first-spherical-clarke-value} gives
\[
 r_{\mathrm C}(\Sph^m)=\zeta_m
 =\arccos\!\left(-\frac1{m+1}\right)
 >\frac\pi2=\convRad(\Sph^m).
\]
Thus \cref{cor:convexity-radius-rips-stability} extends the
convexity-radius range strictly.  The strict Vietoris--Rips convention
includes the regular-simplex scale \(\zeta_m\) as the upper endpoint.

\begin{corollary}[The initial spherical Vietoris--Rips interval]
\label{cor:optimal-spherical-initial-chamber}
\label{cor:stationary-chamber-intro}
Let \(m\geq1\).  For every \(0<r\leq s\leq\zeta_m\), the
canonical inclusion
\[
 \VR{\Sph^m}{r}\longrightarrow\VR{\Sph^m}{s}
\]
is a homotopy equivalence.  Moreover,
\(\VR{\Sph^m}{t}\simeq\Sph^m\) for every \(0<t\leq\zeta_m\).
\end{corollary}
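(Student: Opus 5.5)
The corollary follows from results already in place: \cref{thm:first-spherical-clarke-value}, \cref{cor:convexity-radius-rips-stability}, and the identity map.

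First, the degenerate case \(r=s\) is trivial, since the canonical inclusion is then the identity. So assume \(0<r<s\leq\zeta_m\).

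Next, identify \(r_{\mathrm C}(\Sph^m)\). By definition it is the infimum of \(\Sigma^{\specC}(\Sph^m)\setminus\{0\}\). By \eqref{eq:first-spherical-clarke-value} this set has minimum \(\zeta_m\), so \(r_{\mathrm C}(\Sph^m)=\zeta_m\). The unit round sphere is a closed connected \(C^\infty\) (indeed analytic) Riemannian manifold of positive dimension. Hence \cref{cor:convexity-radius-rips-stability} applies verbatim with \(r_{\mathrm C}(M)=\zeta_m\). It gives that the canonical inclusion \(\VR{\Sph^m}{r}\to\VR{\Sph^m}{s}\) is a homotopy equivalence whenever \(0<r<s\leq\zeta_m\). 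It also gives \(\VR{\Sph^m}{t}\simeq\Sph^m\) for \(0<t\leq\zeta_m\).

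The only point needing care is the upper endpoint \(s=\zeta_m\), which is itself a critical value. This is handled because \cref{thm:metric-stationary-chamber-intro}, through \cref{cor:all-label-clarke-chambers}(i), only requires \([r,s)\cap\Sigma^{\specC}=\varnothing\). That holds here since every positive critical value is at least \(\zeta_m\).

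The weak-slope hypothesis is also satisfied. The Clarke spectrum contains the weak-slope spectrum by \eqref{eq:weak-clarke-spectrum-inclusion}, and for spheres the two coincide by \eqref{eq:full-fixed-spherical-spectrum-coincidence}. So no obstacle remains beyond quoting these results correctly.
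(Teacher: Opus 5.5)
Your proposal is correct and follows the paper's own proof: apply \cref{cor:convexity-radius-rips-stability} with \(r_{\mathrm C}(\Sph^m)=\zeta_m\) from \cref{thm:first-spherical-clarke-value}, and treat \(r=s\) as the identity. Your remarks on the half-open interval at the endpoint \(s=\zeta_m\) and on the weak-slope/Clarke inclusion are accurate but already built into that corollary, so they add nothing that needs separate verification.
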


\begin{proof}
Apply \cref{cor:convexity-radius-rips-stability} with
\(r_{\mathrm C}(\Sph^m)=\zeta_m\), as computed in
\cref{thm:first-spherical-clarke-value}.  The case \(r=s\) is the identity.
\end{proof}

The initial spherical homotopy range in
\cref{cor:optimal-spherical-initial-chamber}, including the canonical
scale inclusions, was established by Lim--M\'emoli--Okutan
\cite[Theorem~7.1, p.~1046, and Section~A.4.1]{LimMemoliOkutan2024}.
Their simplex comparison maps, constructed using a fixed ordering, are
compatible with scale; the equivalence of the inclusions follows by
two-out-of-three.

Our argument gives an alternative proof of this spherical initial-range
theorem: Hausmann's small-scale theorem identifies the homotopy type,
and \cref{cor:convexity-radius-rips-stability}, together with the sharp
identity \(r_{\mathrm C}(\Sph^m)=\zeta_m\), extends that identification
throughout \(0<t\leq\zeta_m\).  It does not use the spherical
initial-range theorem of Lim--M\'emoli--Okutan, but still uses
Hausmann's small-scale theorem \cite[Theorem~3.5]{Hausmann1995}.

The endpoint is sharp for the canonical maps as well, but this requires
an additional topological argument.  Lim--M\'emoli--Okutan show that the
fundamental class of the initial \(\Sph^m\), with coefficients in any
field, dies at scales larger than \(\zeta_m\)
\cite[Remark~9.27, Proposition~9.28, and Remark~9.29]{LimMemoliOkutan2024}.
Consequently, a canonical inclusion from the initial range to a scale
larger than \(\zeta_m\) cannot be a homotopy equivalence.  This conclusion
uses filling radius and persistence, not stationarity alone.

\subsection{The intrinsic circle and its canonical Vietoris--Rips maps}
\label{sec:circle-rips}

We classify the circle's stationary diameter values and compare its
positive nonantipodal scales with the known Vietoris--Rips transitions.

The intrinsic unit circle has circumference \(2\pi\).  Put
\begin{equation}
 \delta_k:=\frac{2\pi k}{2k+1}\quad(k\geq1),
 \qquad \delta_0:=0.
 \label{eq:circle-critical-scales}
\end{equation}
For a circle configuration of diameter \(D\in(0,\pi)\), at each point
\(y\) of positive stressed degree, the equilibrium equation
\eqref{eq:spherical-gordan-equilibrium} forces diameter neighbors in both
circular directions.  These neighbors bound an empty
arc of length \(2(\pi-D)\) centered at \(-y\).  Counting these arcs
shows that all gaps between stressed points have the same length;
their lengths determine the diameter and the number of points.

\Cref{fig:circle-antipodal-gaps} illustrates the gaps for the regular
triangle and pentagon.  Equal weights on their diameter edges give
balance, while their vertex counts give the least label numbers below.

\begin{figure}[htbp]
\centering
\begin{tikzpicture}[scale=1.1]
 \begin{scope}[xshift=-2.2cm]
  \draw[gray!55] (0,0) circle (1);
  \foreach \i in {0,1,2} {
   \coordinate (a\i) at ({90+120*\i}:1);
  }
  \draw[teal!80!black,very thick] (210:1)
   arc[start angle=210,end angle=330,radius=1];
  \foreach \i in {0,1,2} {
   \pgfmathtruncatemacro{\j}{mod(\i+1,3)}
   \draw[gray!65] (a\i) -- (a\j);
   \fill (a\i) circle (1.6pt);
   \draw[gray!65,fill=white] ({270+120*\i}:1) circle (1.4pt);
  }
  \draw[teal!80!black,fill=white] (270:1) circle (1.6pt);
  \node[above] at (a0) {\(y\)};
  \node[below=3pt] at (270:1) {\(-y\)};
  \node[anchor=north] at (0,-1.82) {\(3\) labels, \(D=2\pi/3\)};
 \end{scope}
 \begin{scope}[xshift=2.2cm]
  \draw[gray!55] (0,0) circle (1);
  \foreach \i in {0,1,2,3,4} {
   \coordinate (b\i) at ({90+72*\i}:1);
  }
  \draw[teal!80!black,very thick] (234:1)
   arc[start angle=234,end angle=306,radius=1];
  \foreach \i in {0,1,2,3,4} {
   \pgfmathtruncatemacro{\j}{mod(\i+2,5)}
   \draw[gray!65] (b\i) -- (b\j);
   \fill (b\i) circle (1.6pt);
   \draw[gray!65,fill=white] ({270+72*\i}:1) circle (1.4pt);
  }
  \draw[teal!80!black,fill=white] (270:1) circle (1.6pt);
  \node[above] at (b0) {\(y\)};
  \node[below=3pt] at (270:1) {\(-y\)};
  \node[anchor=north] at (0,-1.82) {\(5\) labels, \(D=4\pi/5\)};
 \end{scope}
\end{tikzpicture}
\caption{The regular triangle and pentagon on the intrinsic unit circle.
Filled points are vertices; hollow points are their antipodes.  The
highlighted gap is centered at \(-y\) and bounded by the two diameter
neighbors of \(y\).  Its length is \(2(\pi-D)\): \(2\pi/3\) for
the triangle and \(2\pi/5\) for the pentagon.  Thin chords represent
diameter pairs; equal weights on these edges balance the opposite
initial tangent directions at each vertex.}
\label{fig:circle-antipodal-gaps}
\end{figure}

\begin{theorem}[Circle spectra and least label numbers]
\label{cor:fixed-label-circle-spectra}
For every \(N\geq2\),
\[
\begin{aligned}
 \Sigma_N^{\specC}(\Sph^1)
 &=\Sigma_N^{\specWS}(\Sph^1)\\
 &=\{0,\pi\}\cup\{\delta_k:k\geq1,\ 2k+1\leq N\}.
\end{aligned}
\]
The set indexed by \(k\) is empty when \(N=2\).
If a finite \(P\subset\Sph^1\) is first-order stationary with
diameter \(D\in(0,\pi)\), then \(D=\delta_k\) for some \(k\geq1\),
and \(P\) is the vertex set of a regular \((2k+1)\)-gon.  Equal
positive weights on its diameter edges give an equilibrium stress.
Conversely, every such polygon is first-order stationary, and its
ordered vertex tuple is both weak-slope stationary
and Clarke critical.  In particular, the least label number realizing
\(\delta_k\) in either spectrum is \(2k+1\).
\end{theorem}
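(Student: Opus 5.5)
The plan is to reduce everything to the nonantipodal range \((0,\pi)\), where \cref{prop:spherical-spectrum-coincidence} equates weak-slope stationarity, Clarke criticality, and the existence of a nonzero nonnegative equilibrium stress on the distinct support. The values \(0\) and \(\pi\) lie in every \(\Sigma_N\) with \(N\geq2\). Zero comes from constant tuples. For \(\pi\), we use \cref{lem:two-label-spherical-spectrum} together with repetition nesting (\cref{lem:weak-slope-repetition-nesting} and \cref{thm:fixed-label-clarke-spectrum}). The equality of the two spectra then follows from \eqref{eq:full-fixed-spherical-spectrum-coincidence}. So the remaining task is to classify finite \(P\subset\Sph^1\) of diameter \(D\in(0,\pi)\) that carry a stress.

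\emph{Classification.} Take such a stress and let \(S\) be the set of points of positive stressed degree; \(S\) still has diameter \(D\). At \(y\in S\), the tangent line is one-dimensional. The tangent projection of a diameter neighbor \(y'\) at distance \(D\) points toward \(y'\) along the shorter arc, with length \(\sin D>0\). Equation \eqref{eq:spherical-gordan-equilibrium} therefore forces positively weighted diameter neighbors on both sides of \(y\), namely the two points \(y^\pm\) at arc distance \(D\) in each direction. Since \(\diam(S)\leq D<\pi\), no point of \(P\) lies strictly inside the open arc of length \(2(\pi-D)\) centered at \(-y\) with endpoints \(y^\pm\). Moreover, \(y^\pm\in S\) because they carry positive weight.

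The map \(y\mapsto y^+\) is rotation by \(D\), and it maps \(S\) to itself. \(S\) is finite, so rotation by \(D\) has finite orbits on it. Hence \(D/(2\pi)\) is rational, say \(D=2\pi k/n\) in lowest terms. Each orbit is then a regular \(n\)-gon, and since \(S\) is a union of such orbits, the gaps are controlled as follows. Each \(y\in S\) has no point of \(S\) within distance \(\pi-D\) of \(-y\). Combined with diameter \(\leq D\), this should force \(S\) to be a single regular \(n\)-gon. The reason is that two points of distinct rotated orbits would lie in some forbidden arc or exceed diameter \(D\). For a regular \(n\)-gon, the diameter equals \(2\pi\lfloor n/2\rfloor/n\). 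For even \(n\) this is \(\pi\), which is excluded, so \(n=2k+1\) is odd and \(D=\delta_k\).

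To show \(P=S\), I would argue that any extra point of \(P\) at distance \(\leq D\) from every vertex of a regular \((2k+1)\)-gon of diameter \(\delta_k\) must lie in the intersection of the closed complementary arcs. That intersection is exactly the vertex set, because the open forbidden arcs around the antipodes cover the circle minus the vertices. This vertex-covering computation, which also closes the single-orbit argument, is the step I expect to be the main obstacle, so I will do it carefully with explicit arc lengths \(2\pi/(2k+1)\). An alternative route is a counting argument: the \(|S|\) forbidden arcs of length \(2(\pi-D)\), centered at the antipodes, are disjoint and tile the gaps.

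\emph{Converse and label counts.} For the regular \((2k+1)\)-gon, each vertex has exactly two diameter neighbors, placed symmetrically, so equal weights balance \eqref{eq:spherical-gordan-equilibrium}. \Cref{prop:spherical-spectrum-coincidence} then makes its ordered tuple stationary in both senses, and repetition extends this to every \(N\geq 2k+1\). Conversely, a stationary \(N\)-tuple with diameter \(\delta_k\) has distinct support equal to the \((2k+1)\)-gon, so \(N\geq 2k+1\). This yields the displayed spectra, the empty index set for \(N=2\), and the least label number \(2k+1\).
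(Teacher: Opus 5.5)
Your proposal is correct, and its core ingredients are the paper's. In the one-dimensional tangent line both diameter neighbors \(y^\pm\) must carry positive weight, so they lie in \(S\). The open arc \(I_y\) of length \(2(\pi-D)\) centered at \(-y\) misses \(P\) and has its endpoints in \(S\); the reason it misses \(P\) is \(\diam(P)=D\), not \(\diam(S)\) as you wrote. The reduction to \((0,\pi)\), the equal-weight converse and the label count also match the paper.

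You differ in the regularity step. The paper uses what you list as your alternative. Each \(I_y\) is a whole gap of \(S\), and distinct \(y\) give distinct gaps because their midpoints \(-y\) differ. So the \(q=|S|\) arcs exhaust the \(q\) gaps, and summing lengths gives \(2q(\pi-D)=2\pi\). This yields equal gaps, \(D=\pi-\pi/q\), and \(P=S\) at once; parity is then settled by antipodes.

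Your primary route uses invariance of \(S\) under rotation by \(D\) to get \(D=2\pi k/n\) in lowest terms, with each orbit a regular \(n\)-gon. That works, but as written the deductions loop. You derive \(D=\delta_k\) assuming \(S\) is a single orbit, and you defer the single-orbit claim to the covering argument, which already assumes \(D=\delta_k\).

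Break the loop by working with one orbit \(R_y\subseteq S\). The arc \(I_y\) is open, has length \((n-2k)\cdot2\pi/n\), and its endpoints \(y^\pm\) are vertices of \(R_y\), so it contains \(n-2k-1\) vertices of \(R_y\). Since \(I_y\) misses \(P\), this forces \(n=2k+1\) and \(D=\delta_k\), with no single-orbit hypothesis. Then the \(2k+1\) arcs \(I_{y'}\), \(y'\in R_y\), are exactly the open gaps of \(R_y\), so \(P=R_y\). This gives the single orbit and \(P=S\) together.

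The paper's count avoids the rationality step and determines \(q\) and \(D\) simultaneously. Your version makes the rotational symmetry explicit but needs this extra orbit bookkeeping.
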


Taking the union of $\Sigma_N^{\specC}(\Sph^1)
=\Sigma_N^{\specWS}(\Sph^1)$ over \(N\) gives
 $\Sigma^{\specC}(\Sph^1)=\Sigma^{\specWS}(\Sph^1)
 =\{0,\pi\}\cup\{\delta_k:k\geq1\}.$

 See \cite[Lemma 4.3]{Katz1991} and \cite[Proposition~2.5]{KatzMemoliWang2023} for a related
classification, and Moy \cite[Section~3]{Moy2022}
for a related argument using excluded arcs.

\begin{proof}[Proof of \Cref{cor:fixed-label-circle-spectra}]
Let \(P=\{y_1,\ldots,y_\ell\}\) be first-order stationary with
diameter \(D\in(0,\pi)\).  By
\cref{cor:spherical-gordan-stress}, choose a nonzero nonnegative
equilibrium stress \(\bigl(w_{ij}\bigr)_{1\leq i,j\leq\ell}\), and put
\[
 h_i:=\sum_jw_{ij},
 \qquad S:=\{y_i:h_i>0\},
 \qquad q:=|S|.
\]
Every positive-weight edge has both endpoints in \(S\), so
\(\diam(S)=D\).  At each \(y_i\in S\), division of
\eqref{eq:spherical-gordan-equilibrium} by \(h_i\sin D\) gives
\[
 \sum_{j:w_{ij}>0}\frac{w_{ij}}{h_i}
       \frac{y_j-\cos(D)y_i}{\sin D}=0,
 \qquad
 \sum_{j:w_{ij}>0}\frac{w_{ij}}{h_i}=1.
\]
The normalized vectors are the initial unit tangents toward the
diameter neighbors.  In the one-dimensional tangent space, convex
balance requires both directions.  Thus both points at distance
\(D\) from \(y_i\) belong to \(S\).

For each \(y\in S\), consider the excluded open arc
\[
 I_y:=\{x\in\Sph^1:d_1(x,-y)<\pi-D\}.
\]
The identity \(d_1(x,y)+d_1(x,-y)=\pi\) shows that every
\(x\in I_y\) satisfies \(d_1(x,y)>D\); hence \(I_y\cap P=\varnothing\).
The endpoints of \(I_y\) are the two diameter neighbors of \(y\),
which belong to \(S\).  Therefore \(I_y\) is a whole gap between
consecutive points of \(S\).  It has length \(2(\pi-D)\) and
midpoint \(-y\), so distinct points of \(S\) give distinct gaps.

There are \(q\) arcs \(I_y\) and exactly \(q\) gaps in
\(\Sph^1\setminus S\); the arcs therefore exhaust the gaps.
Summing their lengths gives
\begin{equation}
 2q(\pi-D)=2\pi,
 \qquad D=\pi-\frac\pi q.
 \label{eq:circle-gap-sum}
\end{equation}
All gaps have the same length, so \(S\) is the vertex set of a
regular \(q\)-gon.  Since every gap avoids \(P\), we also have
\(P=S\).  An even regular polygon contains antipodes, contradicting
\(D<\pi\).  Thus \(q=2k+1\) for some \(k\geq1\), and
\eqref{eq:circle-gap-sum} gives \(D=\delta_k\).

Conversely, in a regular \((2k+1)\)-gon the longest pairs join
cyclic indices differing by \(k\).  They have distance \(\delta_k\).
The two diameter neighbors at each vertex have opposite initial unit
tangents, so assigning the same positive weight to every diameter
edge gives equilibrium.  This proves first-order stationarity by
\cref{cor:spherical-gordan-stress} and
the two other stationarity assertions by
\cref{prop:spherical-spectrum-coincidence}.

If a value \(D\in(0,\pi)\) occurs in either spectrum with \(N\)
labels, \cref{prop:spherical-spectrum-coincidence} makes the
distinct support \(P\) a first-order stationary set.  The polygon
classification gives \(D=\delta_k\) and \(|P|=2k+1\leq N\).
Conversely, the regular polygon realizes \(\delta_k\) with
\(2k+1\) labels; repeating its
vertices preserves stationarity by the characterization
by an equilibrium stress on the distinct points in
that proposition.  The antipodal two-label example and repetition
give \(\pi\) for every \(N\geq2\), while constant tuples give
zero in both spectra.  These possibilities exhaust all diameters
in \([0,\pi]\), proving the formulas and the least-label assertion.
\end{proof}

Below \(\pi\), the circle's positive stationary values also describe
its homotopy transitions.  This additional conclusion uses the
topological classification of
Adamaszek and Adams, including their assertion about canonical maps.

\begin{remark}[Stationary diameters and circle transitions]
\label{cor:circle-canonical-rips-maps}
By Adamaszek--Adams \cite[Theorem~7.4]{AdamaszekAdams2017}, for
\(0<r\leq s<\pi\), the canonical inclusion
\(\VR{\Sph^1}{r}\to\VR{\Sph^1}{s}\) is a homotopy equivalence
exactly when \([r,s)\) contains none of the stationary diameters
\(\delta_j\) classified in \cref{cor:fixed-label-circle-spectra}.
Thus, for \(\Sph^1\), the positive weak-slope and Clarke critical
diameter values below \(\pi\) are exactly the Vietoris--Rips homotopy
transition scales.
\end{remark}

This proves \cref{ex:circle-stationary-rips-intro}.
The intervals \((\delta_k,\delta_{k+1}]\) are exactly the maximal
scale intervals on which every canonical inclusion is a homotopy equivalence.

\begin{remark}[The strict antipodal scale]
\label{rem:circle-antipodal-scale}
At scale \(\pi\), the strict circle complex consists of finite sets
without an antipodal pair.  Every finite subcomplex lies in a cone inside
this complex: choose a vertex nonantipodal to all its finitely many
vertices.  Every continuous map from a sphere into the
geometric realization of this complex has image contained in a finite subcomplex
\cite[Proposition~A.1]{Hatcher2002}, so it is nullhomotopic.
The connected CW realization is therefore contractible by Whitehead's
theorem \cite[Theorem~4.5]{Hatcher2002}.  At larger scales the complex is a full simplex.
The same statements hold at half the circumference after rescaling.
\end{remark}

\begin{remark}[Minimum label number and connectivity at a critical scale]
\label{rem:critical-cardinality-transition-degree}
Let \(M\) be a closed connected Riemannian manifold, let \(c\in\Sigma^{\specC}(M)\cap(0,\diam(M))\), and suppose that \(q\geq3\) is
the least label number for which \(c\in\Sigma_q^{\specC}(M)\).  Minimality of \(q\) gives
\(c\notin\Sigma_{q-1}^{\specC}(M)\).  This fixed-label spectrum is
compact by \cref{thm:fixed-label-clarke-spectrum}, so there is
\(\varepsilon>0\) such that
\[
 [c-\varepsilon,c+\varepsilon]
 \cap\Sigma_{q-1}^{\specC}(M)=\varnothing.
\]
After decreasing \(\varepsilon\) so that the displayed endpoints lie in
\((0,\diam(M))\), \cref{cor:all-label-clarke-chambers}\textup{(ii)} gives a
\((q-2)\)-connected map
\[
 \VR{M}{c-\varepsilon}
 \longrightarrow
 \VR{M}{c+\varepsilon}.
\]
Thus a Clarke value requiring \(q\) labels cannot change homotopy groups
in degrees below \(q-2\).  It may kill classes in degree \(q-2\), and it
need not be a Vietoris--Rips transition at all.  The following corollary shows that this bound is sharp for the circle.
\end{remark}

\begin{corollary}[Sharpness of the connectivity estimate]
\label{cor:circle-connectivity-sharpness}
Let \(k\geq1\) and
\(\delta_{k-1}<r<\delta_k<s<\delta_{k+1}\).  The canonical inclusion
\(\VR{\Sph^1}{r}\to\VR{\Sph^1}{s}\) is \((2k-1)\)-connected and is
not \(2k\)-connected.
\end{corollary}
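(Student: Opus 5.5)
The positive connectivity half follows from \cref{cor:all-label-clarke-chambers}\textup{(ii)}. The failure of $2k$-connectivity comes from the Adamaszek--Adams classification. For the first half, take $N:=2k$. By \cref{cor:fixed-label-circle-spectra},
\[
 \Sigma_{2k}^{\specC}(\Sph^1)=\{0,\pi\}\cup\{\delta_j:1\leq j\leq k-1\},
\]
since $2j+1\leq 2k$ forces $j\leq k-1$. The hypothesis $\delta_{k-1}<r<\delta_k<s<\delta_{k+1}<\pi$ then gives
\[
 [r,s)\cap\Sigma_{2k}^{\specC}(\Sph^1)=\varnothing .
\]
We also have $s\leq\pi=\diam(\Sph^1)$. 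Hence \cref{cor:all-label-clarke-chambers}\textup{(ii)} makes the canonical inclusion $(2k-1)$-connected. The case $k=1$ also fits: $\Sigma_2^{\specC}=\{0,\pi\}$, and we get a $1$-connected map, consistent with \cref{cor:path-metric-rips-pi1-surjectivity}.

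For the sharpness half, I would use \cite[Theorem~7.4]{AdamaszekAdams2017}. With the strict convention, $\VR{\Sph^1}{t}\simeq\Sph^{2j+1}$ for $\delta_j<t\leq\delta_{j+1}$. So the source is homotopy equivalent to $\Sph^{2k-1}$ and the target to $\Sph^{2k+1}$. Any map $\Sph^{2k-1}\to\Sph^{2k+1}$ is zero on $\pi_{2k-1}$. But $\pi_{2k-1}$ of the source is $\mathbb Z\neq0$, so the map is not injective on $\pi_{2k-1}$. A $2k$-connected map must be an isomorphism on $\pi_j$ for $j<2k$, in particular for $j=2k-1$, so the inclusion is not $2k$-connected. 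The same obstruction appears on $H_{2k-1}$ if one prefers homology.

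The main point to check is the index bookkeeping in the Adamaszek--Adams intervals under the open convention used here. Their half-open intervals for the strict filtration must give exactly $\Sph^{2k-1}$ at every $r\in(\delta_{k-1},\delta_k)$ and $\Sph^{2k+1}$ at every $s\in(\delta_k,\delta_{k+1})$. For $k=1$ the source range is $0<r<2\pi/3$, and the identification $\VR{\Sph^1}{r}\simeq\Sph^1$ there also follows from \cref{cor:optimal-spherical-initial-chamber}. No canonical-map information from Adamaszek--Adams is needed: the obstruction is purely that the homotopy groups of source and target differ in degree $2k-1$.
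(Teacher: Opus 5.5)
Your proof is correct and follows the paper's argument. You take $N=2k$ in \cref{cor:all-label-clarke-chambers}\textup{(ii)}, using that $\delta_k$ first appears at $2k+1$ labels so that $[r,s)$ misses $\Sigma_{2k}^{\specC}(\Sph^1)$, and then the Adamaszek--Adams homotopy types $\Sph^{2k-1}$ and $\Sph^{2k+1}$ make the induced map on $\pi_{2k-1}$ the map $\mathbb Z\to0$, which rules out $2k$-connectivity. Your check of the strict-convention index bookkeeping is also right.
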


\begin{proof}
By \cref{cor:fixed-label-circle-spectra}, the value \(\delta_k\)
first occurs with \(2k+1\) labels, and
\([r,s)\cap\Sigma_{2k}^{\specC}(\Sph^1)=\varnothing\).
Hence \cref{cor:all-label-clarke-chambers}\textup{(ii)} makes the
canonical inclusion \((2k-1)\)-connected.  Its source and target
have homotopy types \(\Sph^{2k-1}\) and \(\Sph^{2k+1}\),
respectively, by \cite[Theorem~7.4]{AdamaszekAdams2017}.
The induced map on \(\pi_{2k-1}\) is therefore a map from
\(\mathbb Z\) to zero, and is not an isomorphism.  The inclusion
cannot be \(2k\)-connected.  Thus the bound in
\cref{rem:critical-cardinality-transition-degree} is sharp.
\end{proof}

The odd polygons will also serve as bases for the higher-dimensional
Lov\'asz stacks in \cref{sec:spherical-stacks}.  Those constructions
produce accumulating stationary values.  Unlike the circle calculation,
they do not by themselves identify Vietoris--Rips transitions.

\section{Accumulation of spherical stationary diameters}
\label{sec:spherical-stacks}

We now determine where stationary diameters on \(\Sph^m\) first
accumulate as the number of labels grows.  For every fixed \(N\), the
spectrum \(\Sigma_N^{\specC}(\Sph^m)\) is finite by
\cref{cor:analytic-definable-fixed-label-finiteness}, so a sequence of
distinct stationary diameters requires unboundedly many labels.
Recall that \(\zeta_j=\arccos(-1/(j+1))\) and that \(\zeta_m\) is the
least positive stationary diameter on \(\Sph^m\).  For \(m\geq1\), the
next theorem identifies \(\zeta_{m-1}\) as the first accumulation point
of the full spectrum \(\Sigma^{\specC}(\Sph^m)\).

\begin{theorem}[First accumulation of spherical Clarke diameter values]
\label{thm:first-accumulation-spheres}
For \(m\geq1\) and every \(b<\zeta_{m-1}\), the set
\[
 \Sigma^{\specC}(\Sph^m)\cap(0,b]
\]
is finite, and
\[
 \min\operatorname{Acc}\bigl(\Sigma^{\specC}(\Sph^m)\bigr)=\zeta_{m-1}.
\]
For \(m=1\), derived sets are taken in \([0,\pi]\), so the right side is
the endpoint \(\zeta_0=\pi\).
\end{theorem}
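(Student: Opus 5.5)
The proof has two halves: local finiteness below \(\zeta_{m-1}\), and a construction of a strictly increasing sequence of critical values converging to \(\zeta_{m-1}\).

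\emph{Local finiteness.} Fix \(b<\zeta_{m-1}\). Since \(\zeta_{m-1}\leq\pi\) and \(b<\pi\), every value \(D\in\Sigma^{\specC}(\Sph^m)\cap(0,b]\) is, by \eqref{eq:all-spherical-spectrum-coincidence}, the diameter of a finite \(P\subset\Sph^m\) carrying a nonzero nonnegative equilibrium stress. First discard the vertices of zero stressed degree. The remaining set \(S\) still has diameter \(D\) and still carries the stress, so it is stationary.

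The key step is a uniform separation bound: there is \(\rho(b,m)>0\) with \(d_m(y,y')\geq\rho\) for all distinct \(y,y'\in S\). I would argue this by contradiction. Suppose two stressed points \(y,y'\) are very close. Since every stressed point has a positive-weight diameter neighbor, \eqref{eq:spherical-gordan-equilibrium} at \(y\) puts \(y\) in the cone condition \(0\in\conv\{\text{tangent directions toward diameter neighbors}\}\). The diameter neighbors of \(y\) lie on the sphere of radius \(D\) about \(y\), and also within distance \(D\) of \(y'\).

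Pass to a limit along a sequence of configurations with \(d(y,y')\to0\) and diameters \(D_n\to D_\infty\leq b\). The limiting neighbors of the limit point \(y_\infty\) lie on \(\{d(\cdot,y_\infty)=D_\infty\}\), and their tangent directions at \(y_\infty\) contain zero in their convex hull. The constraint from \(y'\) should force these directions into a closed half-space; this is where \(D<\zeta_{m-1}\) is meant to enter. Equivalently, I would try to show that the diameter neighbors of a stressed point \(y\) project, in \(T_y\Sph^m\cong\R^m\), to a set that must contain \(0\) in its convex hull. Jung's inequality (\cref{lem:spherical-jung}) in the \((m-1)\)-sphere of directions then says that these directions have angular diameter at least \(\zeta_{m-1}\)-like, which contradicts \(D\leq b\) when a second nearby point constrains them.

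I expect this separation lemma to be the main obstacle. I would first try to isolate a clean statement: \(\diam\) of the neighbor set of \(y\) is less than \(D\), which forces a lower bound on \(D\) related to \(\zeta_{m-1}\) via Jung in one lower dimension, quantitatively in terms of the distance between \(y\) and \(y'\).

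Given separation, a spherical packing bound gives \(|S|\leq N_0(b,m)\). Hence, by repetition nesting (\cref{thm:fixed-label-clarke-spectrum}), every such \(D\) lies in \(\Sigma_{N_0}^{\specC}(\Sph^m)\). That set is finite by \cref{cor:analytic-definable-fixed-label-finiteness}, so \(\Sigma^{\specC}(\Sph^m)\cap(0,b]\) is finite and no accumulation point lies below \(\zeta_{m-1}\).

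\emph{Sharpness.} For \(m=1\), the regular odd polygons of \cref{cor:fixed-label-circle-spectra} give \(\delta_k\uparrow\pi=\zeta_0\). For \(m\geq2\), I would use the stress-lifting theorem for Lov\'asz stacks announced in the introduction (\cref{thm:stationary-stack-lifting}). Start from a base stationary configuration on \(\Sph^{m-1}\) with diameter close to \(\zeta_{m-2}\), and apply one-layer stacks successively up to dimension \(m\). The stack diameter transform then has to send values accumulating at \(\zeta_{m-2}\) to distinct values increasing to \(\zeta_{m-1}\). This is a monotonicity and continuity computation on the explicit transform \(\Lambda_1\), with \(\Lambda_1(\zeta_{m-2})=\zeta_{m-1}\), since the one-layer stack over a regular simplex is a regular simplex. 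Injectivity and monotonicity of \(\Lambda_1\) yield distinct values increasing to \(\zeta_{m-1}\). Combining both halves gives \(\min\operatorname{Acc}=\zeta_{m-1}\).
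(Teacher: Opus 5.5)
Your outer structure matches the paper's. For local finiteness: discard vertices of zero stressed degree, separate the remaining points uniformly, bound their number by packing, then apply repetition nesting and analytic fixed-label finiteness. For sharpness: iterate one-layer stacks from the odd polygons. Your monotonicity argument for the stacks is equivalent to the closed formula in \cref{thm:stationary-iterated-one-layer-stacks}: \(\Lambda_1\) is continuous and strictly increasing (in cosines it is \(c\mapsto c/(1-c)\)), with \(\Lambda_1(\zeta_j)=\zeta_{j+1}\) and \(\Lambda_1(\pi)=\zeta_1\). The genuine gap is the separation lemma. You only sketch it, and its decisive Jung step, as you state it, yields no contradiction.

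Write each positive-weight neighbor of \(y\) as \(z=\cos D\,y+\sin D\,u_z\). The link between directions and diameter, which your sketch never states, is that \(d_m(z,z')\le D\) gives \(u_z\cdot u_{z'}\ge\cos D/(1+\cos D)\). With this, Jung on the full direction sphere \(\Sph^{m-1}\subset T_y\Sph^m\) gives some \(u_z\cdot u_{z'}\le-1/m\). That yields only \(\cos D\le-1/(m+1)\), i.e.\ \(D\ge\zeta_m\). This is the first gap, and it is compatible with \(D\le b\) whenever \(\zeta_m\le b<\zeta_{m-1}\).

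The nearby point must first cut the dimension by one. For \(y'=\cos r\,y+\sin r\,v\), the bound \(d_m(y',z)\le D\) gives \(v\cdot u_z\ge\cot D\,\tan(r/2)\). Next, reduce the zero relation at \(y\) to at most \(m+1\) directions by Carath\'eodory. This is needed for your compactness argument, since the number of neighbors is not yet bounded. Then let \(r\to0\). The limiting directions satisfy \(v\cdot u_i\ge0\), so every direction with positive limiting weight lies in \(v^\perp\). Zero is therefore a convex combination of unit vectors in this \((m-1)\)-dimensional space. Only now should you apply Jung, in \(\Sph(v^\perp)\cong\Sph^{m-2}\); this is ``one lower dimension'' relative to the tangent space, not relative to \(\Sph^m\). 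It gives a pair with \(u_i\cdot u_j\le-1/(m-1)\) (for \(m=2\), an antipodal pair \(\pm w\)). Hence \(\cos D_\infty\le-1/m\), i.e.\ \(D_\infty\ge\zeta_{m-1}>b\), the contradiction. For \(m=1\), \(v^\perp=0\) and the contradiction is immediate.

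The paper's \cref{lem:subcritical-held-separation} is the quantitative form of the same idea. The condition \(D\le b<\zeta_{m-1}\) is exactly what makes every convex combination of at most \(m\) directions have norm at least some \(\rho>0\). So the convex hull of the directions contains the \(\rho\)-ball, and some neighbor has \(v\cdot u_z\le-\rho\). This yields the explicit bound \(\tan(r/2)\ge-\rho\tan b\). With either version of this step supplied, the rest of your proposal goes through.
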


There are two parts to the proof.  Below any fixed \(b<\zeta_{m-1}\),
every stationary value has a realization with uniformly bounded support,
so fixed-label finiteness excludes accumulation.  For sharpness, we lift
regular odd polygons one layer at a time to obtain values approaching
the threshold.  The general construction with an arbitrary number of
layers will then give accumulation at every positive nonantipodal
stationary value from the preceding dimension.

\subsection{Realizing critical values with a bounded number of points}
\label{subsec:bounded-realizing-supports}

Removing vertices of zero stressed degree preserves the diameter of a
configuration carrying a nonzero nonnegative equilibrium stress.
We show that the remaining
vertices are uniformly separated when the diameter is bounded by
\(b<\zeta_{m-1}\).  At each vertex, the equilibrium equation places zero
in the convex hull of the unit tangent directions toward its positive-weight
neighbors.  The diameter bound forces this convex hull to contain a ball
of uniformly positive radius, which prevents another vertex from
approaching arbitrarily closely.  Spherical packing then bounds the
number of retained vertices.

\begin{lemma}[Separation of vertices supporting an equilibrium stress]
\label{lem:subcritical-held-separation}
Let \(m\geq1\) and \(b<\zeta_{m-1}\).  Suppose that a finite set
\(P\subset\Sph^m\) has diameter \(D\in(0,b]\) and carries a nonzero
nonnegative equilibrium stress {\(\bigl(w_{yz}\bigr)_{y,z\in P}\)}.  Put
\[
 h_y:=\sum_z w_{yz},\qquad S:=\{y\in P:h_y>0\}.
\]
Then \(D>\pi/2\), \(\diam(S)=D\), and the restricted weights give an
equilibrium stress on \(S\).  For distinct \(y,y'\in S\),
\begin{equation}
 d_m(y,y')\geq\delta(m,b)
 :=\arccos\!\left(
 \frac{2m\cos^2 b}{1+(m-1)\cos b}-1
 \right)>0.
 \label{eq:stress-support-separation}
\end{equation}
Consequently, \(|S|\) has an upper bound depending only on \(m\) and \(b\).
\end{lemma}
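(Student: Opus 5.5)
We will prove the elementary assertions first and then the separation bound. For \(D>\pi/2\) we use \cref{thm:first-spherical-clarke-value}, which gives \(D\geq\zeta_m>\pi/2\); alternatively, \eqref{eq:spherical-stress-weighted-center} and \cref{lem:spherical-jung} give this directly. Every positive-weight edge has both endpoints of positive stressed degree. So a positive-weight diameter edge lies in \(S\), giving \(\diam(S)=D\). At a point \(y\in S\), every term of \eqref{eq:spherical-gordan-equilibrium} with \(w_{yz}>0\) has \(z\in S\), so the restricted weights still balance, as remarked after \cref{cor:spherical-gordan-stress}. The final counting statement will follow from \eqref{eq:stress-support-separation}: the open caps of radius \(\delta(m,b)/2\) about the points of \(S\) are disjoint, so \(|S|\) is at most \(\operatorname{vol}(\Sph^m)\) divided by the volume of one such cap.

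For the separation, fix \(y\in S\) and put \(c:=\cos D\in[\cos b,0)\). Let \(u_z\in T_y\Sph^m\) be the unit initial tangent toward each neighbor \(z\) with \(w_{yz}>0\), so that
\[
 z=c\,y+\sin D\,u_z .
\]
Dividing \eqref{eq:spherical-gordan-equilibrium} by \(h_y\sin D\) places \(0\) in \(\conv\{u_z\}\), exactly as in the proof of \cref{cor:fixed-label-circle-spectra}. Two neighbors \(z,z'\) satisfy \(z\cdot z'\geq c\). This gives the pairwise bound
\[
 u_z\cdot u_{z'}\geq\frac{c-c^2}{1-c^2}=\frac{c}{1+c}.
\]
Hence the directions \(u_z\) form a set in \(\Sph^{m-1}\subset T_y\Sph^m\) with bounded Euclidean diameter and with \(0\) in its convex hull.

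Now take \(y'\in S\setminus\{y\}\) and write \(y'=\cos\theta\,y+\sin\theta\,e\) with \(e\) a unit tangent and \(\theta=d_m(y,y')\). The diameter bound \(y'\cdot z\geq c\) for every neighbor \(z\) gives
\[
 e\cdot u_z\geq -\frac{|c|(1-\cos\theta)}{\sin\theta\,\sin D}
 \qquad\text{for all }z .
\]
So a small \(\theta\) forces all \(u_z\) into a half-space \(\{e\cdot u\geq-\tau\}\) with \(\tau\to0\). The key step is a quantitative ``inradius'' estimate, which is a Jung-type inequality in \(\R^m\). Let \(U\subset\Sph^{m-1}\) have \(0\in\conv U\) and pairwise inner products at least \(c/(1+c)\). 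We claim \(\min_{u\in U}e\cdot u\leq-\rho(m,c)\) for every unit \(e\), with \(\rho>0\) exactly when \(c>-1/m\), that is, \(D<\zeta_{m-1}\).

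To prove the claim, we pass to an inclusion-minimal affinely independent subset, so that at most \(m+1\) points carry the convex combination. We then run the weighted-variance chain of \cref{lem:spherical-jung} on the points \(u\) together with their projections onto \(e^\perp\). This shows that if all \(e\cdot u\geq-\tau\), the projected pairwise distances cannot accommodate \(0\) in the hull unless \(\tau\) exceeds an explicit threshold. Combining this threshold with the displayed lower bound on \(e\cdot u_z\) yields \(\cos\theta\leq 2mc^2/(1+(m-1)c)-1\). Finally, monotonicity in \(c\) over \([\cos b,0)\) replaces \(c\) by \(\cos b\) and gives \eqref{eq:stress-support-separation}. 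Positivity of \(\delta(m,b)\) is the check that the argument of \(\arccos\) is below \(1\) precisely when \(\cos b>-1/m\).

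The main obstacle is the sharp inradius estimate for the direction set and matching it to the stated closed form. First we will try the extremal configuration: \(m\) directions forming a regular simplex pattern tilted toward \(-e\). We expect it to realize the bound, and we will then verify the Jung-type chain with equality there.
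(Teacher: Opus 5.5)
Up to the inradius claim, your outline is the paper's proof. It uses the same tangent directions $u_z$, the same Gram bound $u_z\cdot u_{z'}\ge c/(1+c)$, the same decomposition of $y'$, the same trigonometric endgame, and the same cap-packing count. Your route to $D>\pi/2$, via $0\in\conv(S)$ and Jung, is a valid alternative to the paper's observation that $c\ge0$ would make every convex combination of the $u_z$ nonzero.

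The gap is the inradius claim itself. You leave it unproved, and the sketch you give would not produce it. Suppose you pass to a minimal subset $\{u_1,\dots,u_q\}$, $q\le m+1$, carrying $\sum_i a_iu_i=0$, and run the variance chain with these weights. Write $u_i=t_ie+w_i$ with $w_i\perp e$. Then $\sum_i a_it_i=0$, and the $t$-terms cancel exactly: the identity $\sum_i a_i\|w_i\|^2=\tfrac12\sum_{i,j}a_ia_j\|w_i-w_j\|^2$ reduces to $1=\tfrac12\sum_{i,j}a_ia_j\|u_i-u_j\|^2$. So the hypothesis $t_i\ge-\tau$ never enters. Moreover, with $m+1$ points the chain only gives $1\le(1+\gamma)\bigl(1-\tfrac1{m+1}\bigr)$, where $\gamma:=-c/(1+c)$. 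That is Jung's condition $\gamma\ge 1/m$, which every admissible $U$ satisfies and which carries no information about $\tau$.

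The missing idea is to apply the chain to at most $m$ directions, namely those carrying a boundary point of $K:=\conv\{u_z\}$. For a convex combination $p=\sum_{i=1}^q b_iu_i$ with $q\le m$,
\[
 1-\|p\|^2=\sum_i b_i\|u_i-p\|^2=\tfrac12\sum_{i,j}b_ib_j\|u_i-u_j\|^2
 \le(1+\gamma)\Bigl(1-\sum_i b_i^2\Bigr)\le(1+\gamma)\Bigl(1-\frac1m\Bigr),
\]
so $\|p\|^2\ge\rho^2=\frac{1+mc}{m(1+c)}$.

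Two consequences follow.
\begin{itemize}
\item $0$ is interior to $K$. Otherwise a supporting hyperplane through $0$ contains all directions of a zero convex relation, and Carath\'eodory in that hyperplane writes $0$ with at most $m$ of them, contradicting $\|p\|\ge\rho>0$.
\item Every boundary point of $K$ has norm at least $\rho$. It lies in a proper face, so Carath\'eodory in that face writes it as a convex combination of at most $m$ directions.
\end{itemize}
Thus $K$ contains the closed $\rho$-ball, which is exactly $\min_z e\cdot u_z\le-\rho$ for every unit $e$.

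In your projected picture, the relevant point is $-se$, where the ray in direction $-e$ leaves $K$. Write it with at most $m$ directions; their projections to $e^\perp$ then have $0$ in their hull, and your chain gives $s\ge\rho$.

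With this step supplied, your endgame finishes the proof. You get $\tan(\theta/2)\ge-\rho\tan D$, and the map $c\mapsto 2mc^2/(1+(m-1)c)-1$ is decreasing on $(-1/m,0)$, so replacing $c$ by $\cos b$ gives the stated bound.
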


Estimate \eqref{eq:stress-support-separation} provides an explicit
separation bound in every dimension.  Its two-dimensional specialization
coincides with the bound obtained by Katz
\cite[Lemmas~4.1 and~4.3, pp.~126--127]{Katz1989}.

\begin{proof}[Proof of \Cref{lem:subcritical-held-separation}]
Every positive-weight edge has both endpoints in \(S\).  Removing the
vertices of stressed degree zero therefore preserves all remaining equilibrium
equations and at least one edge of length \(D\).  Thus the restricted
stress is nonzero and \(\diam(S)=D\).

Fix \(y\in S\), and write \(c:=\cos D\), \(s:=\sin D\).  For each
neighbor \(z\) with \(w_{yz}>0\), write
\[
 z=cy+su_z,\qquad u_z\in\Sph(T_y\Sph^m).
\]
Dividing \eqref{eq:spherical-gordan-equilibrium} at \(y\) by \(h_y s\)
gives
\begin{equation}
 \sum_{z:w_{yz}>0}\frac{w_{yz}}{h_y}u_z=0,
 \qquad
 \sum_{z:w_{yz}>0}\frac{w_{yz}}{h_y}=1.
 \label{eq:held-core}
\end{equation}
The diameter bound also gives, for distinct such neighbors,
\begin{equation}
 u_z\cdot u_{z'}
 \geq\frac{c-c^2}{1-c^2}
 =\frac{c}{1+c}.
 \label{eq:held-direction-gram}
\end{equation}
If \(c\geq0\), all these pairwise products are nonnegative, so the
squared norm of the convex combination in \eqref{eq:held-core} is
positive.  Hence \(D>\pi/2\), and therefore \(b>\pi/2\).

Set
\[
 \gamma_b:=\frac{-\cos b}{1+\cos b},
 \qquad
 \rho:=\sqrt{\frac{1-(m-1)\gamma_b}{m}}
       =\sqrt{\frac{1+m\cos b}{m(1+\cos b)}}>0.
\]
Here positivity follows from \(b<\zeta_{m-1}=\arccos(-1/m)\).
Since \(D\leq b\), \eqref{eq:held-direction-gram} gives
\(u_z\cdot u_{z'}\geq-\gamma_b\).  For any convex combination of
\(q\leq m\) of these directions, with \(a_i\geq0\) and \(\sum_i a_i=1\),
\begin{equation}
 \left\lVert\sum_{i=1}^q a_i u_i\right\rVert^2
 \geq(1+\gamma_b)\sum_{i=1}^q a_i^2-\gamma_b
 \geq\frac{1+\gamma_b}{m}-\gamma_b
 =\rho^2.
 \label{eq:stress-small-face-bound}
\end{equation}

Let
\[
 \mathcal K_y:=\conv\{u_z:w_{yz}>0\}\subset T_y\Sph^m.
\]
Zero is an interior point of \(\mathcal K_y\) in the \(m\)-dimensional tangent
space.  Otherwise a supporting hyperplane through zero would contain
all directions used by a convex zero relation.  Carath\'eodory's theorem
in that hyperplane \cite[Theorem~17.1]{Rockafellar1970} would express zero
using at most \(m\) directions, contrary to
\eqref{eq:stress-small-face-bound}.

Every boundary point of \(\mathcal K_y\) lies in a supporting face of affine
dimension at most \(m-1\).  Applying Carath\'eodory in that face and
\eqref{eq:stress-small-face-bound} shows that its norm is at least
\(\rho\).  Thus \(\mathcal K_y\) contains the closed ball of radius \(\rho\)
centered at zero.  In particular, for every unit \(v\in T_y\Sph^m\),
some positive-weight neighbor direction satisfies
\begin{equation}
 v\cdot u_z\leq-\rho.
 \label{eq:uniform-tangent-cover}
\end{equation}

For \(y'\in S\setminus\{y\}\), write
\[
 y'=\cos r\,y+\sin r\,v,\qquad
 r:=d_m(y,y')\in(0,D]\subset(0,\pi).
\]
Choose \(u_z\) as in \eqref{eq:uniform-tangent-cover}.  Since
\(d_m(y',z)\leq D\),
\[
 c\leq y'\cdot z
 \leq c\cos r-s\rho\sin r.
\]
Rearranging this inequality gives
\[
 s\rho\sin r\leq(-c)(1-\cos r).
\]
Since \(-c>0\) and \(\sin r>0\), division by
\((-c)\sin r\), together with
\((1-\cos r)/\sin r=\tan(r/2)\), yields the first inequality below.

The second inequality uses \(D\leq b\) and the fact that
\(-\tan u\) is decreasing on \((\pi/2,\pi)\):
\[
 \tan\frac{r}{2}\geq-\rho\tan D\geq-\rho\tan b>0.
\]
Applying the increasing function \(2\arctan\) gives the desired separation bound:
\[
 r\geq2\arctan\!\bigl(-\rho\tan b\bigr)=\delta(m,b).
\]
To verify the equality, put
\(t:=\rho\sin b/(-\cos b)>0\).  The minus sign makes the denominator
positive, since \(\pi/2<b<\pi\).  Substituting the value of \(\rho^2\)
and cancelling \(1+\cos b\) gives
\[
 t^2
 =\frac{1+m\cos b}{m(1+\cos b)}
   \frac{\sin^2 b}{\cos^2 b}
 =\frac{(1+m\cos b)(1-\cos b)}{m\cos^2 b}.
\]
Consequently,
\begin{align*}
 1+t^2
 =\frac{m\cos^2 b+(1+m\cos b)(1-\cos b)}{m\cos^2 b}
 =\frac{1+(m-1)\cos b}{m\cos^2 b},
\end{align*}
and hence
\[
 \cos(2\arctan t)
 =\frac{2}{1+t^2}-1
 =\frac{2m\cos^2 b}{1+(m-1)\cos b}-1.
\]
The denominator in the last expression is also positive:
\(b<\arccos(-1/m)\) implies
\(1+(m-1)\cos b\geq1/m>0\).
Since \(2\arctan t\in(0,\pi)\), taking arccosines proves the
claimed equality with \(\delta(m,b)\).
For \(m=1\), the same argument gives \(\rho=1\) and
\(\delta(1,b)=2(\pi-b)\).
Finally, disjoint spherical caps of radius \(\delta(m,b)/2\), centered
at the points of \(S\), give the cardinality bound.
\end{proof}

\begin{proposition}[Realizations with a uniformly bounded number of points]
\label{prop:bounded-realizing-supports}
For every \(m\geq1\) and \(b<\zeta_{m-1}\), there is an integer
\(N_0=N_0(m,b)\) such that each value in
\(\Sigma^{\specC}(\Sph^m)\cap(0,b]\) has a stationary realization on
at most \(N_0\) distinct points.  Taking \(N_0\geq2\), repetition gives
\[
 \Sigma^{\specC}(\Sph^m)\cap(0,b]
 \subseteq\Sigma_{N_0}^{\specC}(\Sph^m).
\]
Consequently, the set on the left is finite.
\end{proposition}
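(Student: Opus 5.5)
The plan is to combine the spectrum description \eqref{eq:all-spherical-spectrum-coincidence} with the separation estimate of \cref{lem:subcritical-held-separation}, and then invoke fixed-label finiteness.

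Fix \(m\geq1\) and \(b<\zeta_{m-1}\). Note first that \(b<\zeta_{m-1}\leq\pi\), so every value \(D\in\Sigma^{\specC}(\Sph^m)\cap(0,b]\) lies in \((0,\pi)\). By \eqref{eq:all-spherical-spectrum-coincidence}, \(D\) is therefore the diameter of some finite set \(P\subset\Sph^m\) carrying a nonzero nonnegative equilibrium stress.

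Apply \cref{lem:subcritical-held-separation} to \(P\). The set \(S\) of vertices of positive stressed degree has the following properties:
\begin{enumerate}[label=\textup{(\alph*)}]
\item \(\diam(S)=D\);
\item the restricted weights form a nonzero nonnegative equilibrium stress on \(S\);
\item distinct points of \(S\) are at distance at least \(\delta(m,b)>0\).
\end{enumerate}
The disjoint caps of radius \(\delta(m,b)/2\) centered at the points of \(S\) then bound \(|S|\) by a constant. Concretely, one can take \(N_0(m,b)\) to be the ratio of the volume of \(\Sph^m\) to the volume of a cap of radius \(\delta(m,b)/2\), rounded down, and then enlarged to at least \(2\). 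This bound depends only on \(m\) and \(b\), not on \(P\).

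Next, order the points of \(S\) as a tuple and repeat vertices until there are exactly \(N_0\) labels. This is legitimate since \(|S|\geq2\), because \(D>0\). By \cref{prop:spherical-spectrum-coincidence}, a labelled tuple whose distinct support carries a nonzero nonnegative equilibrium stress is Clarke critical. So this \(N_0\)-tuple is Clarke critical with diameter \(D\), which gives the inclusion
\[
\Sigma^{\specC}(\Sph^m)\cap(0,b]\subseteq\Sigma_{N_0}^{\specC}(\Sph^m).
\]
Alternatively, one can realize \(D\) with \(|S|\) labels and use the repetition nesting of \cref{thm:fixed-label-clarke-spectrum}.

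Finally, the round sphere is real analytic, so \cref{cor:analytic-definable-fixed-label-finiteness} shows that \(\Sigma_{N_0}^{\specC}(\Sph^m)\) is finite. Hence the left side is finite as well.

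The only step needing care is the range restriction \(D<\pi\), which is required to use the stress characterization. It is automatic here because \(b<\pi\). No step is a real obstacle, since the separation lemma already does the geometric work.
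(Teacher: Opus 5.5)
Your proposal is correct and follows essentially the same route as the paper. Both obtain a stress realization from \cref{prop:spherical-spectrum-coincidence}, restrict to the positively stressed support \(S\) using the separation and packing bound of \cref{lem:subcritical-held-separation}, repeat labels up to \(N_0\), and conclude finiteness from \cref{cor:analytic-definable-fixed-label-finiteness}; your explicit cap-volume choice of \(N_0\) and your direct appeal to the stress characterization for the repeated tuple, instead of repetition nesting, are only cosmetic variations.
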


\begin{proof}
Let \(D\in\Sigma^{\specC}(\Sph^m)\cap(0,b]\).  Since \(D<\pi\),
\cref{prop:spherical-spectrum-coincidence} supplies a finite realization
with a nonzero nonnegative equilibrium stress.
\Cref{lem:subcritical-held-separation} gives a subset \(S\) of the same
diameter, carrying the restricted stress, with
\(2\leq|S|\leq N_0(m,b)\), independently of \(D\).
This stress makes its ordered vertex tuple Clarke critical by
\cref{prop:spherical-spectrum-coincidence}.
Repeating vertices up to \(N_0\) labels proves the displayed inclusion
by \cref{thm:fixed-label-clarke-spectrum}.
The containing spectrum is finite by
\cref{cor:analytic-definable-fixed-label-finiteness}.
\end{proof}

\subsection{One-layer lifts and the first accumulation point}
\label{subsec:first-accumulation-point}

To complete the proof of \cref{thm:first-accumulation-spheres}, we construct
stationary configurations in \(\Sph^m\) whose diameters increase to
\(\zeta_{m-1}\).  We obtain them by repeatedly applying the following
one-layer construction  that is a special case of a method due to Lovasz (see \Cref{sec:general-stacks}): place a rescaled copy of a stationary configuration
on a parallel in a sphere of one higher dimension and add the north pole;
see \cref{fig:one-layer-lovasz-stack}.

\begin{proposition}[One-layer stress lifting]
\label{cor:one-layer-lovasz-transform}
Let \(P=\{p_\xi:\xi\in A\}\subset\Sph^m\) be finite with
\(\alpha:=\diam(P)\in(\tfrac{\pi}{2},\pi)\), and put
\(c:=\cos\alpha\), \(a:=c/(1-c)\), and \(s:=\sqrt{1-a^2}\).
Identify \(\R^{m+2}=\R^{m+1}\times\R\), and define
\[
\begin{gathered}
 \Sph^{m+1}\supset\Stack_1(P):=\{Z\}\cup\{Y_\xi:\xi\in A\},\\
 Z:=(0,1),\qquad Y_\xi:=(s p_\xi,a).
\end{gathered}
\]
Here \(0\) denotes the zero vector in \(\R^{m+1}\).
Each \(Y_\xi\) is obtained by multiplying the coordinates of \(p_\xi\)
by \(s\) and appending \(a\) as the last coordinate.  Thus the lifted
points lie on the parallel of height \(a\), while \(Z\) is the north pole.
Then \(\diam(\Stack_1(P))=\arccos a\).  Every nonzero nonnegative
equilibrium stress \(\bigl(w_{\xi\eta}\bigr)_{\xi,\eta\in A}\) on \(P\) lifts by retaining the base
weights and assigning the spoke \(Y_\xi Z\) weight
\((-c)h_\xi\), where \(h_\xi:=\sum_\eta w_{\xi\eta}\).
\end{proposition}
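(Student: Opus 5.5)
The plan is a direct verification with explicit inner products, in two parts: identify the diameter graph of \(\Stack_1(P)\), then check the equilibrium equations \eqref{eq:spherical-gordan-equilibrium} at every lifted point and at the pole.

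\emph{Diameter and diameter graph.} First, \(c\in(-1,0)\) because \(\alpha\in(\pi/2,\pi)\). Hence \(a=c/(1-c)\in(-1/2,0)\), so \(s>0\) and every \(Y_\xi\) is a unit vector. The inner products are
\[
 Y_\xi\cdot Z=a,
 \qquad
 Y_\xi\cdot Y_\eta=s^2\,p_\xi\cdot p_\eta+a^2 .
\]
Since \(p_\xi\cdot p_\eta\geq c\), with equality exactly on diameter pairs of \(P\), the second product is at least
\[
 (1-a^2)c+a^2 .
\]
From \(a(1-c)=c\) one gets \(a^2(1-c)+c=ca+c=a(1-c)\cdot\frac{c+ca}{c}\); more directly, \(a^2(1-c)=ca\), so \(a^2(1-c)+c=c(1+a)=a(1-c)(1+a)\cdot\frac{1}{1+a}\cdot\ldots\) is best avoided. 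The clean route is the identity
\[
 (1-a^2)c+a^2=a \quad\Longleftrightarrow\quad a^2(1-c)+c=a(1-c)+c-c=a ,
\]
which follows from \(a^2(1-c)=ca\) and \(c(1+a)=a\) (since \(a-ca=c\)). Consequently the minimal inner product in \(\Stack_1(P)\) is exactly \(a\), so \(\diam(\Stack_1(P))=\arccos a\in(\pi/2,\pi)\). Moreover, the diameter graph consists of the diameter edges of \(P\) together with all spokes \(Y_\xi Z\). In particular, the proposed weights vanish on nonedges.

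\emph{Equilibrium at \(Y_\xi\).} With \(\cos D'=a\), the equation at \(Y_\xi\) reads
\[
 \sum_\eta w_{\xi\eta}(Y_\eta-aY_\xi)+(-c)h_\xi(Z-aY_\xi)=0 .
\]
I would split this into horizontal and vertical components.

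The horizontal (\(\R^{m+1}\)) part is
\[
 s\Bigl[\sum_\eta w_{\xi\eta}p_\eta-a h_\xi p_\xi+c a h_\xi p_\xi\Bigr]
 =s\Bigl[\sum_\eta w_{\xi\eta}p_\eta-c h_\xi p_\xi\Bigr],
\]
using \(a(1-c)=c\). This vanishes by the base equilibrium \eqref{eq:spherical-gordan-equilibrium}.

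The vertical part is
\[
 h_\xi\bigl[(a-a^2)-c(1-a^2)\bigr]=h_\xi\bigl[a-c-a^2(1-c)\bigr]=h_\xi\bigl[a-c-ca\bigr]=0 .
\]

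\emph{Equilibrium at \(Z\).} The equation at \(Z\) is
\[
 (-c)\sum_\xi h_\xi(Y_\xi-aZ)=(-c)\Bigl(s\sum_\xi h_\xi p_\xi,\;a\sum_\xi h_\xi-a\sum_\xi h_\xi\Bigr).
\]
This vanishes by the weighted-center identity \eqref{eq:spherical-stress-weighted-center}, since \(1-c>0\).

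\emph{Nonnegativity and nontriviality.} The lifted weights are nonnegative because \(-c>0\) and \(h_\xi\geq0\). They are not all zero because the base weights are retained. This completes the lift.

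The only delicate step is the algebraic identity \(s^2c+a^2=a\). It is the reason for the specific choice of \(a\), so it should be checked first. Everything else is bookkeeping.
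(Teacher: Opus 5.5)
Your proposal is correct and follows the paper's proof essentially step for step. The steps are the inner-product bound \(s^2\,p_\xi\cdot p_\eta+a^2\geq(1-a^2)c+a^2=a\) for the diameter, the horizontal/vertical split of the balance at each \(Y_\xi\) using \(a(1-c)=c\), and the weighted-center identity \(\sum_\xi h_\xi p_\xi=0\) at the pole; your explicit identification of the diameter graph states the support condition on the lifted weights slightly more openly than the paper does.

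The only blemish is cosmetic. In the display \(a^2(1-c)+c=a(1-c)+c-c=a\), the middle expression equals \(c\), not \(a\). The abandoned computation just before it should also be deleted. The justification you actually give, \(a^2(1-c)=ca\) together with \(c(1+a)=a\), correctly yields \((1-a^2)c+a^2=c(1+a)=a\).
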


\begin{proof}
Every pair of lifted base points has inner product
\[
 Y_\xi\cdot Y_\eta
 =(1-a^2)(p_\xi\cdot p_\eta)+a^2
 \geq(1-a^2)c+a^2=a,
\]
with equality on the lifted base-diameter pairs.  Every spoke also has
inner product  \(a\), so the diameter is
\(\arccos a\).

For the base stress, the equilibrium equation
\eqref{eq:spherical-gordan-equilibrium} and its summed form
\eqref{eq:spherical-stress-weighted-center} give, respectively,
\[
 \sum_\eta w_{\xi\eta}p_\eta=c h_\xi p_\xi,
 \qquad \sum_\xi h_\xi p_\xi=0.
\]
At a lifted vertex, the required balance is
{
\begin{align*}
 \sum_\eta w_{\xi\eta}(Y_\eta-aY_\xi)
   +(-c)h_\xi(Z-aY_\xi)
 &=\bigl(s h_\xi(c-a+ca)p_\xi,
          h_\xi[a(1-a)-c(1-a^2)]\bigr)\\
 &=0,
\end{align*}
}
because\(a=c/(1-c)\).
At the apex it is
{
\[
 \sum_\xi(-c)h_\xi(Y_\xi-aZ)
 =\bigl((-c)s\sum_\xi h_\xi p_\xi,0\bigr)=0.
\]}
The lifted weights are nonnegative, and retaining the nonzero base
stress makes them nonzero.  Zero weights and vertices of zero
stressed degree require
no separate assumption.
\end{proof}

\begin{figure}[htbp]
\centering
\begingroup
\contourlength{.65pt}
\newcommand{\stackfigmath}[1]{\contour{white}{\ensuremath{#1}}}
\begin{tikzpicture}[
  >={Latex[length=1.6mm,width=1.2mm]},
  every node/.style={font=\small,text=black,inner sep=2pt},
  outline/.style={draw=black!55,line width=.55pt},
  edge/.style={draw=black!75,line width=.8pt},
  guide/.style={draw=black!35,line width=.4pt,densely dashed},
  back/.style={draw=black!55,line width=.55pt,densely dashed},
  dot/.style={circle,fill=black!75,inner sep=1.4pt},
  panel/.style={font=\small\bfseries,anchor=base},
  formula/.style={font=\small,align=center},
  line cap=round,line join=round
]
\def\stackfigR{1.28}
\pgfmathsetmacro{\stackfiga}{-1/3}
\pgfmathsetmacro{\stackfigs}{sqrt(1-(\stackfiga)^2)}
\pgfmathsetmacro{\stackfiglat}{asin(\stackfiga)}

\begin{scope}[shift={(1.65,0)}]
  \node[panel] at (0,1.95) {(a) Start with the base};
  \draw[outline] (0,0) circle[radius=\stackfigR];
  \foreach \i/\a in {1/90,2/210,3/330}{
    \coordinate (base\i) at
      ({\stackfigR*cos(\a)},{\stackfigR*sin(\a)});
  }
  \draw[edge] (base1)--(base2)--(base3)--cycle;
  \foreach \i in {1,2,3}{\node[dot] at (base\i) {};}
  \node[above] at (base1) {\stackfigmath{p_1}};
  \node[below left] at (base2) {\stackfigmath{p_2}};
  \node[below right] at (base3) {\stackfigmath{p_3}};
  \node[formula] at (0,-1.80) {\(P\subset\Sph^1\)};
  \node[formula] at (0,-2.32)
    {\(\alpha=\diam(P),\quad c=\cos\alpha\)};
\end{scope}

\draw[edge,->] (3.28,0.2)--(4.18,0.2);

\begin{scope}[shift={(6.20,0)}]
  \node[panel] at (0,1.95) {(b) Choose the parallel};
  \draw[outline] (0,0) circle[radius=\stackfigR];
  \draw[guide] (-\stackfigR,0)--(\stackfigR,0);
  \draw[guide] (0,-\stackfigR)--(0,\stackfigR);
  \coordinate (meridianY) at
    ({\stackfigR*\stackfigs},{\stackfigR*\stackfiga});
  \draw[outline]
    ({-\stackfigR*\stackfigs},{\stackfigR*\stackfiga})--(meridianY);
  \draw[outline] (0,\stackfigR)--(0,0)--(meridianY);
  \draw[edge,->] (0,{\stackfigR*\stackfiga})--(meridianY)
    node[midway,above] {\stackfigmath{s}};
  \draw[edge] (0,.43)
    arc[start angle=90,end angle=\stackfiglat,radius=.43];
  \node at (.59,.45) {\stackfigmath{D}};
  \node[left] at (0,.05) {\stackfigmath{0}};
  \node[below left] at (0,{\stackfigR*\stackfiga})
    {\stackfigmath{a}};
  \node[dot] at (0,\stackfigR) {};
  \node[above] at (0,\stackfigR) {\stackfigmath{(0,1)}};
  \node[dot] at (meridianY) {};
  \node[below right] at (meridianY) {\stackfigmath{(s,a)}};
  \node[formula] at (0,-1.80)
    {\(a=\dfrac{c}{1-c},\quad s=\sqrt{1-a^2}\)};
  \node[formula] at (0,-2.32) {\(D=\arccos a\)};
\end{scope}

\draw[edge,->] (8.28,0.2)--(9.18,0.2);

\begin{scope}[shift={(11.50,0)}]
  \node[panel] at (0,1.95) {(c) Adjoin the pole};
  \pgfmathsetmacro{\stackfigcy}{\stackfigR*\stackfiga*cos(18)}
  \pgfmathsetmacro{\stackfigrx}{\stackfigR*\stackfigs}
  \pgfmathsetmacro{\stackfigry}{\stackfigrx*sin(18)}
  \coordinate (liftZ) at (0,{\stackfigR*cos(18)});
  \foreach \i/\a in {1/90,2/210,3/330}{
    \coordinate (lift\i) at
      ({\stackfigrx*cos(\a)},
       {\stackfigcy-\stackfigry*sin(\a)});
  }
  \draw[outline] (0,0) circle[radius=\stackfigR];
  \draw[guide] (0,-\stackfigR)--(liftZ);
  \draw[back] (-\stackfigrx,\stackfigcy)
    arc[start angle=180,end angle=0,
        x radius=\stackfigrx,y radius=\stackfigry];
  \draw[outline] (-\stackfigrx,\stackfigcy)
    arc[start angle=180,end angle=360,
        x radius=\stackfigrx,y radius=\stackfigry];
  \draw[edge] (lift1)--(lift2)--(lift3)--cycle;
  \foreach \i in {1,2,3}{\draw[edge] (liftZ)--(lift\i);}
  \foreach \i in {1,2,3}{\node[dot] at (lift\i) {};}
  \node[dot] at (liftZ) {};
  \node[above] at (liftZ) {\stackfigmath{Z=(0,1)}};
  \node[below] at (lift1) {\stackfigmath{Y_1}};
  \node[left] at (lift2) {\stackfigmath{Y_2}};
  \node[right] at (lift3) {\stackfigmath{Y_3}};
  \node[formula] at (0,-1.80)
    {\(Y_\xi=(s p_\xi,a)\in\Sph^2\)};
  \node[formula] at (0,-2.32)
    {\(\Stack_1(P)=\{Z\}\cup\{Y_\xi:\xi\in A\}\)};
\end{scope}
\end{tikzpicture}
\endgroup
\caption{The one-layer Lov\'asz construction, illustrated by
a regular triangle in \(\Sph^1\), which lifts to a regular tetrahedron
in \(\Sph^2\).  The parallel has height \(a=c/(1-c)\) and Euclidean
radius \(s\).  Every pole--vertex pair and every lifted base-diameter
pair has spherical distance \(D=\arccos a\).  Straight segments in
panels~(a) and~(c) indicate diameter pairs, not spherical geodesics.}
\label{fig:one-layer-lovasz-stack}
\end{figure}

For \(k\geq1\), let \(R_{2k+1}\subset\Sph^1\) be the vertex set
of a regular \((2k+1)\)-gon.

Iterate the one-layer construction from the regular odd polygons by setting
\[
 C_{1,k}:=R_{2k+1},
 \qquad
 C_{m+1,k}:=\Stack_1(C_{m,k})\subset\Sph^{m+1}
 \qquad(m,k\geq1),
\]
and write \(\theta_{m,k}:=\diam(C_{m,k})\).

\begin{corollary}[Odd polygons approaching the first accumulation scale]
\label{thm:stationary-iterated-one-layer-stacks}
For all \(m,k\geq1\),
\[
 |C_{m,k}|=2k+m,\qquad
 \theta_{m,k}\in\Sigma_{2k+m}^{\specC}(\Sph^m).
\]
\begin{equation}
 \cos\theta_{m,k}
 =-\frac{\cos(\frac{\pi}{2k+1})}
 {1+(m-1)\cos(\frac{\pi}{2k+1})}.
 \label{eq:iterated-one-layer-stack-diameter}
\end{equation}
For fixed \(m\), these values are strictly increasing and
\[
 \theta_{m,k}\nearrow\zeta_{m-1}=\arccos\left(-\frac{1}{m}\right).
\]
\end{corollary}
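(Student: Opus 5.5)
I will argue by induction on \(m\), with \(k\) fixed, using the one-layer stress lifting of \cref{cor:one-layer-lovasz-transform} at each step. I will track the quantity \(c_m:=\cos\theta_{m,k}\) and write \(\beta:=\cos(\pi/(2k+1))\in(0,1)\). The goal is to maintain, for every \(m\geq1\), three properties: \(C_{m,k}\subset\Sph^m\) has \(2k+m\) points, its diameter lies in \((\pi/2,\pi)\), and it carries a nonzero nonnegative equilibrium stress.

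\emph{Base case \(m=1\).} By \cref{cor:fixed-label-circle-spectra}, \(R_{2k+1}\) has \(2k+1\) points and diameter \(\delta_k=\pi-\pi/(2k+1)\). Its cosine is therefore \(-\beta\), which agrees with \eqref{eq:iterated-one-layer-stack-diameter} at \(m=1\). Equal weights on its diameter edges give an equilibrium stress. Moreover \(\delta_k\geq2\pi/3>\pi/2\).

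\emph{Inductive step.} The construction \(\Stack_1\) adds exactly one point, the pole. The lifted points \(Y_\xi\) are distinct because the map \(p\mapsto(sp,a)\) is injective for \(s>0\), and none equals \(Z\) since \(a<1\). Hence \(|C_{m+1,k}|=2k+m+1\). The proposition gives \(c_{m+1}=a=c_m/(1-c_m)\). Substituting \(c_m=-\beta/(1+(m-1)\beta)\), I compute
\[
1-c_m=\frac{1+m\beta}{1+(m-1)\beta},
\qquad
c_{m+1}=-\frac{\beta}{1+m\beta},
\]
which is the claimed formula at \(m+1\). This formula also keeps the hypothesis of \cref{cor:one-layer-lovasz-transform} in force, since \(c_{m+1}\in(-1,0)\) means \(\theta_{m+1,k}\in(\pi/2,\pi)\). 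Finally, the stress lifts by \cref{cor:one-layer-lovasz-transform}, which completes the induction.

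\emph{Membership in the spectrum.} Since \(0<\theta_{m,k}<\pi\) and \(C_{m,k}\) carries a nonzero nonnegative equilibrium stress, \cref{prop:spherical-spectrum-coincidence} shows that its ordered vertex tuple is Clarke critical. This tuple has \(2k+m\) labels, so \(\theta_{m,k}\in\Sigma_{2k+m}^{\specC}(\Sph^m)\).

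\emph{Monotonicity and limit.} The map \(\beta\mapsto\beta/(1+(m-1)\beta)\) is strictly increasing on \((0,1)\), because its derivative is \(1/(1+(m-1)\beta)^2>0\). As \(k\) grows, \(\beta\) strictly increases toward \(1\). Therefore \(\cos\theta_{m,k}\) strictly decreases toward \(-1/(1+(m-1))=-1/m\). Since \(\arccos\) is strictly decreasing, \(\theta_{m,k}\) strictly increases to \(\arccos(-1/m)=\zeta_{m-1}\).

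The only step requiring real care is checking that the lift stays within the hypothesis range \(\alpha\in(\pi/2,\pi)\) at every level; the sign of the closed form settles this, and everything else is algebra.
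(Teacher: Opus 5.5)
Your proposal is correct and follows the paper's argument. You induct on \(m\) using the one-layer stress lifting of \cref{cor:one-layer-lovasz-transform} and track the cosine through \(c\mapsto c/(1-c)\). You then obtain Clarke criticality from the stress characterization and read off monotonicity from \(\cos(\pi/(2k+1))\nearrow1\). The paper does all of this too. The differences are cosmetic: you verify the closed form directly at each step rather than through the iterated formula \(c/(1-rc)\), and you make explicit the point count and the preservation of the hypothesis \(\alpha\in(\pi/2,\pi)\), which the paper leaves implicit.
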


\begin{proof}
Put \(a_k:=\cos(\tfrac{\pi}{2k+1})\).
The diameter pairs of \(R_{2k+1}\) form one cycle, and equal weights on that
cycle give a nonzero nonnegative equilibrium stress.  Repeated application
of \cref{cor:one-layer-lovasz-transform} preserves such a stress and adds one
point at each step.  Since all displayed diameters lie below \(\pi\),
\cref{prop:clarke-first-order} gives
\(\theta_{m,k}\in\Sigma_{2k+m}^{\specC}(\Sph^m)\).

By \cref{cor:one-layer-lovasz-transform}, if
{\(c=\cos\diam(P)\in(-1,0)\)}, one application of the one-layer
construction replaces {\(c\)} by
{\(c/(1-c)\)}.  After \(r\) successive applications of the
one-layer construction, its value is
{\(c/(1-rc)\)}, by induction.
For the regular polygon,
\(\cos\diam(R_{2k+1})=-a_k\); taking \(r=m-1\) gives
\eqref{eq:iterated-one-layer-stack-diameter}.  Since \(a_k\nearrow1\),
the diameters increase strictly to \(\arccos(-\tfrac{1}{m})\).
\end{proof}

\phantomsection
\label{proof:first-accumulation-spheres}
\begin{proof}[\normalfont\bfseries Proof of \cref{thm:first-accumulation-spheres-intro} and \cref{thm:first-accumulation-spheres}]
For every \(b<\zeta_{m-1}\), \cref{prop:bounded-realizing-supports}
gives finiteness of \(\Sigma^{\specC}(\Sph^m)\cap(0,b]\).
The distinct values in \cref{thm:stationary-iterated-one-layer-stacks}
increase to \(\zeta_{m-1}\), so this is an accumulation point and there
is no smaller one.  Zero cannot be an accumulation point, by the positive
gap in \cref{thm:first-spherical-clarke-value}.
\end{proof}

\smallskip\noindent\emph{Topological consequence.}
Combined with \cref{cor:all-label-clarke-chambers}, this local finiteness
divides \((0,b]\), for \(0<b<\zeta_{m-1}\), into finitely many
intervals on which the canonical Vietoris--Rips inclusions are homotopy
equivalences.  Indeed, write the possibly empty set
\(\Sigma^{\specC}(\Sph^m)\cap(0,b)\) as
\[
 c_1<\cdots<c_J,
\]
and put \(c_0:=0\) and \(c_{J+1}:=b\).  Whenever
\(c_i<r<s\leq c_{i+1}\) for \(0\leq i\leq J\), the half-open interval
\([r,s)\) avoids the Clarke spectrum, so the canonical inclusion
\[
\VR{\Sph^m}{r}\longrightarrow\VR{\Sph^m}{s}
\]
is a homotopy equivalence.  Possible changes in homotopy type are thus
confined to the listed values \(c_1,\ldots,c_J\).

\subsection{General Lov\'asz stacks}\label{sec:general-stacks}

The one-layer construction identifies the first accumulation point.
We next ask whether a prescribed positive nonantipodal stationary
diameter can itself be approached by distinct stationary diameters in
the next dimension. For this we use Lov\'asz's layer construction
\cite[proof of Theorem~1]{Lovasz1983} in a spherical-coordinate
formulation that extends Katz's description; see comments on page~\pageref{sec:related-walk}.
For a base \(P\) of diameter \(\alpha\), the stack has \(k|P|+1\)
points and diameter \(\Lambda_k(\alpha)<\alpha\), tending to \(\alpha\)
as the layer number grows.  We prove that every nonzero nonnegative
equilibrium stress on the base lifts to this stack.
\par

Geometrically, we place rescaled copies of the base on \(k\) parallels
and add the north pole.  An equal-step walk between meridians separated
by \(\pi-\alpha\) determines the parallels: every second colatitude
carries a copy of the base.  Reflection symmetry ensures that the
prescribed contacts all have the same distance.
\Cref{fig:stack-walk-to-layers} illustrates how the walk determines the
layers in the case \(k=2\).

Let \(P\) be indexed without repetition by a finite set \(A\), and put
\[
 P:=\{p_\xi:\xi\in A\}\subset\Sph^m,
 \qquad
 \alpha:=\diam(P)\in\left(\frac{\pi}{2},\pi\right),
 \qquad c:=\cos\alpha,
 \qquad \beta:=\pi-\alpha.
\]
For an integer \(k\geq1\), a \emph{symmetric \(k\)-layer stack walk} is a
strictly increasing symmetric colatitude sequence
\[
 \varphi_0:=0,\qquad \varphi_{2k+1}:=\pi,
 \qquad \varphi_0<\varphi_1<\cdots<\varphi_{2k+1}
\]
satisfying
\begin{align}
 \cos\varphi_r\cos\varphi_{r+1}
 -c\sin\varphi_r\sin\varphi_{r+1}
 &=\cos\varphi_1
 \qquad(0\leq r\leq2k),
 \label{eq:general-stack-walk}\\
 \varphi_r+\varphi_{2k+1-r}&=\pi
 \qquad(0\leq r\leq2k+1).
 \label{eq:general-stack-symmetry}
\end{align}

\begin{proposition}[Canonical symmetric stack walk]
\label{prop:canonical-symmetric-stack-walk}
For every
\[
 \alpha\in\left(\frac{\pi}{2},\pi\right)
 \qquad\text{and}\qquad
 k\geq1,
\]
equations \eqref{eq:general-stack-walk}--
\eqref{eq:general-stack-symmetry}, with \(c=\cos\alpha\), admit a unique
symmetric \(k\)-layer stack walk.
\end{proposition}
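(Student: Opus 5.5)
The plan is to read \eqref{eq:general-stack-walk} geometrically and solve it by one-parameter shooting from the north pole. Place two meridians of \(\Sph^2\) at longitude difference \(\alpha\), and put the point of colatitude \(\varphi_r\) on one of them when \(r\) is even and on the other when \(r\) is odd. The left side of \eqref{eq:general-stack-walk} is then the cosine of the spherical distance between consecutive points, since \(\cos\alpha=c\). The right side is \(\cos\varphi_1\), the cosine of the first step from the pole. So a stack walk is exactly an equal-step zigzag walk of \(2k+1\) steps, each of length \(\lambda:=\varphi_1\), from the north pole to the south pole. I will use \(\lambda\in(0,\pi)\) as the shooting parameter.

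\emph{Step 1 (single forward step).} For \(\varphi\in[0,\pi)\), write
\[
 F_\varphi(\psi):=\cos\varphi\cos\psi-c\sin\varphi\sin\psi=\rho(\varphi)\cos\bigl(\psi+\gamma(\varphi)\bigr),
 \qquad \rho(\varphi)^2=\cos^2\varphi+c^2\sin^2\varphi .
\]
Because \(c<0\) and \(\rho\leq1\), the function \(F_\varphi\) is strictly decreasing on an interval \([\varphi,\psi_{\max}(\varphi)]\) that starts at \(\varphi\). Here \(\psi_{\max}(\varphi)\) is the colatitude of the farthest point on the opposite meridian. At \(\psi=\varphi\), the distance to the opposite meridian point is already positive. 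I would therefore define the next colatitude \(T_\lambda(\varphi)\) as the unique \(\psi\) in this interval with \(F_\varphi(\psi)=\cos\lambda\). This requires \(\lambda\) to be at least the distance at \(\psi=\varphi\) and at most the maximal distance. The implicit function theorem then gives continuity of \(T_\lambda(\varphi)\). It also gives strict monotonicity: \(T_\lambda(\varphi)\) is increasing in both \(\lambda\) and \(\varphi\). Increasing the step length pushes the landing point further down. A landing point further down the opposite meridian is reached from a starting point further down, by the sign of \(\partial_\varphi F\) compared with \(\partial_\psi F\) in this range.

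\emph{Step 2 (symmetry reduction).} The substitution \((\varphi,\psi)\mapsto(\pi-\psi,\pi-\varphi)\) preserves \(F\), so the reflected walk of any walk is again a walk with the same step length. I define \(\varphi_r(\lambda)=T_\lambda^{\,r-1}(\lambda)\) for \(1\leq r\leq k\) (with \(\varphi_1(\lambda)=\lambda\)) and set \(\varphi_{k+1}:=\pi-\varphi_k\). The whole sequence is then a symmetric walk exactly when the middle equation \(F_{\varphi_k}(\pi-\varphi_k)=\cos\lambda\) holds, with \(\varphi_k<\pi/2\). Conversely, \eqref{eq:general-stack-symmetry} forces these colatitudes, so every symmetric walk arises this way. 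Existence and uniqueness thus reduce to showing that
\[
 h(\lambda):=\arccos F_{\varphi_k(\lambda)}\bigl(\pi-\varphi_k(\lambda)\bigr)-\lambda
\]
has exactly one zero on the set of \(\lambda\) for which the forward walk is defined and stays strictly increasing below \(\pi/2\).

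\emph{Step 3 (intermediate value and monotonicity).} As \(\lambda\to0\), the walk stays near the pole. Its middle gap \(\pi-2\varphi_k\) is nearly \(\pi\), so its length tends to \(\pi\) while \(\lambda\to 0\); hence \(h>0\). As \(\lambda\) increases to the first value at which \(\varphi_k(\lambda)\) reaches \(\pi/2\), the middle step degenerates to the distance between the two equatorial points, which is \(\alpha\). I need this to be strictly less than \(\lambda\), giving \(h<0\); alternatively, the forward step ceases to exist first, which also gives a sign change. Continuity of \(h\) then gives a zero. For uniqueness, \(\varphi_k(\lambda)\) is strictly increasing by Step 1. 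The middle step length \(\arccos F_{\varphi_k}(\pi-\varphi_k)\) is decreasing in \(\varphi_k\), since the two symmetric points approach each other. Hence \(h\) is strictly decreasing and the zero is unique. Strict monotonicity of the colatitudes follows from the construction.

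\emph{Main obstacle.} The hard part is the domain bookkeeping in Steps 1 and 3. I must show that on the relevant \(\lambda\)-interval every forward step is well defined, meaning \(\lambda\) lies between the minimal and maximal distances from \(\varphi_r\) to the opposite meridian. I must also show that the sign of \(h\) really changes before the walk leaves this regime. My first attempt is to use \(\alpha>\pi/2\), which puts the nearest point on the opposite meridian at the pole side and makes the angle \(\gamma(\varphi)\) well controlled. If that bookkeeping becomes unwieldy, I would instead prove monotonicity of \(\lambda\mapsto\varphi_r(\lambda)\) by induction, using the explicit form \(\rho\cos(\psi+\gamma)\).
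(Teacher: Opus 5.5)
Your strategy is the paper's: shoot on the first step \(\lambda=\varphi_1\), take forward increasing roots, impose the middle condition, reflect, and conclude by the intermediate value theorem plus monotonicity. Your closing function \(h(\lambda)=\arccos\bigl(-c-(1-c)\cos^2\varphi_k(\lambda)\bigr)-\lambda\) is equivalent to the paper's condition \(\Psi_k(\tau)=\pi\). However, Step~3, which carries existence, fails at both ends. The source is your geometric model: since \(\cos(\pi-\alpha)=-c\), the left side of \eqref{eq:general-stack-walk} is the cosine of the distance between points on meridians separated by \(\beta:=\pi-\alpha\), not by \(\alpha\).

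At the right end, when \(\varphi_k=\pi/2\), the middle step is \(\arccos(-c)=\beta\), not \(\alpha\). Your required inequality \(\alpha<\lambda\) can never hold, because \(\lambda=\varphi_1\leq\varphi_k\leq\pi/2<\alpha\); as written, your reasoning gives \(h>0\) there. At the left end, \(\lambda\to0\) is not in the domain of the forward walk. A root \(\varphi_2>\varphi_1=\lambda\) exists only if \(\cos^2\lambda-c\sin^2\lambda\geq\cos\lambda\), i.e.\ \(\cos\lambda\leq -c/(1+c)\), which fails for all small \(\lambda\) when \(\alpha<2\pi/3\). For large \(k\), the walk stalls before index \(k\) for every small \(\lambda\), since to first order \(\varphi_r\approx\lambda\sin(r\beta)/\sin\beta\). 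So you have not exhibited an interval on which \(h\) is defined and changes sign.

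The missing idea is to shoot only over \([\beta,\sigma_k]\).
\begin{enumerate}
\item Any symmetric walk satisfies \(\cos\varphi_1=-c-(1-c)\cos^2\varphi_k<\cos\beta\) with \(\varphi_1\leq\varphi_k<\pi/2\), so \(\varphi_1\in(\beta,\pi/2)\). Equivalently, \(h(\lambda)>0\) whenever \(\lambda\leq\beta\).
\item At \(\lambda=\beta\), the root map satisfies \(\varphi<T_\beta(\varphi)<\pi/2\) for every \(\varphi<\pi/2\), because \(F_\varphi(\varphi)>-c=\cos\beta>-c\sin\varphi=F_\varphi(\pi/2)\). Hence the walk exists to every index, stays below \(\pi/2\), and \(h(\beta)>0\).
\item By monotonicity in \(\lambda\), the first parameter \(\sigma_k\) with \(\varphi_k(\sigma_k)=\pi/2\) lies in \((\beta,\pi/2]\), and \(h(\sigma_k)=\beta-\sigma_k<0\).
\item The monotonicity of \(T_\lambda\) in \(\varphi\), which you infer from a comparison of partial derivatives, genuinely requires \(\lambda>\beta\). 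The paper proves \(\partial F_\varphi(\psi)/\partial\varphi>0\) at the root by comparing \(\psi\) with \(v_0=\arctan(\tan\varphi/(-c))\), where \(F_\varphi(v_0)>-c>\cos\lambda\). For \(\lambda<\beta\) it can fail: for \(\pi/4<\beta<\pi/3\) and small \(\lambda\), \(T_\lambda\) decreases in \(\varphi\) near \(\varphi=\lambda\).
\end{enumerate}
With these ingredients, the intermediate value theorem and strict decrease of \(h\) on \((\beta,\sigma_k]\) give exactly one zero. Your Steps~2--3 then go through and reproduce the paper's argument.
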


The first step determines the increasing walk, and a strictly monotone
closing condition selects the unique step for which reflection joins the
two halves.  Appendix~\ref{app:stack-walk-shooting-convergence} gives the
complete shooting-and-closing proof.

For the canonical walk, define
\[
 \Lambda_k(\alpha):=\pi-\varphi_1=\varphi_{2k}.
\]
We call \(\Lambda_k\) the \emph{diameter transform} of the
\(k\)-layer Lov\'asz construction.
The value \(\Lambda_k(\alpha)\) depends only on \((\alpha,k)\).
\Cref{thm:stationary-stack-lifting} identifies it with the diameter
of the \(k\)-layer stack over every base of diameter \(\alpha\).

For \(1\leq i\leq k\), put
\[
 \ell_i:=\varphi_{2i},\qquad c_i:=\cos\ell_i,\qquad s_i:=\sin\ell_i,
\]
and define the parallel lift \(p_\xi\mapsto v_i^\xi\) to
colatitude \(\ell_i\), and the north-pole apex in
\(\Sph^{m+1}\subset\R^{m+1}\times\R\), by
\[
 Z:=(0,1),\qquad v_i^\xi:=(s_ip_\xi,c_i),
\]
\begin{equation}
 \Stack_k(P):=\{Z\}\cup
 \{v_i^\xi:1\leq i\leq k,\ \xi\in A\}.
 \label{eq:general-spherical-stack}
\end{equation}
Write
\[
 L_0:=\{Z\},
 \qquad
 L_i:=\{v_i^\xi:\xi\in A\}\quad(1\leq i\leq k)
\]
for the apex and the \(k\) lifted layers.
The pole \(L_0=\{Z\}\) is not counted as a layer. Panel~(b) of \cref{fig:stack-walk-to-layers} shows \(L_1,L_2\) and
\(Z\) for a triangular base.
By \cref{prop:canonical-symmetric-stack-walk}, the walk and hence the layer
coordinates are uniquely determined by \((\alpha,k)\), whereas the resulting
configuration is determined by \((P,k)\).  We call \(\Stack_k(P)\) the
\(k\)-layer Lov\'asz stack over \(P\).
For a fixed base, the layer positions are determined separately
for each \(k\), in the same sphere of one higher dimension.  Increasing
\(k\) does not mean appending a layer while retaining the previous layer
positions.
For \(k=1\), writing \(a=\cos\varphi_2\), the recurrence gives
\((a-1)((1-c)a-c)=0\).  The ordered walk selects \(a=c/(1-c)\), so this definition agrees with the direct one-layer
construction in \cref{cor:one-layer-lovasz-transform}.

\begin{figure}[H]
\centering
 \begingroup
\color{black}
\contourlength{.55pt}
\newcommand{\stackwalkmath}[1]{\contour{white}{\ensuremath{#1}}}
\begin{tikzpicture}[
  >={Latex[length=1.8mm,width=1.3mm]},
  every node/.style={font=\small,text=black,inner sep=2pt},
  outline/.style={draw=black!65,line width=.55pt},
  walk/.style={draw=black,line width=.9pt},
  guide/.style={draw=black!35,line width=.4pt,densely dashed},
  leader/.style={draw=black!65,line width=.4pt},
  selected/.style={circle,fill=black,inner sep=1.6pt},
  auxiliary/.style={circle,draw=black,fill=white,
                    line width=.65pt,inner sep=1.5pt},
  panel/.style={font=\small\bfseries,anchor=base},
  line cap=round,line join=round
]
\def\walkfigR{1.65}
\def\walkfigstep{61.023267768851}
\def\walkfigsone{.994804636746036}
\def\walkfigcone{.101802429777426}
\def\walkfigstwo{.874816515801676}
\def\walkfigctwo{-.484454397937118}

\begin{scope}[shift={(3.05,0)}]
  \node[panel] at (0,2.35) {(a) The auxiliary walk};
  \draw[outline] (0,0) circle[radius=\walkfigR];
  \foreach \side in {-1,1}{
    \draw[outline,draw=black!40]
      plot[domain=0:180,samples=81,variable=\t]
        ({\side*.5*\walkfigR*sin(\t)},{\walkfigR*cos(\t)});
  }
  \foreach \s/\c in {\walkfigsone/\walkfigcone,\walkfigstwo/\walkfigctwo}{
    \draw[guide] ({-\walkfigR*\s},{\walkfigR*\c})
      --({\walkfigR*\s},{\walkfigR*\c});
  }
  \foreach \xa/\za/\xb/\zb in {
    0/1/.437408257900838/.484454397937118,
    .437408257900838/.484454397937118/-.497402318373018/.101802429777426,
    -.497402318373018/.101802429777426/.497402318373018/-.101802429777426,
    .497402318373018/-.101802429777426/-.437408257900838/-.484454397937118,
    -.437408257900838/-.484454397937118/0/-1}{
    \draw[walk] plot[domain=0:1,samples=35,variable=\t]
      ({\walkfigR*(sin((1-\t)*\walkfigstep)*\xa
          +sin(\t*\walkfigstep)*\xb)/sin(\walkfigstep)},
       {\walkfigR*(sin((1-\t)*\walkfigstep)*\za
          +sin(\t*\walkfigstep)*\zb)/sin(\walkfigstep)});
  }
  \foreach \j/\x/\z in {
    0/0/1,1/.437408257900838/.484454397937118,
    2/-.497402318373018/.101802429777426,
    3/.497402318373018/-.101802429777426,
    4/-.437408257900838/-.484454397937118,5/0/-1}{
    \coordinate (walkpoint\j) at ({\walkfigR*\x},{\walkfigR*\z});
  }
  \foreach \j in {0,2,4}{\node[selected] at (walkpoint\j) {};}
  \foreach \j in {1,3,5}{\node[auxiliary] at (walkpoint\j) {};}
  \node[above=4pt] at (walkpoint0) {\stackwalkmath{\varphi_0=0}};
  \node[below=4pt] at (walkpoint5) {\stackwalkmath{\varphi_5=\pi}};
  \draw[leader] (walkpoint2)--(-1.83,.28);
  \node[anchor=east] at (-1.89,.28) {\stackwalkmath{\varphi_2}};
  \draw[leader] (walkpoint4)--(-1.83,-.85);
  \node[anchor=east] at (-1.89,-.85) {\stackwalkmath{\varphi_4}};
  \draw[leader] (walkpoint1)--(1.83,.86);
  \node[anchor=west] at (1.89,.86) {\stackwalkmath{\varphi_1}};
  \draw[leader] (walkpoint3)--(1.83,-.24);
  \node[anchor=west] at (1.89,-.24) {\stackwalkmath{\varphi_3}};
  \node[font=\footnotesize,align=center] at (0,-2.62)
    {Meridian separation: \(\beta=\pi-\alpha\)\\[2pt]
     Five equal spherical steps of length \(\varphi_1\)};
\end{scope}

\draw[walk,->] (5.90,.05)--(8.05,.05)
  node[midway,above=6pt,font=\footnotesize,align=center]
    {select the even\\colatitudes};

\begin{scope}[shift={(10.45,0)}]
  \node[panel] at (0,2.35) {(b) The two-layer stack};
  \draw[outline] (0,0) circle[radius=\walkfigR];
  \foreach \i/\s/\c in {
    1/\walkfigsone/\walkfigcone,2/\walkfigstwo/\walkfigctwo}{
    \pgfmathsetmacro{\layercy}{\walkfigR*\c*cos(15)}
    \pgfmathsetmacro{\layerrx}{\walkfigR*\s}
    \pgfmathsetmacro{\layerry}{\layerrx*sin(15)}
    \draw[guide] (\layerrx,\layercy)
      arc[start angle=0,end angle=180,x radius=\layerrx,y radius=\layerry];
    \draw[outline] (-\layerrx,\layercy)
      arc[start angle=180,end angle=360,x radius=\layerrx,y radius=\layerry];
    \foreach \a in {90,210,330}{
      \node[selected] at
        ({\layerrx*cos(\a)},{\layercy-\layerry*sin(\a)}) {};
    }
    \coordinate (layeredge\i) at (\layerrx,\layercy);
  }
  \node[selected] (stackpole) at (0,{\walkfigR*cos(15)}) {};
  \node[above=5pt] at (stackpole) {\stackwalkmath{Z}};
  \draw[leader] (layeredge1)--(2.02,.36);
  \node[anchor=west] at (2.07,.36)
    {\stackwalkmath{L_1:\ \ell_1=\varphi_2}};
  \draw[leader] (layeredge2)--(2.02,-.78);
  \node[anchor=west] at (2.07,-.78)
    {\stackwalkmath{L_2:\ \ell_2=\varphi_4}};
  \node[font=\footnotesize,align=center] at (0,-2.62)
    {\(v_i^\xi=(s_ip_\xi,c_i),\quad i=1,2\)\\[2pt]
     \(\Stack_2(P)=\{Z\}\cup L_1\cup L_2\subset\Sph^2\)};
\end{scope}
\end{tikzpicture}
\caption{The two-layer Lov\'asz stack over a regular triangle
\(P\subset\Sph^1\) of diameter \(\alpha=2\pi/3\).
In (a), the auxiliary walk alternates between two meridians, taking five
equal spherical steps of length \(\varphi_1\).  Filled markers indicate
the pole and the selected layer colatitudes; open markers are auxiliary.
In (b), rescaled copies \(L_1,L_2\) of \(P\) lie at
\(\ell_i=\varphi_{2i}\), and the pole \(Z\) is adjoined.
The stack diameter is \(\Lambda_2(\alpha)=\varphi_4=\pi-\varphi_1\).}
\label{fig:stack-walk-to-layers}
\endgroup
\end{figure}

The relation with Lov\'asz's ellipse construction is proved in
\cref{prop:ellipse-stack-walk-equivalence}.
That coordinate equivalence is independent of the stress-lifting
argument in \cref{thm:stationary-stack-lifting}.

\subsection{Lifting equilibrium stresses to Lov\'asz stacks}
\label{subsec:stress-lifting-spectral-transport}

We prove that the stack construction preserves nonzero nonnegative
equilibrium stresses.  It therefore produces stationary configurations
in a sphere of one higher dimension.  We then show that their diameters
approach the base diameter as the number of layers grows.

\begin{theorem}[Lov\'asz-stack geometry and equilibrium stresses]
\label{thm:stationary-stack-lifting}
Let \(P=\{p_\xi:\xi\in A\}\subset\Sph^m\) be a finite set indexed
without repetition, with diameter \(\alpha\in(\pi/2,\pi)\), and put
\(c:=\cos\alpha\).  Let \(k\geq1\) be an integer.
For the canonical walk and stack in
\eqref{eq:general-stack-walk}--\eqref{eq:general-spherical-stack},
\(\Stack_k(P)\) has the following properties.
\begin{enumerate}[label=\textup{(\roman*)}]
\item \emph{Geometry.} The stack \(\Stack_k(P)\) has \(k|P|+1\) points and
\[
 \diam(\Stack_k(P))
 =\Lambda_k(\alpha)
 =\varphi_{2k}
 =\pi-\varphi_1,
 \qquad
 \frac{\pi}{2}<\diam(\Stack_k(P))<\alpha.
\]
Its diameter pairs are exactly
\begin{equation}
 \{Z,v_k^\xi\},
 \qquad
 \{v_i^\xi,v_j^\eta\}
 \quad\text{with}\quad
 p_\xi\cdot p_\eta=c,\quad i+j\in\{k,k+1\}.
 \label{eq:general-stack-active-pairs}
\end{equation}
In the second family the base endpoints and the layer indices are ordered
before the resulting pair is regarded as unordered.  In particular, when
\(i\neq j\), a base edge \(\{\xi,\eta\}\) gives both crossed layer edges.
Moreover,
\begin{equation}
 0<\alpha-\diam(\Stack_k(P))<\frac{2\alpha-\pi}{2k+1}
 <\frac{\alpha}{2k+1},
 \qquad \diam(\Stack_k(P))\longrightarrow\alpha.
 \label{eq:general-stack-limit}
\end{equation}

\item \emph{Stress lifting.} Suppose additionally that the diameter pairs of
\(P\) carry symmetric
weights \(\bigl(w_{\xi\eta}\bigr)_{\xi,\eta\in A}\), with
\(w_{\xi\eta}=w_{\eta\xi}\geq0\), not all zero, extended by zero
off the diameter graph and on its diagonal, such that
\begin{equation}
 \sum_{\eta:p_\xi\cdot p_\eta=c}
 w_{\xi\eta}(p_\eta-c\,p_\xi)=0
 \qquad(\xi\in A).
 \label{eq:base-stack-equilibrium}
\end{equation}
Then the stack carries a nonzero nonnegative equilibrium stress, and therefore
\[
 \Lambda_k(\alpha)\in
 \Sigma_{k|P|+1}^{\specC}(\Sph^{m+1}).
\]
If
\(w_{\xi\eta}>0\) on every base diameter pair and every base vertex has positive
stressed degree, the lifted stress is positive on every diameter pair of the
stack.
\end{enumerate}
\end{theorem}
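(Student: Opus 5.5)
The plan is to work entirely with inner products, using the walk identities \eqref{eq:general-stack-walk}--\eqref{eq:general-stack-symmetry}. Put \(\mu:=\cos\varphi_1=-\cos\Lambda_k(\alpha)\). For lifted points,
\[
 v_i^\xi\cdot v_j^\eta=c_ic_j+s_is_j\,(p_\xi\cdot p_\eta),
 \qquad
 Z\cdot v_i^\xi=c_i .
\]
Since \(s_is_j>0\), the inner product is minimized over base pairs exactly when \(p_\xi\cdot p_\eta=c\). Its minimum is then \(\cos\ell_i\cos\ell_j+c\,s_is_j\).

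\textbf{Geometry.} By symmetry \eqref{eq:general-stack-symmetry}, \(\cos\varphi_{2k+1-r}=-\cos\varphi_r\) and \(\sin\varphi_{2k+1-r}=\sin\varphi_r\). Hence
\[
 \cos\ell_i\cos\ell_j+c\,s_is_j=-\bigl(\cos\varphi_{2i}\cos\varphi_{2k+1-2j}-c\sin\varphi_{2i}\sin\varphi_{2k+1-2j}\bigr).
\]
When \(i+j\in\{k,k+1\}\), the indices \(2i\) and \(2k+1-2j\) are consecutive, so \eqref{eq:general-stack-walk} makes this equal to \(-\mu\). The same argument with \(\varphi_0=0\) handles the spokes: \(Z\cdot v_k^\xi=\cos\varphi_{2k}=-\mu\).

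For every other pair, I would show the inner product is strictly larger than \(-\mu\). I expect this to follow from a monotonicity argument. Consider \(F(x,y):=\cos x\cos y-c\sin x\sin y\), the cosine of the distance between points at colatitudes \(x\) and \(y\) on meridians separated by \(\beta\). Along the increasing walk, this is strictly decreasing in the separation of the indices, so non-consecutive index pairs give larger values. This is the Lov\'asz-type "no extra contacts" estimate, and it also needs \(c_i\ge -\mu\) for \(i<k\) (which follows from monotonicity of the \(\ell_i\)) and coincident-base pairs handled separately. This identifies the diameter pairs \eqref{eq:general-stack-active-pairs} and gives \(\diam=\pi-\varphi_1\). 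The bounds \(\pi/2<\Lambda_k<\alpha\) and \eqref{eq:general-stack-limit} come from the walk: it consists of \(2k+1\) equal steps of length \(\varphi_1\) between meridians \(\beta\) apart, covering total colatitude \(\pi\). A comparison of a step with its colatitude increment, using the spherical triangle inequality, gives \((2k+1)\varphi_1>\pi\) and \(\varphi_1\) close to \(\beta+(\pi-\beta)/(2k+1)\)-type bounds. I would defer these estimates to the appendix analysis of \cref{prop:canonical-symmetric-stack-walk}; this is the step where the quantitative constant \((2\alpha-\pi)/(2k+1)\) must be extracted and is the likely main obstacle.

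\textbf{Stress lifting.} Generalizing \cref{cor:one-layer-lovasz-transform}, I use the ansatz \(W(v_i^\xi,v_j^\eta)=\omega_{ij}w_{\xi\eta}\) on the crossed layer edges and \(W(Z,v_k^\xi)=\omega_0 h_\xi\). The unknown layer weights \(\omega_{ij}=\omega_{ji}\ge0\) are supported on \(i+j\in\{k,k+1\}\).

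At \(v_i^\xi\), the base equations \(\sum_\eta w_{\xi\eta}p_\eta=c\,h_\xi p_\xi\) reduce the equilibrium to a vector proportional to \((p_\xi,0)\) plus a multiple of \(e_{m+2}\). Each component gives one scalar linear equation in the \(\omega\)'s, with coefficients depending on \(c_i,s_i\). At \(Z\), the horizontal part vanishes by \(\sum h_\xi p_\xi=0\), and the vertical part is automatic.

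The layer-incidence graph is a path \(Z\!-\!k\!-\!1\!-\!(k\!-\!1)\!-\!2\!-\!\cdots\), with a loop at the middle layer when \(i=j\) is allowed. The resulting system is a tridiagonal recursion along this path. Each node of the path has a two-dimensional balance in the meridian plane spanned by the tangent directions toward its two walk neighbours, exactly as in the planar auxiliary walk. I would solve it by interpreting \(\omega\) as weights making the walk polygon \(\varphi_0,\ldots,\varphi_{2k+1}\) balanced at each vertex. Equal step lengths and reflection symmetry suggest taking the weights symmetric. Positivity should then follow because the walk turns consistently, so each vertex's two neighbour tangents lie on opposite sides.

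Nonnegativity of the lifted stress is immediate, and it is nonzero because \(w\) is. When \(w>0\) on all base edges and all \(h_\xi>0\), positivity of every \(\omega\) makes the stress positive on all pairs in \eqref{eq:general-stack-active-pairs}. Finally, \cref{prop:spherical-spectrum-coincidence} converts the stress into Clarke criticality with \(k|P|+1\) labels.
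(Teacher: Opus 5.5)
Your framework matches the paper's, and the exclusion of non-diameter pairs is fine. That covers the reflection identity turning \(c_ic_j+c\,s_is_j\) into \(-\cos\varphi_1\) when \(i+j\in\{k,k+1\}\), the ansatz \(\omega_{ij}w_{\xi\eta}\), \(\omega_0h_\xi\), and the recursion along \(Z-k-1-(k-1)-\cdots\) ending in a loop. For the exclusion, your monotonicity becomes the paper's argument once you fix the layer index \(2i\): the map \(y\mapsto\cos\varphi_{2i}\cos y-c\sin\varphi_{2i}\sin y\) is a unimodal phase-shifted cosine on \([0,\pi]\) equal to \(\cos\varphi_1\) exactly at \(\varphi_{2i\pm1}\). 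The bounds \(\pi/2<\Lambda_k(\alpha)<\alpha\) also follow, from \(\cos\varphi_1=-c-(1-c)\cos^2\varphi_k\) and \(\varphi_1\le\varphi_k<\pi/2\).

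The first genuine gap is \eqref{eq:general-stack-limit}, which you leave open, and your sketch points the wrong way. Bounding each colatitude increment by the step length gives \((2k+1)\varphi_1>\pi\), i.e.\ \(\Lambda_k(\alpha)<\delta_k\). What you need is an \emph{upper} bound on \(\varphi_1-\beta=\alpha-\Lambda_k(\alpha)\); your target \(\beta+(\pi-\beta)/(2k+1)\) would moreover give only \(\alpha/(2k+1)\). The appendix does not help: its shooting argument gives only \(\varphi_1\in(\beta,\pi/2)\), and its fixed-\(\alpha\) rate is not uniform in \(\alpha\).

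The missing idea is a \emph{lower} bound on each middle increment. Points of the two alternating meridians at equal colatitude \(u\) are at distance \(\delta(u)\), where \(\cos\delta(u)=\cos^2u+\cos\beta\sin^2u\). So \(\delta(u)\le\beta\), with equality only at \(u=\pi/2\), which no \(\varphi_r\) attains by symmetry. The triangle inequality then gives \(\varphi_{r+1}-\varphi_r\ge\varphi_1-\delta(\varphi_r)>\varphi_1-\beta\) for \(1\le r\le2k-1\), and the first and last increments equal \(\varphi_1\). Summing gives \(\pi>2\beta+(2k+1)(\varphi_1-\beta)\), which is the claim since \(\pi-2\beta=2\alpha-\pi\).

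The second gap is that nonnegativity of the lifted stress is not immediate; it is the heart of part~(ii). At a layer vertex only one scalar equation is genuine. After the base equilibrium removes the component along the parallel, what remains is tangent and lies in \(\spann\{(p_\xi,0),e_{m+2}\}\), so it is a multiple of the meridional unit tangent. Your two components are therefore dependent. A literal two-dimensional balance, as the planar walk suggests, is impossible, since both walk neighbours lie on the other meridian.

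That single equation is \(\sum_j\omega_{ij}\gamma_{ij}+\mathbf 1_{\{i=k\}}\omega_0=0\). Here \(\gamma_{ij}=(s_ic_j-c\,c_is_j)/\sin\Lambda_k(\alpha)\) is the northward meridional component of the unit tangent toward a diameter neighbour in layer \(j\). Each new factor is \(-\omega_{\mathrm{in}}\gamma_{\mathrm{in}}/\gamma_{\mathrm{out}}\), which is positive only under the sign pattern \(\gamma_{i,k-i}>0>\gamma_{i,k+1-i}\) for \(1\le i<k\) and \(\gamma_{k1}<0\).

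Saying the walk turns consistently does not prove this, because the target's colatitude does not decide the meridional sign. For \(k=1\), the direction toward a diameter neighbour in the \emph{same} layer already points strictly south (\(\gamma_{11}=c<0\)). The paper proves the pattern using \(Z_i(y)=s_i\cos y-c\,c_is_i\)-type sinusoid \(Z_i(y)=s_i\cos y-c\,c_i\sin y\). This function is positive at \(0\), negative at \(\pi\), and has exactly one zero in \((0,\pi)\). At \(y_-=\ell_{k-i}\) and \(y_+=\ell_{k+1-i}\) it satisfies
\[
Z_i(y_-)Z_i(y_+)=\frac{\cos^2\Lambda_k(\alpha)\,(s_i^2+c^2c_i^2)-c^2}{c_i^2+c^2s_i^2}<0,
\]
because \(\cos^2\Lambda_k(\alpha)<c^2\).
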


\begin{proof}
\noindent\emph{Diameter bounds.}
Put
\[
 D_k:=\pi-\varphi_1=\Lambda_k(\alpha).
\]
We first show that this candidate value is the diameter of the stack.
The strict ordering of the layer colatitudes makes the \(k\) layers
pairwise disjoint; each is a copy of \(P\), and none contains the apex.
Thus \(\Stack_k(P)\) has \(k|P|+1\) points.
The midpoint relation \(\varphi_k+\varphi_{k+1}=\pi\) and
\eqref{eq:general-stack-walk} give
\[
 \cos\varphi_1=-c-(1-c)\cos^2\varphi_k<-c=\cos\beta.
\]
Strict ordering gives \(\varphi_1\leq\varphi_k<\tfrac{\pi}{2}\).  Thus
\(\varphi_1>\beta\) and
\(\tfrac{\pi}{2}<D_k=\pi-\varphi_1<\alpha\).

\par\smallskip\noindent\emph{Diameter pairs.}
We next determine the diameter pairs.  For a fixed source layer \(i\), put
\[
 g_i(y):=c_i\cos y+c\,s_i\sin y-\cos D_k.
\]
Apply \eqref{eq:general-stack-walk} to the two steps adjacent
to \(\varphi_{2i}\), then use \eqref{eq:general-stack-symmetry} to
reflect their odd-indexed endpoints.  This gives the two consecutive
roots \(\ell_{k-i}\) and \(\ell_{k+1-i}\) of \(g_i\), where
\(\ell_0:=0\).  Moreover,
\[
 g_i(0)=\cos\ell_i-\cos D_k\geq0,
 \qquad
 g_i(\pi)=\cos\varphi_1-\cos\ell_i>0.
\]
A phase-shifted cosine has at most two roots on \([0,\pi]\).  It can
therefore be negative only between these consecutive roots, where no layer
colatitude occurs.  Since every base pair satisfies \(p_\xi\cdot p_\eta\geq c\),
\[
 v_i^\xi\cdot v_j^\eta
 \geq c_ic_j+c\,s_is_j\geq\cos D_k.
\]
Equality holds precisely for the layer and base pairs in
\eqref{eq:general-stack-active-pairs}.  The apex distances are the
colatitudes \(\ell_i\leq\ell_k=D_k\), with equality only on the last
layer.  This identifies the diameter and the pairs that realize it.

\par\smallskip\noindent\emph{An estimate uniform in the base diameter.}
It remains to prove the quantitative assertion in part~\textup{(i)}.  Choose
unit vectors \(e_0,e_1\) with \(e_0\cdot e_1=\cos\beta\), and on the two
corresponding meridians put
\[
 x_\nu(u):=(\sin u\,e_\nu,\cos u),\qquad \nu\in\{0,1\}.
\]
Equation~\eqref{eq:general-stack-walk} says that consecutive
alternating-meridian points \(x_{r\bmod2}(\varphi_r)\) and
\(x_{(r+1)\bmod2}(\varphi_{r+1})\) have
\(d_{m+1}\)-distance \(\varphi_1\).
Set \(\delta(u):=d_{m+1}(x_0(u),x_1(u))\) for points at equal
colatitude \(u\).  Then
\(\delta(u)\leq\beta\), since
\[
 \cos\delta(u)=\cos^2u+\cos\beta\sin^2u\geq\cos\beta.
\]
No colatitude \(\varphi_r\) equals \(\tfrac{\pi}{2}\): otherwise symmetry would give the
same value at the distinct index \(2k+1-r\), contrary to strict ordering.
Thus \(\delta(\varphi_r)<\beta\) for \(1\leq r\leq2k-1\).  The triangle
inequality gives
\[
 \varphi_{r+1}-\varphi_r
 \geq\varphi_1-\delta(\varphi_r)>\varphi_1-\beta
 \qquad(1\leq r\leq2k-1).
\]
Symmetry also gives
\(\varphi_{2k+1}-\varphi_{2k}
=\varphi_1-\varphi_0=\varphi_1\).  Consequently,
\[
 \pi
 >2\varphi_1+(2k-1)(\varphi_1-\beta)
 =2\beta+(2k+1)(\varphi_1-\beta).
\]
Since \(\varphi_1-\beta=\alpha-D_k\) and
\(\pi-2\beta=2\alpha-\pi\), this proves
\eqref{eq:general-stack-limit}.

\par\smallskip\noindent\emph{Constructing the lifted stress.}
The base equilibrium equations cancel the components tangent
to each parallel; we choose positive multipliers for the layer pairs
recursively to balance the meridional components.
Now suppose the weights in part~\textup{(ii)} are given.  Put
\[
 h_\xi:=\sum_\eta w_{\xi\eta}.
\]
Summing \eqref{eq:base-stack-equilibrium} over \(\xi\), and using symmetry,
gives
\begin{equation}
  \sum_\xi h_\xi p_\xi=0.
 \label{eq:base-stack-weighted-center}
\end{equation}
For a base diameter pair, write
\[
 u^P_{\xi\to\eta}:=\frac{p_\eta-c\,p_\xi}{\sin\alpha}
 \in T_{p_\xi}\Sph^m
\]
for its outgoing unit tangent at \(p_\xi\).
For \(1\leq i,j\leq k\) with \(i+j\in\{k,k+1\}\),
define the coefficients that will give the meridional components of
the diameter-edge unit tangents:
\[
 \gamma_{ij}
 :=\frac{s_ic_j-c\,c_is_j}{\sin D_k}.
\]
These coefficients satisfy
\begin{equation}
 \gamma_{i,k-i}>0>\gamma_{i,k+1-i}\quad(1\leq i<k),
 \qquad \gamma_{k1}<0.
 \label{eq:general-stack-gamma-signs}
\end{equation}
Indeed, write
\(Z_i(y):=s_i\cos y-c\,c_i\sin y\).  At the two roots
\(y_-:=\ell_{k-i}\) and \(y_+:=\ell_{k+1-i}\) of \(g_i\),
\[
 Z_i(y_-)Z_i(y_+)
 =\frac{\cos^2D_k(s_i^2+c^2\,c_i^2)-c^2}
        {c_i^2+c^2\,s_i^2}<0.
\]
Since \(\tfrac{\pi}{2}<D_k<\alpha\), we have
\(\cos^2D_k<c^2\); also \(s_i^2+c^2c_i^2\leq1\), so the
numerator is negative.  The nonzero linear combination \(Z_i\) of
sine and cosine has exactly one zero in \((0,\pi)\).  Together with
\(Z_i(0)>0>Z_i(\pi)\), this identifies the signs in their colatitude
order.

Put
\[
 r_0:=k,\quad r_1:=1,\quad r_2:=k-1,\quad r_3:=2,\quad\ldots,
 \quad r_{k-1}:=\left\lceil\frac{k}{2}\right\rceil.
\]
The active layer pairs in
\eqref{eq:general-stack-active-pairs} occur in the order
\[
 L_0-L_{r_0}-L_{r_1}-\cdots-L_{r_{k-1}}-L_{r_{k-1}}.
\]
Each edge records all diameter edges between the indicated
layers; the final edge records diameter edges within that layer.
We assign one symmetric factor \(\mu_{ij}\) to each pair of lifted layers.
At a fixed vertex, each corresponding base edge contributes once.

Start with \(\mu_{k1}:=1\) and
\(\kappa:=-\gamma_{k1}>0\), which balances the meridional component
at layer \(k\).  Then follow the displayed order.  At each successive
layer, the incoming factor is already fixed, and the following equation
determines the one remaining factor:
\begin{equation}
 \sum_{\substack{j\geq1:\ i+j\in\{k,k+1\}}}
 \mu_{ij}\gamma_{ij}
 +\mathbf1_{\{i=k\}}\kappa=0.
 \label{eq:general-stack-layer-balance}
\end{equation}
The two meridional coefficients at each successive layer
have opposite signs by \eqref{eq:general-stack-gamma-signs}, so every
new factor is positive, including the factor for edges within the final
layer.  When \(k=1\), the initial choices already give the within-layer
factor \(\mu_{11}=1\) and the spoke factor \(\kappa\); no recursive
step remains.

Give an active lifted edge the weight \(\mu_{ij}w_{\xi\eta}\), and give the
spoke \(\{Z,v_k^\xi\}\) weight \(\kappa h_\xi\).  At \(v_i^\xi\), let
\(\widehat m_{i,\xi}:=(-c_ip_\xi,s_i)\) be the meridional unit tangent.
The unit tangent toward \(v_j^\eta\) along a diameter edge is
\[
 \gamma_{ij}\widehat m_{i,\xi}
 +\frac{s_j\sin\alpha}{\sin D_k}
   (u^P_{\xi\to\eta},0).
\]
For each fixed target layer \(j\), the components tangent
to the parallel through \(v_i^\xi\) cancel by
\eqref{eq:base-stack-equilibrium}: every base term has the same factor
\(\mu_{ij}s_j\sin\alpha/\sin D_k\).  The remaining meridional
coefficient is \(h_\xi\) times the left side of
\eqref{eq:general-stack-layer-balance}, so it also vanishes.  At the apex the spoke tangents are
\((p_\xi,0)\), so \eqref{eq:base-stack-weighted-center} gives equilibrium.
The lifted stress is nonzero and nonnegative, with the stated strictness
under the two support hypotheses.  Since \(D_k<\pi\),
\cref{prop:clarke-first-order} gives the claimed spectral membership.
\end{proof}

\begin{proposition}[Diameter transforms of Lov\'asz stacks]
\label{prop:canonical-lovasz-transform}
For \(\alpha\in(\tfrac{\pi}{2},\pi)\), the diameter transforms defined above
satisfy
\begin{equation}
 \frac{\pi}{2}<\Lambda_1(\alpha)<\Lambda_2(\alpha)<\cdots<\alpha.
 \label{eq:lovasz-transform-monotonicity}
\end{equation}
The function \(\Lambda_k\) is continuous on
\((\tfrac{\pi}{2},\pi)\) and extends continuously to \(\alpha=\pi\) by
\begin{equation}
 \Lambda_k(\pi):=\delta_k
 =\pi-\frac{\pi}{2k+1}.
 \label{eq:lovasz-transform-antipodal-extension}
\end{equation}
The estimate
\[
 0<\alpha-\Lambda_k(\alpha)<\frac{\alpha}{2k+1}
 <\frac{\pi}{2k+1}
\]
is uniform in the base diameter.  In particular, for any sequence
\(\alpha_k\in(\tfrac{\pi}{2},\pi)\),
\(\alpha_k-\Lambda_k(\alpha_k)\to0\).
\end{proposition}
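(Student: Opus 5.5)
The bounds that do not involve several values of \(k\) at once come directly from \cref{thm:stationary-stack-lifting}\textup{(i)}. It gives \(\tfrac{\pi}{2}<\Lambda_k(\alpha)<\alpha\) for every \(k\). Its estimate \eqref{eq:general-stack-limit} gives
\[
0<\alpha-\Lambda_k(\alpha)<\frac{2\alpha-\pi}{2k+1}<\frac{\alpha}{2k+1}<\frac{\pi}{2k+1}.
\]
The last two inequalities hold because \(\alpha<\pi\), and the constants do not depend on \(\alpha\). This is the claimed uniform estimate, and it yields \(\alpha_k-\Lambda_k(\alpha_k)\to0\) for any sequence \(\alpha_k\). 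Three things remain: strict monotonicity in \(k\), continuity in \(\alpha\), and the antipodal extension. All three concern the step length \(\theta=\varphi_1\), because \(\Lambda_k(\alpha)=\pi-\varphi_1\).

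For monotonicity I will use the shooting description from \cref{prop:canonical-symmetric-stack-walk} and Appendix~\ref{app:stack-walk-shooting-convergence}. Fix \(c\) and a trial step \(\theta\). Starting from \(\varphi_0=0\), each successive colatitude is the larger root \(v\) of
\[
\cos u\cos v-c\sin u\sin v=\cos\theta .
\]
Write \(\Phi_r(\theta)\) for the \(r\)-th colatitude produced this way. I first check that this root is strictly increasing in both \(u\) and \(\theta\); by induction, each \(\Phi_r\) is then strictly increasing in \(\theta\) wherever it is defined. Let \(\theta_k\) be the canonical step for \(k\) layers, so that \(\Phi_{2k+1}(\theta_k)=\pi\). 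Suppose \(\theta_{k+1}\ge\theta_k\). Monotonicity gives
\[
\Phi_{2k+1}(\theta_{k+1})\ge\Phi_{2k+1}(\theta_k)=\pi .
\]
This contradicts the strict increase of the \((k+1)\)-layer walk up to \(\varphi_{2k+3}=\pi\). Hence \(\theta_{k+1}<\theta_k\), which gives \(\Lambda_k(\alpha)<\Lambda_{k+1}(\alpha)\). The main obstacle is the monotone dependence of the root on \((u,\theta)\). That root may leave \([0,\pi]\) for large trial steps, so I will use the appendix's closing function rather than \(\Phi_{2k+1}\) directly. That function is \(\theta\mapsto\Phi_k(\theta)+\Phi_{k+1}(\theta)-\pi\), which is strictly monotone on its natural domain, and I will compare its zeros for \(k\) and \(k+1\).

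For continuity I will regard the closing function as a function of \((\theta,c)\). It is jointly continuous, since each step is an explicit continuous root of a phase-shifted cosine equation, and it is strictly monotone in \(\theta\) with a unique zero \(\theta_k(c)\). A continuous family of strictly monotone functions with sign changes has a continuous zero, so \(\theta_k\), and hence \(\Lambda_k\), is continuous in \(\alpha\).

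For the extension at \(\alpha=\pi\), I will check that the recursion, the sign change and the monotonicity all remain valid at \(c=-1\). There the two meridians are antipodal, so the alternating walk runs along a single great circle through both poles. Its \(2k+1\) equal steps therefore cover the half great circle, giving \(\theta=\pi/(2k+1)\). Continuity of the zero up to \(c=-1\) then yields \(\Lambda_k(\alpha)\to\pi-\pi/(2k+1)=\delta_k\).
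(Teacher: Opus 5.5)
Your proposal is correct. The bounds \(\tfrac{\pi}{2}<\Lambda_k(\alpha)<\alpha\) and the uniform estimate come from \cref{thm:stationary-stack-lifting}\textup{(i)} exactly as in the paper.

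Your monotonicity argument is the paper's in different packaging. The paper compares a single iterate: from \(\tau_{k+1}\geq\tau_k\) it gets \(\varphi_{k+1}^{(k+1)}\geq\varphi_{k+1}^{(k)}\), which contradicts \(\varphi_{k+1}^{(k+1)}<\tfrac{\pi}{2}<\varphi_{k+1}^{(k)}\). Your comparison of zeros needs one line to close. On the common domain \([\beta,\sigma_{k+1}]\), the \((k+1)\)-layer closing function exceeds the \(k\)-layer one by \(\Phi_{k+2}-\Phi_k>0\). So the \(k\)-layer function is negative at \(\theta_{k+1}\), and hence \(\theta_{k+1}<\theta_k\).

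The real difference is in the continuity argument.

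The paper's route is compactness plus uniqueness:
\begin{itemize}
\item it extracts convergent subsequences of the whole colatitude vectors;
\item it passes the recurrence and the symmetry to the limit;
\item it excludes the degenerate limits \(\tau=\beta\), \(\tau=\tfrac{\pi}{2}\) and coincident colatitudes;
\item it then invokes the uniqueness in \cref{prop:canonical-symmetric-stack-walk};
\item at \(\alpha=\pi\) it reads equal increments off \(\cos(\varphi_{r+1}-\varphi_r)=\cos\varphi_1\).
\end{itemize}

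Your route tracks the zero of a jointly continuous, strictly monotone closing function. Write each step as the explicit root \(v=\psi+\arccos(\cos\theta/A)\) of \(A\cos(v-\psi)=\cos\theta\). This makes joint continuity in \((u,\theta,c)\) immediate, including at \(c=-1\). There the step is \(u\mapsto u+\theta\) and the closing function is \((2k+1)\theta-\pi\).

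What your route buys is that it avoids the degeneracy exclusions and treats the endpoint like any interior point. One detail needs to be made explicit: the admissible interval \([\beta(c),\sigma_k(c)]\) moves with \(c\). So apply the sign change at \(\theta_0\pm\varepsilon\), and use continuity of \(\Phi_k\) in \((\theta,c)\) to keep \(\Phi_k(\theta_0+\varepsilon)<\tfrac{\pi}{2}\) for nearby \(c\).

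A small slip: at \(c=-1\) the two meridians are separated by \(\beta=\pi-\alpha=0\), so they coincide rather than being antipodal. Your equal-step conclusion is still correct, because it follows from the recurrence itself.
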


The uniform estimate is \eqref{eq:general-stack-limit}; monotonicity and continuity are proved in
\cref{subsec:stack-transform-proof}.  The appendix also gives a
sharper convergence rate when the base diameter is fixed. For \(n\geq2\), the stack construction associates to each
Clarke critical diameter \(\alpha\in(0,\pi)\) of \(\Sph^{n-1}\) a strictly
increasing sequence of Clarke critical diameters \(\Lambda_j(\alpha)\)
of \(\Sph^n\) converging to \(\alpha\).  Thus each such \(\alpha\) becomes
an accumulation point of the spectrum in the next dimension.

\begin{theorem}[Spherical spectra under the Lov\'asz-stack construction]
\label{thm:lovasz-stack-spectrum-transport}
Let \(n\geq2\) and \(N\geq2\).  Equatorial inclusion preserves the
positive nonantipodal values at \(N\) labels:
\begin{equation}
 \Sigma_N^{\specC}(\Sph^{n-1})\cap(0,\pi)
 \subseteq
 \Sigma_N^{\specC}(\Sph^n)\cap(0,\pi).
 \label{eq:equatorial-spectrum-nesting}
\end{equation}
If \(\alpha\in\Sigma_N^{\specC}(\Sph^{n-1})\cap(0,\pi)\), then for
every \(j\geq1\),
\begin{equation}
 \Lambda_j(\alpha)\in\Sigma_{jN+1}^{\specC}(\Sph^n),
 \qquad
 \Lambda_1(\alpha)<\Lambda_2(\alpha)<\cdots<\alpha,
 \qquad
 \Lambda_j(\alpha)\longrightarrow\alpha.
 \label{eq:fixed-label-stack-transport}
\end{equation}
Consequently, every positive nonantipodal stationary value one dimension
below is both a stationary value and an accumulation point upstairs:
\begin{equation}
 \Sigma^{\specC}(\Sph^{n-1})\cap(0,\pi)
 \subseteq
 \bigl[\Sigma^{\specC}(\Sph^n)\cap(0,\pi)\bigr]
 \cap
 \operatorname{Acc}\bigl(\Sigma^{\specC}(\Sph^n)\cap(0,\pi)\bigr)
 \qquad(n\geq2),
 \label{eq:stationary-spectrum-stack-lift}
\end{equation}
\end{theorem}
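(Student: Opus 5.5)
The plan is to reduce everything to the stress characterization of \cref{prop:spherical-spectrum-coincidence} and then apply \cref{thm:stationary-stack-lifting} together with \cref{prop:canonical-lovasz-transform}.

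\emph{Equatorial inclusion.} Take \(x\in(\Sph^{n-1})^N\), Clarke critical with \(0<\diam_N(x)<\pi\). By \cref{prop:spherical-spectrum-coincidence}, its distinct support \(P\) carries a nonzero nonnegative equilibrium stress. Under the equatorial embedding \(\R^n\hookrightarrow\R^{n+1}\), inner products are unchanged. Hence the diameter \(D\), the diameter graph, and the equations \eqref{eq:spherical-gordan-equilibrium} hold verbatim in \(\R^{n+1}\). The embedded tuple therefore has a stressed support in \(\Sph^n\), and the same proposition makes it Clarke critical with \(N\) labels. This proves \eqref{eq:equatorial-spectrum-nesting}.

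\emph{Stack transport.} Let \(\alpha\in\Sigma_N^{\specC}(\Sph^{n-1})\cap(0,\pi)\), realized by \(x\) with distinct support \(P\), where \(|P|\le N\) and \(P\) carries a stress. By \cref{lem:subcritical-held-separation}, or directly from the nonnegative-Gram argument there, a positive stressed diameter satisfies \(\alpha>\pi/2\); so \(\alpha\in(\pi/2,\pi)\) and the stack is defined. \Cref{thm:stationary-stack-lifting}(ii) gives \(\Lambda_j(\alpha)\in\Sigma_{j|P|+1}^{\specC}(\Sph^n)\). Since \(j|P|+1\le jN+1\), repetition nesting in \cref{thm:fixed-label-clarke-spectrum} moves this value into \(\Sigma_{jN+1}^{\specC}(\Sph^n)\). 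Strict monotonicity and convergence then follow from \eqref{eq:lovasz-transform-monotonicity} and the uniform estimate of \cref{prop:canonical-lovasz-transform}. All \(\Lambda_j(\alpha)\) lie in \((\pi/2,\alpha)\subset(0,\pi)\).

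\emph{Accumulation.} The values \(\Lambda_j(\alpha)\) are pairwise distinct, lie in \(\Sigma^{\specC}(\Sph^n)\cap(0,\pi)\), and converge to \(\alpha\). Hence \(\alpha\) is an accumulation point of that set, and together with the equatorial inclusion this yields \eqref{eq:stationary-spectrum-stack-lift}.

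The only delicate step is ensuring \(\alpha>\pi/2\) before invoking the stack machinery, which requires the stress. I would take this from the first paragraph of the proof of \cref{lem:subcritical-held-separation}. That argument uses only the equilibrium equation at one vertex and the inequality \(u_z\cdot u_{z'}\ge c/(1+c)\ge0\) when \(c\ge0\), not the bound \(b<\zeta_{m-1}\).
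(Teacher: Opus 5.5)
Your proposal is correct and follows the paper's proof. You pass to the stressed distinct support, transfer it equatorially, lift it with \cref{thm:stationary-stack-lifting} plus repetition nesting, and take monotone convergence from \cref{prop:canonical-lovasz-transform}. The differences are minor: you get \(\alpha>\pi/2\) from the local Gram argument in the proof of \cref{lem:subcritical-held-separation}, which indeed does not use \(b\), rather than from the Jung-based bound \(\alpha\geq\zeta_{n-1}\) of \cref{thm:first-spherical-clarke-value}, and you transfer the stress through its linear Euclidean form rather than through the unit tangents of minimizing arcs.
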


\phantomsection
\label{proof:lovasz-stack-hierarchy}
\begin{proof}[\normalfont\bfseries Proof of \cref{thm:lovasz-stack-hierarchy-intro} and \cref{thm:lovasz-stack-spectrum-transport}]
Fix
\[
 \alpha\in\Sigma_N^{\specC}(\Sph^{n-1})\cap(0,\pi).
\]
Since \(0<\alpha<\pi\), the characterization by an
equilibrium stress on the distinct points in
\cref{prop:spherical-spectrum-coincidence} applies to a Clarke-critical
realization.  Merging its repeated labels gives a distinct support
\[
 P\subset\Sph^{n-1},
 \qquad
 |P|\leq N,
 \qquad
 \diam(P)=\alpha,
\]
whose diameter pairs carry a nonzero nonnegative equilibrium stress.

\par\smallskip\noindent\emph{Equatorial inclusion.}
The equatorial embedding \(p\mapsto(p,0)\) from
\(\Sph^{n-1}\) to \(\Sph^n\) preserves distances.  Since
\(\alpha<\pi\), every active minimizing arc is unique in both spheres and
its ambient unit tangent is the image of its equatorial unit tangent.
Thus the same stress remains in equilibrium in \(\Sph^n\).  Hence
\cref{cor:spherical-gordan-stress,prop:clarke-first-order} make \(P\)
Clarke critical in \(\Sph^n\).  Repetition nesting,
\cref{thm:fixed-label-clarke-spectrum}, then proves
\eqref{eq:equatorial-spectrum-nesting}.

\par\smallskip\noindent\emph{{Stack diameters.}}
The sharp positive minimum at \(N\) labels in
\cref{thm:first-spherical-clarke-value} gives
\begin{equation}
 \alpha\geq\zeta_{\min\{n-1,N-2\}}
 \geq\zeta_{n-1}
 =\arccos\left(-\frac{1}{n}\right)>\frac{\pi}{2},
 \label{eq:stack-transport-domain}
\end{equation}
In particular,
\(\alpha\in(\tfrac{\pi}{2},\pi)\), so the Lov\'asz-stack construction
applies.  By
\cref{thm:stationary-stack-lifting,prop:canonical-lovasz-transform},
\(\Stack_j(P)\) has diameter \(\Lambda_j(\alpha)\), has
\(j|P|+1\) points, and these diameters increase strictly to \(\alpha\).
Since \(j|P|+1\leq jN+1\), repetition nesting gives
\eqref{eq:fixed-label-stack-transport}.

Taking the union over \(N\), and combining equatorial nesting with the
strictly increasing stack sequence, proves
\eqref{eq:stationary-spectrum-stack-lift}.
\end{proof}

\begin{corollary}[Iterated accumulation and simplex values]
\label{cor:stack-derived-consequences}
Iterating Lov\'asz-stack construction across dimensions gives:
\begin{equation}
 \bigl[\Sigma^{\specC}(\Sph^{n-\mu})\cap(0,\pi)\bigr]^{(\nu)}
 \subseteq
 \bigl[\Sigma^{\specC}(\Sph^n)\cap(0,\pi)\bigr]^{(\nu+\mu)}
 \qquad\bigl(n\geq2,\ \mu\in\{1,\ldots,n-1\},\
 \nu\in\mathbb Z_{\geq0}\bigr).
 \label{eq:stationary-spectrum-iterated-lift}
\end{equation}
In particular, for every \(m\geq1\),
\begin{equation}
 \zeta_{m-\nu}\in\bigl[\Sigma^{\specC}(\Sph^m)\bigr]^{(\nu)}
 \qquad\bigl(\nu\in\{0,\ldots,m\}\bigr).
 \label{eq:derived-zeta-membership}
\end{equation}
\end{corollary}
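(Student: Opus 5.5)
The inclusion \eqref{eq:stationary-spectrum-iterated-lift} will follow by induction on \(\mu\), starting from a one-dimension statement. For this step write \(A:=\Sigma^{\specC}(\Sph^{n-1})\cap(0,\pi)\) and \(B:=\Sigma^{\specC}(\Sph^n)\cap(0,\pi)\). By \cref{thm:lovasz-stack-spectrum-transport}, \(A\subseteq B\cap\operatorname{Acc}(B)=B\cap B^{(1)}\). The goal is to upgrade this to \(A^{(\nu)}\subseteq B^{(\nu+1)}\) for every \(\nu\geq0\). The case \(\nu=0\) is exactly the theorem.

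For the induction on \(\nu\), suppose \(A^{(\nu)}\subseteq B^{(\nu+1)}\). Derived sets are monotone, since \(\operatorname{Acc}\) is monotone under inclusion. Applying \(\operatorname{Acc}\) therefore gives \(A^{(\nu+1)}=\operatorname{Acc}(A^{(\nu)})\subseteq\operatorname{Acc}(B^{(\nu+1)})=B^{(\nu+2)}\). This step uses only the inclusion \(A\subseteq B^{(1)}\), because \(A^{(0)}\subseteq B^{(1)}\) starts the chain. One point needs care: accumulation points are taken in \([0,\pi]\), so \(A^{(\nu)}\) may contain \(\pi\) or \(0\). Monotonicity of \(\operatorname{Acc}\) is insensitive to this, so no restriction to \((0,\pi)\) is needed at later orders.

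Chaining over dimensions, the one-step result applied to dimensions \(n-\mu+1,\ldots,n\) composes as follows. At each stage \(n'-1\geq1\), so \(n'\geq2\) and the theorem applies. The result is \([\Sigma^{\specC}(\Sph^{n-\mu})\cap(0,\pi)]^{(\nu)}\subseteq[\Sigma^{\specC}(\Sph^{n-\mu+1})\cap(0,\pi)]^{(\nu+1)}\subseteq\cdots\subseteq[\Sigma^{\specC}(\Sph^n)\cap(0,\pi)]^{(\nu+\mu)}\). This is \eqref{eq:stationary-spectrum-iterated-lift}. The constraint \(\mu\leq n-1\) keeps the lowest sphere of dimension at least \(1\).

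For \eqref{eq:derived-zeta-membership}, I split into the cases \(\nu<m\) and \(\nu=m\).

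\emph{Case \(\nu<m\).} Set \(j:=m-\nu\geq1\). By \cref{thm:first-spherical-clarke-value}, \(\zeta_j\in\Sigma^{\specC}(\Sph^j)\cap(0,\pi)\). If \(\nu=0\), this together with equatorial nesting \eqref{eq:equatorial-spectrum-nesting} gives \(\zeta_m\in\Sigma^{\specC}(\Sph^m)\), and the claim holds. If \(\nu\geq1\), apply the iterated inclusion with \(n=m\) and \(\mu=\nu\leq m-1\), at derived order \(0\). This puts \(\zeta_{m-\nu}\) in \([\Sigma^{\specC}(\Sph^m)\cap(0,\pi)]^{(\nu)}\subseteq[\Sigma^{\specC}(\Sph^m)]^{(\nu)}\), again by monotonicity.

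\emph{Case \(\nu=m\).} Here the target is \(\zeta_0=\pi\), and this is the main obstacle, because \(\pi\) lies outside \((0,\pi)\). Instead I would start from the circle. By \cref{cor:fixed-label-circle-spectra}, the values \(\delta_k\nearrow\pi\) lie in \(\Sigma^{\specC}(\Sph^1)\cap(0,\pi)\), so \(\pi\in[\Sigma^{\specC}(\Sph^1)\cap(0,\pi)]^{(1)}\). For \(m=1\) this is already the claim. For \(m\geq2\), apply \eqref{eq:stationary-spectrum-iterated-lift} with \(n=m\), \(\mu=m-1\) and \(\nu=1\). This gives \(\pi\in[\Sigma^{\specC}(\Sph^m)\cap(0,\pi)]^{(m)}\), which is what is needed.
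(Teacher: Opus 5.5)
Your proposal is correct and follows essentially the same route as the paper. You apply monotonicity of $\operatorname{Acc}$ to the one-dimension inclusion \eqref{eq:stationary-spectrum-stack-lift} and chain it across $\mu$ dimensions; you then get $\zeta_{m-\nu}$ from the regular-simplex value for $\nu<m$ and from the circle values $\delta_k\nearrow\pi$ for $\nu=m$. (In the case $\nu=0$ the appeal to equatorial nesting is unnecessary: there $j=m$, so \cref{thm:first-spherical-clarke-value} already gives $\zeta_m\in\Sigma^{\specC}(\Sph^m)$.)
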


\begin{proof}
The accumulation operator preserves inclusion, directly from its
sequence definition.  Apply it successively to
\eqref{eq:stationary-spectrum-stack-lift} to obtain
\[
 \bigl[\Sigma^{\specC}(\Sph^{n-1})\cap(0,\pi)\bigr]^{(\nu)}
 \subseteq
 \bigl[\Sigma^{\specC}(\Sph^n)\cap(0,\pi)\bigr]^{(\nu+1)}
 \qquad\bigl(\nu\in\mathbb Z_{\geq0}\bigr),
\]
and iteration across \(\mu\) dimensions proves
\eqref{eq:stationary-spectrum-iterated-lift}.

For \(\nu=0\), the regular-simplex value \(\zeta_m\) belongs to
\(\Sigma^{\specC}(\Sph^m)\cap(0,\pi)\) by
\cref{thm:first-spherical-clarke-value}.  For \(1\leq\nu<m\), the same theorem places
\(\zeta_{m-\nu}\) in the positive nonantipodal spectrum of
\(\Sph^{m-\nu}\).  Applying
\eqref{eq:stationary-spectrum-iterated-lift} with \(n=m\), dimension shift \(\nu\),
and initial derivative order zero
gives \eqref{eq:derived-zeta-membership}.  For \(\nu=m\),
\cref{cor:fixed-label-circle-spectra} gives values
\(\delta_k\in\Sigma^{\specC}(\Sph^1)\cap(0,\pi)\) converging to
\(\pi=\zeta_0\).  Thus
\[
 \pi\in\bigl[\Sigma^{\specC}(\Sph^1)\cap(0,\pi)\bigr]^{(1)},
\]
and
\eqref{eq:stationary-spectrum-iterated-lift}, with \(n=m\),
dimension shift \(m-1\), and initial derivative order one, gives
\(\pi\in[\Sigma^{\specC}(\Sph^m)\cap(0,\pi)]^{(m)}\) when \(m\geq2\).
The case \(m=1\) is the initial endpoint statement.
\end{proof}

Unlike equatorial inclusion alone, stacking makes each fixed circle
value \(\delta_k\) an accumulation point, not merely another stationary
value.  We next compute the complete hierarchy within the constructed
family.

\subsection{Iterated stacks and their derived sets}
\label{subsec:explicit-stack-trees}

Starting with the regular odd polygons, we allow every layer number and
repeat the construction on each resulting base.  For this explicit
family, all iterated accumulation sets can be determined exactly.
These are the finite-order Cantor--Bendixson derivatives.
Recall that \(E^{(0)}=E\) and
\(E^{(\nu+1)}=\operatorname{Acc}(E^{(\nu)})\) for
\(\nu\in\mathbb Z_{\geq0}\), with accumulation taken in
\([0,\pi]\).  The resulting formulas also give inclusions for the
full spherical spectrum.

\begin{example}[Odd-polygon stacks]
\label{cor:odd-polygon-stack-accumulation}
For \(j,k\geq1\), the stack \(\Stack_j(R_{2k+1})\) has
\((2k+1)j+1\) points and diameter \(\Lambda_j(\delta_k)\).
Equal weights on the polygon's diameter cycle lift to a stress positive
on every diameter edge of the stack.  In particular,
\[
 \Lambda_j(\delta_k)\in\Sigma_{(2k+1)j+1}^{\specC}(\Sph^2),\qquad
 \Lambda_1(\delta_k)<\Lambda_2(\delta_k)<\cdots\nearrow\delta_k.
\]
These are the conclusions of
\cref{thm:stationary-stack-lifting,prop:canonical-lovasz-transform}.
\end{example}

The diameters obtained by iterating the construction form the sets
\[
 \mathcal T_0:=\{\pi\},
 \qquad
 \mathcal T_1:=\{\delta_k:k\geq1\},
 \qquad
 \mathcal T_{n+1}
 :=\{\Lambda_j(\alpha):\alpha\in\mathcal T_n,\ j\geq1\}
 \quad(n\geq1).
\]
The recursion has two distinct limiting mechanisms.  When the layer number
tends to infinity, the elementary estimate
\[
 0<\alpha-\Lambda_j(\alpha)<\frac{\pi}{2j+1}
\]
is uniform over the nonantipodal base diameter
\(\alpha\in(\tfrac{\pi}{2},\pi)\).  When the layer number is fixed and the
sequence of base diameters converges, convergence instead follows from continuity of
the extended map \(\Lambda_j\), including the endpoint identity
\(\Lambda_j(\pi)=\delta_j\).  These two mechanisms are summarized in
\cref{fig:lovasz-stack-tree}.

\begin{figure}[htbp!]
\centering
\begin{tikzpicture}[
  leveltag/.style={anchor=east,font=\small\bfseries},
  levelbox/.style={draw,rounded corners=2pt,
                   inner xsep=7pt,inner ysep=5pt,font=\small},
  convergence/.style={densely dotted,-{Latex[length=2mm]},semithick},
  limitlabel/.style={fill=white,inner sep=1.5pt,xshift=5pt,
                     font=\scriptsize,align=left},
  mechanism/.style={font=\scriptsize,align=center,anchor=north}
]
\node[leveltag] at (-6.4,0) {$\mathcal T_0$};
\node[levelbox] (T0) at (0,0) {$\{\pi\}$};

\node[leveltag] at (-6.4,-1.85) {$\mathcal T_1$};
\node[levelbox] (T1) at (0,-1.85)
 {$\{\delta_{i_1}=\Lambda_{i_1}(\pi):i_1\geq1\}$};
\draw[convergence] (T1.north)--
 node[limitlabel,right] {$i_1\to\infty$\\
                         $\delta_{i_1}\nearrow\pi$} (T0.south);

\node[leveltag] at (-6.4,-3.7) {$\mathcal T_2$};
\node[levelbox] (T2) at (0,-3.7)
 {$\{\Lambda_{i_2}(\delta_{i_1}):i_1,i_2\geq1\}$};
\draw[convergence] (T2.north)--
 node[limitlabel,right] {$i_2\to\infty$ (with $i_1$ fixed)\\
 $\Lambda_{i_2}(\delta_{i_1})\nearrow\delta_{i_1}$} (T1.south);

\node at (0,-4.85) {$\vdots$};

\node[leveltag] at (-6.4,-5.85) {$\mathcal T_{n-1}$};
\node[levelbox] (Tnm) at (0,-5.85)
 {$\{(\Lambda_{i_{n-1}}\circ\cdots\circ\Lambda_{i_2})
       (\delta_{i_1}):i_1,\ldots,i_{n-1}\geq1\}$};

\node[leveltag] at (-6.4,-7.7) {$\mathcal T_n$};
\node[levelbox] (Tn) at (0,-7.7)
 {$\{(\Lambda_{i_n}\circ\cdots\circ\Lambda_{i_2})
       (\delta_{i_1}):i_1,\ldots,i_n\geq1\}$};
\draw[convergence] (Tn.north)--
 node[limitlabel,right] {$i_n\to\infty$ (with the base diameter fixed)\\
 $\Lambda_{i_n}(\alpha)\nearrow\alpha$} (Tnm.south);
\node[mechanism] at (0,-8.55)
 {\(\begin{array}{@{}rcl@{}}
 j_\ell\to\infty,\ \alpha_\ell\to a
   &\Longrightarrow&
   \Lambda_{j_\ell}(\alpha_\ell)\to a,\\
 j\ \text{fixed},\ \alpha_\ell\to a
   &\Longrightarrow&
   \Lambda_j(\alpha_\ell)\to\Lambda_j(a).
 \end{array}\)};
\end{tikzpicture}
\caption{Diameters obtained by iterating the Lov\'asz-stack construction.
For each \(\alpha\in\mathcal T_{m-1}\), the values
\(\Lambda_j(\alpha)\in\mathcal T_m\) increase to \(\alpha\).
The dotted arrows indicate these limits.  The two implications below the
diagram distinguish a growing layer number from a fixed layer number.}
\label{fig:lovasz-stack-tree}
\end{figure}

\begin{theorem}[Derived sets of iterated stack diameters]
\label{thm:explicit-lovasz-stack-tree}
For every \(n\geq1\), the set \(\mathcal T_n\) is a countably infinite
explicit subset of \(\Sigma^{\specC}(\Sph^n)\), and
\begin{equation}
 \operatorname{Acc}(\mathcal T_n)=\overline{\mathcal T_{n-1}},
 \label{eq:explicit-stack-tree-first-derived}
\end{equation}
where closure is taken in \([0,\pi]\).  Its higher derivatives satisfy
\begin{equation}
 [\mathcal T_n]^{(\nu)}
 =\overline{\mathcal T_{n-\nu}}
 \quad\bigl(\nu\in\{1,\ldots,n\}\bigr),
 \qquad
 [\mathcal T_n]^{(n+1)}=\varnothing,
 \label{eq:explicit-stack-tree-derived-sets}
\end{equation}
and their least values are
\begin{equation}
 \min\bigl([\mathcal T_n]^{(\nu)}\bigr)=\zeta_{n-\nu}
 \qquad\bigl(\nu\in\{0,\ldots,n\}\bigr).
 \label{eq:explicit-stack-tree-derived-minima}
\end{equation}
\end{theorem}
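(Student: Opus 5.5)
The plan is an induction on \(n\), built on two facts from \cref{prop:canonical-lovasz-transform}. The first is the uniform estimate \(0<\alpha-\Lambda_j(\alpha)<\pi/(2j+1)\). The second is continuity of the extended maps \(\Lambda_j\) on \((\pi/2,\pi]\), with \(\Lambda_j(\pi)=\delta_j\).

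\emph{Membership and countability.} First, \(\mathcal T_n\subseteq\Sigma^{\specC}(\Sph^n)\cap(\pi/2,\pi)\) for \(n\geq1\). For \(n=1\) this is \cref{cor:fixed-label-circle-spectra}. For larger \(n\) it follows by induction from \cref{thm:lovasz-stack-spectrum-transport}, applied to the odd-polygon stacks of \cref{cor:odd-polygon-stack-accumulation}. Countability is immediate from the indexing. Infinitude holds because each fibre \(\{\Lambda_j(\alpha)\}_j\) is strictly increasing. I will also record that \(\mathcal T_n\subseteq(\pi/2,\pi)\) and \(\mathcal T_n\cap\mathcal T_{n-1}\) need not be empty; the argument should not rely on disjointness.

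\emph{First derived set.} For \(\operatorname{Acc}(\mathcal T_n)\supseteq\overline{\mathcal T_{n-1}}\), note that each \(\alpha\in\mathcal T_{n-1}\) is the limit of the strictly increasing values \(\Lambda_j(\alpha)\in\mathcal T_n\). The set \(\operatorname{Acc}\) is closed, so it contains the closure. This includes the case \(n=1\), where \(\delta_j\nearrow\pi\).

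For the reverse inclusion, take distinct \(\Lambda_{j_\ell}(\alpha_\ell)\to a\) with \(\alpha_\ell\in\mathcal T_{n-1}\), and pass to a subsequence with \(\alpha_\ell\to b\in\overline{\mathcal T_{n-1}}\subseteq[\pi/2,\pi]\). There are two cases. If \(j_\ell\to\infty\), the uniform estimate gives \(a=b\), so \(a\in\overline{\mathcal T_{n-1}}\). If \(j_\ell=j\) is eventually constant, then the \(\alpha_\ell\) must take infinitely many values. Hence \(b\in\operatorname{Acc}(\mathcal T_{n-1})=\overline{\mathcal T_{n-2}}\) by induction, and continuity gives \(a=\Lambda_j(b)\). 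I then need \(\Lambda_j(\overline{\mathcal T_{n-2}})\subseteq\overline{\mathcal T_{n-1}}\). This follows because \(\Lambda_j(\mathcal T_{n-2})\subseteq\mathcal T_{n-1}\) for \(n\geq3\), together with continuity. For \(n=2\) it reduces to \(\Lambda_j(\pi)=\delta_j\in\mathcal T_1\).

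\emph{The main obstacle.} One must make sure \(b\) lies in the domain where continuity is available, namely \(b>\pi/2\). I will get this from \(\mathcal T_{n-1}\subseteq[\zeta_{n-1},\pi]\). That inclusion comes from \cref{thm:first-spherical-clarke-value}, since \(\mathcal T_{n-1}\) consists of positive Clarke values on \(\Sph^{n-1}\), and \(\zeta_{n-1}>\pi/2\).

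\emph{Higher derived sets.} Apply \(\operatorname{Acc}\) repeatedly. The identity \(\operatorname{Acc}(\overline E)=\operatorname{Acc}(E)\) gives \([\mathcal T_n]^{(\nu)}=\overline{\mathcal T_{n-\nu}}\), ending with \(\overline{\mathcal T_0}=\{\pi\}\) and then \(\varnothing\).

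\emph{Minima.} It remains to show \(\min\overline{\mathcal T_r}=\zeta_r\) for \(r\geq0\). For the lower bound, the inclusion above gives \(\overline{\mathcal T_r}\subseteq[\zeta_r,\pi]\). For attainment, iterate the one-layer values of \cref{thm:stationary-iterated-one-layer-stacks}: \(\theta_{r,k}\in\mathcal T_r\) and \(\theta_{r,k}\to\zeta_{r-1}\). That only gives \(\zeta_{r-1}\), so instead I will exhibit \(\zeta_r\) directly. Take \(\theta_{r,1}\), the triangle stacked \(r-1\) times. By \eqref{eq:iterated-one-layer-stack-diameter} with \(\cos(\pi/3)=1/2\), its cosine is \(-1/(r+1)\), so \(\theta_{r,1}=\zeta_r\in\mathcal T_r\). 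This identifies the minimum of each derived set as \(\zeta_{n-\nu}\).
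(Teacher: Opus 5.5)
Your proposal is correct and follows essentially the same route as the paper. Membership goes by induction through the stack-transport theorem. The reverse inclusion uses the two cases: a layer number tending to infinity is handled by the uniform estimate, and a fixed layer number by continuity of the extended \(\Lambda_j\), with \(\zeta_{n-1}>\pi/2\) keeping the limiting base in its domain. The higher derived sets follow from \(\operatorname{Acc}(\overline E)=\operatorname{Acc}(E)\), and \(\zeta_r\) is attained by iterating the one-layer transform from \(\delta_1=\zeta_1\). Your inclusion \(\Lambda_j(\overline{\mathcal T_{n-2}})\subseteq\overline{\mathcal T_{n-1}}\) is a compact restatement of the paper's approximating-sequence step. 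The case \(r=0\) of the minima, where \(\overline{\mathcal T_0}=\{\pi\}=\{\zeta_0\}\), is trivial.
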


The first identity also gives the closure formula
\begin{equation}
 \overline{\mathcal T_n}
 =\mathcal T_n\cup\mathcal T_{n-1}\cup\cdots\cup
  \mathcal T_1\cup\{\pi\},
 \label{eq:explicit-stack-tree-closure}
\end{equation}
Thus the compact countable set \(\overline{\mathcal T_n}\)
has Cantor--Bendixson rank \(n+1\), where the rank denotes the least
derivative order at which the set becomes empty.  Indeed,
\[
 [\overline{\mathcal T_n}]^{(n)}=\{\pi\},
 \qquad
 [\overline{\mathcal T_n}]^{(n+1)}=\varnothing.
\]

\begin{proof}
By \cref{cor:fixed-label-circle-spectra}, the circle values
\(\delta_k\) lie in \(\Sigma^{\specC}(\Sph^1)\cap(0,\pi)\).
Induction using
\eqref{eq:fixed-label-stack-transport} gives
\(\mathcal T_n\subseteq\Sigma^{\specC}(\Sph^n)\cap(0,\pi)\).  For a nonantipodal base value,
\eqref{eq:lovasz-transform-monotonicity} and
\eqref{eq:general-stack-limit} give a strictly increasing sequence of
stack values converging to that base value.  At \(n=1\),
\eqref{eq:lovasz-transform-antipodal-extension} gives the sequence
\(\Lambda_j(\pi)=\delta_j\) converging to \(\pi\).  Thus each
element of \(\mathcal T_{n-1}\) is the limit of its strictly increasing
\(\Lambda_j\)-sequence in \(\mathcal T_n\).  Since a derived set is
closed, this proves
\[
 \overline{\mathcal T_{n-1}}
 \subseteq\operatorname{Acc}(\mathcal T_n).
\]

For the reverse inclusion, the case \(n=1\) is
\(\operatorname{Acc}(\mathcal T_1)=\{\pi\}=\mathcal T_0\).
Now let \(n\geq2\) and assume the identity at level \(n-1\).  Let
\[
 r_\ell:=\Lambda_{j_\ell}(\alpha_\ell)\in\mathcal T_n
\]
be a sequence of distinct values converging to \(r\).  After passing to a
subsequence, either \(j_\ell\to\infty\) or \(j_\ell=j\) is constant.

\smallskip
\noindent\emph{The layer number tends to infinity.}
By \eqref{eq:general-stack-limit},
\[
 0<\alpha_\ell-r_\ell
 <\frac{\alpha_\ell}{2j_\ell+1}
 <\frac{\pi}{2j_\ell+1}.
\]
Thus \(\alpha_\ell-r_\ell\to0\), uniformly in the varying base diameter
\(\alpha_\ell\).  Since \(r_\ell\to r\), it follows that
\(\alpha_\ell\to r\).  Because
\(\alpha_\ell\in\mathcal T_{n-1}\), one has
\(r\in\overline{\mathcal T_{n-1}}\).

\smallskip
\noindent\emph{The layer number is fixed.}
Suppose \(j_\ell=j\).  Since the values \(r_\ell\) are distinct, the base diameters
\(\alpha_\ell\) are distinct.  Compactness of \([0,\pi]\) gives, after
passing to a further subsequence,
\[
 \alpha_\ell\longrightarrow\alpha.
\]
Because the base diameters are distinct,
\(\alpha\in\operatorname{Acc}(\mathcal T_{n-1})
=\overline{\mathcal T_{n-2}}\).
The positive spherical gap in
\cref{thm:first-spherical-clarke-value} gives
\(\alpha\geq\zeta_{n-1}>\pi/2\), so the limiting base diameter
remains in the domain of the extended transform.  Its continuity from
\cref{prop:canonical-lovasz-transform}, including
\(\Lambda_j(\pi)=\delta_j\) when \(\alpha=\pi\), gives
\[
 r=\Lambda_j(\alpha).
\]
Choose \(\widetilde\alpha_\mu\in\mathcal T_{n-2}\) with
\(\widetilde\alpha_\mu\to\alpha\).  A second application of continuity gives
\[
 \Lambda_j(\widetilde\alpha_\mu)\longrightarrow\Lambda_j(\alpha)=r.
\]
The recursion puts
\(\Lambda_j(\widetilde\alpha_\mu)\in\mathcal T_{n-1}\) when
\(n\geq3\).  When \(n=2\), the approximating sequence is the constant
value \(\pi\in\mathcal T_0\), and
\eqref{eq:lovasz-transform-antipodal-extension} instead puts its image
\(\delta_j\) in \(\mathcal T_1\).  In both cases,
\(r\in\overline{\mathcal T_{n-1}}\), proving
\eqref{eq:explicit-stack-tree-first-derived}.

For every subset \(E\) of a metric space,
\(\operatorname{Acc}(\overline E)=\operatorname{Acc}(E)\): a nearby point
of \(\overline E\) can be approximated by a point of \(E\) distinct from
the basepoint.  The identity
\(\overline E=E\cup\operatorname{Acc}(E)\), together with
\eqref{eq:explicit-stack-tree-first-derived}, gives
\eqref{eq:explicit-stack-tree-closure}.  Apply the accumulation operator repeatedly to
\eqref{eq:explicit-stack-tree-first-derived}, using
\(\operatorname{Acc}(\overline E)=\operatorname{Acc}(E)\) at each
step.  This gives \eqref{eq:explicit-stack-tree-derived-sets}; the
last derivative is empty because \(\mathcal T_0=\{\pi\}\).  By the sharp positive spherical minimum
\cref{thm:first-spherical-clarke-value}, every element of
\(\mathcal T_n\subseteq\Sigma^{\specC}(\Sph^n)\cap(0,\pi)\) is at least \(\zeta_n\). Equality is attained: start with
\(\delta_1=\zeta_1\) and apply the one-layer transform repeatedly, since,
for every \(j\geq1\),
\[
 \frac{\cos\zeta_j}{1-\cos\zeta_j}
 =\frac{-1/(j+1)}{1+1/(j+1)}
 =-\frac{1}{j+2}
 =\cos\zeta_{j+1}.
\]

Thus \(\zeta_n\in\mathcal T_n\) and \(\min\mathcal T_n=\zeta_n\).
Closure does not change this minimum, so
\eqref{eq:explicit-stack-tree-derived-minima} follows from
\eqref{eq:explicit-stack-tree-derived-sets}.

Countability follows from the recursion.  Every level is infinite because
\(j\mapsto\Lambda_j(\alpha)\) is strictly increasing for each fixed
base diameter \(\alpha\).
\end{proof}

\smallskip\noindent\emph{Consequences for the full spectrum.}
Derived-set monotonicity gives
\begin{equation}
 \mathcal T_{n-\nu}
 \subseteq[\Sigma^{\specC}(\Sph^n)]^{(\nu)}
 \quad\bigl(\nu\in\{0,\ldots,n-1\}\bigr),
 \qquad
 \pi\in[\Sigma^{\specC}(\Sph^n)]^{(n)}.
 \label{eq:explicit-stack-tree-derived-depth}
\end{equation}
For every \(\nu<n\), the \(\nu\)-th derivative of the full spectrum
therefore contains a countably infinite explicit subset, and its
\(n\)-th derivative is nonempty.  The higher derived sets of the
full spectrum are not determined by these inclusions.

Because the round metric is analytic,
\cref{cor:analytic-definable-fixed-label-finiteness} makes every
\(\Sigma_N^{\specC}(\Sph^n)\) finite.  Hence \(\Sigma^{\specC}(\Sph^n)\) and its
positive nonantipodal part \(\Sigma^{\specC}(\Sph^n)\cap(0,\pi)\) are countable;
\cref{thm:explicit-lovasz-stack-tree} shows that both are infinite.
More generally, in any nested family of finite fixed-label spectra,
distinct values require least realizing label numbers tending to infinity.
In particular, if \(c_j\in\Sigma^{\specC}(\Sph^n)\) are distinct, then
\begin{equation}
 \min\{N\geq2:c_j\in\Sigma_N^{\specC}(\Sph^n)\}\longrightarrow\infty.
 \label{eq:spherical-accumulation-label-escape}
\end{equation}
Indeed, if these minimum label numbers were bounded along an infinite
subsequence, repetition nesting would place that subsequence in one fixed
finite spectrum \(\Sigma_{N_0}^{\specC}(\Sph^n)\), contradicting distinctness.
Every accumulation sequence is a special case.

\Cref{tab:lovasz-stack-derived-depth} illustrates these inclusions in the
first three dimensions.  Only the circle row describes the full derived
spectrum exactly.

\begin{table}[H]
\centering
\small
\renewcommand{\arraystretch}{1.35}
\begin{tabular}{
  c
  >{\raggedright\arraybackslash}p{0.31\linewidth}
  >{\raggedright\arraybackslash}p{0.50\linewidth}}
\toprule
Sphere & Explicit values in the spectrum & Values forced in successive derived sets\\
\midrule
\(\Sph^1\)
&
\(\delta_k\in\mathcal T_1\subseteq\Sigma^{\specC}(\Sph^1)\)
&
\([\Sigma^{\specC}(\Sph^1)]^{(1)}=\{\pi\}\)
\\
\(\Sph^2\)
&
\(\Lambda_j(\delta_k)\in\mathcal T_2
  \subseteq\Sigma^{\specC}(\Sph^2)\)
&
\(\{\delta_k:k\geq1\}\subseteq[\Sigma^{\specC}(\Sph^2)]^{(1)}\), and
\(\pi\in[\Sigma^{\specC}(\Sph^2)]^{(2)}\)
\\
\(\Sph^3\)
&
\(\Lambda_\ell(\Lambda_j(\delta_k))\in\mathcal T_3
  \subseteq\Sigma^{\specC}(\Sph^3)\)
&
\(\mathcal T_2\subseteq[\Sigma^{\specC}(\Sph^3)]^{(1)}\),
\(\mathcal T_1\subseteq[\Sigma^{\specC}(\Sph^3)]^{(2)}\), and
\(\pi\in[\Sigma^{\specC}(\Sph^3)]^{(3)}\)
\\
\bottomrule
\end{tabular}
\caption{The first three levels of the Lov\'asz-stack accumulation
hierarchy.  The general inclusions are given by
\eqref{eq:explicit-stack-tree-derived-depth}.
The equality in the circle row follows from
\cref{cor:fixed-label-circle-spectra}; in dimensions two
and three the table records explicit inclusions, not complete descriptions
of the derived sets of the full spherical spectra.}
\label{tab:lovasz-stack-derived-depth}
\end{table}

\begin{question}[Least points of the derived spectra]
\label{ques:derived-zeta-hierarchy}
For \(m\geq1\), we proved
\[
 \min\bigl(\Sigma^{\specC}(\Sph^m)\setminus\{0\}\bigr)=\zeta_m,\qquad
 \min\Bigl(\bigl[\Sigma^{\specC}(\Sph^m)\bigr]^{(1)}\Bigr)=\zeta_{m-1},
\]
as well as the membership
\(\zeta_{m-\nu}\in\bigl[\Sigma^{\specC}(\Sph^m)\bigr]^{(\nu)}\) for every
\(\nu\in\{0,\ldots,m\}\).
For \(\nu\in\{2,\ldots,m\}\), is
\[
 \min\Bigl(\bigl[\Sigma^{\specC}(\Sph^m)\bigr]^{(\nu)}\Bigr)=\zeta_{m-\nu}
\]
valid?
\end{question}

\begin{remark}[Accumulation and spectra in lower dimensions]
\label{rem:reverse-dimensional-boundary}
For \(m\geq2\), the stronger inclusion
\begin{equation}
 \operatorname{Acc}\bigl(\Sigma^{\specC}(\Sph^m)\bigr)\cap(0,\pi)
 \subseteq\Sigma^{\specC}(\Sph^{m-1})\cap(0,\pi)
 \label{eq:conjectural-spherical-accumulation-descent}
\end{equation}
would settle the preceding question.  Iteration, the sharp positive
spherical minimum, and the exact circle spectrum would supply the lower
bounds; \cref{cor:stack-derived-consequences} already supplies attainment.
Together with \cref{thm:lovasz-stack-spectrum-transport}, this would identify the positive
nonantipodal first derived spectrum with the spectrum one dimension below.

What remains is to construct a finite equilibrium stress in the
lower-dimensional sphere.  In a sequence of equilibrium stresses, a
collision between vertices of positive stressed degree can place a
limiting vertex and the neighbors in one of its positive
linear dependence relations among tangent directions in a great
hypersphere.  This does not
propagate that hypersphere through the stress support or establish
balance at the neighboring vertices.  A finite lower-dimensional
equilibrium stress therefore does not yet follow.  The exact calculation
for the iterated stacks avoids this unresolved step.
\end{remark}

\begin{question}[Vietoris--Rips detection of Lov\'asz-stack values]
\label{ques:lovasz-stack-rips-detection}
Which of the transported values
\(\Lambda_j(\alpha)\), with
\(\alpha\in\Sigma^{\specC}(\Sph^{n-1})\cap(0,\pi)\), mark changes in the
strict Vietoris--Rips homotopy type of \(\Sph^n\)?  At the regular-simplex
scale \(\zeta_n\), the canonical-map obstruction is already known, as
recalled after \cref{cor:optimal-spherical-initial-chamber}.
On \(\Sph^2\), the value \(\Lambda_1(\delta_1)=\zeta_2\) is also a known homotopy
transition \cite[Corollary~7.5]{LimMemoliOkutan2024}.
The first remaining test family is
\(\Lambda_j(\delta_k)\) with \((j,k)\neq(1,1)\): which of these
values, or of their limiting scales \(\delta_k\), are transition values?
\end{question}

\section{Topological consequences and limits of diameter criticality}
\label{sec:spectral-mechanisms}

For compact metric spaces,
\cref{thm:metric-stationary-chamber-intro} turns gaps in the stationary
diameter spectrum into homotopy equivalences of canonical Vietoris--Rips
maps.  If \(Y\) is a one-Lipschitz retract of \(X\), weak-slope
stationary configurations in \(Y\) remain stationary in \(X\).
If a canonical Vietoris--Rips inclusion for \(Y\) fails to be a homotopy
equivalence, the corresponding inclusion for \(X\) also fails
(\cref{prop:weak-slope-nonexpansive-retract}).
A smooth distance-preserving embedding transfers stresses when the
ambient diameter pairs avoid the cut locus
(\cref{prop:stress-transfer-metric-isometric-embedding}).

The snowflaked interval in
\cref{ex:snowflake-stationary-without-transition} shows why stationarity
alone is insufficient to detect a topological transition: every attainable
diameter is stationary, although every canonical inclusion between
positive scales is a homotopy equivalence.

\subsection{Retractions and stress transfer}
A configuration that is stationary for diameter in a subspace need not remain stationary in the ambient space, where deformations may leave the subspace. We give two criteria ensuring that stationarity persists: one uses a one-Lipschitz retraction, while the other transfers equilibrium stresses through a smooth distance-preserving embedding under an off-cut-locus hypothesis.
\subsubsection{One-Lipschitz retractions} A one-Lipschitz retraction turns every local diameter descent
in the ambient space into a descent on the subspace, as in
\cref{lem:weak-slope-retraction}.  Stationarity in the subspace therefore
forces stationarity in the ambient space.  The inclusion and retraction
also induce maps of Vietoris--Rips complexes at each scale;
\cref{prop:weak-slope-nonexpansive-retract} uses these maps to transfer
failures of the canonical scale inclusions to be homotopy equivalences.

\begin{proposition}[Stationarity and Vietoris--Rips maps under a one-Lipschitz retraction]
\label{prop:weak-slope-nonexpansive-retract}
Let \((X,d_X)\) be a metric space, let \(Y\subseteq X\) carry the
restricted metric, and suppose that there is a one-Lipschitz retraction
\(\omega:X\to Y\).  Write \(\diam_N^X\) and \(\diam_N^Y\) for the diameter
functions on \(X^N\) and \(Y^N\), respectively.

\emph{Stationarity.} For every
\(N\geq2\) and \(y\in Y^N\),
\[
 \bigl|\diff\diam_N^Y\bigr|(y)
 \geq
 \bigl|\diff\diam_N^X\bigr|(y),
\]
where on the right \(y\) is regarded as a tuple in \(X^N\).  Consequently,
\[
 \Sigma_N^{\specWS}(Y)\subseteq\Sigma_N^{\specWS}(X),
 \qquad
 \Sigma^{\specWS}(Y)\subseteq\Sigma^{\specWS}(X).
\]

\emph{Canonical maps.} For every \(0<a<b\),
\[
 \begin{gathered}
  \VR{X}{a}\longrightarrow\VR{X}{b}
  \text{ is a homotopy equivalence}\\
  \Longrightarrow\\
  \VR{Y}{a}\longrightarrow\VR{Y}{b}
  \text{ is a homotopy equivalence}.
 \end{gathered}
\]

\end{proposition}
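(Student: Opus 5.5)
The stationarity assertion will follow from \cref{lem:weak-slope-retraction} applied to the product spaces. Let \(i:Y^N\to X^N\) be the coordinatewise inclusion and let \(p:=\omega^{\times N}:X^N\to Y^N\) be the coordinatewise retraction. For the maximum product metrics, \(i\) is an isometric embedding. Because \(\omega\) is one-Lipschitz in each coordinate, so is \(p\), and \(pi=\operatorname{id}\). The restricted metric gives \(\diam_N^X\circ i=\diam_N^Y\). The one-Lipschitz bound \(d_Y(\omega x_j,\omega x_k)\leq d_X(x_j,x_k)\) gives \(\diam_N^Y\circ p\leq\diam_N^X\). The lemma then yields the displayed weak-slope inequality. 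If the left side vanishes, the right side vanishes as well, and \(i\) preserves diameter. This proves the inclusion of spectra at each fixed \(N\), and taking unions over \(N\geq2\) gives the all-label inclusion.

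For the canonical maps, I would build a retraction diagram of Vietoris--Rips complexes. At each scale \(t\), the inclusion induces a simplicial map \(I_t:\VR{Y}{t}\to\VR{X}{t}\). The map \(\omega\) induces a simplicial map \(\Omega_t:\VR{X}{t}\to\VR{Y}{t}\): the image of a simplex of diameter less than \(t\) has diameter less than \(t\), and the strict convention is preserved by the one-Lipschitz bound. Both families commute strictly with the canonical scale inclusions, since all maps are determined on vertices. In addition, \(\Omega_t I_t=\operatorname{id}\) on the nose. Thus \(i^Y_{a,b}:\VR{Y}{a}\to\VR{Y}{b}\) is a retract of \(i^X_{a,b}\) in the arrow category of spaces, with strictly commuting squares.

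The conclusion then follows from the standard fact that a retract of a weak homotopy equivalence is a weak homotopy equivalence. On each homotopy group \(\pi_j\), at every basepoint \(y\in Y\), the map induced by \(i^Y_{a,b}\) is a retract of the group homomorphism induced by \(i^X_{a,b}\) at the basepoint \(I_a(y)\). Retracts of isomorphisms are isomorphisms. For injectivity, suppose \(i^Y_*(u)=0\). Then \(i^X_*(I_{a*}u)=I_{b*}i^Y_*u=0\), so \(I_{a*}u=0\), and hence \(u=\Omega_{a*}I_{a*}u=0\). For surjectivity, given \(v\), write \(I_{b*}v=i^X_*w\) and conclude that \(v=\Omega_{b*}I_{b*}v=\Omega_{b*}i^X_*w=i^Y_*\Omega_{a*}w\). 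The same argument gives a bijection on path components. Since geometric realizations of simplicial complexes are CW complexes, Whitehead's theorem \cite[Theorem~4.5]{Hatcher2002} upgrades the weak equivalence to a homotopy equivalence.

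The only step needing care is bookkeeping rather than difficulty. One must check that the basepoint-changing isomorphisms along paths are compatible, which holds because every map involved is a strictly commuting map of spaces. One must also confirm that \(\Omega_t\) is well defined when \(\omega\) identifies vertices; this is fine, because simplicial maps may collapse simplices.
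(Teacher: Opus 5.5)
Your proof is correct. The stationarity part coincides with the paper's: coordinatewise inclusion and retraction on the maximum product metrics, followed by \cref{lem:weak-slope-retraction}. For the canonical maps you build the same strictly commuting retraction diagram; your \(\Omega_t\) is the paper's \(P_t\).

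You diverge in how you use this diagram. The paper stays at the level of homotopy classes. If \(h\) is a homotopy inverse of \(F:\VR{X}{a}\to\VR{X}{b}\), then \(\Omega_a h I_b\) is a homotopy inverse of the \(Y\)-scale map \(G\). Indeed, \(\Omega_a h I_b G=\Omega_a h F I_a\simeq\Omega_a I_a=\operatorname{id}\) and \(G\Omega_a h I_b=\Omega_b F h I_b\simeq\Omega_b I_b=\operatorname{id}\). That argument is shorter and purely formal: it needs no basepoint bookkeeping, no CW structure, and no Whitehead theorem.

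Your detour through homotopy groups is unnecessary for the stated equivalence, but it proves more. Your injectivity and surjectivity arguments work degree by degree, and they apply verbatim to any functor, such as \(H_j\). So injectivity (resp.\ surjectivity) of the ambient map on a single \(\pi_j\) or \(H_j\) already passes to the subspace map. In particular, \(n\)-connectedness of the ambient canonical inclusion transfers to the subspace. This gives degree-specific obstructions that the homotopy-inverse argument does not give directly. For example, in \cref{cor:circle-retract-critical-values} it shows that the ambient map from scale \(r_k\) to any larger scale is not injective on \(\pi_{2k-1}\) at basepoints on the circle.
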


\begin{proof}
Let \(\iota_N:Y^N\to X^N\) and \(\omega_N:X^N\to Y^N\) be the coordinatewise
inclusion and retraction.  For the maximum product metrics, \(\iota_N\) is
isometric, \(\omega_N\) is one-Lipschitz, and
\(\omega_N\iota_N=\operatorname{id}_{Y^N}\).  Moreover,
\[
 \diam_N^Y(\omega_Nz)\leq\diam_N^X(z)
 \qquad(z\in X^N).
\]
Since also \(\diam_N^X\iota_N=\diam_N^Y\),
\cref{lem:weak-slope-retraction} gives the slope inequality.
A stationary tuple in \(Y^N\) is therefore stationary in \(X^N\);
preservation of diameter gives the spectral inclusions.

For the filtration assertion, let
\[
 I_t:\VR{Y}{t}\longrightarrow\VR{X}{t},
 \qquad
 P_t:\VR{X}{t}\longrightarrow\VR{Y}{t}
\]
be induced by the inclusion and the retraction, and let
\[
 F:\VR{X}{a}\longrightarrow\VR{X}{b},
 \qquad
 G:\VR{Y}{a}\longrightarrow\VR{Y}{b}
\]
be the canonical scale maps.  Naturality gives
\[
 FI_a=I_bG,
 \qquad
 P_bF=GP_a,
 \qquad
 P_tI_t=\operatorname{id}.
\]
If \(h:\VR{X}{b}\to\VR{X}{a}\) is a homotopy inverse of \(F\), then
\(P_ahI_b\) is a homotopy inverse of \(G\):
\[
 (P_ahI_b)G=P_ahFI_a\simeq P_aI_a=\operatorname{id},
 \qquad
 G(P_ahI_b)=P_bFhI_b\simeq P_bI_b=\operatorname{id}.
\]

\end{proof}

\begin{corollary}[Circle-retract criterion]
\label{cor:circle-retract-critical-values}
Suppose the intrinsic circle of circumference \(L>0\) embeds isometrically
in a metric space \(X\), and its image admits a one-Lipschitz retraction
of \(X\).  Put \(r_k:=Lk/(2k+1)\).  For every \(k\geq1\),
\[
 r_k\in\Sigma_{2k+1}^{\specWS}(X),
\]
and every canonical inclusion
\[
 \VR{X}{r_k}\longrightarrow\VR{X}{u},\qquad u>r_k,
\]
fails to be a homotopy equivalence.
\end{corollary}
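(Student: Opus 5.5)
The plan is to reduce everything to the unit circle by rescaling, and then push the circle statements into \(X\) using \cref{prop:weak-slope-nonexpansive-retract}. Let \(Y\subseteq X\) be the isometric image of the intrinsic circle \(C_L\) of circumference \(L\). Multiplying distances by \(2\pi/L\) identifies \(C_L\) with \(\Sph^1\). Weak slope scales linearly under a homothety of the metric, so its vanishing is unchanged; hence stationary diameters scale by \(L/(2\pi)\). Vietoris--Rips complexes are simply relabelled: \(\VR{C_L}{t}=\VR{\Sph^1}{2\pi t/L}\) as abstract complexes, compatibly with the canonical inclusions. Under this identification \(\delta_k\) corresponds to \(r_k=Lk/(2k+1)\).

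For the spectral claim, \cref{cor:fixed-label-circle-spectra} gives \(\delta_k\in\Sigma_{2k+1}^{\specWS}(\Sph^1)\), realized by the regular \((2k+1)\)-gon. After rescaling, \(r_k\in\Sigma_{2k+1}^{\specWS}(Y)\), where \(Y\) carries the restricted metric; this equals the intrinsic circle metric because the embedding is isometric. \cref{prop:weak-slope-nonexpansive-retract} then gives \(\Sigma_{2k+1}^{\specWS}(Y)\subseteq\Sigma_{2k+1}^{\specWS}(X)\).

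For the topological claim, the contrapositive of the canonical-map part of \cref{prop:weak-slope-nonexpansive-retract} reduces the problem to showing that \(\VR{Y}{r_k}\to\VR{Y}{u}\) is not a homotopy equivalence for every \(u>r_k\). After rescaling, this is the statement that \(\VR{\Sph^1}{\delta_k}\to\VR{\Sph^1}{s}\) fails to be a homotopy equivalence for every \(s>\delta_k\). There are three cases.

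\begin{itemize}
\item If \(s<\pi\), then \([\delta_k,s)\) contains \(\delta_k\). The Adamaszek--Adams criterion in \cref{cor:circle-canonical-rips-maps} (with \(r=\delta_k\)) says the inclusion is then not a homotopy equivalence.
\item If \(s\geq\pi\), the criterion does not apply directly. Here the plan uses types: by \cite[Theorem~7.4]{AdamaszekAdams2017} the source \(\VR{\Sph^1}{\delta_k}\) is homotopy equivalent to \(\Sph^{2k-1}\), since \(\delta_{k-1}<\delta_k\leq\delta_k\). The target is contractible: at \(s=\pi\) by \cref{rem:circle-antipodal-scale}, and for \(s>\pi\) because the complex is a full simplex. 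A map from a non-contractible space to a contractible one is not a homotopy equivalence.
\end{itemize}

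The main obstacle is this bookkeeping at the strict endpoint. One must confirm that, in the open-complex convention, the Adamaszek--Adams homotopy type at scale exactly \(\delta_k\) is \(\Sph^{2k-1}\) and not \(\Sph^{2k+1}\). This agrees with \cref{cor:circle-connectivity-sharpness} and with the half-open interval convention used throughout. Once that is settled, the remaining steps are routine.
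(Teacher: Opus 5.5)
Your proposal is correct and follows the paper's proof essentially step for step. Both arguments rescale the circle spectrum, transfer the stationary value and the failure of the canonical map through \cref{prop:weak-slope-nonexpansive-retract}, and split into \(u<L/2\), handled by the Adamaszek--Adams criterion of \cref{cor:circle-canonical-rips-maps}, and \(u\geq L/2\), handled by a contractible target versus a source of type \(\Sph^{2k-1}\). Your check that the strict complex at scale exactly \(\delta_k\) has type \(\Sph^{2k-1}\) is also correct; the paper uses this implicitly when it cites \cite[Theorem~7.4]{AdamaszekAdams2017}.
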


\begin{proof}
Rescaling \cref{cor:fixed-label-circle-spectra} gives
\(r_k=(L/(2\pi))\delta_k\) as a weak-slope stationary value at
\(2k+1\) labels.  The retraction transfers this value by
\cref{prop:weak-slope-nonexpansive-retract}.
For \(r_k<u<L/2\), the circle's canonical inclusion is not an
equivalence by \cref{cor:circle-canonical-rips-maps}.  For \(u\geq L/2\),
its target is contractible by \cref{rem:circle-antipodal-scale}, whereas
its source has homotopy type \(\Sph^{2k-1}\)
by
\cite[Theorem~7.4]{AdamaszekAdams2017}.  The retraction proposition
therefore excludes the ambient equivalence in every case.
\end{proof}

In \cref{cor:circle-retract-critical-values}, the values
\(r_k\) increase strictly to \(L/2\), so \(\Sigma^{\specWS}(X)\) is
infinite.  \Cref{lem:weak-slope-repetition-nesting} places \(r_k\) in
every \(\Sigma_N^{\specWS}(X)\) with \(N\geq2k+1\).
If \(X\) is a connected finite-dimensional Riemannian manifold,
\cref{prop:clarke-first-order} also places these values in
\(\Sigma_N^{\specC}(X)\).

\begin{example}[Circle products]
\label{ex:circle-product-canonical-obstructions}
Let \(L>0\), and let \(Z\) be a complete connected finite-dimensional Riemannian manifold
and fix \(z_*\in Z\).  In the Riemannian product
\[
 X=\left(\Sph^1,\frac{L}{2\pi}d_1\right)\times Z,
\]
the horizontal circle has the intrinsic circumference-\(L\) metric.
Projection \((\theta,z)\mapsto(\theta,z_*)\) is a one-Lipschitz
retraction, since circle-coordinate distance is at most product distance.
Thus \cref{cor:circle-retract-critical-values} supplies the stationary
values \(r_k=Lk/(2k+1)\) and the failure of every canonical map from
\(r_k\) to a larger scale.  The values belong to both spectra for
\(N\geq2k+1\).  Taking \(Z\) to be another circumference-\(L\) circle
gives the square flat torus.
\end{example}

\subsubsection{Stress transfer and isometrically embedded circles}
A smooth distance-preserving embedding preserves equilibrium stresses,
with the same edge weights, when every ambient diameter pair avoids the
cut locus (\cref{prop:stress-transfer-metric-isometric-embedding}).
Distance preservation identifies the minimizing geodesics,
so the differential of the embedding carries each tangent balance to the
ambient tangent space.  The resulting ambient configurations are stationary.
This alone does not imply that any canonical Vietoris--Rips inclusion
fails to be a homotopy equivalence.

\begin{proposition}[Stress transfer under a distance-preserving embedding]
\label{prop:stress-transfer-metric-isometric-embedding}
Let \(M\) and \(\widetilde M\) be complete connected finite-dimensional
Riemannian manifolds, and let
\[
 \iota:M\longrightarrow\widetilde M
\]
be a smooth embedding satisfying
\[
 d_{\widetilde M}\bigl(\iota(x),\iota(x')\bigr)=d_M(x,x')
 \qquad(x,x'\in M).
\]
Fix \(N\geq2\), and let \(x\in M^N\) have positive
diameter and be first-order stationary for diameter.  Suppose that every
diameter pair of \(\iota^N(x)\) is off the cut locus of \(\widetilde M\).
Then \(\iota^N(x)\) is first-order stationary, weak-slope stationary, and
Clarke critical for diameter on \(\widetilde M^N\).  In particular,
\[
 \diam_N(x)
 \in
 \Sigma_N^{\specWS}(\widetilde M)\cap\Sigma_N^{\specC}(\widetilde M).
\]

Every nonzero nonnegative equilibrium stress on \(x\) transfers with
the same edge weights.
\end{proposition}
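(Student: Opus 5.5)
The plan is to reduce everything to the convex-hull criterion of \cref{prop:clarke-first-order}, applied twice: once on \(M^N\) to extract a stress from first-order stationarity, and once on \(\widetilde M^N\) to conclude all three notions from a transported stress. Write \(D:=\diam_N(x)>0\) and \(\widetilde x:=\iota^N(x)\). Because \(\iota\) preserves distances, the labelled distances agree: \(\ell_{ij}^{\widetilde M}(\widetilde x)=\ell_{ij}^M(x)\) for all \(i<j\). Hence \(\diam_N(\widetilde x)=D\) and \(\operatorname{Act}(\widetilde x)=\operatorname{Act}(x)\).

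Next I will match the geometry of the diameter pairs. For an active pair \(ij\), the hypothesis says \(\iota(x_j)\notin\operatorname{Cut}_{\widetilde M}(\iota(x_i))\), so \(d_{\widetilde M}\) is smooth near \((\iota(x_i),\iota(x_j))\). Smoothness of \(d_M\) near \((x_i,x_j)\) also follows: \(d_M=d_{\widetilde M}\circ(\iota\times\iota)\) and \(\iota\) is a smooth embedding. Thus \cref{prop:clarke-first-order} applies on both sides. The key geometric point is that if \(\gamma\) is a minimizing unit-speed geodesic of \(M\) from \(x_i\) to \(x_j\), then \(\iota\circ\gamma\) has \(\widetilde M\)-length at most \(D\). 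Distance preservation gives \(d_{\widetilde M}(\iota\gamma(s),\iota\gamma(t))=|s-t|\), so \(\iota\circ\gamma\) is a unit-speed minimizing curve in \(\widetilde M\), hence a minimizing geodesic. Off the cut locus it is the unique one. Its initial and final velocities are therefore \(\diff\iota(\dot\gamma(0))\) and \(\diff\iota(\dot\gamma(D))\), and \(\diff\iota\) is isometric on these vectors.

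By first variation, \(\diff(\ell^{\widetilde M}_{ij})_{\widetilde x}\) is the covector \(V\mapsto-\langle V_i,\diff\iota\,\dot\gamma(0)\rangle+\langle V_j,\diff\iota\,\dot\gamma(D)\rangle\). First-order stationarity of \(x\), through the convex-hull identity \eqref{eq:clarke-active-convex-hull} (Gordan), gives nonnegative weights \(w_{ij}\), not all zero, on \(\operatorname{Act}(x)\) with the following balance at each label \(i\):
\[
 \sum_j w_{ij}\,u_{i\to j}=0 \quad\text{in } T_{x_i}M,
\]
where \(u_{i\to j}\) is the initial unit tangent toward \(x_j\). This is the equilibrium stress. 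Applying the linear map \(\diff\iota_{x_i}\) gives the same balance for the ambient unit tangents \(\diff\iota(u_{i\to j})\) in \(T_{\iota(x_i)}\widetilde M\), with the same weights. So the ambient active differentials have a vanishing nonzero nonnegative combination, and zero lies in their convex hull.

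Then \cref{prop:clarke-first-order} on \(\widetilde M^N\) yields Clarke criticality, weak-slope stationarity and first-order stationarity together, which gives the spectral membership. The main obstacle is the geodesic identification. The argument must exclude a shorter or different ambient minimizer that would change the endpoint tangents; the off-cut-locus hypothesis handles this through uniqueness. A secondary care point is coincident labels: at positive diameter they are inactive, so they play no role.
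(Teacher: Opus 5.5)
Your proposal is correct and follows essentially the same route as the paper. You extract a nonzero nonnegative stress from the convex-hull criterion of \cref{prop:clarke-first-order}, then use distance preservation and off-cut-locus uniqueness to identify the ambient minimizing geodesic with \(\iota\circ\gamma\), so that \(\diff\iota_{x_i}\) carries each intrinsic unit tangent to the ambient one. You then push the balance forward linearly and reapply \cref{prop:clarke-first-order} on \(\widetilde M^N\); since this step uses only linearity of \(\diff\iota_{x_i}\), it applies to every equilibrium stress on \(x\), which gives the final assertion.
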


\begin{proof}
Because \(\iota\) preserves all distances, \(x\) and \(\iota^N(x)\) have
the same diameter and the same indices of diameter pairs.  Each active intrinsic
distance function is the pullback of the corresponding smooth ambient
distance function, so it is smooth near the diameter pair.  Write
\(u^M_{i\to j}\) and \(u^{\widetilde M}_{i\to j}\) for the initial
unit tangents of the intrinsic and ambient minimizing geodesics,
respectively.  By the convex-hull criterion in
\cref{prop:clarke-first-order} and the
first-variation formula, first-order stationarity of \(x\) supplies
symmetric coefficients \(\bigl(w_{ij}\bigr)_{1\leq i,j\leq N}\), with
\(w_{ij}=w_{ji}\geq0\), not all zero
and supported on its diameter pairs, such that
\[
 \sum_{j\neq i}w_{ij}u^M_{i\to j}=0
 \qquad(1\leq i\leq N).
\]

For each diameter pair, the image under \(\iota\) of its minimizing geodesic
in \(M\) has length equal to the ambient distance between its endpoints,
and is therefore minimizing in \(\widetilde M\).  The off-cut-locus
hypothesis makes that ambient minimizing geodesic unique.  Hence
\[
 u^{\widetilde M}_{i\to j}
 =\diff\iota_{x_i}\bigl(u^M_{i\to j}\bigr).
\]
Applying \(\diff\iota_{x_i}\) to the preceding balance gives
\[
 \sum_{j\neq i}w_{ij}u^{\widetilde M}_{i\to j}
 =\diff\iota_{x_i}
   \left(\sum_{j\neq i}w_{ij}u^M_{i\to j}\right)
 =0
 \qquad(1\leq i\leq N).
\]
The first-variation formula now turns these equations into a nonzero
nonnegative dependence among the active distance differentials on
\(\widetilde M^N\).  Thus the image tuple has no common first-order descent
direction.  The weak-slope and Clarke conclusions follow from
\cref{prop:clarke-first-order}.
\end{proof}

\begin{corollary}[Odd-polygon values from an isometrically embedded circle]
\label{prop:off-cut-geodesic-circle-ladder}
Let \(M\) be a complete connected finite-dimensional Riemannian manifold.
Suppose
that, for some \(L>0\), there is a smooth embedding
\[
 \gamma:\Sph^1\longrightarrow M
\]
which is an isometry from
\(\bigl(\Sph^1,\tfrac{L}{2\pi}d_1\bigr)\) onto its image, equipped with
the restricted metric \(d_M\), and such that
\[
 \gamma(z')\notin\operatorname{Cut}_M(\gamma(z))
 \qquad
 \text{whenever }0<d_1(z,z')<\pi,
\]
where \(\operatorname{Cut}_M(x)\) denotes the cut locus of \(x\) in \(M\).
Then, for every \(k\geq1\),
\[
 \tfrac{L}{2\pi}\delta_k
 \in
 \Sigma_{2k+1}^{\specWS}(M)\cap\Sigma_{2k+1}^{\specC}(M).
\]
The values increase strictly to \(L/2\), an accumulation point of both
all-label spectra.
\end{corollary}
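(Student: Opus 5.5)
The plan is to apply \cref{prop:stress-transfer-metric-isometric-embedding} with \(\widetilde M=M\) and, as source manifold, the circle of circumference \(L\), namely \(\bigl(\Sph^1,\tfrac{L}{2\pi}d_1\bigr)\) with its rescaled round metric \(\bigl(\tfrac{L}{2\pi}\bigr)^2g_{\Sph^1}\). This is a complete connected one-dimensional Riemannian manifold whose intrinsic distance is \(\tfrac{L}{2\pi}d_1\). By hypothesis, \(\gamma\) is then a smooth embedding preserving distances. Fix \(k\geq1\) and let \(x\in(\Sph^1)^{2k+1}\) be the ordered vertex tuple of a regular \((2k+1)\)-gon.

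By \cref{cor:fixed-label-circle-spectra}, after rescaling, \(x\) has positive diameter \(\tfrac{L}{2\pi}\delta_k\). Equal positive weights on its diameter cycle give a nonzero nonnegative equilibrium stress. Hence \(x\) is first-order stationary for diameter in the rescaled circle, since rescaling the metric by a constant changes neither diameter pairs nor the first-order descent condition.

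The one hypothesis still to check is that every diameter pair of \(\gamma^{2k+1}(x)\) lies off the cut locus of \(M\). Distance preservation means the diameter pairs of the image are exactly the images of the diameter pairs of \(x\). Those pairs \(z,z'\) satisfy \(d_1(z,z')=\delta_k\in(0,\pi)\), so the stated assumption \(\gamma(z')\notin\operatorname{Cut}_M(\gamma(z))\) applies directly. \Cref{prop:stress-transfer-metric-isometric-embedding} then gives
\[
\tfrac{L}{2\pi}\delta_k\in\Sigma_{2k+1}^{\specWS}(M)\cap\Sigma_{2k+1}^{\specC}(M).
\]

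For the last sentence, note that \(\delta_k=\pi-\pi/(2k+1)\) increases strictly to \(\pi\), so the values increase strictly to \(L/2\). These values are distinct and lie in both all-label spectra \(\Sigma^{\specWS}(M)\) and \(\Sigma^{\specC}(M)\), so \(L/2\) is an accumulation point of each.

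The only delicate step is ensuring the stress-transfer proposition applies to the circle with intrinsic metric \(\tfrac{L}{2\pi}d_1\) as the source manifold, i.e.\ that ``first-order stationary'' is verified for that Riemannian structure. I would handle this by noting that the unit tangent directions toward diameter neighbors are the same before and after constant rescaling, so the balance equations from \cref{cor:fixed-label-circle-spectra} hold verbatim.
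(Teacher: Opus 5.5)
Your proposal is correct and follows the paper's proof. You rescale \cref{cor:fixed-label-circle-spectra} to obtain a first-order stationary regular \((2k+1)\)-gon of diameter \(\tfrac{L}{2\pi}\delta_k\), use \(\delta_k<\pi\) and the hypothesis to place the image diameter pairs off the cut locus, apply \cref{prop:stress-transfer-metric-isometric-embedding}, and read off strict convergence to \(L/2\) from \(\delta_k\nearrow\pi\); the paper does exactly this. One small correction: under rescaling the unit tangents are multiplied by the constant \(2\pi/L\) rather than left unchanged, but a uniform positive factor preserves the balance equations, so your conclusion stands.
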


\begin{proof}
After rescaling \cref{cor:fixed-label-circle-spectra}, the regular
\((2k+1)\)-gon in
\(\bigl(\Sph^1,\tfrac{L}{2\pi}d_1\bigr)\) is first-order stationary and
has diameter \(\tfrac{L}{2\pi}\delta_k<L/2\).  Every one of its diameter pairs has \(d_1\)-distance \(\delta_k<\pi\), so its image under \(\gamma\)
lies off the ambient cut locus by hypothesis.  Apply
\cref{prop:stress-transfer-metric-isometric-embedding}.
Strict monotonicity and convergence follow from \(\delta_k\nearrow\pi\).
\end{proof}

For a particular \(k\), it suffices to check the ambient cut-locus
condition only on the diameter pairs of the chosen regular polygon.

\begin{example}[An equatorial circle without a retraction]
\label{ex:equatorial-stress-without-retraction}
The equatorial inclusion \(\iota:\Sph^1\hookrightarrow\Sph^2\) preserves
the intrinsic round distances.  Every nonantipodal equatorial pair lies
off the cut locus of \(\Sph^2\), so
\cref{prop:off-cut-geodesic-circle-ladder} gives
\[
 \delta_k\in
 \Sigma_{2k+1}^{\specWS}(\Sph^2)\cap
 \Sigma_{2k+1}^{\specC}(\Sph^2)
 \qquad(k\geq1).
\]
There is no continuous retraction \(\omega:\Sph^2\to\Sph^1\) of this
inclusion.  Otherwise \(\omega\iota=\operatorname{id}_{\Sph^1}\) would imply
\[
 \omega_\ast\iota_\ast
 =\operatorname{id}_{H_1(\Sph^1;\mathbb Z)},
\]
whereas \(\iota_\ast\) factors through
\(H_1(\Sph^2;\mathbb Z)=0\).  Thus the stress-transfer proposition
applies even though the retraction criterion for canonical map
obstructions is unavailable.
\end{example}

\subsection{Limits of the criterion and quantitative contractions}

On \([0,1]\) with \(d(s,t)=|s-t|^\alpha\), \(0<\alpha<1\), every
tuple is weak-slope stationary for diameter, while every canonical
Vietoris--Rips inclusion between positive scales is a homotopy equivalence.

\begin{example}[Weak-slope stationarity without Vietoris--Rips transitions]
\label{ex:snowflake-stationary-without-transition}
Let \(0<\alpha<1\) and
\[
 X=([0,1],d_X),\qquad d_X(s,t)=|s-t|^\alpha.
\]
For every \(N\geq2\), every tuple in \(X^N\) is weak-slope stationary
for diameter, and
\[
 \Sigma_N^{\specWS}(X)=\Sigma^{\specWS}(X)=[0,1].
\]
Nevertheless, \(\VR{X}{r}\) is contractible for every \(r>0\), and
every canonical inclusion \(\VR{X}{r}\to\VR{X}{s}\), \(0<r<s\),
is a homotopy equivalence.

\par\smallskip\noindent To prove weak-slope stationarity,
we bound the possible decrease in diameter under a small displacement.
Fix \(x=(x_1,\ldots,x_N)\in X^N\) and let
\(a=\max_{i,j}|x_i-x_j|\) be its Euclidean diameter.  If \(a=0\),
then \(x\) is a global minimum of \(\diam_N\).  Suppose \(a>0\).
For any \(z\in X^N\) satisfying
\(d_{X,\infty}(z,x)\leq u\), each coordinate moves by at most
\(u^{1/\alpha}\) in the Euclidean metric.  A pair realizing \(a\)
therefore gives, for all sufficiently small \(u>0\),
\[
 \diam_N(x)-\diam_N(z)
 \leq a^\alpha-\bigl(a-2u^{1/\alpha}\bigr)^\alpha
 =O(u^{1/\alpha})=o(u).
\]
A weak-slope deformation with rate \(\sigma>0\), evaluated at \(x\),
would instead give a decrease of at least \(\sigma u\) with displacement
at most \(u\), a contradiction.  Thus every tuple is stationary.  For
\(N\geq2\), every value in \([0,1]\) is realized by a pair
\((0,a)\), with repeated labels if necessary, since its diameter is
\(a^\alpha\).

\par\smallskip\noindent To determine the Vietoris--Rips
homotopy type, we contract the interval while keeping pairwise distances
nonincreasing.
The continuous homotopy
\(F_t(x)=(1-t)x\) fixes \(0\), starts at the identity, ends at the
constant map, and satisfies
\[
 d_X(F_t(x),F_t(x'))=(1-t)^\alpha d_X(x,x').
\]
It is a crushing onto \(\{0\}\) in Hausmann's sense, with time
reversed.  Hence \(\VR{X}{r}\) is contractible for every \(r>0\)
by \cite[Proposition~2.2]{Hausmann1995}.  Every map between nonempty
contractible spaces is a homotopy equivalence, which proves the assertion
for the canonical inclusions.
\end{example}

The snowflake metric is not a length metric: any curve from \(0\) to \(1\),
subdivided at successive crossings of \(j/m\), has length at least
\(m^{1-\alpha}\) for every integer \(m\geq1\), and hence infinite length.
This contrasts with the closed smooth Riemannian setting.
\Cref{prop:clarke-first-order,thm:fixed-label-clarke-spectrum} give
\(\Sigma_N^{\specWS}(M)\subseteq\Sigma_N^{\specC}(M)\) and
\(\dim_{\mathrm H}\Sigma_N^{\specC}(M)=0\) for every \(N\geq2\).
Thus the interval of stationary values exhibited here cannot occur in
that setting.  This restriction concerns the size of the stationary
spectrum; it does not assert that every stationary value marks a
Vietoris--Rips transition.

\begin{remark}[Crushing versus controlled descent]
\label{rem:hausmann-crushing-comparison}
After reversing time, a Hausmann crushing of \(X\) onto \(A\subseteq X\)
is a continuous homotopy \(F_t:X\to X\) with
\[
 F_0=\operatorname{id}_X,\qquad F_1(X)\subseteq A,\qquad
 F_t|_A=\operatorname{id}_A,
\]
whose pairwise distances never increase:
\[
 d_X(F_s(x),F_s(x'))\leq d_X(F_t(x),F_t(x'))
 \qquad(0\leq t\leq s\leq1).
\]
Hausmann's theorem makes \(\VR{A}{r}\to\VR{X}{r}\) a homotopy
equivalence for every \(r>0\)
\cite[Definition and Proposition~2.2]{Hausmann1995}.
Crushing imposes no linear relation between decrease and displacement.
The snowflake example shows that it can coexist with zero weak slope at
every tuple.  The next criterion adds precisely such quantitative control.
\end{remark}

\subsubsection{{Excluding positive stationary values by contractions}}
 To use a contraction to exclude stationarity at a positive
diameter, we must control both diameter decrease and displacement of the
labels.  In \cref{prop:uniform-contraction-no-stationarity}, a
uniform contraction of pairwise distances supplies the decrease, while a
linear displacement bound permits the time rescaling required by
\cref{def:weak-slope}.

\begin{proposition}[Quantitative contractions exclude positive stationarity]
\label{prop:uniform-contraction-no-stationarity}
Let \((X,d_X)\) be a nonempty metric space.  Let
\(A,\lambda,\tau>0\) satisfy \(\lambda\tau\leq1\), and suppose there is a
continuous map
\[
 C:X\times[0,\tau]\longrightarrow X
\]
such that, writing \(C_t(x):=C(x,t)\), one has
\(C_0=\operatorname{id}_X\) and, for every \(x,x'\in X\) and
\(0\leq t\leq\tau\),
\begin{equation}
 d_X(C_t(x),x)\leq At,
 \qquad
 d_X(C_t(x),C_t(x'))\leq(1-\lambda t)d_X(x,x').
 \label{eq:uniform-metric-contraction}
\end{equation}
Then, for every \(N\geq2\) and every \(x\in X^N\) of positive diameter,
\[
 |\diff\diam_N|(x)\geq\frac{\lambda\diam_N(x)}{A}>0.
\]
In particular,
\[
 \Sigma_N^{\specWS}(X)=\{0\}
 \quad(N\geq2),
 \qquad
 \Sigma^{\specWS}(X)=\{0\}.
\]
\end{proposition}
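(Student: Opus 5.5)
The plan is to exhibit, directly from \cref{def:weak-slope}, an admissible deformation near any tuple \(x\) of positive diameter. The deformation applies the contraction \(C\) coordinatewise, with time rescaled so that displacement is at most the time parameter. Concretely, given \(x\in X^N\) with \(D:=\diam_N(x)>0\), I fix any \(\sigma\) with \(0<\sigma<\lambda D/A\). Then I choose \(\delta>0\) small; the constraints on \(\delta\) are collected below. The candidate is
\[
 H(y,u):=\bigl(C_{u/A}(y_1),\ldots,C_{u/A}(y_N)\bigr),
 \qquad y\in B_\delta(x),\ u\in[0,\delta],
\]
which requires \(\delta\leq A\tau\) so that \(u/A\in[0,\tau]\). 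Continuity of \(H\) follows from continuity of \(C\) on \(X\times[0,\tau]\) and of the finite product. The displacement bound in the maximum product metric is immediate: \(d_X(C_{u/A}(y_i),y_i)\leq A\cdot u/A=u\) for each \(i\).

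For the decrease, the contraction inequality in \eqref{eq:uniform-metric-contraction} gives, pairwise, \(d_X(C_t y_i,C_t y_j)\leq(1-\lambda t)d_X(y_i,y_j)\). Here \(1-\lambda t\geq0\) because \(\lambda\tau\leq1\). Taking the maximum over pairs gives
\[
 \diam_N(H(y,u))\leq\Bigl(1-\frac{\lambda u}{A}\Bigr)\diam_N(y)
 =\diam_N(y)-\frac{\lambda\diam_N(y)}{A}\,u .
\]
It then suffices that \(\lambda\diam_N(y)/A\geq\sigma\) throughout \(B_\delta(x)\). This follows from the Lipschitz bound \(|\diam_N(y)-\diam_N(x)|\leq2d_{X,\infty}(y,x)\), provided \(\delta\) is small enough that \(D-2\delta\geq A\sigma/\lambda\). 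Such \(\delta\) exists because \(\sigma<\lambda D/A\).

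Thus every \(\sigma<\lambda D/A\) is admissible, and taking the supremum gives \(|\diff\diam_N|(x)\geq\lambda\diam_N(x)/A>0\). For the spectral statement, positive-diameter tuples are nonstationary, while constant tuples are stationary with diameter \(0\). Hence \(\Sigma_N^{\specWS}(X)=\{0\}\) for each \(N\geq2\), and taking the union gives \(\Sigma^{\specWS}(X)=\{0\}\).

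I expect no serious obstacle. The only points needing care are the bookkeeping of the time rescaling \(t=u/A\) against the domain restriction \(t\leq\tau\), and the nonnegativity of the factor \(1-\lambda t\); both are handled by the hypothesis \(\lambda\tau\leq1\) and by the choice of \(\delta\). The argument nowhere uses compactness, which is consistent with the intended application to unbounded spaces.
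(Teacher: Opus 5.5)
Your proposal is correct and follows the paper's proof essentially step for step. Both use the coordinatewise time-rescaled map \(H(y,u)=\bigl(C_{u/A}(y_1),\ldots,C_{u/A}(y_N)\bigr)\), both choose \(\delta\leq A\tau\) with \(\lambda(D-2\delta)/A\geq\sigma\), and both use the two-Lipschitz bound on \(\diam_N\) to obtain a uniform decrease rate \(\sigma\) on \(B_\delta(x)\).
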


\begin{proof}
Put \(D:=\diam_N(x)>0\).
Let \(0\leq\sigma<\lambda D/A\).  Choose \(\delta>0\) so small that
\[
 \delta\leq A\tau,
 \qquad
 \frac{\lambda(D-2\delta)}{A}>\sigma.
\]
For \(y=(y_1,\ldots,y_N)\in B_\delta(x)\) and
\(0\leq u\leq\delta\), set
\[
 H(y,u):=
 \bigl(C_{u/A}(y_1),\ldots,C_{u/A}(y_N)\bigr).
\]
The first inequality in \eqref{eq:uniform-metric-contraction} gives
\[
 d_{X,\infty}(H(y,u),y)\leq u.
\]
Since \(\diam_N\) is two-Lipschitz for the maximum product metric,
\(\diam_N(y)\geq D-2\delta\).  The second inequality in
\eqref{eq:uniform-metric-contraction} therefore gives
\begin{align*}
 \diam_N(H(y,u))
 &\leq\left(1-\frac{\lambda u}{A}\right)\diam_N(y)\\
 &\leq\diam_N(y)-\frac{\lambda(D-2\delta)}{A}u
 \leq\diam_N(y)-\sigma u.
\end{align*}
Thus \(\sigma\leq|\diff\diam_N|(x)\).  Letting
\(\sigma\nearrow\lambda D/A\) proves the estimate.  Constant tuples have
diameter zero and are weak-slope stationary, which proves the spectral
conclusion.
\end{proof}

The proof only uses a displacement bound near the tuple under
consideration.  Radial geodesic contractions provide such local bounds,
even when the space is unbounded.

\begin{corollary}[The local geodesic-contraction variant]
\label{prop:busemann-contraction-no-stationarity}
Let \(X\) be a nonempty metric space with a point \(x_*\) and a continuous
map \(\Gamma:X\times[0,1]\to X\).  Suppose
\(s\mapsto\Gamma(x,s)\) is a constant-speed geodesic from \(x_*\) to
\(x\), and
\begin{equation}
 d_X(\Gamma(x,s),\Gamma(x',s))\leq s\,d_X(x,x')
 \qquad(x,x'\in X,\ 0\leq s\leq1).
 \label{eq:conical-geodesic-contraction}
\end{equation}
Then \(\Sigma_N^{\specWS}(X)=\{0\}\) for every \(N\geq2\), and hence
\(\Sigma^{\specWS}(X)=\{0\}\), without a boundedness assumption.
\end{corollary}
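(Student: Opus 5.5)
The plan is to reduce to \cref{prop:uniform-contraction-no-stationarity}, using the local form of its proof. That proof needs two things: a contraction rate \(\lambda\) on pairwise distances, and a displacement bound \(A\) for points near the given tuple. Both are supplied by the radial geodesic contraction toward \(x_*\), reparametrized by time.

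Fix \(N\geq2\) and a tuple \(x\in X^N\) with \(D:=\diam_N(x)>0\). Set \(R:=\max_i d_X(x_*,x_i)+1\).
\begin{itemize}
\item[(1)] \emph{Contraction.} For \(0\leq t\leq1\), define \(C_t(y):=\Gamma(y,1-t)\). Then \(C_0=\operatorname{id}\), because \(\Gamma(y,1)=y\) is the endpoint of the geodesic. By \eqref{eq:conical-geodesic-contraction},
\[
 d_X(C_t(y),C_t(y'))\leq(1-t)\,d_X(y,y'),
\]
so \(\lambda=1\) and \(\tau=1\). Continuity of \(C\) is inherited from \(\Gamma\).
\item[(2)] \emph{Displacement.} Since \(s\mapsto\Gamma(y,s)\) is a constant-speed geodesic from \(x_*\) to \(y\), its speed is \(d_X(x_*,y)\). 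Hence
\[
 d_X(C_t(y),y)=t\,d_X(x_*,y).
\]
For every \(y\) with \(d_X(y,x_i)<1\) for some \(i\), this is at most \(Rt\). So the displacement bound holds with \(A=R\), but only on the union of unit balls around the coordinates of \(x\).
\end{itemize}

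Next, I would repeat the proof of \cref{prop:uniform-contraction-no-stationarity} verbatim, restricting to the ball \(B_\delta(x)\) in \(X^N\) with \(\delta<1\). Every coordinate \(y_i\) of a tuple \(y\in B_\delta(x)\) then lies in the unit ball around \(x_i\), so the bound \(d_X(C_{u/R}(y_i),y_i)\leq u\) holds.

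Define \(H(y,u):=(C_{u/R}(y_i))_i\) for \(u\leq\delta\leq R\). This satisfies \(d_{X,\infty}(H(y,u),y)\leq u\). It also gives
\[
 \diam_N(H(y,u))\leq\bigl(1-u/R\bigr)\diam_N(y)\leq\diam_N(y)-\frac{D-2\delta}{R}\,u .
\]
Hence \(|\diff\diam_N|(x)\geq D/R>0\). Constant tuples contribute the value \(0\), which gives \(\Sigma_N^{\specWS}(X)=\{0\}\) for each \(N\geq2\). Taking the union over \(N\) gives \(\Sigma^{\specWS}(X)=\{0\}\).

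The only real obstacle is that a global \(A\) is unavailable when \(X\) is unbounded, so the proposition cannot be quoted as stated. I would handle this by observing, as the remark before the corollary does, that its proof uses the displacement bound only on the deformation domain \(B_\delta(x)\). The point to check is that this domain stays in a bounded region, which the choice \(\delta<1\) ensures, together with the requirement \(\delta\leq A\tau=R\).
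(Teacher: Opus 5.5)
Your proof is correct and matches the paper's argument essentially line for line. The paper also localizes the proof of \cref{prop:uniform-contraction-no-stationarity} to \(C_t(y)=\Gamma(y,1-t)\), with \(\lambda=1\), \(A=R\) for some \(R>\max_i d_X(x_*,x_i)\), and a neighborhood radius \(\delta\leq R\). Your choice \(R=\max_i d_X(x_*,x_i)+1\) together with \(\delta<1\) simply makes its ``sufficiently small neighborhood'' explicit. One step is left implicit: the rate \((D-2\delta)/R\) is positive only once \(\delta<D/2\), and you obtain the bound \(D/R\) by letting \(\delta\downarrow0\).
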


\begin{proof}
Fix a tuple \(x\) of diameter \(D>0\) and choose
\(R>\max_i d_X(x_*,x_i)\).  On a sufficiently small tuple neighborhood,
every coordinate remains within distance \(R\) of \(x_*\).
The time rescaling in the proof of
\cref{prop:uniform-contraction-no-stationarity} applies locally to
\(C_t(y)=\Gamma(y,1-t)\), with \(A=R\) and \(\lambda=1\).
Explicitly, fix \(\sigma<D/R\) and choose the neighborhood radius
\(0<\delta\leq R\) so that \((D-2\delta)/R>\sigma\).  For
\(y\in B_\delta(x)\) and \(0\leq u\leq\delta\), the map
\[
 H(y,u)=\bigl(\Gamma(y_1,1-u/R),\ldots,\Gamma(y_N,1-u/R)\bigr)
\]
has displacement at most \(u\) and diameter at most
\((1-u/R)\diam_N(y)\).  Since \(\diam_N(y)\geq D-2\delta\), it
decreases diameter at rate at least \(\sigma\).  Thus every
\(\sigma<D/R\) is admissible and \(|\diff\diam_N|(x)\geq D/R>0\).
\end{proof}

\begin{remark}[Examples and the role of boundedness]
\label{rem:metric-tree-boundedness}
A convex subset of a normed vector space uses
\(\Gamma(x,s)=(1-s)x_*+sx\), with equality in \eqref{eq:conical-geodesic-contraction}.
A \emph{Busemann-convex} geodesic space is one in which the distance
between any two constant-speed geodesics is convex in their common
parameter \cite[Section~1]{DescombesLang2015}.
This implies uniqueness of geodesics by applying convexity to two
geodesics with the same endpoints.  With the common initial point
\(x_*\), it gives \eqref{eq:conical-geodesic-contraction}.  Joint
continuity follows from this bound and constant speed:
\[
 d_X(\Gamma(x,s),\Gamma(y,t))
 \leq s\,d_X(x,y)+|s-t|\,d_X(x_*,y).
\]
Thus \cref{prop:busemann-contraction-no-stationarity}
excludes all positive weak-slope stationary diameter values in these
spaces.  In particular, \(\operatorname{CAT}(0)\)
spaces are Busemann convex
\cite[Proposition~II.2.2]{BridsonHaefliger1999}.
An \(\R\)-tree is a uniquely geodesic metric space whose geodesic
triangles are tripods \cite[Example~II.1.15(5)]{BridsonHaefliger1999}; along the tripod
rooted at \(x_*\), radial contraction satisfies the same inequality.  This argument does not require completeness or boundedness.

If \(X\) is bounded, \(C_t(x)=\Gamma(x,1-t)\) also satisfies
\eqref{eq:uniform-metric-contraction} with \(\lambda=\tau=1\) and any positive
\(A\geq\sup_x d_X(x_*,x)\).  Conversely, the global criterion forces
boundedness, since for \(0<t\leq\tau\),
\[
 d_X(x,x')\leq2At+(1-\lambda t)d_X(x,x')
 \quad\Longrightarrow\quad\diam(X)\leq2A/\lambda.
\]
Unbounded examples therefore use the local corollary, not the global
proposition.
\end{remark}

For a compact space satisfying the global quantitative criterion,
\cref{prop:uniform-contraction-no-stationarity,%
thm:metric-stationary-chamber-intro} make all positive-scale
canonical maps homotopy equivalences.  Comparing with a scale larger than
\(\diam(X)\), where the complex is a full simplex, shows that every
positive-scale complex is contractible.

\subsubsection{Spaces with no nonconstant paths}

The snowflaked interval has nonconstant paths, but its metric prevents
linear descent relative to displacement.  The absence of nonconstant
paths gives another reason for every weak slope to vanish.
\begin{proposition}[Spaces with no nonconstant paths]
\label{prop:path-rigidity-weak-slope}
Let \(X\) be a metric space having no nonconstant continuous path.
Then every continuous function \(f:X\to\R\) satisfies
\begin{equation}
 |\diff f|\equiv0.
 \label{lem:weak-slope-path-rigidity}
\end{equation}
If \(X\) is nonempty, then, for every \(N\geq1\), every tuple in
\(X^N\) is weak-slope stationary for \(\diam_N\), and
\[
 \Sigma_1^{\specWS}(X)=\{0\},\qquad
 \Sigma_N^{\specWS}(X)=\{d_X(x,x'):x,x'\in X\}
 \quad(N\geq2).
\]
If \(X\) is infinite and compact, then
the set of distances in \(X\) is infinite and has \(0\) as an
accumulation point.
\end{proposition}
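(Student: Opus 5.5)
The plan is to prove the vanishing \eqref{lem:weak-slope-path-rigidity} directly from \cref{def:weak-slope} and then read off the diameter statements. Suppose \(\sigma>0\) were admissible at some \(z\in X\), witnessed by \(\delta>0\) and a continuous \(H:B_\delta(z)\times[0,\delta]\to X\) with \(d_X(H(y,u),y)\leq u\) and \(f(H(y,u))\leq f(y)-\sigma u\). Fix \(y=z\). Then \(u\mapsto H(z,u)\) is a continuous path in \(X\), so by hypothesis it is constant. Since \(H(z,0)=z\) by the displacement bound, we get \(H(z,u)=z\) for every \(u\). The descent inequality at \(u=\delta\) then gives \(f(z)\leq f(z)-\sigma\delta\), which is impossible. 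So no positive rate is admissible, and the weak slope, a supremum over a set that always contains \(0\), is zero.

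For the diameter, I will first check that \(X^N\) with \(d_{X,\infty}\) also has no nonconstant paths. Each coordinate projection of a path in \(X^N\) is a path in \(X\), hence constant, so the path itself is constant. Applying the first part on \(Z=X^N\) to the continuous function \(\diam_N\) then shows that every tuple is weak-slope stationary. Consequently \(\Sigma_N^{\specWS}(X)\) is the full image of \(\diam_N\). For \(N=1\) this image is \(\{0\}\). For \(N\geq2\), the image contains every distance \(d_X(x,x')\), realized by the tuple \((x,x',x',\ldots,x')\). Conversely, each diameter of a finite tuple is a maximum of finitely many pairwise distances, so it is attained by one pair. This gives equality with the distance set.

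For the final assertion, let \(X\) be infinite and compact. Choose a sequence of distinct points; by compactness it has a convergent subsequence \(x_n\to x\) with \(x_n\neq x\). Passing to a further subsequence, we may assume the distances \(d_X(x_n,x)>0\) are strictly decreasing to \(0\). These distances are pairwise distinct and positive, so the distance set is infinite and has \(0\) as an accumulation point.

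There is no serious obstacle here. The only point needing care is that the path \(u\mapsto H(z,u)\) starts at \(z\), and this is exactly what the displacement bound \(d_Z(H(y,u),y)\leq u\) guarantees at \(u=0\).
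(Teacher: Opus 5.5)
Your proposal is correct and follows the paper's proof essentially step by step. The displacement bound makes \(u\mapsto H(z,u)\) a path starting at \(z\), hence constant, which contradicts descent at a positive rate; coordinate projections carry path rigidity to \(X^N\); and a convergent sequence of distinct points gives distinct positive distances tending to \(0\). The differences are cosmetic: you fix \(y=z\) instead of an arbitrary \(y\), and you use sequential compactness instead of naming a nonisolated point.
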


\begin{proof}
For any weak-slope deformation \(H\) near a point of \(X\), each map
\(u\mapsto H(y,u)\) is a continuous path starting at \(y\), by the
displacement inequality at \(u=0\).  It must be constant.  A positive
rate \(\sigma\) would then force
\(f(y)\leq f(y)-\sigma u\) for \(u>0\), a contradiction.  This proves
\eqref{lem:weak-slope-path-rigidity}.

Every path in \(X^N\) is also constant, since all its coordinate
projections are constant.  Applying the preceding argument to
\(\diam_N:X^N\to\R\) proves stationarity of every tuple.  For
\(N\geq2\), a tuple's diameter is one of its pairwise distances;
conversely, every distance is realized by a pair with repeated labels if
necessary.  This gives the spectral identities.
Finally, an infinite compact metric space has a nonisolated point
\(x\).  Distinct points \(x_j\to x\) give positive distances
\(d_X(x_j,x)\to0\), with distinct values after passing to a subsequence.
\end{proof}

\begin{remark}[Totally disconnected spaces and the limits of the criterion]
\label{rem:path-rigidity-limitation}
Every totally disconnected metric space has no nonconstant path.  For the
middle-thirds Cantor set \(C\subset[0,1]\), one has
\(C-C=[-1,1]\).  Indeed, every \(t\in[-1,1]\) has a balanced ternary
expansion \(t=2\sum_{j\geq1}\varepsilon_j3^{-j}\), with
\(\varepsilon_j\in\{-1,0,1\}\): successively choose one of the three
closed subintervals of relative length \(1/3\).  Writing
\(\varepsilon_j=a_j-b_j\), with \(a_j,b_j\in\{0,1\}\), expresses
\(t\) as the difference of two points of \(C\).  Therefore
\[
 \Sigma_N^{\specWS}(C)=[0,1]
 \qquad(N\geq2).
\]
For these spaces, the hypothesis of
\cref{thm:metric-stationary-chamber-intro} says that no distance belongs
to \([r,s)\); hence \(\VR{X}{r}=\VR{X}{s}\).
Finite metric spaces are examples.  Equivalences between different
complexes in their filtrations require another argument, such as a
simplicial collapse.

The examples isolate three different situations.  Quantitative
contraction excludes every positive stationary value.  When every path
is constant, every attained diameter is stationary.  Snowflaking can give the same maximal spectrum despite
nonconstant paths and contractible complexes at every positive scale.
Thus the spectrum measures the availability of controlled descent; it is
not an exact detector of changes in Vietoris--Rips topology.
\end{remark}

\appendix

\section{Weak-slope comparisons and the antipodal case}
\label{app:weak-clarke-comparison}

We prove the comparisons stated in
\cref{thm:weak-implies-clarke,lem:weak-slope-bilipschitz,prop:weak-slope-finite-smooth-maximum}.
We then give the smooth metric construction of
\cref{prop:cantor-clarke-spectrum} and the antipodal argument for
\cref{lem:two-label-spherical-spectrum}.

Recall the Clarke subdifferential from
\cref{def:clarke-subdifferential} and its support-function description
\cite[Section~2.1]{Clarke1990}.  For a \(C^1\) function it is
\(\{\diff f_x\}\); for a locally Lipschitz function it records limiting
differentials.  In a Euclidean chart its support-function description is
\[
 h^\circ(z;v):=\limsup_{\substack{y\to z\\t\downarrow0}}
       \frac{h(y+tv)-h(y)}{t},
 \qquad
 \partial^{\mathrm C}h(z)
 =\{\xi:\xi[v]\leq h^\circ(z;v)\text{ for every }v\}.
\]
This latter definition also applies in a Banach space, with
\(\xi\in E^*\).

\begin{proof}[Proof of \cref{thm:weak-implies-clarke}]
We transfer Degiovanni--Marzocchi's Banach-space inequality
\cite[Theorem~2.17]{DegiovanniMarzocchi1994} to a normal chart at \(x\)
and compare the chart metric with \(d_M\).  The cited theorem states
that, if \(A\) is open in a real Banach space \(E\) and
\(h:A\to\R\) is locally Lipschitz, then
\[
 |\diff h|_{\lVert\cdot\rVert_E}(z)
 \geq\min_{\xi\in\partial^{\mathrm C}h(z)}\lVert\xi\rVert_{E^*}.
\]
Here the subdifferential uses the generalized directional derivative and
\(\lVert\cdot\rVert_{E^*}\) is the dual norm.

Fix \(\varepsilon>0\).  Choose a sufficiently small normal chart
\(\phi:U\to\R^{\dim M}\) centered at \(x\), with
\(\diff\phi_x\) an isometry, so that both \(\phi\) and
\(\phi^{-1}\) are \((1+\varepsilon)\)-Lipschitz.
Indeed, the metric coefficients approach their value at the center;
on a smaller convex normal neighborhood, integration along straight
coordinate segments and minimizing geodesics gives the two distance
bounds.  Transfer a weak-slope deformation for \(h=f\circ\phi^{-1}\)
back to \(M\), shrinking its domain to remain in this chart and replacing
its time parameter \(u\) by \(u/(1+\varepsilon)\).  Its displacement is
then at most \(u\), so
\[
 |\diff f|_{d_M}(x)
 \geq\frac{|\diff h|_{\mathrm{Eucl}}(\phi(x))}{1+\varepsilon}
 \geq\frac{\operatorname{dist}_{g_x^*}
              (0,\partial^{\mathrm C}f(x))}{1+\varepsilon}.
\]
By \cref{def:clarke-subdifferential}, the pullback
\((\diff\phi_x)^*\) identifies
\(\partial^{\mathrm C}h(\phi(x))\) with
\(\partial^{\mathrm C}f(x)\).  This identification preserves the
cotangent norms because \(\diff\phi_x\) is an isometry.
Let \(\varepsilon\downarrow0\).
\end{proof}

\begin{proof}[Proof of \cref{lem:weak-slope-bilipschitz}]
Choose a neighborhood on which \(a\,d\leq d'\leq b\,d\), with
\(a,b>0\).  If \(H\) witnesses a positive rate \(\sigma\) for \(d\),
restrict its spatial and time domains so that its image remains in
that neighborhood.  On a sufficiently small \(d'\)-ball the map
\(H'(y,u)=H(y,u/b)\) is defined, moves points by at most \(u\) in
\(d'\), and lowers \(f\) by at least \(\sigma u/b\).
Thus positive \(d\)-slope implies positive \(d'\)-slope.
Interchanging the metrics proves the converse.
\end{proof}

\subsection{Finite maxima of smooth functions}
\label{sec:finite-smooth-maxima}

Let \(f=\max_{\alpha\in\mathcal A}f_\alpha\), with active set
\(I_f(z)\) and convex hull \(K_z\) of active differentials as in
\cref{prop:weak-slope-finite-smooth-maximum}.

\begin{proof}[Proof of \cref{prop:weak-slope-finite-smooth-maximum}]
After excluding the inactive branches, Taylor expansion along
\(\exp_z(hv)\) gives \eqref{eq:dini-finite-maximum}.  At a
differentiability point \(y\) of \(f\), each active branch satisfies
\(\diff(f_\alpha)_y=\diff f_y\), since \(f-f_\alpha\) has a local
minimum at \(y\).  Finiteness and continuity of the branch differentials
therefore give \(\partial^{\mathrm C}f(z)\subseteq K_z\).
For the reverse inclusion, each active branch satisfies
\(f-f_\alpha\geq0\) with equality at \(z\), so its differential is
bounded above in every direction by the ordinary directional derivative
of \(f\), and hence by the Clarke generalized directional derivative.
The defining support inequalities for the Clarke subdifferential imply
\(\diff(f_\alpha)_z\in\partial^{\mathrm C}f(z)\).  Convexity gives
equality.  This is the finite smooth maximum rule
\cite[Proposition~2.3.12]{Clarke1990}.

Put \(\mu=\operatorname{dist}(0,K_z)\).  Convex duality gives
\[
 \mu=\max_{\lVert v\rVert\leq1}
           \left\{-\max_{\xi\in K_z}\xi[v]\right\}.
\]
To see the identity directly, let \(\xi_0\in K_z\) have least norm.
For every \(v\) with \(\lVert v\rVert\leq1\),
\[
 -\max_{\xi\in K_z}\xi[v]\leq-\xi_0[v]\leq\lVert\xi_0\rVert.
\]
If \(\xi_0=0\), the choice \(v=0\) gives equality.  Otherwise use the
vector dual to \(-\xi_0/\lVert\xi_0\rVert\) and the nearest-point
inequality \(\langle\xi-\xi_0,\xi_0\rangle\geq0\) for \(\xi\in K_z\).

\Cref{thm:weak-implies-clarke} supplies the lower bound:
\[
 |\diff f|(z)\geq\operatorname{dist}
 (0,\partial^{\mathrm C}f(z))=\mu.
\]
For the upper bound, let a weak-slope deformation \(H\) witness an
admissible rate \(\sigma>0\).  Choose \(u_k\downarrow0\) and put
\[
 v_k:=u_k^{-1}\exp_z^{-1}H(z,u_k).
\]
The displacement bound gives \(\lVert v_k\rVert\leq1\) for all large
\(k\).  Passing to a subsequence, let \(v_k\to v\).  For every
\(\alpha\in I_f(z)\),
\[
 f_\alpha(H(z,u_k))\leq f(H(z,u_k))\leq f(z)-\sigma u_k.
\]
Taylor expansion, division by \(u_k\), and passage to the limit give
\(\diff(f_\alpha)_z[v]\leq-\sigma\).  The duality identity implies
\(\sigma\leq\mu\).  Taking the supremum over admissible rates proves
\eqref{eq:weak-slope-finite-maximum}; separation of the compact convex
set \(K_z\) from zero gives the stated equivalences.
\end{proof}

\subsection{A smooth metric with uncountably many critical diameter values}
\label{app:cantor-clarke-spectrum}

We construct the metric in \cref{prop:cantor-clarke-spectrum}
using a warping function that is flat on a Cantor set and takes distinct
values there.  The corresponding parallels are geodesics.  We prove
that their semicircles minimize globally, so the two orientations give
opposite limiting distance differentials and make the endpoint pairs
Clarke critical.

\begin{proof}
\noindent\emph{Constructing the warping function.}
Use coordinates \([0,10]\times S^1\), with the two boundary circles
collapsed, and a metric
\[
 (\diff s)^2+f(s)^2(\diff\theta)^2.
\]
We construct \(f\) on \([2,8]\) so that
\begin{equation}
 1\leq f\leq1.1,\qquad\lVert f''\rVert_\infty\leq0.1,
 \label{eq:cantor-warp-bounds}
\end{equation}
while \(f'=0\) on a Cantor set \(C\subset[4,5]\) and \(f|_C\) is
injective.  Take the middle-thirds Cantor set in \([4,5]\), put
\[
 u_0(s):=\exp\!\left(-\frac{1}{(s-a)(b-s)}\right)
 \quad\text{on each complementary gap }(a,b),
\]
and set \(u_0=0\) on \(C\) and outside \([4,5]\).  Every derivative
on a gap tends to zero faster than any power of its length as the gap
shrinks, and is flat at its endpoints.  These estimates show that
\(u_0\) extends smoothly by zero, with all derivatives zero on \(C\).
Let \(A=\int_4^5u_0(s)\,\diff s>0\), choose a nonnegative
\(\beta\in C_c^\infty((6,7))\) with integral one, and set
\[
 H(s):=\int_2^s\bigl(u_0(t)-A\beta(t)\bigr)\,\diff t,
 \qquad f(s):=1+\varepsilon H(s).
\]
Here \(0\leq H\leq A\); choosing \(\varepsilon>0\) sufficiently
small gives \eqref{eq:cantor-warp-bounds}.  Between any two distinct
points of \(C\) there is a complementary gap, so \(f|_C\) is strictly
increasing.  The function equals one near both ends of \([2,8]\).
Extend it by positive smooth caps, with \(f(s)=s\) near zero and
\(f(s)=10-s\) near ten.  These formulas make the metric smooth at the
poles.  Its Gaussian curvature vanishes on an open cylindrical band
and does not vanish everywhere; thus the metric is not analytic.

\par\smallskip\noindent\emph{Global minimality of the semicircles.}
For \(c\in C\), put
\[
 x_c=(c,0),\qquad y_c=(c,\pi),\qquad r_c=\pi f(c).
\]
Since \(f'(c)=0\), the parallel \(s=c\) is a geodesic.  We claim
that both of its semicircles from \(x_c\) to \(y_c\) minimize globally.
For a piecewise smooth competitor, let
\(R=\max_t|s(t)-c|\).  Its radial variation is at least \(2R\).
If \(R\geq r_c/2\), its length is therefore at least \(r_c\).
Otherwise \(R<1.1\pi/2<2\), so the path remains in \([2,8]\times S^1\)
and its angular variation is at least \(\pi\).  Taylor's theorem gives
\(f(s(t))\geq f(c)-0.05R^2>0\).  The triangle inequality for the
integral of the nonnegative vector
\((|\dot s|,f(s)|\dot\theta|)\) now gives
\begin{align*}
 L^2
 &\geq 4R^2+\pi^2\bigl(f(c)-0.05R^2\bigr)^2\\
 &=\pi^2f(c)^2+
       \bigl(4-0.1\pi^2f(c)\bigr)R^2+0.0025\pi^2R^4
 \geq r_c^2,
\end{align*}
because \(4-0.1\pi^2f(c)>2.9\).  This proves the claim.

\par\smallskip\noindent\emph{Criticality and additional labels.}
Every proper initial subarc of either minimizing semicircle ends before
its cut time from \(x_c\).  The distance is smooth at those ordered
pairs.  First variation shows that, as the second endpoint tends to
\(y_c\) along the two semicircles, the coupled endpoint differentials
converge to opposite covectors in
\(T_{x_c}^*M\times T_{y_c}^*M\).  Both are reachable differentials of
\(d_M\); their midpoint is zero.  Hence
\[
 0\in\partial^{\mathrm C}d_M(x_c,y_c),
 \qquad r_c\in\Sigma_2^{\specC}(M).
\]
Continuity and injectivity of \(f|_C\) show that
\(\{r_c:c\in C\}\) is a Cantor set.

For \(N>2\), add \(N-2\) copies of the {additional point}
\((c,\pi/2)\).  Their distances to either endpoint are at most
\(r_c/2\), so \((x_c,y_c)\) is still the unique labelled diameter pair.
Locally the diameter equals that distance function, whose two opposite
reachable covectors extend by zero on the {additional coordinates}.  The
tuple is therefore Clarke critical.  For every \(N\geq2\), moving \(y_c\) slightly toward
\(x_c\) along one semicircle strictly reduces the active distance.
When {additional labels} are present, all other pairs retain their gap.
Thus the diameter decreases, and none of these tuples is a local
minimum.
\end{proof}

\subsection{Weak-slope stationarity at an antipodal pair}
\label{subsec:antipodal-weak-slope}

We complete the proof of \cref{lem:two-label-spherical-spectrum} by
treating the antipodal distance \(\pi\), where the smooth first-order
criterion does not apply.  Near an antipodal pair, suitable coordinates
write the distance as \(\pi-\|w\|\), with \(w\in\R^m\) measuring the
displacement from antipodality.  The limiting differentials of this
local expression give Clarke criticality.  For weak-slope stationarity,
we show that a deformation decreasing distance at a positive uniform
rate would, after normalization, extend a map homotopic to the identity
on \(\Sph^{m-1}\) across a ball.  The no-retraction argument rules out
such an extension.

\begin{proof}[Proof of \cref{lem:two-label-spherical-spectrum}]
At a pair of distance in \((0,\pi)\), moving one endpoint toward the
other strictly decreases the smooth distance function.  Thus
\cref{prop:spherical-spectrum-coincidence} excludes both kinds of
stationarity.  Constant pairs are global minima of diameter and account
for zero in both spectra.

Fix an antipodal pair \((x,-x)\).  We reduce distance near this
pair to the function \(\pi-\|w\|\): its limiting differentials give
Clarke criticality, while the no-retraction argument below excludes a
positive weak-slope deformation.
For a nearby pair \((y,y')\), choose normal coordinates
\(u\in\R^m\) for \(y\) near \(x\), and write
\(-y'=\exp_y(w)\).  A smooth orthonormal frame near \(x\) identifies
\(w\in T_y\Sph^m\) with a vector of \(\R^m\).  In these local
coordinates,
\[
 d_m(y,y')=\pi-\|w\|.
\]
At points with \(w\ne0\), the differential is
\((0,-w/\|w\|)\).  Its limiting covectors as \(w\to0\) include
\((0,v)\) for every unit vector \(v\).  Their convex hull contains
zero, so the antipodal pair is Clarke critical.

The coordinate metric and the maximum product metric are locally
bi-Lipschitz.  By \cref{lem:weak-slope-bilipschitz}, it suffices to prove
weak-slope stationarity of \(\pi-\|w\|\) in the Euclidean product
metric.  Suppose a weak-slope deformation \(H\) has positive
rate \(\sigma\), as in \cref{def:weak-slope}.  Choose a closed ball
\(\overline B_r^m\) in the slice \(u=0\), small enough to lie in the
spatial domain of \(H\), and a time \(0<\tau<r\) in its time domain,
small enough that all images remain in the coordinate neighborhood.
For \(w\in\overline B_r^m\), let \(W_\tau(w)\in\R^m\) be the
\(w\)-coordinate of \(H((0,w),\tau)\).  The descent and displacement
bounds give
\[
 \|W_\tau(w)\|\geq\|w\|+\sigma\tau>0,
 \qquad
 \|W_\tau(w)-w\|\leq\tau.
\]
Consequently, \(W_\tau/\|W_\tau\|\) maps the whole ball into
\(\Sph^{m-1}\).  On its boundary this map is homotopic to
\(w\mapsto w/r\): the straight-line homotopy from \(w\) to
\(W_\tau(w)\) avoids zero because \(\tau<r\).
For \(m\geq2\), a boundary map of degree one cannot extend over the
ball, by the no-retraction argument
\cite[Corollary~2.15]{Hatcher2002}.  For \(m=1\), a
continuous map from the interval into \(\Sph^0\) cannot take the two
different endpoint values.  Thus no such deformation exists.
\end{proof}

\section{Equivalence of the ellipse and stack-walk coordinates}
\label{app:ellipse-stack-walk-coordinate-bridge}

The spherical walk in
\eqref{eq:general-stack-walk}--\eqref{eq:general-stack-symmetry} and
the planar ellipse construction give the same parallels for the stack
over a base \(P\) in \eqref{eq:general-spherical-stack}.  Axial
rescaling sends the ellipse to a unit meridian circle.  Under this map,
two adjacent walk equations are equivalent to tangency of the chord
joining the corresponding even-indexed points.  Fix
\[
 \alpha\in\left(\frac{\pi}{2},\pi\right),
 \qquad
 c:=\cos\alpha,
 \qquad
 \beta:=\pi-\alpha,
 \qquad
 \varrho:=-c=\cos\beta\in(0,1),
\]
and put
\[
 \mathcal E_\beta
 :=\left\{(x,y)\in\R^2:\frac{x^2}{\varrho^2}+y^2=1\right\},
 \qquad
 \Gamma_t:=\left\{(x,y)\in\R^2:x^2+y^2=t^2\right\}.
\]
\begin{proposition}[Equivalence of the ellipse and stack-walk coordinates]
\label{prop:ellipse-stack-walk-equivalence}
Let \(\varphi_0,\ldots,\varphi_{2k+1}\) be a symmetric \(k\)-layer
stack walk and set
\[
 t:=\cos\varphi_1,
 \qquad
 \theta_i:=\frac{\pi}{2}-\varphi_{2i},
 \qquad
 q_i:=(\varrho\cos\theta_i,\sin\theta_i)
 \quad(0\leq i\leq k).
\]
\emph{From the walk to the ellipse.} Then \(0<t<\varrho\).  The points \(q_0,\ldots,q_k\) form the
right-hand chain of a convex \((2k+1)\)-gon in \(\mathcal E_\beta\)
circumscribed about \(\Gamma_t\): the edges \(q_iq_{i+1}\),
\(0\leq i<k\), are tangent to \(\Gamma_t\), and the bottom edge joins
\(q_k\) to its reflection in the vertical axis and is tangent along
\(y=-t\).  Reflection completes the chain to the full odd polygon.
Under the axial rescaling
\[
 (x,y)\longmapsto\left(\frac{x}{\varrho},y\right),
\]
the image of \(q_i\) lies on the unit meridian circle at
colatitude \(\varphi_{2i}\), measured from \((0,1)\).  Hence the
selected parallels in the ellipse construction have exactly the layer
colatitudes
\[
 \ell_i=\varphi_{2i}\qquad(1\leq i\leq k).
\]

\emph{From the ellipse to the walk.} Conversely, every convex \((2k+1)\)-gon whose vertices all lie on
\(\mathcal E_\beta\), enumerated clockwise from \((0,1)\) in a single cyclic
traversal, and circumscribed about \(\Gamma_t\) for some
\(0<t<\varrho\), determines a unique symmetric \(k\)-layer stack walk by the
same change of variables.  Thus the ellipse and symmetric stack-walk
constructions give precisely the same layer positions.
\end{proposition}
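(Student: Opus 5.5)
The whole correspondence reduces to one planar fact, which I will prove first. After the axial rescaling \((x,y)\mapsto(x/\varrho,y)\), a point \(q=(\varrho\cos\theta,\sin\theta)\) of \(\mathcal E_\beta\) becomes the point of the unit meridian circle at colatitude \(\varphi=\pi/2-\theta\). Let \(q,q'\) have colatitudes \(\varphi,\varphi'\). The line through \(q\) and \(q'\) is tangent to \(\Gamma_t\) exactly when its distance to the origin equals \(t\). By the cross-product formula, this distance is
\[
 \frac{\varrho\,\lvert\sin(\varphi'-\varphi)\rvert}{\lVert q-q'\rVert}.
\]
I will compare this with the existence of an intermediate odd colatitude \(\psi\), with \(\varphi<\psi<\varphi'\), that satisfies the two walk equations
\[
 \cos\varphi\cos\psi-c\sin\varphi\sin\psi=t,
 \qquad
 \cos\psi\cos\varphi'-c\sin\psi\sin\varphi'=t.
\]
Each equation says that the point \(x_1(\psi)\) is at spherical distance \(\arccos t\) from \(x_0(\varphi)\) (respectively \(x_0(\varphi')\)), in the meridian notation of the proof of \cref{thm:stationary-stack-lifting}. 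Write each left side as the inner product of the plane vector \((\cos\psi,\sin\psi)\) with \(v=(\cos\varphi,\varrho\sin\varphi)\), respectively \(v'=(\cos\varphi',\varrho\sin\varphi')\); here I use \(-c=\varrho\). The two equations then say that the unit vector at angle \(\psi\) lies on both lines \(\{u:u\cdot v=t\}\) and \(\{u:u\cdot v'=t\}\). Equivalently, \(u\cdot(v-v')=0\) and \(u\cdot v=t\).

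Eliminating \(u\) from these two conditions is the key computation. The vectors \(v,v'\) are the images of \(q,q'\) under the linear map \((x,y)\mapsto(y,x)\) composed with the rescaling. I expect the condition to become \(\lvert\det(v,v')\rvert=t\lVert v-v'\rVert\), which is the tangency identity above after the coordinate swap. This is where I would check the signs carefully; it is the main obstacle. I have to confirm three things. First, the solution \(\psi\) is the one strictly between \(\varphi\) and \(\varphi'\), and not its reflection. Second, the right-hand (clockwise) orientation corresponds to the increasing walk. Third, \(t>0\) corresponds to the circle lying inside the polygon, so that the edge is a supporting tangent. For this I will use convexity of the ellipse, together with the monotonicity of the walk from \cref{prop:canonical-symmetric-stack-walk}.

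\emph{From the walk to the ellipse.} Given a symmetric walk, I apply the lemma to each pair \((\varphi_{2i},\varphi_{2i+2})\) with intermediate point \(\varphi_{2i+1}\), for \(0\le i<k\). This gives tangency of the edges \(q_iq_{i+1}\). For the bottom edge, symmetry \eqref{eq:general-stack-symmetry} gives \(\varphi_{2k+1}=\pi\) and \(\varphi_{2k}=\pi-\varphi_1\). The reflected vertex of \(q_k\) has the same height \(\cos\varphi_{2k}=-\cos\varphi_1=-t\), so the horizontal chord \(y=-t\) is tangent to \(\Gamma_t\) at its lowest point. The inequality \(0<t<\varrho\) follows from \(\cos\varphi_1<-c=\varrho\), established in the proof of \cref{thm:stationary-stack-lifting}, and from \(\varphi_1<\pi/2\). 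Convexity of the resulting polygon holds because its vertices lie on the convex ellipse in cyclic order. The colatitude statement \(\ell_i=\varphi_{2i}\) is then immediate from the parametrization.

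\emph{From the ellipse to the walk.} Given a circumscribed convex \((2k+1)\)-gon with vertices on \(\mathcal E_\beta\), the reflection in the vertical axis maps \(\mathcal E_\beta\) and \(\Gamma_t\) to themselves. The vertex \((0,1)\) together with the odd vertex count forces a horizontal bottom edge; I would argue this via uniqueness of the tangent polygon issued from \((0,1)\) (Poncelet-type determinacy of each successive tangent chord). Then I read off the even colatitudes from the vertices, and the lemma recovers the odd colatitudes \(\varphi_{2i+1}\) in the correct intervals. Finally, I set \(\varphi_{2k+1}=\pi\) and \(\varphi_{2k+2-r}=\pi-\varphi_{r}\), so the sequence satisfies \eqref{eq:general-stack-walk}--\eqref{eq:general-stack-symmetry}. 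Its uniqueness follows from \cref{prop:canonical-symmetric-stack-walk}.
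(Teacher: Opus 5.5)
Your forward direction is sound. It is the paper's computation in cleaner form: the two walk equations say that \((\cos\varphi_{2i+1},\sin\varphi_{2i+1})\) is the unit normal, at distance \(t\), of the coordinate-swapped chord. This is the paper's normalization \((\cos v,\sin v)\propto(\varrho\cos\mu,\sin\mu)\) together with its chord-distance formula. You still owe the check that the origin lies on the polygon's side of every edge line. That is what makes the polygon circumscribed rather than merely having tangent edge lines; the paper gets it from parameter gaps less than \(\pi\).

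The converse, however, has a genuine gap, exactly at what you call the main obstacle. Your lemma as stated is false: tangency of the line \(qq'\) to \(\Gamma_t\) does not force the normal angle into \((\varphi,\varphi')\). Take \(\varrho=0.3\), \(\varphi=0\), \(\varphi'=0.2\). The line is at distance \(t\approx0.948\) from the origin, and the only solution of both walk equations is \(\psi\approx0.32>\varphi'\). There is no second, reflected solution to choose between, since the opposite normal gives \(-t\). What excludes this example is the hypothesis \(t<\varrho\), which your plan never invokes. Convexity of the ellipse alone does not supply it. The monotonicity of walks from \cref{prop:canonical-symmetric-stack-walk} is unavailable too, because in the converse the walk is what you are constructing.

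A short repair goes as follows. The function \(\phi\mapsto\cos\psi\cos\phi+\varrho\sin\psi\sin\phi-t\) is a phase-shifted cosine of amplitude at least \(\varrho>t>0\). So it is positive exactly on an arc of length less than \(\pi\) whose endpoints are its zeros \(\varphi,\varphi'\). Since \(0\leq\varphi<\varphi'<\pi\), that arc must be \((\varphi,\varphi')\). Its value at \(\phi=\psi\) is \(\cos^2\psi+\varrho\sin^2\psi-t\geq\varrho-t>0\), which puts \(\psi\) inside. The paper instead proves \(|\varepsilon_i|<a_i\) from \(\sin^2\varepsilon_i\leq(1-\varrho^2)\cos^2\mu_i<\sin^2a_i\); the last inequality is equivalent to \(t<\varrho\).

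Second, you cannot simply set the reflection identities at the end. Your index is also off: with \(\varphi_{2k+2-r}=\pi-\varphi_r\), the case \(r=1\) contradicts \(\varphi_{2k+1}=\pi\). More importantly, every \(\varphi_r\) with \(r\leq2k\) is already fixed by the polygon: even indices by vertices, odd ones by tangency normals. So \eqref{eq:general-stack-symmetry} is a claim to prove, and it is what identifies the reflected values with vertex colatitudes. It is not a consequence of the vertical-axis symmetry of the polygon alone. That symmetry relates the right chain to the left chain, whereas \eqref{eq:general-stack-symmetry} pairs even indices with odd ones.

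The paper derives symmetry from the recurrence. The bottom edge gives \(\varphi_{2k}=\pi-\varphi_1\). Hence \(\widetilde\varphi_r:=\pi-\varphi_{2k+1-r}\) satisfies the same equations \eqref{eq:general-stack-walk}, with \(\widetilde\varphi_0=0\) and \(\widetilde\varphi_1=\varphi_1\). The map \(y\mapsto\cos\varphi_r\cos y+\varrho\sin\varphi_r\sin y\) has amplitude at least \(\varrho>t\), so the equation with value \(t\) has exactly two roots modulo \(2\pi\), one of them \(\varphi_{r-1}\). Induction with strict ordering then gives \(\widetilde\varphi_r=\varphi_r\). Only after this does \cref{prop:canonical-symmetric-stack-walk} give you uniqueness.
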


\begin{figure}[htbp]
\centering
\begingroup
\newcommand{\figmath}[1]{\ensuremath{#1}}
\begin{tikzpicture}[
  >={Latex[length=1.8mm,width=1.25mm]},
  every node/.style={font=\scriptsize,text=black,inner sep=1.5pt},
  geometry/.style={draw=black!70,line width=.55pt},
  point/.style={circle,fill=black!85,inner sep=1.2pt},
  layerpoint/.style={circle,fill=black!85,inner sep=1.05pt},
  guide/.style={draw=black!35,line width=.35pt,dash pattern=on 2pt off 2pt},
  backarc/.style={draw=black!45,line width=.45pt,dash pattern=on 1.5pt off 1.5pt},
  leader/.style={draw=black!60,line width=.4pt},
  panel/.style={font=\footnotesize\bfseries,anchor=base,align=center},
  stepnote/.style={font=\scriptsize,align=center,inner sep=2pt},
  line cap=round,line join=round
]
\node[panel] at (0,1.95) {(a) Tangential polygon};
\node[panel] at (4.70,1.95) {(b) Selected colatitudes};
\node[panel] at (9.40,1.95) {(c) Two-layer stack};

\begin{scope}[scale=1.30]
  \draw[geometry] (0,0) ellipse[x radius=.75,y radius=1];
  \draw[geometry,draw=black!55,line width=.45pt]
    (0,0) circle[radius=.67772];
  \coordinate (q0) at (0,1);
  \coordinate (q1) at (.73435,.20323);
  \coordinate (q2) at (.55149,-.67772);
  \coordinate (q3) at (-.55149,-.67772);
  \coordinate (q4) at (-.73435,.20323);
  \draw[draw=black!85,line width=.95pt]
    (q0)--(q1)--(q2)--(q3)--(q4)--cycle;
  \foreach \q in {q0,q1,q2,q3,q4}{\node[point] at (\q) {};}
  \node[anchor=south west] at (.10,1.05) {\figmath{q_0}};
  \node[anchor=west] at (.88,.25) {\figmath{q_1}};
  \node[anchor=west] at (.76,-.75) {\figmath{q_2}};
  \node[anchor=east] at (-.93,.67) {\figmath{\mathcal E_\beta}};
  \draw[leader] (-.87,.67)--(-.56,.665);
  \node at (0,.02) {\figmath{\Gamma_t}};
\end{scope}

\draw[geometry,->] (1.65,.02)--(3.15,.02)
  node[midway,above=5pt,stepnote]{axial\\rescaling};

\begin{scope}[shift={(4.70,0)},scale=1.30]
  \draw[guide] (0,-1)--(0,1);
  \draw[guide] (-.97913,.20323)--(.97913,.20323);
  \draw[guide] (-.73532,-.67772)--(.73532,-.67772);
  \draw[geometry,draw=black!50,line width=.4pt]
    (.97913,.20323)--(0,0)--(.73532,-.67772);
  \draw[geometry] (0,0) circle[radius=1];
  \node[point] at (0,1) {};
  \node[point] at (.97913,.20323) {};
  \node[point] at (.73532,-.67772) {};
  \node[anchor=south west] at (.10,1.05) {\figmath{\varphi_0=0}};
  \node[anchor=south] at (-.43,.27) {\figmath{\ell_1}};
  \node[anchor=south] at (-.36,-.61) {\figmath{\ell_2}};
\end{scope}

\draw[geometry,->] (6.25,.02)--(7.85,.02)
  node[midway,above=5pt,stepnote]
    {place \figmath{P} on\\each parallel};

\begin{scope}[shift={(9.40,0)},scale=1.30]
  \draw[guide] (0,-1)--(0,1);
  \draw[geometry] (0,0) circle[radius=1];
  \foreach \s/\c in {.97913/.20323,.73532/-.67772}{
    \pgfmathsetmacro{\cy}{\c*cos(10)}
    \pgfmathsetmacro{\ry}{\s*sin(10)}
    \draw[backarc] (\s,\cy)
      arc[start angle=0,end angle=180,x radius=\s,y radius=\ry];
    \draw[geometry] (-\s,\cy)
      arc[start angle=180,end angle=360,x radius=\s,y radius=\ry];
    \foreach \a in {18,106,211,302}{
      \node[layerpoint] at ({\s*cos(\a)},{\cy+\ry*sin(\a)}) {};
    }
  }
  \node[point] at (0,{cos(10)}) {};
  \node[anchor=south west] at (.10,1.05) {\figmath{Z}};
  \draw[leader] (.97913,{.20323*cos(10)})--(1.10,.40)--(1.18,.40);
  \node[anchor=west] at (1.20,.40) {\figmath{L_1}};
  \draw[leader] (.73532,{-.67772*cos(10)})--(1.10,-.55)--(1.18,-.55);
  \node[anchor=west] at (1.20,-.55) {\figmath{L_2}};
\end{scope}

\node[anchor=north,inner sep=0pt] at (4.70,-1.72) {\figmath{
  \ell_i=\varphi_{2i},\qquad
  (s_i,c_i)=(\sin\ell_i,\cos\ell_i),\qquad
  L_i=s_iP\times\{c_i\}\quad(i=1,2)
}};
\end{tikzpicture}
\endgroup
\caption{The ellipse construction and the corresponding two-layer stack.
In (a), a symmetric pentagon with vertices on \(\mathcal E_\beta\) is
circumscribed about \(\Gamma_t\).  In (b), axial rescaling sends the
vertices corresponding to the selected even-indexed walk points to
\((s_i,c_i)=(\sin\ell_i,\cos\ell_i)\), where
\(\ell_i=\varphi_{2i}\), for \(i=1,2\).  The intervening odd-indexed walk
points are not shown.  In (c), each selected parallel carries
\(L_i=s_iP\times\{c_i\}\), and \(Z=(0,1)\) is adjoined.  The repeated
marker pattern, including the displayed base directions, is schematic; it
records only that the same labelled base \(P\) is placed on both layers.}
\label{fig:lovasz-coordinate-bridge}
\end{figure}

\begin{proof}[Proof of \cref{prop:ellipse-stack-walk-equivalence}]
\noindent\emph{Forward direction: tangency and the selected parallels.}
The symmetry and strict ordering of the walk give
\[
 \varphi_k<\frac{\pi}{2}<\varphi_{k+1},
 \qquad
 \varphi_k+\varphi_{k+1}=\pi.
\]
In particular, \(t>0\).  Applying
\eqref{eq:general-stack-walk} to this middle pair gives
\[
 t
 =\varrho-(1+\varrho)\cos^2\varphi_k
 <\varrho.
\]

Fix \(0\leq i<k\) and put
\[
 u:=\varphi_{2i},\qquad v:=\varphi_{2i+1},\qquad
 w:=\varphi_{2i+2},
 \qquad
 \mu:=\frac{u+w}{2},\qquad a:=\frac{w-u}{2}.
\]
Since \(\varrho=-c\), the two adjacent walk equations are
\[
 t=\cos u\cos v+\varrho\sin u\sin v,
 \qquad
 t=\cos v\cos w+\varrho\sin v\sin w.
\]
Subtracting these equations gives
\(\sin\mu\cos v=\varrho\cos\mu\sin v\).
Since \(0<\mu,v<\pi\), the vector \((\cos v,\sin v)\) is a
positive multiple of \((\varrho\cos\mu,\sin\mu)\).  Normalizing gives
\[
 \cos v
 =\frac{\varrho\cos\mu}
 {\sqrt{\varrho^2\cos^2\mu+\sin^2\mu}},
 \qquad
 \sin v
 =\frac{\sin\mu}
 {\sqrt{\varrho^2\cos^2\mu+\sin^2\mu}}.
\]
Substitution into either adjacent equation yields
\begin{equation}
 t
 =\frac{\varrho\cos a}
 {\sqrt{\varrho^2\cos^2\mu+\sin^2\mu}}.
 \label{eq:digon-ellipse-tangency}
\end{equation}
On the other hand, the distance from the origin to the
chord's supporting line is
\begin{align*}
 \operatorname{dist}\bigl(0,\aff\{q_i,q_{i+1}\}\bigr)
 =\frac{|\det(q_i,q_{i+1})|}{\|q_{i+1}-q_i\|}
 =\frac{\varrho\cos a}
 {\sqrt{\varrho^2\cos^2\mu+\sin^2\mu}}.
\end{align*}
Thus \eqref{eq:digon-ellipse-tangency} says exactly that every
{edge in the right-hand chain} is tangent to \(\Gamma_t\).  Walk symmetry also gives
\[
 \varphi_{2k}=\pi-\varphi_1,
 \qquad
 \sin\theta_k=-\cos\varphi_1=-t,
\]
so the horizontal edge from \(q_k\) to its vertical-axis reflection is
tangent along \(y=-t\).  Reflecting the right-hand chain completes the
convex odd polygon.  More explicitly, if \(R(x,y):=(-x,y)\), put
\[
 q_{2k+1-i}:=R(q_i)\quad(1\leq i\leq k),
 \qquad q_{2k+1}:=q_0.
\]
The completed vertices occur once in cyclic order on the ellipse, and every
consecutive parameter gap is less than \(\pi\).  Hence the origin lies in
the polygonal interior half-plane bounded by each tangent line.  The closed
disk bounded by \(\Gamma_t\) is therefore contained in the polygon, so the
polygon is indeed circumscribed about \(\Gamma_t\).  Finally,
\[
 \left(\frac{(q_i)_1}{\varrho},(q_i)_2\right)
 =(\cos\theta_i,\sin\theta_i)
 =(\sin\varphi_{2i},\cos\varphi_{2i}),
\]
which proves the claim about its selected parallels.

\par\smallskip\noindent\emph{Converse: reconstructing the walk.}
We first recover the polygon's reflection symmetry from its
tangent edges.  Orient the polygon clockwise and enumerate it from
\(q_0=(0,1)\).  At each point of \(\mathcal E_\beta\), there are
exactly two tangents to the strictly interior circle \(\Gamma_t\).
Choose the tangent chord directed so that the circle lies on its right;
its other endpoint defines a map
\(F:\mathcal E_\beta\to\mathcal E_\beta\).
Convexity, cyclic ordering, and containment of \(\Gamma_t\) imply
that each polygonal edge runs from a vertex \(q_i\) to \(F(q_i)\).
Choosing the other tangent gives \(F^{-1}\).  Reflection in the
vertical axis interchanges the two tangent choices, so it conjugates
\(F\) to \(F^{-1}\) and fixes \(q_0\).  Since \(q_i=F^i(q_0)\) and
\(F^{2k+1}(q_0)=q_0\),
\[
 R(q_i)=RF^i(q_0)=F^{-i}(q_0)=F^{2k+1-i}(q_0)=q_{2k+1-i},
\]
where \(R(x,y):=(-x,y)\).  Thus
\[
 q_{2k+1-i}=(- (q_i)_1,(q_i)_2)
 \qquad(0\leq i\leq2k+1).
\]
In particular, the middle edge is horizontal and, being the lower supporting
edge, is tangent along \(y=-t\).

Write the right-hand vertices uniquely as
\[
 q_i=(\varrho\cos\theta_i,\sin\theta_i),
 \qquad
 \frac{\pi}{2}=\theta_0>\theta_1>\cdots>
 \theta_k>-\frac{\pi}{2},
\]
and set \(\varphi_{2i}:=\tfrac{\pi}{2}-\theta_i\).  For
\(0\leq i<k\), put
\[
 \mu_i:=\frac{\varphi_{2i}+\varphi_{2i+2}}{2},
 \qquad
 a_i:=\frac{\varphi_{2i+2}-\varphi_{2i}}{2},
\]
and define the intervening angle by
\[
 (\cos\varphi_{2i+1},\sin\varphi_{2i+1})
 :=\frac{(\varrho\cos\mu_i,\sin\mu_i)}
 {\sqrt{\varrho^2\cos^2\mu_i+\sin^2\mu_i}}.
\]
The chord-distance calculation and tangency to \(\Gamma_t\) give
\eqref{eq:digon-ellipse-tangency}, with \(\mu=\mu_i\) and \(a=a_i\).
Direct substitution gives both adjacent equations in
\eqref{eq:general-stack-walk} with common value \(t\).
It remains to show that the reconstructed odd-indexed angle lies
strictly between its two neighboring even-indexed angles.
With \(\varepsilon_i:=\varphi_{2i+1}-\mu_i\), this is the
inequality \(|\varepsilon_i|<a_i\).  We have
\[
 \sin^2\varepsilon_i
 =\frac{(1-\varrho)^2\sin^2\mu_i\cos^2\mu_i}
 {\varrho^2\cos^2\mu_i+\sin^2\mu_i}
 \leq(1-\varrho^2)\cos^2\mu_i.
\]
Moreover,
\[
 \cos\varepsilon_i
 =\frac{\varrho\cos^2\mu_i+\sin^2\mu_i}
 {\sqrt{\varrho^2\cos^2\mu_i+\sin^2\mu_i}}>0,
\]
so \(|\varepsilon_i|<\tfrac{\pi}{2}\).
Since \(t<\varrho\), \eqref{eq:digon-ellipse-tangency} gives
\[
 \sin^2a_i>(1-\varrho^2)\cos^2\mu_i.
\]
Here \(0<a_i<\tfrac{\pi}{2}\) and
\(|\varepsilon_i|<\tfrac{\pi}{2}\), so \(|\varepsilon_i|<a_i\), which is equivalent to
\[
 \varphi_{2i}<\varphi_{2i+1}<\varphi_{2i+2}.
\]
For \(i=0\), the first walk equation gives
\(t=\cos\varphi_1\).  The lower horizontal edge gives
\(\varphi_{2k}=\pi-\varphi_1\); set
\(\varphi_{2k+1}:=\pi\).  The last walk equation then holds as well.

\noindent\emph{Symmetry and uniqueness.}
It remains only to verify symmetry.  {The sequence}
\[
 \psi_r:=\pi-\varphi_{2k+1-r}
\]
is strictly increasing, satisfies the same walk recurrence, and begins with
\(\psi_0=0\) and \(\psi_1=\varphi_1\).  Starting with this common pair,
suppose the two sequences agree through index \(r\geq1\).
{For fixed \(\varphi_r\)}, the left-hand side of
\eqref{eq:general-stack-walk} is a phase-shifted cosine in the next
angle, with amplitude at least \(\varrho>t\).  The recurrence
therefore has exactly two roots modulo \(2\pi\).
The preceding {colatitude} \(\varphi_{r-1}\) is one root; strict ordering
selects the other root uniquely as the following {colatitude} in \([0,\pi]\).  Induction therefore gives
\(\psi_r=\varphi_r\) for every \(r\), which is precisely
\eqref{eq:general-stack-symmetry}.  The same argument shows uniqueness of
the reconstructed walk for the given tangential polygon.
\end{proof}

\section{Shooting and convergence details for canonical stack walks}
\label{app:stack-walk-shooting-convergence}

The canonical walk is determined by its first step.  A shooting
argument selects the step for which the two middle colatitudes sum
to \(\pi\), so reflection completes the walk.  We then use the same
root maps to prove monotonicity in the layer number and continuity in
the base diameter of the transform \(\Lambda_k\) in
\cref{prop:canonical-lovasz-transform}.
The final two subsections give supplementary convergence estimates.
The derived-set argument uses the estimate
\eqref{eq:general-stack-limit}, which is uniform in the base diameter.

\subsection{Shooting and closing}

Fix a base diameter \(\alpha\in(\pi/2,\pi)\) and a layer number
\(k\geq1\).  To prove \cref{prop:canonical-symmetric-stack-walk}, we
vary the first step and solve \eqref{eq:general-stack-walk} forward
from the north pole.  We identify an interval of trial first steps
on which the iteration reaches the two middle colatitudes.  Their sum
is continuous and strictly increasing on this interval, with endpoint
values on opposite sides of \(\pi\).  The unique first step for which
this sum equals \(\pi\) allows us to complete the walk by reflection
across the equator.

\begin{proof}[Proof of \cref{prop:canonical-symmetric-stack-walk}]

Put
\[
 \beta:=\pi-\alpha,
 \qquad
 a:=\cos\beta=-\cos\alpha\in(0,1),
\]
and define
\[
 H(u,v):=\cos u\cos v+a\sin u\sin v.
\]
\noindent\emph{Admissible first steps.}
We first determine the possible first step of a symmetric walk.  If
\(\tau:=\varphi_1\), then symmetry gives
\[
 \varphi_{k+1}=\pi-\varphi_k,
 \qquad
 0<\tau\leq\varphi_k<\frac{\pi}{2}.
\]
The middle recurrence therefore gives
\begin{equation}
 \cos\tau
 =H(\varphi_k,\pi-\varphi_k)
 =a-(1+a)\cos^2\varphi_k
 <a=\cos\beta.
 \label{eq:lovasz-middle-equation}
\end{equation}
Thus every symmetric \(k\)-layer stack walk has
\[
 \tau\in\left(\beta,\frac{\pi}{2}\right).
\]

\par\smallskip\noindent\emph{{The map \(F_\tau\) and its monotonicity.}}
For a trial first step
\(\tau\in(\beta,\tfrac{\pi}{2})\), we solve the walk recurrence
forward from the north pole.  Given a current colatitude
\(0\leq u\leq\tfrac{\pi}{2}\), let
\(F_\tau(u)\in(u,\pi)\) be the unique next colatitude \(v\)
satisfying
\begin{equation}
 H(u,v)=\cos\tau.
 \label{eq:lovasz-shooting-step}
\end{equation}
Indeed,
\[
 H(u,u)\geq a>\cos\tau,
 \qquad
 H(u,\pi)=-\cos u<\cos\tau,
\]
and, for \(u<v<\pi\),
\[
 H_v(u,v)=-\cos u\sin v+a\sin u\cos v<0.
\]
For \(v\geq\tfrac{\pi}{2}\) this is immediate; for
\(u<v<\tfrac{\pi}{2}\), it follows from
\(\tan v>\tan u\geq a\tan u\).

To compare the next colatitude as either \(u\) or \(\tau\)
increases, we also need the sign \(H_u(u,v)>0\) at a solution of
\eqref{eq:lovasz-shooting-step}.  This sign is immediate when
\(v\geq\tfrac{\pi}{2}\).  If \(v<\tfrac{\pi}{2}\), set
\[
 v_0:=\arctan\frac{\tan u}{a}.
\]
Then
\[
 H(u,v_0)
 =a\sqrt{\frac{1+\tan^2u}{a^2+\tan^2u}}
 >a>\cos\tau=H(u,v).
\]
Since \(H(u,\mathord\cdot)\) is strictly decreasing, \(v>v_0\), which is
equivalent to \(H_u(u,v)>0\).  Implicit differentiation gives
\begin{equation}
 \frac{\partial F_\tau}{\partial u}
 =-\frac{H_u}{H_v}>0,
 \qquad
 \frac{\partial F_\tau}{\partial\tau}
 =-\frac{\sin\tau}{H_v}>0.
 \label{eq:lovasz-step-monotonicity}
\end{equation}

Put
\[
 u_0(\tau):=0,
 \qquad
 u_{r+1}(\tau):=F_\tau(u_r(\tau))
\]
whenever the input \(u_r(\tau)\) is at most \(\tfrac{\pi}{2}\).
On every common domain the iterates are continuous and strictly increasing
in the iteration index and, for positive indices, in \(\tau\).

\par\smallskip\noindent\emph{Endpoint parameters.}
The construction extends continuously to \(\tau=\beta\).  If
\(u<\tfrac{\pi}{2}\), then
\[
 H(u,u)>a,
 \qquad
 H\left(u,\frac{\pi}{2}\right)=a\sin u<a,
\]
so
\[
 u<F_\beta(u)<\frac{\pi}{2}.
\]
Consequently every finite iterate \(u_r(\beta)\) lies below
\(\tfrac{\pi}{2}\).  At the other endpoint we use the continuous extension
\[
 F_{\pi/2}\left(\frac{\pi}{2}\right)=\pi.
\]

\par\smallskip\noindent\emph{When each iterate reaches the equator.}
The iterates are defined only while their inputs to \(F_\tau\)
are at most \(\pi/2\).  We therefore locate, by induction, the
parameter at which each iterate first reaches the equator.  These
parameters will give an interval on which the closing condition is
defined.  Set
\[
 \sigma_1:=\frac{\pi}{2}.
\]
Suppose \(j\geq2\) and \(\sigma_{j-1}\) has been defined so that
\[
 u_{j-1}(\sigma_{j-1})=\frac{\pi}{2}.
\]
The function \(u_j\) is defined, continuous, and strictly increasing on
\([\beta,\sigma_{j-1}]\), and
\[
 u_j(\beta)<\frac{\pi}{2},
 \qquad
 u_j(\sigma_{j-1})
 =F_{\sigma_{j-1}}\left(\frac{\pi}{2}\right)
 >\frac{\pi}{2}.
\]
Hence there is a unique
\[
 \sigma_j\in(\beta,\sigma_{j-1})
 \qquad\text{such that}\qquad
 u_j(\sigma_j)=\frac{\pi}{2}.
\]
In particular, \(\sigma_k\) is defined, and
\[
 u_0(\sigma_k),\ldots,u_{k-1}(\sigma_k)
 <\frac{\pi}{2}=u_k(\sigma_k).
\]

\par\smallskip\noindent\emph{Closing and reflecting the walk.}
The first half of the walk joins its reflection exactly when
its two middle colatitudes sum to \(\pi\).  On
\([\beta,\sigma_k]\), the closing function
\[
 \Psi_k(\tau):=u_k(\tau)+u_{k+1}(\tau)
\]
is continuous and strictly increasing.  At its endpoints,
\[
 \Psi_k(\beta)<\pi,
 \qquad
 \Psi_k(\sigma_k)
 =\frac{\pi}{2}
  +F_{\sigma_k}\left(\frac{\pi}{2}\right)
 >\pi.
\]
There is therefore a unique
\(\tau_*\in(\beta,\sigma_k)\) such that
\[
 u_k(\tau_*)+u_{k+1}(\tau_*)=\pi.
\]

Define
\[
 \varphi_r:=u_r(\tau_*)
 \qquad(0\leq r\leq k+1)
\]
and complete the sequence by reflection:
\[
 \varphi_{2k+1-r}:=\pi-\varphi_r
 \qquad(0\leq r\leq k).
\]
The closing equation makes the two prescriptions agree at the middle and
gives
\[
 \varphi_k<\frac{\pi}{2}<\varphi_{k+1}.
\]
Thus the completed sequence is strictly increasing.  Its first half
satisfies the recurrence by construction, and its reflected half does also,
because
\[
 H(\pi-v,\pi-u)=H(u,v).
\]
It is therefore a symmetric \(k\)-layer stack walk.

\par\smallskip\noindent\emph{Uniqueness.}
Conversely, the first step \(\tau\) of any symmetric \(k\)-layer stack walk
lies in \((\beta,\tfrac{\pi}{2})\).  Strict ordering forces its first half to
be the successive increasing roots
\[
 \varphi_r=u_r(\tau)
 \qquad(0\leq r\leq k+1),
\]
and \(\varphi_k<\pi/2\) implies \(\tau<\sigma_k\), so
\(\Psi_k\) is defined at this parameter.  Middle symmetry is exactly
\[
 \Psi_k(\tau)=\pi.
\]
The strict monotonicity of \(\Psi_k\) forces \(\tau=\tau_*\), proving
uniqueness.
\end{proof}

\subsection{Monotonicity and continuity of the stack diameters}
\label{subsec:stack-transform-proof}

We now prove the monotonicity and continuity assertions of
\cref{prop:canonical-lovasz-transform}.  The stack diameter is
\(\Lambda_k(\alpha)=\pi-\tau_k\), where \(\tau_k\) is the first step
of the canonical \(k\)-layer walk.  At fixed base diameter, comparing
the iterates of the root maps \(F_\tau\) from
\eqref{eq:lovasz-shooting-step} shows that adding a layer decreases
the first step and hence increases the diameter.  For continuity at
fixed \(k\), we pass to limits of the colatitude sequences and use
uniqueness to identify the limiting walk when \(\alpha<\pi\).
At \(\alpha=\pi\), the limiting recurrence forces equal increments,
giving the extension \(\Lambda_k(\pi)=\pi-\pi/(2k+1)\).

\begin{proof}[Proof of \cref{prop:canonical-lovasz-transform}]
\noindent\emph{Monotonicity in the layer number.}
Put \(\beta:=\pi-\alpha\) and \(a:=\cos\beta=-\cos\alpha\).
The uniform estimate is \eqref{eq:general-stack-limit}.
Use {the maps} \(F_\tau\) defined by
\eqref{eq:lovasz-shooting-step}.  Their strict
monotonicity in the current colatitude and in \(\tau\) is
\eqref{eq:lovasz-step-monotonicity}.
Write \(\varphi_j^{(k)}\) for the \(j\)-th colatitude at layer count \(k\),
and put \(\tau_k:=\varphi_1^{(k)}\).  If \(\tau_{k+1}\geq\tau_k\), apply
\eqref{eq:lovasz-step-monotonicity} successively through index
\(k+1\).  In both walks, the input colatitudes through index \(k\)
lie below \(\pi/2\), so the root maps are defined and give
\[
 \varphi_{k+1}^{(k+1)}\geq\varphi_{k+1}^{(k)}.
\]
Symmetry gives the opposite strict order
\[
 \varphi_{k+1}^{(k+1)}<\frac{\pi}{2}
 <\varphi_{k+1}^{(k)}=\pi-\varphi_k^{(k)},
\]
a contradiction.  Hence \(\tau_{k+1}<\tau_k\), and
\(\Lambda_k(\alpha)=\pi-\tau_k\) proves
\eqref{eq:lovasz-transform-monotonicity}.

\par\smallskip\noindent\emph{Continuity in the base diameter.}
We prove continuity in \(\alpha\), including the endpoint
extension, by identifying every subsequential limit of the walks.
For an interior value of \(\alpha\), we must first exclude endpoint
first steps and repeated colatitudes in the limit.  Let \(\alpha_\nu\to\alpha\in(\tfrac{\pi}{2},\pi]\), and write
\(\varphi^{(\nu)}_0,\ldots,\varphi^{(\nu)}_{2k+1}\) for the corresponding
walks.  Every subsequence has a further subsequence on which all colatitudes
converge, and the recurrence and symmetry identities pass to the limit.
Write \(\varphi_j\) for these limits and \(\tau:=\varphi_1\).
If \(\alpha<\pi\), suppose first that \(\tau=\beta\).  Set
\(H(u,v):=\cos u\cos v+a\sin u\sin v\).  For \(0\leq u<\pi/2\),
\[
 H(u,u)=a+(1-a)\cos^2u>a,
 \qquad H(u,\pi/2)=a\sin u<a,
\]
and \(H(u,\cdot)\) is strictly decreasing on \([u,\pi/2]\).
The limiting colatitudes
\(\varphi_0,\ldots,\varphi_k\) are nondecreasing and at most
\(\pi/2\).  Starting at \(\varphi_0=0\), the recurrence
\(H(\varphi_j,\varphi_{j+1})=a\) therefore forces each next
colatitude through index \(k\) into \((\varphi_j,\pi/2)\).  In particular \(\varphi_k<\pi/2\), whereas
the middle equation \eqref{eq:lovasz-middle-equation} with
\(\tau=\beta\) forces \(\cos\varphi_k=0\), a contradiction.
The inequalities for the approximating walks give
\(\beta\leq\tau\leq\tfrac{\pi}{2}\).  We have just excluded equality on
the left.  Equality on the right would make
\eqref{eq:lovasz-middle-equation} give
\(\cos^2\varphi_k=a/(1+a)>0\), contradicting
\(\tau=\varphi_1\leq\varphi_k\).  Hence
\(\beta<\tau<\tfrac{\pi}{2}\).  The middle equation gives
\[
 \cos^2\varphi_k=\frac{a-\cos\tau}{1+a}>0,
\]
so {the limiting colatitude \(\varphi_k\)} remains strictly below
\(\tfrac{\pi}{2}\).  For \(0\leq u\leq\pi/2\), one has
\(H(u,u)\geq a>\cos\tau\), so consecutive limiting colatitudes
cannot coincide while the current colatitude is in this interval.
The nondecreasing limiting sequence therefore follows the unique
increasing roots \(F_\tau(u)\) in
\eqref{eq:lovasz-shooting-step} through the middle pair.
Equation~\eqref{eq:general-stack-symmetry} determines the remaining
colatitudes by reflection.  Thus the limit is a symmetric stack walk
for \(\alpha\), and \cref{prop:canonical-symmetric-stack-walk}
identifies it with the canonical walk.

\par\smallskip\noindent\emph{The antipodal endpoint.}
If \(\alpha=\pi\), then \(a\to1\), and the limiting recurrence becomes
\[
 \cos(\varphi_{r+1}-\varphi_r)=\cos\varphi_1.
\]
The limiting colatitudes are nondecreasing and lie in \([0,\pi]\).  Each
increment and \(\varphi_1\) therefore lies in \([0,\pi]\), where cosine is
injective, and hence
\(\varphi_{r+1}-\varphi_r=\varphi_1\).  Summing the \(2k+1\) increments gives
\(\varphi_1=\pi/(2k+1)\), and therefore
\(\Lambda_k(\alpha_\nu)\to\delta_k\).  Every convergent subsequence has the
same limit, proving continuity and
\eqref{eq:lovasz-transform-antipodal-extension}.

\end{proof}

\subsection{{Geometric convergence for a fixed base diameter}}

The diameter error decreases geometrically when the base diameter is
fixed.  This sharper estimate is not uniform as the base diameter approaches
\(\pi\), and is not needed for the derived-set calculation.

For a fixed \(\alpha\in(\tfrac{\pi}{2},\pi)\), one has
\begin{equation} \label{eq:lovasz-transform-geometric}
 0<\alpha-\Lambda_k(\alpha)
 <\tan\frac{\alpha}{2}\,
       \operatorname{sech}^2\!\left(
       2k\operatorname{artanh}\tan\frac{\pi-\alpha}{2}\right)
 =\frac{
 4\tan(\frac{\alpha}{2})
 \tan^{2k}(\frac{2\alpha-\pi}{4})}
 {\left(1+\tan^{2k}(\frac{2\alpha-\pi}{4})\right)^2}.
\end{equation}

To prove \eqref{eq:lovasz-transform-geometric}, we compare the
canonical walk with the orbit whose first step is \(\pi-\alpha\).
An explicit formula for that orbit bounds the middle colatitude, which
controls the diameter error through
\eqref{eq:lovasz-middle-equation}. Fix \(\alpha\in(\tfrac{\pi}{2},\pi)\), put
\[
 \beta:=\pi-\alpha,
 \qquad
 \eta_\alpha:=\operatorname{artanh}\tan\frac{\pi-\alpha}{2},
\]
and let \(\tau_k\) be the first step of the canonical \(k\)-layer walk,
whose colatitudes we write as \(\varphi_j^{(k)}\).
Use the continuous extension to \(\tau=\beta\) of the root
map in \eqref{eq:lovasz-shooting-step} to define the limiting orbit
\(\bar\varphi_0=0\),
\(\bar\varphi_{j+1}=F_\beta(\bar\varphi_j)\).
If
\[
 b:=\tan\frac{\beta}{2},
 \qquad
 t_j:=\tan\frac{\bar\varphi_j}{2},
\]
then the half-angle form of
\eqref{eq:lovasz-shooting-step} at \(\tau=\beta\) gives the first
identity below.  Starting from \(t_0=0\), the addition formula for
hyperbolic tangent gives the second:
\[
 t_{j+1}=\frac{t_j+b}{1+bt_j},
 \qquad
 t_j=\tanh(j\eta_\alpha).
\]
Thus
\begin{equation}
 \cos\bar\varphi_j=\operatorname{sech}(2j\eta_\alpha).
 \label{eq:lovasz-limit-orbit}
\end{equation}
Since \(\tau_k>\beta\),
\eqref{eq:lovasz-step-monotonicity} gives
\(\varphi_k^{(k)}>\bar\varphi_k\).  By \eqref{eq:lovasz-middle-equation},
\[
 \cos\beta-\cos\tau_k
 =(1+\cos\beta)\cos^2\varphi_k^{(k)}
 <(1+\cos\beta)\operatorname{sech}^2(2k\eta_\alpha).
\]
On the other hand,
\[
 \cos\beta-\cos\tau_k
 =\int_\beta^{\tau_k}\sin u\,\diff u
 \geq(\tau_k-\beta)\sin\beta.
\]
Since
\[
 \alpha-\Lambda_k(\alpha)=\tau_k-\beta,
 \qquad
 \frac{1+\cos\beta}{\sin\beta}=\tan\frac{\alpha}{2},
\]
this proves the first line of
\eqref{eq:lovasz-transform-geometric}.  Finally,
\[
 e^{-2\eta_\alpha}
 =\frac{1-b}{1+b}
 =\tan\frac{2\alpha-\pi}{4},
\]
which gives its second line.

\subsection{{Convergence rate for the iterated one-layer stacks}}

For the diameters \(\theta_{m,k}\) of the iterated one-layer
stacks in \cref{thm:stationary-iterated-one-layer-stacks}, the approach
to \(\zeta_{m-1}=\arccos(-1/m)\) has the following rate.
For \(m\geq2\),
\[
 \zeta_{m-1}-\theta_{m,k}
 =\frac{\pi^2}
 {2m\sqrt{m^2-1}\,(2k+1)^2}+O(k^{-4}),
\]
while \(\pi-\theta_{1,k}=\tfrac{\pi}{2k+1}\).

Indeed, with \(a_k:=\cos(\pi/(2k+1))\),
\[
 a_k=1-\frac{\pi^2}{2(2k+1)^2}+O(k^{-4}),
\]
and substitution into
\eqref{eq:iterated-one-layer-stack-diameter}, followed by Taylor
expansion of \(\arccos\) at \(-\tfrac{1}{m}\), gives the displayed
asymptotic for \(m\geq2\).  For \(m=1\), the same diameter formula
gives the exact expression above.

\bibliographystyle{alpha}
\bibliography{references}

\end{document}